\setlist[description]{font=\normalfont\itshape\textbullet\space}
\numberwithin{equation}{section}
\def\blue{\textcolor{blue}}
\def\red{\textcolor{red}}
\def\green{\textcolor{green}}
\newcommand*{\Scale}[2][4]{\scalebox{#1}{$#2$}}
\newtheorem{thm}{Theorem}[section]
\newtheorem{lem}[thm]{Lemma}
\newtheorem{cor}[thm]{Corollary}
\newtheorem{conj}[thm]{Conjecture}
\newtheorem{prob}[thm]{Open Problem}
\newtheoremstyle{dotless}{}{}{\itshape}{}{\bfseries}{}{ }{}
\theoremstyle{dotless}
\newtheorem{thmdot}[thm]{Theorem}
\theoremstyle{definition}
\newtheorem{rem}{Remark}[section]
\newcommand{\N}{\mathbb{N}}
\newcommand{\Z}{\mathbb{Z}}
\newcommand{\wt}{{\rm wt}}
\newcommand{\proofof}[1]{\bigskip\noindent{\sc Proof of #1.\ }}
\newcommand{\myendremark}{ $\blacksquare$ \bigskip}
\newcommand{\sfa}{{{\sf a}}}
\newcommand{\sfb}{{{\sf b}}}
\newcommand{\sfc}{{{\sf c}}}
\newcommand{\sfd}{{{\sf d}}}
\newcommand{\sfe}{{{\sf e}}}
\newcommand{\sff}{{{\sf f}}}
\newcommand{\bsfa}{{\mbox{\textsf{\textbf{a}}}}}
\newcommand{\bsfb}{{\mbox{\textsf{\textbf{b}}}}}
\newcommand{\bsfc}{{\mbox{\textsf{\textbf{c}}}}}
\newcommand{\bsfd}{{\mbox{\textsf{\textbf{d}}}}}
\newcommand{\bsfe}{{\mbox{\textsf{\textbf{e}}}}}
\newcommand{\bsff}{{\mbox{\textsf{\textbf{f}}}}}
\newcommand{\scra}{{\mathcal{A}}}
\newcommand{\scrb}{{\mathcal{B}}}
\newcommand{\scrc}{{\mathcal{C}}}
\newcommand{\bfscra}{{\bm{\mathcal{A}}}}
\newcommand{\bfscrb}{{\bm{\mathcal{B}}}}
\newcommand{\bfscrc}{{\bm{\mathcal{C}}}}
\newcommand{\scrd}{{\mathcal{D}}}
\newcommand{\scrm}{{\mathcal{M}}}
\newcommand{\scrs}{{\mathcal{S}}}
\newcommand{\bzero}{{\bm{0}}}
\newcommand{\rbf}{v}
\newcommand{\dperm}{{\mathfrak{D}}}
\newcommand{\dcycle}{{\mathfrak{DC}}}
\newcommand{\restrict}{\upharpoonright}
\newcommand{\sinv}{\sigma^{-1}}
\def\omegahat{{\widehat{\omega}}}
\newcommand{\eqdef}{\stackrel{\rm def}{=}}
\newcommand{\textbfit}[1]{\textbf{\textit{#1}}}
\newcommand{\be}{\begin{equation}}
\newcommand{\ee}{\end{equation}}
\newcommand{\fS}{\mathfrak{S}}
\newcommand{\Sym}{\mathfrak{S}}
\newcommand{\lev}{{\rm lev}}
\newcommand{\cyc}{{\rm cyc}}
\newcommand{\earec}{{\rm earec}}
\newcommand{\erec}{{\rm erec}}
\newcommand{\nrar}{{\rm nrar}}
\newcommand{\ereccval}{{\rm ereccval}}
\newcommand{\ereccdrise}{{\rm ereccdrise}}
\newcommand{\eareccpeak}{{\rm eareccpeak}}
\newcommand{\eareccdfall}{{\rm eareccdfall}}
\newcommand{\eareccval}{{\rm eareccval}}
\newcommand{\ereccpeak}{{\rm ereccpeak}}
\newcommand{\rar}{{\rm rar}}
\newcommand{\evenrar}{{\rm evenrar}}
\newcommand{\oddrar}{{\rm oddrar}}
\newcommand{\nrcpeak}{{\rm nrcpeak}}
\newcommand{\nrcval}{{\rm nrcval}}
\newcommand{\nrcdrise}{{\rm nrcdrise}}
\newcommand{\nrcdfall}{{\rm nrcdfall}}
\newcommand{\nrfix}{{\rm nrfix}}
\newcommand{\Evenfix}{{\rm Evenfix}}
\newcommand{\Oddfix}{{\rm Oddfix}}
\newcommand{\evennrfix}{{\rm evennrfix}}
\newcommand{\oddnrfix}{{\rm oddnrfix}}
\newcommand{\Cpeak}{{\rm Cpeak}}
\newcommand{\cpeak}{{\rm cpeak}}
\newcommand{\Cval}{{\rm Cval}}
\newcommand{\cval}{{\rm cval}}
\newcommand{\Cdrise}{{\rm Cdrise}}
\newcommand{\cdrise}{{\rm cdrise}}
\newcommand{\Cdfall}{{\rm Cdfall}}
\newcommand{\cdfall}{{\rm cdfall}}
\newcommand{\minval}{{\rm minval}}
\newcommand{\nminval}{{\rm nminval}}
\newcommand{\Fix}{{\rm Fix}}
\newcommand{\fix}{{\rm fix}}
\newcommand{\zo}{z_{\rm o}}
\newcommand{\ze}{z_{\rm e}}
\newcommand{\wo}{w_{\rm o}}
\newcommand{\we}{w_{\rm e}}
\newcommand{\so}{s_{\rm o}}
\newcommand{\se}{s_{\rm e}}
\newcommand{\Rec}{{\rm Rec}}
\newcommand{\rec}{{\rm rec}}
\newcommand{\Arec}{{\rm Arec}}
\newcommand{\arec}{{\rm arec}}
\newcommand{\ucross}{{\rm ucross}}
\newcommand{\ucrosscval}{{\rm ucrosscval}}
\newcommand{\ucrosscpeak}{{\rm ucrosscpeak}}
\newcommand{\ucrosscdrise}{{\rm ucrosscdrise}}
\newcommand{\lcross}{{\rm lcross}}
\newcommand{\lcrosscpeak}{{\rm lcrosscpeak}}
\newcommand{\lcrosscval}{{\rm lcrosscval}}
\newcommand{\lcrosscdfall}{{\rm lcrosscdfall}}
\newcommand{\unest}{{\rm unest}}
\newcommand{\unestcval}{{\rm unestcval}}
\newcommand{\unestcpeak}{{\rm unestcpeak}}
\newcommand{\unestcdrise}{{\rm unestcdrise}}
\newcommand{\lnest}{{\rm lnest}}
\newcommand{\lnestcpeak}{{\rm lnestcpeak}}
\newcommand{\lnestcval}{{\rm lnestcval}}
\newcommand{\lnestcdfall}{{\rm lnestcdfall}}
\newcommand{\ujoin}{{\rm ujoin}}
\newcommand{\ljoin}{{\rm ljoin}}
\newcommand{\psnest}{{\rm psnest}}
\newcommand{\upsnest}{{\rm upsnest}}
\newcommand{\lpsnest}{{\rm lpsnest}}
\newcommand{\epsnest}{{\rm epsnest}}
\newcommand{\opsnest}{{\rm opsnest}}
\newcommand{\vtilde}{{\widetilde{v}}}
\newcommand{\ytilde}{{\widetilde{y}}}
\newcommand{\laguerre}[1]{\left. L\right|_{#1}}
\newcommand{\laguerrep}[1]{\left. L'\right|_{#1}}
\title{Continued fractions using a Laguerre digraph interpretation
of the Foata--Zeilberger bijection and its variants}
\author{Bishal Deb${}^{1,2}$
\\[3mm]
     \hspace*{-1.3cm}
      \normalsize
	   ${}^1$Department of Mathematics,
           University College London, 
           London WC1E 6BT,
           United Kingdom\\[2mm] 
\hspace*{-1.15cm}
      \normalsize
	   ${}^2$Sorbonne Universit\'e and Universit\'e Paris Cit\'e, CNRS,
         Laboratoire de Probabilit\'es, \\
     \hspace*{-6cm}
      \normalsize
     Statistique et Mod\'elisation, 75005 Paris, France \\[2mm]
 {\texttt{bishal@gonitsora.com}}}
\date{April 27, 2023\\
Revised on September 27, 2024\vspace{3mm} 
}
\begin{document}

\maketitle\thispagestyle{empty}

\begin{abstract}
In the combinatorial theory of continued fractions, 
the Foata--Zeilberger bijection and its variants have been extensively used
to derive various continued fractions enumerating several 
(sometimes infinitely many) simultaneous statistics 
on permutations (combinatorial model for factorials) 
and D-permutations 
(combinatorial model for Genocchi and median Genocchi numbers).
A Laguerre digraph is a digraph in which each vertex has 
in- and out-degrees $0$ or $1$.
In this paper, we interpret the Foata--Zeilberger bijection
in terms of Laguerre digraphs,
which enables us to count cycles in permutations.
Using this interpretation, we obtain
Jacobi-type continued fractions for multivariate polynomials 
enumerating permutations,
and also Thron-type and Stieltjes-type 
continued fractions for multivariate polynomials enumerating D-permutations,
in both cases including the counting of cycles.
This enables us to prove some conjectured continued fractions 
due to Sokal and Zeng (2022 Advances in Applied Mathematics)
in the case of permutations,
and Randrianarivony and Zeng (1996 Electronic Journal of Combinatorics)
and Deb and Sokal (2024 Advances in Applied Mathematics)
in the case of D-permutations.
\end{abstract}

\bigskip
\noindent\textbf{Key Words:} Permutations, D-permutations, continued fraction, Foata--Zeilberger bijection, S-fraction, J-fraction, T-fraction, Dyck path, almost-Dyck path, Motzkin path, Schr\"oder path, Laguerre digraphs

\medskip
\noindent
{\bf Mathematics Subject Classification (MSC 2020) codes:}
05A19 (Primary);
05A05, 05A10, 05A15, 05A30, 11B68, 30B70 (Secondary).

\clearpage

\tableofcontents

\clearpage

\section{Introduction}

\subsection{Foreword}

This paper will introduce new results to 
the combinatorial theory of continued fractions
for multivariate polynomials generalising
the following three sequences of integers:
factorials $(n!)_{n\geq 0}$, the Genocchi numbers \cite[A110501]{OEIS}
\begin{equation}
	 (g_n)_{n \ge 0}
   \;=\;
   1, 1, 3, 17, 155, 2073, 38227, 929569, 28820619, 1109652905,
   \ldots
 \label{eq.genocchi}
\end{equation}
and the median Genocchi numbers \cite[A005439]{OEIS}
\begin{equation}
	(h_n)_{n \ge 0}
   \;=\;
   1, 1, 2, 8, 56, 608, 9440, 198272, 5410688, 186043904,
   \ldots
   \;.
\end{equation}
We shall use permutations for studying factorials, and a subclass of permutations 
called {\em D-permutations} (first introduced in \cite{Lazar_22,Lazar_20})
for studying the Genocchi and median Genocchi numbers.

We shall consider continued fractions of Stieltjes-type (S-fraction),
\be
   \sum_{n=0}^\infty a_n t^n
   \;=\;
   \cfrac{1}{1 - \cfrac{\alpha_1 t}{1 - \cfrac{\alpha_2 t}{1 - \cdots}}}
   \label{def.Stype}
   \;\;,
\ee
as well as Jacobi-type and Thron-type (defined in \eqref{def.Jtype},\/\eqref{def.Ttype}).
The ordinary generating functions of our integer sequences 
have S-fractions with coefficients
$\alpha_{2k-1} = \alpha_{2k} = k$ for factorials \cite[section~21]{Euler_1760},
$\alpha_{2k-1} = k^2$ and $\alpha_{2k} = k(k+1)$ for the Genocchi numbers
\cite[eq.~(7.5)]{Viennot_81}
\cite[p.~V-9]{Viennot_83}
\cite[eqns.~(1.4) and (3.9)]{Dumont_94b},
and $\alpha_{2k-1} = \alpha_{2k} = k^2$ for the Genocchi medians
\cite[eq.~(9.7)]{Viennot_81}
\cite[p.~V-15]{Viennot_83}
\cite[eqns.~(1.5) and (3.8)]{Dumont_94b}.

%
%

A systematic study of some combinatorial families whose
associated S-fraction coefficients $(\alpha_n)_{n\geq 0}$ grow linearly in $n$
was carried out by Sokal and Zeng in \cite{Sokal-Zeng_masterpoly}.
They introduced various ``master polynomials'' enumerating
permutations, set partitions and perfect matchings
with respect to a large (sometimes infinite) number of simultaneous statistics.
A similar study was carried out for D-permutations and its subclasses
by Deb and Sokal in \cite{Deb-Sokal}:
here the associated T-fraction coefficients $(\alpha_n)_{n\geq 0}$
for these families grow quadratically in $n$.
The continued fractions in \cite{Sokal-Zeng_masterpoly,Deb-Sokal}
were classified as ``first'' or ``second''
depending on whether they did not or did involve the count of cycles.
Both Sokal--Zeng and Deb--Sokal were able to prove  
``second'' continued fractions but by using two specialisations.
They conjectured continued fractions with only one specialisation
(\cite[Conjecture~2.3]{Sokal-Zeng_masterpoly} and \cite[Conjecture~4.1]{Deb-Sokal}),
but a proof was lacking.
Here we prove these conjectures.
We will also prove a conjectured continued fraction of
Randrianarivony and Zeng from 1996
\cite[Conjecture~12]{Randrianarivony_96b} 
for D-o-semiderangements\footnote{In their paper
\cite{Randrianarivony_96b},
Randrianarivony and Zeng call these Genocchi permutations.
We will explain our nomenclature in subsection~\ref{subsec.intro.Dperm.def}.}
 (a subclass of D-permutations).

Our proofs bring a surprising twist to this story.
A common feature in the work of Sokal--Zeng \cite{Sokal-Zeng_masterpoly}
and Deb--Sokal \cite{Deb-Sokal} is that
the proofs of the first and second continued fractions involve
two different bijections:
the first continued fractions were proved using bijections  
motivated from the Foata--Zeilberger bijection \cite{Foata_90},
whereas the second continued fractions used the Biane bijection \cite{Biane_93}
or a Biane-like bijection.
However, we will prove these conjectured second continued fractions
by precisely the same bijections that were used to prove 
the first bijections in these papers.
We will show, perhaps surprisingly,
that these variants of the Foata--Zeilberger bijection 
can be used to obtain the counting of cycles.

Let us mention the historical context for our bijections.
The Foata--Zeilberger bijection \cite{Foata_90} is a bijection
between permutations and labelled Motzkin paths
that has been very successfully
employed to obtain continued fractions involving polynomial coefficients
counting various permutation statistics (see for example
\cite{Randrianarivony_98b, Corteel_07, Blitvic_21, Sokal-Zeng_masterpoly}).
In a similar essence to the Foata--Zeilberger bijection,
Randrianarivony \cite{Randrianarivony_97}
introduced a bijection between D-o-semiderangements
and labelled Dyck paths to obtain continued fractions
counting various statistics on D-o-semiderangements.
Motivated by Randrianarivony's bijection, Deb and Sokal \cite{Deb-Sokal}
introduced two new bijections involving all D-permutations,
one of which extends Randrianarivony's bijection.

The fundamental idea in this paper is that
we interpret the intermediate steps in these existing bijections
in a new light in terms of {\em Laguerre digraphs}.
A Laguerre digraph of size $n$ is a directed graph
where each vertex has a distinct label from the label set $[n]$
and has indegree $0$ or $1$ and outdegree $0$ or $1$.\footnote{
Foata and Strehl \cite{Foata_84}
introduced an equivalent class of combinatorial objects
called Laguerre configurations
as a combinatorial interpretation of the Laguerre polynomials.
Laguerre digraphs in the form that we use in this paper
were first defined in
\cite{Sokal_2022}. Also see \cite{Deb-Dyachenko_laguerre}.}
Thus, the connected components in a Laguerre digraph are
either directed paths or directed cycles.
A path with one vertex and no edges will be called an isolated vertex,
and a cycle with one vertex (and one edge) will be called a loop.

The Sokal--Zeng conjecture
\cite[Conjecture~2.3]{Sokal-Zeng_masterpoly}
is a multivariate continued fraction
containing 8 variables along with a one-parameter family of
infinitely many variables
(the latter associated to fixed points)
counting various simultaneous statistics for permutations.
The Deb--Sokal conjecture \cite[Conjecture~4.1]{Deb-Sokal}
is a multivariate continued fraction with 12 variables
counting similar simultaneous statistics for D-permutations.
The Randrianarivony--Zeng conjecture from 1996
\cite[Conjecture~12]{Randrianarivony_96b}
is a 4-variable continued fraction
for D-o-semiderangements.
In the same spirit as \cite{Elvey-Price-Sokal_wardpoly, Sokal-Zeng_masterpoly, Deb-Sokal},
we will generalise these conjectured continued fractions and
use our proofs to churn out
continued fractions containing an infinite number of variables.

The rest of the introduction is organised as follows:
We begin by explaining briefly the types of continued fractions that
will be employed (Section~\ref{subsec.contfrac})
and then introduce the required statistics (Section~\ref{subsec.intro.stats}).
We state the conjecture for permutations 
\cite[Conjecture~2.3]{Sokal-Zeng_masterpoly} in
Section~\ref{subsec.intro.perms}.
We then define Genocchi, median Genocchi numbers and D-permutations in Section~\ref{subsec.intro.Dperm.def},
and state the associated conjectures 
(\cite[Conjecture~12]{Randrianarivony_96b} and \cite[Conjecture~4.1]{Deb-Sokal})
in Section~\ref{subsec.intro.Dperm.conj}.
Then, in Section \ref{subsec.intro.proof},
we summarise our main ideas by
providing an overview of the Foata--Zeilberger bijection
and our interpretation of this bijection using Laguerre digraphs.
The outline of the rest of the paper is mentioned in Section \ref{subsec.intro.outline}.

Throughout this paper, including the rest of this introduction, 
we shall use two running examples.
The first is the permutation
\be
\sigma = 9\,3\,7\,4\,6\,11\,5\,8\,10\,1\,2
           = (1,9,10)\,(2,3,7,5,6,11)\,(4)\,(8) \in \Sym_{11};
\ee
the second is the permutation
\begin{eqnarray}
\sigma & = & 7\, 1\, 9\, 2\, 5\, 4\, 8\, 6\, 10\, 3\, 11\, 12\, 14\, 13\,
        \nonumber\\
       & = & (1,7,8,6,4,2)\,(3,9,10)\,(5)\,(11)\,(12)\,(13,14) \in \Sym_{14}.
\label{eq.running.example.2}
\end{eqnarray}
We will see later in Section~\ref{subsec.intro.Dperm.def} that our second example is a D-permutation.

\begin{sloppy}
\subsection{Classical continued fractions: S-fractions, J-fractions
   and T-fractions}
   \label{subsec.contfrac}
\end{sloppy}

If $(a_n)_{n \ge 0}$ is a sequence of combinatorial numbers or polynomials
with $a_0 = 1$, it is often fruitful to seek to express its
ordinary generating function as a continued fraction.
The most commonly studied types of continued fractions
are Stieltjes-type (S-fractions),
\be
   \sum_{n=0}^\infty a_n t^n
   \;=\;
   \cfrac{1}{1 - \cfrac{\alpha_1 t}{1 - \cfrac{\alpha_2 t}{1 - \cdots}}}
   \;\;,
\ee
and Jacobi-type (J-fractions),
\be
   \sum_{n=0}^\infty a_n t^n
   \;=\;
   \cfrac{1}{1 - \gamma_0 t - \cfrac{\beta_1 t^2}{1 - \gamma_1 t - \cfrac{\beta_2 t^2}{1 - \cdots}}}
   \label{def.Jtype}
   \;\;.
\ee
A less commonly studied type of continued fraction is 
the Thron-type (T-fraction):
\be
   \sum_{n=0}^\infty a_n t^n
   \;=\;
   \cfrac{1}{1 - \delta_1 t - \cfrac{\alpha_1 t}{1 - \delta_2 t - \cfrac{\alpha_2 t}{1 - \cdots}}}
   \label{def.Ttype}
   \;\;.
\ee
(Both sides of all these expressions are to be interpreted as
formal power series in the indeterminate $t$.)
Flajolet \cite{Flajolet_80} showed that
any S-fraction (resp.\ J-fraction)
can be interpreted combinatorially as a generating function
for Dyck (resp.\ Motzkin) paths with suitable weights for each rise and fall
(resp.\ each rise, fall and level step).
More recently, several authors
\cite{Fusy_15,Oste_15,Josuat-Verges_18,Elvey-Price-Sokal_wardpoly}
have found a similar combinatorial interpretation
of the general T-fraction:
namely, as a generating function for Schr\"oder paths with suitable
weights for each rise, fall and long level step.
These interpretations will be reviewed in
Section~\ref{subsec.prelimproofs.1} below.

\subsection{Permutation statistics: record and cycle classification}
\label{subsec.intro.stats}

We will follow the terminology in \cite{Deb-Sokal}.

We begin by introducing some statistics which have been called 
the \textbfit{record-and-cycle classification} for permutations
and D-permutations in \cite{Sokal-Zeng_masterpoly}
and \cite[Section~2.7]{Deb-Sokal} respectively.
These statistics will play a central role
in what follows.

Given a permutation $\sigma \in \fS_N$, an index $i \in [N]$ is called
\begin{itemize}
   \item {\em cycle peak}\/ (cpeak) if $\sigma^{-1}(i) < i > \sigma(i)$;
   \item {\em cycle valley}\/ (cval) if $\sigma^{-1}(i) > i < \sigma(i)$;
   \item {\em cycle double rise}\/ (cdrise) if $\sigma^{-1}(i) < i < \sigma(i)$;
   \item {\em cycle double fall}\/ (cdfall) if $\sigma^{-1}(i) > i > \sigma(i)$;
   \item {\em fixed point}\/ (fix) if $\sigma^{-1}(i) = i = \sigma(i)$.
\end{itemize}
Clearly every index $i$ belongs to exactly one of these five types;
we refer to this classification as the \textbfit{cycle classification}.

On the other hand, an index $i \in [N]$ is called a
\begin{itemize}
   \item {\em record}\/ (rec) (or {\em left-to-right maximum}\/)
         if $\sigma(j) < \sigma(i)$ for all $j < i$
      [note in particular that the indices 1 and $\sigma^{-1}(N)$
       are always records];
   \item {\em antirecord}\/ (arec) (or {\em right-to-left minimum}\/)
         if $\sigma(j) > \sigma(i)$ for all $j > i$
      [note in particular that the indices $N$ and $\sigma^{-1}(1)$
       are always antirecords];
   \item {\em exclusive record}\/ (erec) if it is a record and not also
         an antirecord;
   \item {\em exclusive antirecord}\/ (earec) if it is an antirecord
         and not also a record;
   \item {\em record-antirecord}\/ (rar) (or {\em pivot}\/)
      if it is both a record and an antirecord;
   \item {\em neither-record-antirecord}\/ (nrar) if it is neither a record
      nor an antirecord.
\end{itemize}
Every index $i$ thus belongs to exactly one of the latter four types;
we refer to this classification as the \textbfit{record classification}.

Furthermore, one can apply the record and cycle classifications simultaneously,
to obtain 10 (not~20) disjoint categories: 
\medskip
\begin{center}
\begin{tabular}{c|c|c|c|c|c|}
                & cpeak & cval & cdrise & cdfall & fix \\
        \hline
        erec &   & ereccval & ereccdrise & &\\
        earec & eareccpeak & &  & eareccdfall &\\
        rar & & & & & rar \\
        nrar & nrcpeak & nrcval & nrcdrise & nrcdfall & nrfix\\
        \hline
\end{tabular}
\end{center}
\medskip
Clearly every index $i$ belongs to exactly one of these 10~types;
we call this the \textbfit{record-and-cycle classification}.

A variant of this classification involving
record and antirecord values rather than indices
was used in \cite[Section~3.5]{Deb-Sokal}.
A value $i \in [N]$ is called a
\begin{itemize}
   \item {\em record value}\/ ($\rec'$) (or {\em left-to-right maximum value}\/)
         if $\sigma(j) < i$ for all $j < \sinv(i)$
	[note in particular that the values $\sigma(1)$ and $N$
       are always record values];
   \item {\em antirecord value}\/ ($\arec'$) (or {\em right-to-left minimum value}\/)
         if $\sigma(j) > i$ for all $j > \sinv(i)$
	[note in particular that the values $\sigma(N)$ and $1$
       are always antirecord values].
\end{itemize}
Thus, $i$ is a record value (antirecord value resp.) if and only if $\sinv(i)$ is a record (antirecord).

\begin{sloppy}
We also analogously define {\em exclusive record value} ($\erec'$),
{\em exclusive antirecord value} ($\earec'$),
{\em record-antirecord value} ($\rar'$) (or {\em pivot value}),
{\em neither-record-antirecord value}\\ 
($\nrar'$).
Every index $i$ thus belongs to exactly one of these four types;
we refer to this classification as the \textbfit{variant record classification}.
\end{sloppy}

We can similarly introduce the \textbfit{variant record-and-cycle classification}
consisting of the following 10~disjoint categories:
\medskip
\begin{center}
\begin{tabular}{c|c|c|c|c|c|}
                & cpeak & cval & cdrise & cdfall & fix \\
        \hline
        $\erec'$ & $\ereccpeak'$  & & $\ereccdrise'$ & &\\
        $\earec'$ & & $\eareccval'$ &  & $\eareccdfall'$ &\\
        $\rar'$ & & & & & $\rar'$ \\
        $\nrar'$ & $\nrcpeak'$ & $\nrcval'$ & $\nrcdrise'$ & $\nrcdfall'$ & $\nrfix'$\\
        \hline
\end{tabular}
\end{center}
\medskip
Notice that in record-and-cycle classification,
cycle valleys (cycle peaks resp.) can be exclusive records (exclusive anti-records) 
whereas in the variant record-and-cycle classification,
cycle valleys (cycle peaks) can now be exclusive anti-records (exclusive records).

While working with D-permutations 
(defined in Section~\ref{subsec.intro.Dperm.def}), 
we will refine the fixed points according to their parity:
%
\be
   \begin{aligned}
   & \bullet\; \hbox{{\em even fixed point}\/ (evenfix):  $\sigma^{-1}(i) = i = \sigma(i)$ is even}  \\[2mm]
   & \bullet\; \hbox{{\em odd fixed point}\/ (oddfix):  $\sigma^{-1}(i) = i = \sigma(i)$ is odd}   \hspace*{3.7cm} \\
   \end{aligned}
 \label{eq.parities.2}
\ee

We therefore refine the record-and-cycle classification
by distinguishing even and odd fixed points:
\begin{itemize}
   \item evenrar:  even record-antirecords (these are always fixed points);
   \item oddrar:  odd record-antirecords (these are always fixed points);
   \item evennrfix:  even neither-record-antirecords that are also fixed points;
   \item oddnrfix:  odd neither-record-antirecords that are also fixed points.
\end{itemize}
We will use the \textbfit{parity-refined record-and-cycle classification},
in which each index $i$ belongs to exactly one of 12 types.
More precisely, we will see (by using Equation~\eqref{eq.parities.1})
that each even index $i$ belongs to exactly one of the 6 types
\begin{quote}
   eareccpeak, nrcpeak, eareccdfall, nrcdfall, evenrar, evennrfix,
\end{quote}
while each odd index $i$ belongs to exactly one of the 6 types
\begin{quote}
   ereccval, nrcval, ereccdrise, nrcdrise, oddrar, oddnrfix.
\end{quote}
We can also similarly introduce 
\textbfit{variant parity-refined record-and-cycle classification}
for D-permutations.
We will see (again by using Equation~\eqref{eq.parities.1})
that each even index $i$ belongs to exactly one of the 6 types
\begin{quote}
   $\ereccpeak'$, $\nrcpeak'$, $\eareccdfall'$, $\nrcdfall'$, $\evenrar'$, $\evennrfix'$,
\end{quote}
while each odd index $i$ belongs to exactly one of the 6 types
\begin{quote}
   $\eareccval'$, $\nrcval'$, $\ereccdrise'$, $\nrcdrise'$, $\oddrar'$, $\oddnrfix'$.
\end{quote}

Additionally, we define the {\em pseudo-nestings} of a fixed point \cite[eq.~(2.51)]{Deb-Sokal} by
\begin{equation}
	\psnest(i,\sigma) \;\eqdef\; \#\{j<i\colon\: \sigma(j)>i\} \;=\; \#\{j>i\colon\: \sigma(j)<i \}.
\label{def.level}
\end{equation}
This quantity was called level of $i$
and was denoted as $\lev(i,\sigma)$ in \cite[eq.~(2.20)]{Sokal-Zeng_masterpoly}. 
In this paper, we prefer to use $\psnest$.

Also, notice that each non-singleton cycle consists of exactly one
minimum element, which must be a cycle valley,
and one maximum element, which must be cycle peak.
With this observation, the following four statistics were introduced in 
\cite[Section~4.1.3]{Deb-Sokal}.
\begin{itemize}
\item cycle valley minimum (minval): cycle valley that is the minimum in its cycle;
\item cycle peak maximum (maxpeak): cycle peak that is the maximum in its cycle;
\item cycle valley non-mimimum (nminval): cycle valley that is not the minimum in its cycle;
\item cycle peak non-maximum (nmaxpeak): cycle peak that is not the maximum in its cycle.
\end{itemize}

Finally, whenever we use the name of a statistic
but with its first letter in capital,
we will refer to the set of elements that belong to that statistic
(in case that makes sense).
For example, we use $\Cval$ to denote the set of all cycle valleys,
or $\Evenfix$ to denote the set of even fixed points.

We now state the cycle and record classifications for our two running examples
which will be of use throughout this introduction and the rest of this paper.

\subsubsection{Running example 1}

We consider our first running example
in its cycle notation, 
$\sigma = (1,9,10)\,(2,3,7,5,6,11)\\
\,(4)\,(8) \in \Sym_{11}.$
The cycle classification of $\sigma$
partitions the index set $[11] \eqdef \{1,\ldots,11\}$ as follows:
\begin{subeqnarray}
        \Cpeak(\sigma) \; = \; \{7, 10, 11 \}  &\qquad&
        \Cval(\sigma) \; = \; \{1,2,5\} \\
        \Cdrise(\sigma) \; = \; \{ 3,6,9\} &\qquad&
        \Cdfall(\sigma) \; = \; \emptyset\\
        \Fix(\sigma) \; = \; \{4, 8\} &&
\label{eq.example.1.cycle.classification}
\end{subeqnarray}
Thus, the statistics $\cpeak, \cval, \cdrise, \cdfall$ and $\fix$ are
simply the cardinalities of these sets, respectively.

For the record classification, we write $\sigma$ as a word,
i.e., $\sigma = 9\,\, 3\,\, 7\,\, 4\,\, 6\,\, 11\,\, 5\,\, 8\,\, 10\,\, 1\,\, 2$.
The~record and antirecord positions are therefore
$\Rec(\sigma) = \{1, 6\}$ and $\Arec(\sigma) = \{10, 11\}$.
(Also, notice that the record and antirecord values are 
$\Rec'(\sigma) = \{9, 11\}$ and $\Arec'(\sigma) = \{1, 2\}$.)
The full record classification is
\begin{subeqnarray}
        {\rm Erec}(\sigma) \;=\; \{1,6\} &&
        {\rm Earec}(\sigma) \;=\; \{10,11\} \\
        {\rm Rar}(\sigma) \;=\; \emptyset &&
        {\rm Nrar}(\sigma) \;=\; \{2,3,4,5,7,8,9\}
\label{eq.example.1.record.classification}
\end{subeqnarray}

Finally, the record-and-cycle classification gives us
\begin{subeqnarray}
        {\rm Eareccpeak}(\sigma) \;=\; \{10, 11\} &&
        {\rm Nrcpeak}(\sigma) \;=\; \{7\} \\
        {\rm Ereccval}(\sigma) \;=\; \{1\} &&
        {\rm Nrcval}(\sigma) \;=\; \{2,5\} \\
        {\rm Erecdrise}(\sigma) \;=\; \{6\} &&
        {\rm Nrcdrise}(\sigma) \;=\; \{3,9\} \\
        {\rm Earecdfall}(\sigma) \;=\; \emptyset &&
        {\rm Nrcdfall}(\sigma) \;=\; \emptyset \\
        {\rm Rar}(\sigma) \;=\; \emptyset &&
        {\rm Nrfix}(\sigma) \; = \; \{4, 8\}
\label{eq.example.1.record.and.cycle.classification}
\end{subeqnarray}

We leave the variant record classification and 
the variant record-and-cycle classification
as an exercise for the reader.


\subsubsection{Running example 2}

We now consider our second running example
in its cycle notation, \\
$\sigma = (1,7,8,6,4,2)\,(3,9,10)\,(5)\,(11)\,(12)\,(13,14) \in \Sym_{14}.$
The cycle classification of $\sigma$
partitions the index set $[14] \eqdef \{1,\ldots,14\}$ as follows:
\begin{subeqnarray}
        \Cpeak(\sigma) \; = \; \{8, 10, 14 \}  &\qquad&
        \Cval(\sigma) \; = \; \{1, 3, 13\} \\
        \Cdrise(\sigma) \; = \; \{7, 9 \} &\qquad&
        \Cdfall(\sigma) \; = \; \{2, 4, 6\}\\
        \Fix(\sigma) \; = \; \{5,11,12\} &&
\label{eq.example.2.cycle.classification}
\end{subeqnarray}
Thus, the statistics $\cpeak, \cval, \cdrise, \cdfall$ and $\fix$ are
simply the cardinalities of these sets.

For the record classification, we write $\sigma$ as a word,
$\sigma = 7\: 1\: 9\: 2\: 5\: 4\: 8\: 6\: 10\: 3\: 11\: 12\: 14\: 13$.
The record and antirecord positions are therefore
$\Rec(\sigma) = \{1, 3, 9, 11, 12, 13\}$
and $\Arec(\sigma) = \{2,4, 10, 11, 12, 14\}$.
The full record classification is
\begin{subeqnarray}
        {\rm Erec}(\sigma) \;=\; \{1,3,9,13\} &&
        {\rm Earec}(\sigma) \;=\; \{2,4,10,14\} \\
        {\rm Rar}(\sigma) \;=\; \{11,12\} &&
        {\rm Nrar}(\sigma) \;=\; \{5,6,7,8\}
\label{eq.example.2.record.classification}
\end{subeqnarray}

Finally, the record-and-cycle classification gives us
\begin{subeqnarray}
        {\rm Eareccpeak}(\sigma) \;=\; \{10,14\} &&
        {\rm Nrcpeak}(\sigma) \;=\; \{8\} \\
        {\rm Ereccval}(\sigma) \;=\; \{1,3,13\} &&
        {\rm Nrcval}(\sigma) \;=\; \emptyset \\
        {\rm Erecdrise}(\sigma) \;=\; \{9\} &&
        {\rm Nrcdrise}(\sigma) \;=\; \{7\} \\
        {\rm Earecdfall}(\sigma) \;=\; \{2,4\} &&
        {\rm Nrcdfall}(\sigma) \;=\; \{6\} \\
        {\rm Rar}(\sigma) \;=\; \{11,12\} &&
        {\rm Nrfix}(\sigma) \; = \; \{5\}
\label{eq.example.2.record.and.cycle.classification}
\end{subeqnarray}

We leave the variant record classification and 
the variant record-and-cycle classification
as an exercise for the reader.


\subsection{Permutations: Statement of conjecture}
\label{subsec.intro.perms}

The polynomial $\widehat{Q}_n$
was defined in \cite[Equation~(2.29)]{Sokal-Zeng_masterpoly}
\begin{eqnarray}
	& &
	\widehat{Q}_n(x_1,x_2, y_1, y_2,u_1, u_2, v_1, v_2, \mathbf{w}, \lambda)
        \;=\;
	\nonumber\\[4mm]
	 & &\qquad\qquad 
	 \sum_{\sigma \in \fS_n}
        x_1^{\eareccpeak(\sigma)} x_2^{\eareccdfall(\sigma)}
        y_1^{\ereccval(\sigma)} y_2^{\ereccdrise(\sigma)}
	\:\times
	\qquad\qquad
	\nonumber\\[-1mm]
   & & \qquad\qquad\qquad\:
   u_1^{\nrcpeak(\sigma)} u_2^{\nrcdfall(\sigma)}
   v_1^{\nrcval(\sigma)} v_2^{\nrcdrise(\sigma)}
        \mathbf{w}^{\boldsymbol{\fix}(\sigma)} \lambda^{\cyc(\sigma)}
\label{def.poly.conjecture}
\end{eqnarray}
where $\mathbf{w}^{\boldsymbol{\fix}(\sigma)}$ as defined in
\cite[Equation~(2.22)]{Sokal-Zeng_masterpoly} is
\begin{equation}
        \mathbf{w}^{\boldsymbol{\fix}(\sigma)} \; =\;  \prod_{i\in \Fix} w_{\psnest(i,\sigma)}.
\end{equation}

Sokal and Zeng stated the following conjecture in their paper:
\begin{conj}[{{\cite[Conjecture~2.3]{Sokal-Zeng_masterpoly}}}]
The ordinary generating function of the polynomials $\widehat{Q}_n$
specialised to $v_1 = y_1$ has the J-type continued fraction
\begin{eqnarray}
        \sum_{n=0}^\infty \widehat{Q}_n(x_1,x_2,y_1,y_2,u_1,u_2,y_1,v_2,\mathbf{w},\lambda)
        \: t^n
        \;=\; \qquad\qquad\qquad\qquad\qquad\qquad\qquad\qquad
        \nonumber \\
	\quad  \Scale[0.9]{\cfrac{1}{1 -  \lambda w_0 t - \cfrac{\lambda x_1 y_1 t^2}{1 - (x_2 \!+\! y_2 \!+\! \lambda w_1)t -  \cfrac{(\lambda \!+\! 1)(x_1 \!+\! u_1) y_1 t^2 }{1 -(x_2 \!+\! y_2 \!+\! u_2 \!+\! v_2 \!+\! \lambda w_2)t - \cfrac{(\lambda \!+\! 2)(x_1 \!+\! 2u_1)y_1 t^2}{1 - \cdots}}}}}
        \nonumber \\[1mm]
    \label{eq.conj.conj.SZ}
        \end{eqnarray}
with coefficients
\begin{subeqnarray}
    \gamma_{0} & = & \lambda w_0
         \\[1mm]
    \gamma_{n}   & = & [x_2 \!+\! (n-1)u_2 ] + [y_2 \!+\! (n-1)v_2 ] + \lambda w_n   \qquad\hbox{for $n \ge 1$}
         \\[1mm]
        \beta_n  & = &   (\lambda+n-1)[x_1 \!+\! (n-1)u_1 ] y_1
  \label{def.weights.conj.conj.SZ}
 \end{subeqnarray}
\label{conj.conj.SZ}
\end{conj}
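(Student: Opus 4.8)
The plan is to deduce Conjecture~\ref{conj.conj.SZ} from a single \emph{master} J-fraction in infinitely many variables, obtained by running the Foata--Zeilberger bijection and reading off the step weights through the lens of Laguerre digraphs. By Flajolet's theorem (reviewed in Section~\ref{subsec.prelimproofs.1}) it suffices to exhibit a weight-preserving bijection from $\fS_n$ onto labelled Motzkin paths of length $n$ such that the weight of a permutation \emph{factorizes as a product over the steps} of the associated path, with each step weight depending only on the step type (rise, fall, or level), the height, and the label; summing the step weights over all admissible labels at a given height then produces $\gamma_n$ and $\beta_n$ directly, and one specializes the master coefficients down to~\eqref{def.weights.conj.conj.SZ} at the end.

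The first step is to set up the Laguerre-digraph form of Foata--Zeilberger: reading the indices $j=1,2,\ldots,n$ in increasing order, one maintains a Laguerre digraph on a growing vertex set whose arcs record the pairs $(i,\sigma(i))$ already forced; the cycle classification of $j$ dictates whether $j$ is an opener (cval), a closer (cpeak), a transit vertex (cdrise/cdfall), or a loop (fix), hence the type of the $j$-th Motzkin step, while the label encodes which of the currently open path-ends the new arc(s) attach to, in the usual crossing/nesting fashion. The record classification is recovered because an index is an (anti)record precisely when the relevant arc attaches to the extreme open end, which is the standard source of the $x_1$-versus-$u_1$ and $y_1$-versus-$v_1$ dichotomies, and the pseudo-nesting of a fixed point equals the height at its (loop) step, giving the $\mathbf{w}^{\boldsymbol{\fix}}$ grading.

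The genuinely new ingredient, and the reason the factor $\lambda^{\cyc(\sigma)}$ can be carried along, is the observation that in the growing Laguerre digraph a \emph{cycle is completed at exactly one step}, namely at the closer vertex whose two incident arcs happen to join one partial path to itself rather than two distinct partial paths, and that whether this occurs is a local function of the step type, the height, and the label. Hence $\cyc(\sigma)=\sum_{\text{steps}}\mathbf{1}[\text{this step closes a cycle}]$, so $\lambda^{\cyc(\sigma)}$ is again a product over steps and the whole weight stays in product form. At a fall from height $k$ there are $k$ open partial paths; one shows that, after the labels are normalized, the ``close a cycle'' choice and the ``receive from the extreme head'' (exclusive-antirecord) choice are independent, each being the distinguished value among $k$, so that the fall contributes $(\lambda+k-1)\bigl[x_1+(k-1)u_1\bigr]$; combined with the opener weight $y_1$ (which after the specialization $v_1=y_1$ no longer depends on the cycle-valley's record status) this yields $\beta_k=(\lambda+k-1)[x_1+(k-1)u_1]\,y_1$, and the analogous but simpler count at level steps (cdrise/cdfall contributing $x_2+(k-1)u_2$ and $y_2+(k-1)v_2$, loops contributing $\lambda w_k$) yields $\gamma_k$.

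The main obstacle is the bookkeeping in that third step: one must verify carefully that, with the crossing/nesting labels present, the open path-ends can be indexed so that the self-joining label is well defined and occurs for exactly one value independently of the crossing pattern, that its count of non-self-joining alternatives is exactly $k-1$ with no off-by-one from the forced records at positions $1$ and $\sigma^{-1}(n)$ or from fixed points, and that it is independent of the record-type label. One must also pin down why $v_1=y_1$ is forced for the factorization to survive: whether a cycle valley is an exclusive record is decided only later, at the fall (or level step) that also carries the cycle-closure information about the same partial path, and without the specialization these two data interact and the product-over-steps structure breaks --- which is exactly why Sokal--Zeng could prove only the doubly-specialized version. Once the master J-fraction is in hand, the specialization of the remaining infinitely many variables and the summation of the resulting geometric series in the labels are routine and give~\eqref{eq.conj.conj.SZ}.
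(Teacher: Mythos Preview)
Your proposal describes a Biane-style scan in the natural increasing order $1,2,\ldots,n$, with cycles closed at cycle peaks (falls); this is \emph{not} the paper's approach, and the difference is exactly where your argument breaks. The paper uses the Foata--Zeilberger bijection but reinterprets its \emph{inverse} via a nonstandard total order on $[n]$ (the ``FZ order''): first the fixed points $H$, then the anti-excedance positions $G$ in increasing order, and finally the excedance positions $F$ in \emph{decreasing} order. After stages $H$ and $G$ the Laguerre digraph has no non-loop cycles; cycles can only close during the final stage, and the cycle closers are precisely the cycle-valley minima. Thus the factor $\lambda$ is attached to \emph{rises}, not falls: the rise weight becomes $(\lambda+k)\sfa_k$ where $\sfa_k$ depends only on the sum $\ucross+\unest$ (this is where $v_1=y_1$ is spent), while the fall weight $\sum_\xi\sfb_{k-1-\xi,\xi}$ and the cdrise level-step weight $\sum_\xi\sfd_{k-1-\xi,\xi}$ keep their full two-index dependence, so that $x_1,u_1$ and $y_2,v_2$ remain separate.

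Your gap is at the cdrise level steps. In an increasing-order (Biane-like) scan, the label at a cycle double rise $i$ records which open upper arc is being \emph{closed} at $i$, i.e.\ it encodes $\unest'(i)$ (equivalently $\unest(\sigma^{-1}(i))$), not $\unest(i)$; the record status of $i$ itself (which distinguishes $y_2$ from $v_2$) depends on $\sigma(i)$, which is only fixed at a later step. Hence your claimed contribution ``$y_2+(k-1)v_2$'' cannot be read off at the cdrise step, and one is forced to collapse $\sfd$ to depend only on the sum $\ucross+\unest$ --- which is exactly the obstruction Sokal and Zeng encountered, and why their Biane proof needed \emph{both} $v_1=y_1$ and $v_2=y_2$. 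Your independence argument at falls (which, incidentally, is correct: for fixed choice of $\sigma(i)$ exactly one of the $k$ choices of $\sigma^{-1}(i)$ closes a cycle) does not rescue this; the problem is not at falls but at the cdrise transitions. The paper's trick of reversing the order on $F$ is precisely what relocates the cycle-closing bookkeeping onto the rises, where the necessary collapse $v_1=y_1$ has already been made, leaving the cdrise labels free to carry $\unest(i)$ as in the original Foata--Zeilberger setup.
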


Sokal and Zeng \cite[Theorem~2.4]{Sokal-Zeng_masterpoly}
proved this continued fraction subject to the further specialisation $v_2 = y_2$
using the Biane bijection. Here we will prove the full conjecture
by using the Foata--Zeilberger bijection, suitably reinterpreted.

In Section~\ref{sec.permutations}, 
we will see that this conjecture is a special case of a 
more general J-fraction
involving five families of infinitely many indeterminates
and one additional variable.
We will prove these results in Section~\ref{sec.permutations.proofs}.

\subsection{Genocchi, median Genocchi numbers, and D-permutations}
\label{subsec.intro.Dperm.def}

We will follow the terminology in \cite{Deb-Sokal}.

The Genocchi numbers \cite[A110501]{OEIS}
\be
   (g_n)_{n \ge 0}
   \;=\;
   1, 1, 3, 17, 155, 2073, 38227, 929569, 28820619, 1109652905,
   \ldots
\ee
are odd positive integers \cite{Lucas_1877,Barsky_81,Han_18}
\cite[pp.~217--218]{Foata_08}
defined by the exponential generating function
\be
   t \, \tan(t/2)
   \;=\;
   \sum_{n=0}^\infty g_n \, {t^{2n+2} \over (2n+2)!}
   \;.
\ee

The median Genocchi numbers (or Genocchi medians for short) 
\cite[A005439]{OEIS}
\begin{equation}
        (h_n)_{n \ge 0}
   \;=\;
   1, 1, 2, 8, 56, 608, 9440, 198272, 5410688, 186043904,
   \ldots
   \;.
\end{equation}
are defined by \cite[p.~63]{Han_99b}
\be
   h_n  \;=\;  \sum_{i=0}^{n-1} (-1)^i \, \binom{n}{2i+1} \, g_{n-1-i}
   \;.
 \label{eq.hn.binomgn}
\ee
See \cite[Sections~2.5,~2.6]{Deb-Sokal} and references therein 
for continued fractions associated to the Genocchi and median Genocchi numbers.

The median Genocchi numbers enumerate a class of permutations called D-permutations (short for Dumont-like permutations),
they were introduced by Lazar and Wachs in \cite{Lazar_22,Lazar_20}.
A permutation of $[2n]$ is called a D-permutation in case
$2k-1 \le \sigma(2k-1)$ and $2k \ge \sigma(2k)$ for all $k$,
i.e., it contains no even excedances and no odd anti-excedances.
Let us say also that a permutation is an
{\em e-semiderangement}\/ (resp.\ {\em o-semiderangement}\/)
in case it contains no even (resp.~odd) fixed points;
it is a {\em derangement}\/ in case it contains no fixed points at all.
A D-permutation that is also an
e-semiderangement (resp.\ o-semiderangement, derangement)
will be called a \textbfit{D-e-semiderangement}
(resp.\ \textbfit{D-o-semiderangement}, \textbfit{D-derangement}).
A D-permutation that contains exactly one cycle is called a \textbfit{D-cycle}. Notice that a D-cycle is also a D-derangement.
Let $\dperm_{2n}$
(resp.~$\dperm^{\rm e}_{2n}, \dperm^{\rm o}_{2n}, \dperm^{\rm eo}_{2n}, \dcycle_{2n}$)
denote the set of all D-permutations
(resp.\ D-e-semiderangements, D-o-semiderangements, D-derangements, D-cycles) of $[2n]$.
For instance,
\begin{subeqnarray}
   \dperm_2  & = & \{ 12 ,\, 21^{\rm eo} \}  \\[1mm]
   \dperm_4  & = & \{ 1234 ,\, 1243 ,\, 2134 ,\, 2143^{\rm eo} ,\,
                     3142^{\rm eo} ,\, 3241^{\rm o} ,\, 4132^{\rm e} ,\, 4231 \}\\[1mm]
\dcycle_2 & = & \{21\}\\[1mm]
\dcycle_4 & = & \{3142\}
\end{subeqnarray}
where $^{\rm e}$ denotes e-semiderangements that are not derangements,
$^{\rm o}$ denotes o-semi\-de\-range\-ments that are not derangements,
and $^{\rm eo}$ denotes derangements.
Additionally, our second running example~\eqref{eq.running.example.2} 
is also an example of a D-permutation 
where $n=7$.

It is known \cite{Dumont_74,Dumont_94,Lazar_22,Lazar_20, Deb-Sokal}
that
\begin{subeqnarray}
   |\dperm_{2n}|    & = &  h_{n+1}  \\[0.5mm]
   |\dperm^{\rm e}_{2n}|  \;=\; |\dperm^{\rm o}_{2n}|  & = &  g_n \\[1mm]
   |\dperm^{\rm eo}_{2n}|  & = &  h_n\\[0.5mm]
    |\dcycle_{2n}| & = & g_{n-1}
\end{subeqnarray}

For a D-permutation $\sigma$,
the cycle classification of a non-fixed-point index $i$
is equivalent to recording the parities of $\sinv(i)$ and $i$ 
\cite[eq.~(2.47)]{Deb-Sokal}:

\be
   \begin{aligned}
   & \bullet\; \hbox{{\em cycle peak}\/: $\sigma^{-1}(i) < i > \sigma(i)$
            $\implies$ $\sinv(i)$ odd, $i$ even} \\[2mm]
   & \bullet\; \hbox{{\em cycle valley}\/: $\sigma^{-1}(i) > i < \sigma(i)$
            $\implies$ $\sinv(i)$ even, $i$ odd} \\[2mm]
   & \bullet\; \hbox{{\em cycle double rise}\/: $\sigma^{-1}(i) < i < \sigma(i)$
            $\implies$ $\sinv(i)$ odd, $i$ odd} \\[2mm]
   & \bullet\; \hbox{{\em cycle double fall}\/:  $\sigma^{-1}(i) > i > \sigma(i)$
            $\implies$ $\sinv(i)$ even, $i$ even} \hspace*{1.7cm} \\
   \end{aligned}
 \label{eq.parities.1}
\ee

For a fixed point $i$, we will need to explicitly record the parity of $i$.
Thus, using Equation~\eqref{eq.parities.1},
it is clear that each even index $i$ belongs to exactly one of the following three types: 
cpeak, cdfall, evenfix;
and each odd index $i$ belongs to exactly one of the following three types: 
cval, cdrise, oddfix.
Combining this with the record-classification we get the
\emph{parity-refined record-and-cycle classification} for D-permutations:
we see that each even index $i$ belongs to exactly one of the 6 types
\begin{quote}
   eareccpeak, nrcpeak, eareccdfall, nrcdfall, evenrar, evennrfix,
\end{quote}
while each odd index $i$ belongs to exactly one of the 6 types
\begin{quote}
   ereccval, nrcval, ereccdrise, nrcdrise, oddrar, oddnrfix.
\end{quote}
as was claimed in Section~\ref{subsec.intro.stats}.
Similarly, we also get the  
\emph{variant parity-refined record-and-cycle classification}
for D-permutations:
we see that each even index $i$ belongs to exactly one of the 6 types
\begin{quote}
   $\ereccpeak'$, $\nrcpeak'$, $\eareccdfall'$, $\nrcdfall'$, $\evenrar'$, $\evennrfix'$,
\end{quote}
while each odd index $i$ belongs to exactly one of the 6 types
\begin{quote}
   $\eareccval'$, $\nrcval'$, $\ereccdrise'$, $\nrcdrise'$, $\oddrar'$, $\oddnrfix'$.
\end{quote}

\subsection{D-Permutations: Statements of conjectures}
\label{subsec.intro.Dperm.conj}


In \cite{Randrianarivony_96b},
Randrianarivony and Zeng introduced two sequences of polynomials
for D-o-semiderangements 
\cite[eq.~(3.3)]{Randrianarivony_96b}
\be
   R_n(x,y,\bar{x},\bar{y})
   \;=\;
   \sum_{\sigma \in \dperm^{\rm o}_{2n}}
      x^{{\rm lema}(\sigma)}
      y^{{\rm romi}(\sigma)}
      \bar{x}^{{\rm fix}(\sigma)}
      \bar{y}^{{\rm remi}(\sigma)}
\label{eq.def.Rn.RZ}
\ee
and \cite[p.~9]{Randrianarivony_96b}
\be
   G_n(x,y,\bar{x},\bar{y})
   \;=\;
   \sum_{\sigma \in \dperm^{\rm o}_{2n}}
      x^{{\rm comi}(\sigma)}
      y^{{\rm lema}(\sigma)}
      \bar{x}^{{\rm cemi}(\sigma)}
      \bar{y}^{{\rm remi}(\sigma)}
\label{eq.def.Gn.RZ}
\ee
where the statistics ${\rm lema}, {\rm romi}, {\rm remi}, 
{\rm comi}, {\rm cemi}$ are defined as follows:
\begin{itemize}
\item ${\rm lema}$ -- left-to-right maxima whose value is even,
\item ${\rm romi}$ -- right-to-left minima whose value is odd,
\item ${\rm remi}$ -- right-to-left minima whose value is even,
\item ${\rm comi}$ -- odd cycle minima,
\item ${\rm cemi}$ -- even cycle minima;
\end{itemize}
for a permutation $\sigma$, ${\rm lema}(\sigma)$ denotes the number of 
left-to-right maxima (i.e. record) whose value $\sigma(i)$ is even, etc.
See \cite[p.~2]{Randrianarivony_96b} for a full description of these statistics.

In their paper, Randrianarivony and Zeng stated the following conjecture which we shall prove:

\begin{conj}[{{\cite[Conjecture~12]{Randrianarivony_96b}}}]
For $n\geq 1$ we have $R_n(x,y,\bar{x},\bar{y})\;=\;G_n(x,y,\bar{x},\bar{y})$.
\label{conj.RZ}
\end{conj}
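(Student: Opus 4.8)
The plan is to prove that $R_n = G_n$ by showing that both polynomials equal the same Thron-type or Stieltjes-type continued fraction expansion, which is exactly the sort of result the Laguerre-digraph reinterpretation of Randrianarivony's bijection is built to produce. Concretely, I would first translate both statistic-tuples into the record-and-cycle language of Section~\ref{subsec.intro.stats}. On a D-o-semiderangement every index is non-fixed or an odd fixed point, and by the parity dictionary \eqref{eq.parities.1} the ``odd cycle minima'' ${\rm comi}$ are precisely the cycle valleys that are minima in their cycle (i.e.\ $\minval$), while ${\rm cemi}$ counts even cycle minima; since each non-singleton cycle has a unique minimum (a cycle valley, necessarily odd-indexed with odd value) and a unique maximum (a cycle peak, even-indexed with even value), one sees $\minval = \cval$ on D-o-semiderangements, and ${\rm cemi}$ counts something controlled by the maxima. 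Similarly ${\rm lema}$ counts records of even value, which by the parity rules are exactly the records that are cycle peaks or even fixed points — but there are no even fixed points here — so ${\rm lema}$ counts records that are cycle peaks, i.e.\ $\eareccpeak$ (a cycle peak can only be an exclusive antirecord or an nrar), and likewise ${\rm romi}$ and ${\rm remi}$ are antirecord counts refined by the value-parity, i.e.\ by the cycle type of the antirecord.

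Second, I would invoke the master T-fraction (or S-fraction) for D-permutations that this paper proves via the Laguerre-digraph version of Randrianarivony's bijection — the same machinery that yields the Deb--Sokal and Randrianarivony--Zeng continued fractions — and specialise all the many indeterminates down to the four variables $x,y,\bar x,\bar y$ in two different ways: once according to how $(x,y,\bar x,\bar y)$ sit on the statistics $({\rm lema},{\rm romi},{\rm fix},{\rm remi})$ appearing in $R_n$, and once according to how they sit on $({\rm comi},{\rm lema},{\rm cemi},{\rm remi})$ appearing in $G_n$. The claim then reduces to checking that these two specialisations produce the \emph{same} numerical continued-fraction coefficients $\delta_k,\alpha_k$ (equivalently $\gamma_k,\beta_k$). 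This is a finite, mechanical comparison: each coefficient is an explicit polynomial in the four variables with integer coefficients depending linearly (for the level/long-level contributions) and quadratically (for the $\beta$'s) on the height $k$, and one matches them termwise.

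The key structural point making the two specialisations agree is a ``record $\leftrightarrow$ cycle-minimum'' duality on D-o-semiderangements: on this class, being an antirecord of odd value is equivalent, cycle by cycle, to being an odd cycle minimum (both single out the smallest element of a cycle), and being a record of even value versus being tied to an even cycle minimum are exchanged under the same bookkeeping. Phrased through the continued fraction, the variable attached to a \emph{rise} step in the Randrianarivony path encodes a cycle valley together with whether it opens a new record or not, while the variable attached to a \emph{fall} step encodes a cycle peak together with its antirecord status; the involution implicit in swapping $({\rm lema},{\rm romi})\leftrightarrow$ the $({\rm comi},{\rm cemi})$-description amounts to relabelling rises and falls consistently, which leaves the weight of every Dyck/Schröder path unchanged. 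So once everything is pushed onto the path side, $R_n$ and $G_n$ become two names for the same weighted path generating function.

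The main obstacle I anticipate is not the continued-fraction algebra — that will be routine given the master theorem — but the \emph{correct identification} of the five Randrianarivony--Zeng statistics ${\rm lema},{\rm romi},{\rm remi},{\rm comi},{\rm cemi}$ with the parity-refined record-and-cycle statistics of this paper, including getting the boundary cases right (the forced records $\sigma(1)$ and $N$, the forced antirecords $\sigma^{-1}(1)$ and $1$, and singleton cycles, which on a D-o-semiderangement must be even fixed points and hence absent). If any of these statistics fails to be a clean specialisation of the master polynomial, the fallback is to prove $R_n = G_n$ directly by exhibiting a bijection $\dperm^{\rm o}_{2n} \to \dperm^{\rm o}_{2n}$ — most naturally the cycle-by-cycle ``write each cycle starting from its minimum and read off minima/maxima'' normalisation, or an inverse-type map — that carries the tuple $({\rm lema},{\rm romi},{\rm fix},{\rm remi})$ to $({\rm comi},{\rm lema},{\rm cemi},{\rm remi})$; the Laguerre-digraph picture already organises a permutation by its cycles, so such an involution should drop out of the same combinatorial model with little extra work.
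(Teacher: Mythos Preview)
Your overall strategy --- translate the Randrianarivony--Zeng statistics into the parity-refined record-and-cycle language, then specialise the master T-fraction for D-permutations and compare coefficients --- is exactly the paper's approach. The paper reduces Conjecture~\ref{conj.RZ} to its equivalent S-fraction form (Conjecture~\ref{conj.RZ}${}^\prime$) and then obtains that S-fraction as a specialisation of Theorem~\ref{thm.dperm.prime}${}^\prime$, using that the S-fraction for $R_n$ is already known from \cite[Proposition~10]{Randrianarivony_96b}.

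However, your statistic identifications contain several genuine errors that would derail the proof as written. First, a D-o-semiderangement has no \emph{odd} fixed points (it may have even ones), not the reverse; this matters because ${\rm cemi}$ (even cycle minima) then equals exactly $\evennrfix$, not ``something controlled by the maxima''. Second, $\minval \neq \cval$ in general: a single cycle in a D-o-semiderangement can have several cycle valleys (e.g.\ the 6-cycle $(1,5,6,3,4,2)\in\dperm^{\rm o}_6$ has cycle valleys $1$ and $3$), so ${\rm comi}=\minval$ is strictly finer than $\cval$. Third, ${\rm lema}$ is a \emph{record} (not antirecord) statistic on \emph{values}, so it lands in the \emph{variant} (primed) classification: ${\rm lema}=\ereccpeak'$, not $\eareccpeak$. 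The full dictionary the paper uses is ${\rm lema}=\ereccpeak'$, ${\rm romi}=\eareccval'$, ${\rm remi}=\eareccdfall'$, $\fix={\rm cemi}=\evennrfix$, ${\rm comi}=\minval$; with this in hand $G_n$ is visibly a specialisation of the polynomial $\widetilde{P}_n'$ of \eqref{def.Pntilde.prime}, and the S-fraction drops out of Theorem~\ref{thm.dperm.prime}${}^\prime$. You correctly flagged the identifications as the main obstacle, but the ones you sketched would not survive contact with an example.
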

Using \cite[Proposition~10]{Randrianarivony_96b}, 
Conjecture~\ref{conj.RZ} can be equivalently stated as

\addtocounter{thm}{-1}
\begin{conj}\hspace*{-3mm}${}^{\bf\prime}$
The ordinary generating function of the polynomials 
$G_n(x,y,\bar{x},\bar{y})$ defined in \eqref{eq.def.Rn.RZ}
has the S-type continued fraction
\begin{equation}
	1 + \sum_{n=1}^{\infty} G_n(x,y,\bar{x},\bar{y}) t^n \; = \; 
	\cfrac{1}{1-\cfrac{xy t}{1-\cfrac{1(\bar{x} +\bar{y})t}{1-\cfrac{(x+1)(y+1)t}{1-\cfrac{2(\bar{x} + \bar{y} + 1) t}{1-\cfrac{(x+2)(y+2)t}{1-\cfrac{3(\bar{x}+\bar{y}+2)t}{\cdots}}}}}}}
\end{equation}
\end{conj}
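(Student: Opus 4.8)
The plan is to deduce this S-fraction as a specialization of a more general continued fraction for D-o-semiderangements, obtained by reinterpreting Randrianarivony's bijection \cite{Randrianarivony_97} (between $\dperm^{\rm o}_{2n}$ and labelled Dyck paths of length $2n$) in terms of Laguerre digraphs. First I would recast that bijection in the Laguerre-digraph language of this paper: processing the indices $1, 2, \ldots, 2n$ in increasing order, one maintains a Laguerre digraph on the labels seen so far, and at step $i$ performs a single local move --- create an isolated vertex, add an edge that extends or joins open path-components, or add the edge that closes a path-component into a cycle --- dictated by the cycle- and record-type of $i$ in $\sigma$ under the D-permutation constraints $2k-1 \le \sigma(2k-1)$, $2k \ge \sigma(2k)$. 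Each move is recorded as a rise, a fall, or a (long-)level step of a Dyck path, weighted by the number of currently-open arcs and the type of the move; the indegree/outdegree data of the partial digraph is precisely what lets one detect when a directed path becomes a cycle.

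The crucial step is then to translate the statistics $\mathrm{lema}, \mathrm{romi}, \mathrm{remi}, \mathrm{comi}, \mathrm{cemi}$ of $\sigma$ into statistics on the weighted Dyck path. The record-value statistics $\mathrm{lema}, \mathrm{romi}, \mathrm{remi}$ are read off essentially as in Randrianarivony's original analysis --- i.e.\ the way \cite[Proposition~10]{Randrianarivony_96b} already encodes $R_n$ --- from whether a given rise or fall opens or closes an ``exclusive'' arc. The new ingredient is $\mathrm{comi}$ and $\mathrm{cemi}$: using \eqref{eq.parities.1} and the remark just before Conjecture~\ref{conj.RZ} that every non-singleton cycle has a unique minimum, which is a cycle valley, one checks that for $\sigma \in \dperm^{\rm o}_{2n}$ we have $\mathrm{comi}(\sigma) = \minval(\sigma) = $ (number of non-singleton cycles of $\sigma$), while $\mathrm{cemi}(\sigma)$ is the number of even fixed points. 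Thus $G_n$ genuinely records a cycle-counting statistic, and it is the Laguerre-digraph bookkeeping that makes this statistic visible on the path: a non-singleton cycle contributes exactly one path-closure event and an even fixed point exactly one loop-creation, so $\mathrm{comi}$ and $\mathrm{cemi}$ become weights on distinguished subtypes of steps. One then verifies that these weights are compatible with those simultaneously imposed by $\mathrm{lema}, \mathrm{romi}, \mathrm{remi}$.

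Granting the dictionary, I would conclude by Flajolet's master theorem: summing over all steps starting (resp.\ ending) at a fixed height, the rise from level $2k-2$ to $2k-1$ carries total weight $(x+k-1)(y+k-1)$ and the fall from level $2k-1$ to $2k-2$ carries total weight $k(\bar x + \bar y + k - 1)$, which is exactly the asserted continued fraction. (If convenient, one first proves the master version carrying several families of infinitely many indeterminates, in the spirit of the permutation results of this paper, and then specializes.) Finally, since \cite[Proposition~10]{Randrianarivony_96b} gives the same S-fraction for $R_n$, comparing the two continued fractions yields $R_n = G_n$ for all $n \ge 1$, which is Conjecture~\ref{conj.RZ}.

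The main obstacle is the statistic translation of the previous paragraph, and within it the cycle part: $\mathrm{comi}$ and $\mathrm{cemi}$ are defined through the global cycle structure of $\sigma$, and the substance of the argument is that, read through the Laguerre digraph, they are in fact local functions of the Dyck-path steps --- and moreover local in a way that produces the \emph{first-power} factors $(x+k-1)$ and $k(\bar x + \bar y + k - 1)$, rather than factors of the form $x + (k-1)u$ with an extra indeterminate. This is precisely the phenomenon advertised in the introduction: a Foata--Zeilberger-type bijection (here Randrianarivony's), suitably reinterpreted, already ``records'' cycle information. So the bulk of the work is the careful case analysis of how the bijection treats a cycle-closing step, and of the resulting weight, rather than the final continued-fraction computation.
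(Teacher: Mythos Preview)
Your high-level plan---reinterpret a Foata--Zeilberger--type bijection via Laguerre digraphs so that cycle closures become visible, then read off an S-fraction by Flajolet's theorem---matches the paper's philosophy, and your identification $\mathrm{comi}=\minval$, $\mathrm{cemi}=\evennrfix$ is exactly what the paper uses. But the specific mechanism you describe is off in a way that matters. You propose ``processing the indices $1,2,\ldots,2n$ in increasing order'' and performing a local move at each step; that is a Biane-type history, not Randrianarivony's (Foata--Zeilberger--type) bijection, whose inverse reconstructs the even and odd subwords \emph{separately} via inversion tables. The paper's central observation is that to make cycle closures readable inside an FZ-type bijection one must \emph{not} process in the natural increasing order: one first runs through the even positions in increasing order, and then through the odd positions in \emph{decreasing} order (the ``DS order''; for the variant bijection, first $G'$ increasing then $F'$ decreasing). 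It is precisely this asymmetric order that guarantees each non-singleton cycle is closed at its minimum cycle valley and that, for a rise starting at height $2k$, exactly one of the $k+1$ admissible labels closes a cycle (Lemma~\ref{lem.cycle.closer.dperm}). Without this order your key step---showing that $\mathrm{comi}$ is a local function of the Dyck step and produces the clean factor $(x+k-1)$---does not go through inside Randrianarivony's bijection.

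There is also a structural difference: the paper does not prove the $G_n$ S-fraction directly on D-o-semiderangements. It first proves a master T-fraction (Theorem~\ref{thm.DS.master.prime}) for \emph{all} D-permutations using the variant Deb--Sokal bijection and the variant index-refined crossing/nesting statistics, then rephrases via cycle-valley minima (Theorem~\ref{thm.dperm.prime}${}^{\bf\prime}$), and finally specializes $\ytilde_1=x$, $x_1=y$, $\we=\bar x$, $x_2=\bar y$, $\wo=\zo=0$. So the ``record'' statistics that have to be tracked alongside $\minval$ are $\ereccpeak'$ and $\eareccdfall'$ (the \emph{variant} record classification), and the bijection used is the one with value-based inversion tables. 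Your sketch would be closer to the paper if you replaced ``increasing order'' by the DS order, used the (variant) Deb--Sokal bijection on $\dperm_{2n}$ rather than Randrianarivony's on $\dperm^{\rm o}_{2n}$, and obtained the 4-variable result by specialization of the 12-variable $\widetilde{P}_n'$.
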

\addtocounter{thm}{0}

It is worthwhile to translate the statistics of Randrianarivony and Zeng
to the statistics we introduced in Section~\ref{subsec.intro.stats}.
This was already done for the statistics involved in the
polynomials $R_n(x,y,\bar{x},\bar{y})$
in \cite[Remark p.~36]{Deb-Sokal};
the following statistics are identical for D-o-semiderangements:
\begin{itemize}
        \item ${\rm lema} = \ereccpeak'$
        \item ${\rm remi} = \eareccdfall'$
        \item ${\rm romi} = \eareccval'$
        \item $\fix = \evennrfix$.
\end{itemize}
It remains to translate the statistics ${\rm cemi}$ and ${\rm comi}$.

Notice that the smallest element $i$ of a cycle with at least two elements
must be a cycle valley, and hence must be odd.
As D-o-semiderangements do not have any odd fixed points,
the cycle minima for fixed points are necessarily even.
On the other hand, the smallest element $i$ of a cycle
with at least two elements must be a cycle valley, and hence must be odd.
Thus, $i$ is an even cycle minima if and only if it is an even fixed point,
and $i$ is an odd cycle minima if and only if it is the minimum valley of cycle
with at least two elements. Thus, we have shown that
\begin{itemize}
        \item ${\rm cemi} = \evennrfix$
        \item ${\rm comi} = \minval$.
\end{itemize}
This shows that
\begin{equation}
G_n(x,y,\bar{x},\bar{y})
   \;=\;
   \sum_{\sigma \in \dperm^{\rm o}_{2n}}
	x^{\minval(\sigma)}
      y^{{\ereccpeak'}(\sigma)}
      \bar{x}^{{\evennrfix}(\sigma)}
      \bar{y}^{{\eareccdfall'}(\sigma)}.
\label{eq.Gn.stats}
\end{equation}

We will also look at the following sequence of polynomials
introduced by Deb and Sokal for all D-permutations \cite[Equation~(4.2)]{Deb-Sokal}:
\begin{eqnarray}
   & &
   \Scale[0.97]{
   \widehat{P}_n(x_1,x_2,y_1,y_2,u_1,u_2,v_1,v_2,\we,\wo,\ze,\zo,\lambda)}
   \;=\;
       \nonumber \\[4mm]
   & & \qquad\qquad
   \sum_{\sigma \in \dperm_{2n}}
   x_1^{\eareccpeak(\sigma)} x_2^{\eareccdfall(\sigma)}
   y_1^{\ereccval(\sigma)} y_2^{\ereccdrise(\sigma)}
   \:\times
       \qquad\qquad
       \nonumber \\[-1mm]
   & & \qquad\qquad\qquad\:
   u_1^{\nrcpeak(\sigma)} u_2^{\nrcdfall(\sigma)}
   v_1^{\nrcval(\sigma)} v_2^{\nrcdrise(\sigma)}
   \:\times
       \qquad\qquad
       \nonumber \\[3mm]
   & & \qquad\qquad\qquad\:
   \we^{\evennrfix(\sigma)} \wo^{\oddnrfix(\sigma)}
   \ze^{\evenrar(\sigma)} \zo^{\oddrar(\sigma)}
   \, \lambda^{\cyc(\sigma)}\;.
 \label{def.Pnhat.dperm}
\end{eqnarray}
We will prove the following conjectured Thron-type continued fraction
involving the polynomials \eqref{def.Pnhat.dperm}:

\begin{conj}[{{\cite[Conjecture~4.1]{Deb-Sokal}}}]
The ordinary generating function of the polynomials \eqref{def.Pnhat.dperm}
specialised to $v_1 = y_1$ has the T-type continued fraction
\begin{eqnarray}
   & & \hspace*{-12mm}
   \sum_{n=0}^\infty
   \widehat{P}_n(x_1,x_2,y_1,y_2,u_1,u_2,y_1,v_2,\we,\wo,\ze,\zo,\lambda) \: t^n
   \;=\;
       \nonumber \\
   & &
   \cfrac{1}{1 - \lambda^2 \ze \zo  \,t - \cfrac{\lambda x_1 y_1  \,t}{1 -  \cfrac{(x_2\!+\!\lambda\we)(y_2\!+\!\lambda\wo) \,t}{1 - \cfrac{(\lambda+1)(x_1\!+\!u_1) y_1 \,t}{1 - \cfrac{(x_2\!+\!u_2\!+\!\lambda\we)(y_2\!+\!v_2\!+\!\lambda\wo)  \,t}{1 - \cfrac{(\lambda+2)(x_1\!+\!2u_1) y_1 \,t}{1 - \cfrac{(x_2\!+\!2u_2\!+\!\lambda\we)(y_2\!+\!2v_2\!+\!\lambda\wo)  \,t}{1 - \cdots}}}}}}}
       \nonumber \\[1mm]
   \label{eq.conj.DS}
\end{eqnarray}
with coefficients
\begin{subeqnarray}
   \alpha_{2k-1} & = & (\lambda + k-1) \: [x_1 + (k-1) u_1] \: y_1
        \\[1mm]
   \alpha_{2k}   & = & [x_2 + (k-1) u_2 + \lambda\we] \: [y_2 + (k-1)v_2 + \lambda\wo]
        \\[1mm]
   \delta_1  & = &   \lambda^2 \ze \zo   \\[1mm]
   \delta_n  & = &   0    \qquad\hbox{for $n \ge 2$}
   \label{eq.conj.DS.weights}
\end{subeqnarray}
\label{conj.DS}
\end{conj}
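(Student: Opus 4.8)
The plan is to prove Conjecture~\ref{conj.DS} by reinterpreting, in terms of Laguerre digraphs, the bijection of Deb and Sokal \cite{Deb-Sokal} between D-permutations and labelled Schröder paths that underlies their ``first'' T-fraction for $\dperm_{2n}$ --- the one without the cycle variable $\lambda$ --- and then extracting $\cyc(\sigma)$ from that reinterpretation. Recall the mechanism of the Deb--Sokal bijection: one scans the indices $i = 1, 2, \ldots, 2n$ and records, for each $i$, its type in the cycle classification (which by Equation~\eqref{eq.parities.1} is forced by the parity of $i$) together with a label describing how the arc incident to $i$ attaches to the part of $\sigma$ built so far; cycle valleys become rise steps, cycle peaks become fall steps, cycle double rises, cycle double falls and non-pivot fixed points are absorbed into the step weights at the current height (which, for a fixed point $i$, equals $\psnest(i,\sigma)$), and the pivots become long level steps at height $0$, the height after scanning $i$ being the number of arcs still ``open''. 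The observation driving this paper is that the partial objects in this scan are Laguerre digraphs: after scanning $i$, the arcs already determined form a Laguerre digraph on a subset of $[2n]$ whose connected components are directed paths (the open arcs together with their interiors) and directed cycles (the completed cycles of $\sigma$); a directed cycle is created exactly when a scanning step joins the two ends of a single existing path, so the total number of such \emph{cycle-closing events} over the whole scan is precisely $\cyc(\sigma)$.

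The first step is to formalise this bookkeeping and prove the key lemma: under the Deb--Sokal encoding $\sigma \mapsto (\omega, \xi)$, with $\omega$ a Schröder path and $\xi$ a labelling, the number of cycle-closing events --- hence $\cyc(\sigma)$ --- is a \emph{local} statistic of $(\omega, \xi)$, a sum over the fall steps, the long level steps and the fixed-point labels of a contribution depending only on that step or label. Concretely: each fixed point is a loop and hence its own cycle, contributing $1$; and at a fall step there are several admissible ways of attaching the cycle peak to the currently open arcs, exactly one of which joins an arc to itself (closing a cycle) while the others merely splice two open arcs together, so ``closes a cycle'' is visible in the label. This is where the Laguerre-digraph viewpoint does the real work, and it is also the main obstacle: one must show that ``closes a cycle'' really depends on the label alone and not on any global feature of the path --- this is exactly the phenomenon, contrary to the earlier belief that counting cycles required the Biane bijection, that lets a Foata--Zeilberger-type bijection count cycles --- and one must fit the non-pivot fixed points (loops absorbed at their pseudo-nesting height) and the pivot pairs (one even and one odd loop carried jointly by a single height-$0$ long level step) into the same local framework.

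Granting the lemma, the rest is a weight computation in the Schröder-path calculus for T-fractions reviewed in Section~\ref{subsec.prelimproofs.1}. Starting from the weights of the first Deb--Sokal T-fraction for $\dperm_{2n}$ --- proved in \cite{Deb-Sokal} by the very bijection we are reinterpreting --- and multiplying in one factor of $\lambda$ at each cycle-closing event, which is legitimate precisely because these events are local, one sums over all labels compatible with a fixed Schröder path. At level $k$ the label choices contribute: a cycle valley opening an arc --- after the specialisation $v_1 = y_1$ that the cycle bookkeeping forces, just as in the permutation case --- contributing $\lambda y_1$ if it is the minimum of a new cycle (pre-paying the cycle's eventual closing) or $y_1$ for each of the $k-1$ ways of nesting it in an open arc, i.e.\ a factor $(\lambda + k-1)y_1$; a cycle peak closing an arc, contributing $x_1 + (k-1)u_1$ according to its record type; the intervening cycle double falls and even fixed points, contributing $x_2 + (k-1)u_2 + \lambda\we$ (the $\lambda$ since each such fixed point is a loop), and the intervening cycle double rises and odd fixed points, contributing $y_2 + (k-1)v_2 + \lambda\wo$. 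This produces $\alpha_{2k-1} = (\lambda + k-1)\,[x_1 + (k-1)u_1]\,y_1$ and $\alpha_{2k} = [x_2 + (k-1)u_2 + \lambda\we]\,[y_2 + (k-1)v_2 + \lambda\wo]$, while a height-$0$ long level step is the image of one even pivot and one odd pivot, each a loop and hence a cycle, so $\delta_1 = \lambda^2\ze\zo$, and $\delta_n = 0$ for $n \ge 2$ since a fixed point of positive pseudo-nesting is never a pivot. Feeding the weights \eqref{eq.conj.DS.weights} into the T-fraction of Section~\ref{subsec.prelimproofs.1} gives the continued fraction \eqref{eq.conj.DS}, proving Conjecture~\ref{conj.DS}.

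As in the permutation case, this argument in fact proves a more general T-fraction involving several families of infinitely many indeterminates indexed by height, Conjecture~\ref{conj.DS} being the specialisation that collapses each family to a single variable; we will state and prove that more general result in the D-permutation section. Finally, the same Laguerre-digraph reinterpretation applied to Randrianarivony's Dyck-path bijection \cite{Randrianarivony_97}, now with the variant record-and-cycle classification, proves the S-fraction form of Conjecture~\ref{conj.RZ} for D-o-semiderangements, with the variable $x = \minval$ --- the number of non-singleton cycles --- produced by exactly the cycle bookkeeping described above.
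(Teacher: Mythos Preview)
Your high-level plan --- reinterpret the Deb--Sokal bijection via Laguerre digraphs, track cycle-closing events, and show these are local in the label --- is exactly the paper's strategy. But there is a genuine gap in your mechanism. You describe scanning $i=1,2,\ldots,2n$ in the natural order and assert that at a fall step (cycle peak) ``exactly one label joins an arc to itself''. This is how the \emph{Biane} bijection behaves, not the Foata--Zeilberger/Deb--Sokal bijection: in the latter the label at a cycle peak is $\lnest(i,\sigma)$, and in the natural left-to-right scan this does \emph{not} determine whether that peak is the cycle-peak maximum of its cycle, hence does not determine locally whether a cycle closes. Your own write-up betrays this: you place the factor~$\lambda$ on cycle valleys (``pre-paying''), yet claim cycles close at peaks; but in the natural scan you cannot know at a valley whether it will turn out to be the minimum of its cycle.

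The paper's resolution is precisely the missing idea: do \emph{not} run the inverse bijection in the natural order. Instead process the edges in the ``DS order'' --- first all even positions $2,4,\ldots,2n$ in increasing order (building $\sigma\restrict F$ via the right-to-left inversion table), then all odd positions $2n{-}1,\ldots,3,1$ in \emph{decreasing} order (building $\sigma\restrict G$ via the left-to-right inversion table). After stage~(a) the Laguerre digraph has no non-loop cycles, and in stage~(b) the final vertex of every nontrivial path is always a cycle valley (Lemma~\ref{lem.lastvertexG.dperm}); a cycle closes exactly when the current odd vertex $y$ is a cycle-valley minimum (Lemma~\ref{lem.classifyingCycleG.dperm}). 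The technical heart is Lemma~\ref{lem.cycle.closer.dperm}: for such a $y$ there is \emph{exactly one} value of $\xi_y\in\{0,\ldots,\lceil h_{y-1}/2\rceil\}$ that closes the cycle, and this depends only on the path $\omega$ and the labels already fixed, not on any later choices. That is what makes the contribution at a rise from height $2(k-1)$ equal to $(\lambda+k-1)\,\sfa_{k-1}$, and it is also the structural reason you must specialise $v_1=y_1$: after collapsing to ``closes or not'' you lose the distinction between $\unest=0$ (record) and $\unest>0$ (non-record) among cycle valleys. Once you have this lemma, your weight computation and the deduction of \eqref{eq.conj.DS.weights} are correct and agree with the paper; the paper actually proves the master T-fraction (Theorem~\ref{thm.DS.master}) first and then specialises, exactly as you anticipate in your penultimate paragraph.
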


If this continued fraction is further
specialised to $v_2 = y_2$,
the resulting continued fraction is the same as the 
second T-fraction for D-permutations of Deb and Sokal \cite[Theorem~4.2]{Deb-Sokal}
under the specialisation $\widehat{v}_2 = \widehat{y}_2$ 
and then identifying $\widehat{y}_2$ with $y_2$.
Deb and Sokal's second T-fraction was proved using a Biane-like bijection.
Here we will prove the full conjecture using a Foata--Zeilberger-like bijection,
suitably reinterpreted.

In Section~\ref{sec.dperm},
we will see that both of these conjectures 
are special cases of
general T-fractions
involving six families of infinitely many indeterminates
and one additional variable.
We will prove these results in Section~\ref{sec.dperm.proofs}.

\subsection{Overview of proof for results on permutations}
\label{subsec.intro.proof}

We now summarise our proof for permutations (described in Section~\ref{sec.permutations.proofs}).
Our results for D-permutations are also obtained by using 
very similar ideas (described in Section~\ref{sec.dperm.proofs}).
We first provide an overview of the Foata--Zeilberger bijection,
and then briefly mention how we reinterpet it 
to obtain the count of cycles in a permutation.

Let $\sigma \in\Sym_n$ be a permutation on $n$ letters.
This permutation $\sigma$ partitions the set $[n]$
into excedance indices ($F=\{i\in [n]: \sigma(i)>i\}$),
anti-excedance indices ($G=\{i\in [n]: \sigma(i)<i \}$), and fixed points ($H$).
Similarly, $\sigma$ also partitions $[n]$ into
excedance values ($F'=\{i\in [n]: i>\sinv(i)\}$),
anti-excedance values ($G' = \{i\in [n]: i<\sinv(i)\}$),
and fixed points.
Clearly, $\sigma \restrict F \colon\: F \to F'$,
$\sigma \restrict G \colon\: G \to G'$,
and $\sigma \restrict H \colon\: H \to H$
are bijections, 
and the permutation $\sigma$ can be obtained from the following data:
\begin{itemize}
\item Two partitions of the set $[n] \;=\; F \,\cup\, G \,\cup\, H \;=\; F' \,\cup\, G' \,\cup\, H$.
\item The two subwords of $\sigma$: $\sigma(x_1)\sigma(x_2) \ldots \sigma(x_m)$
	and $\sigma(y_1)\sigma(y_2) \ldots \sigma(y_l)$, where $G = \{x_1<x_2<\ldots<x_m\}$
		and $F = \{y_1<y_2<\ldots<y_l\}$.
\end{itemize}

In their construction, Foata and Zeilberger \cite{Foata_90}
use these data to describe a bijection from $\Sym_n$
to a set of labelled Motzkin paths of length $n$ 
(these will be completely defined in Section~\ref{sec.prelimproofs}).
One then uses Flajolet's theorem \cite{Flajolet_80} 
to obtain continued fractions from this bijection
while keeping track of various simultaneous permutation statistics.

The Foata--Zeilberger bijection consists of the following steps 
(following \cite{Sokal-Zeng_masterpoly}):
\begin{itemize}
	\item Step 1: A 3-coloured Motzkin path $\overline{\omega}$ 
		is constructed from $\sigma$
		(each level step is one of three colours).
		The path $\overline{\omega}$ is fully determined by the sets 
		$F,F',G,G',H$.

	\item Step 2: The labels $\xi$ associated to $\overline{\omega}$ 
		are constructed from $\sigma$.
		It turns out that the labels depend on 
		the maps
		$\sigma \restrict F \colon\: F \to F'$ 
		and $\sigma \restrict G \colon\: G \to G'$
		and the set $H$, separately.

	
\item Step 3: This step describes the construction of the inverse map
	$(\overline{\omega}, \xi)\mapsto \sigma$. 
	This step is broken down as follows:
	\begin{itemize}
		\item Step 3(a): The sets $F,F',G,G',H$ are read off from the 3-coloured 
			Motzkin path $\overline{\omega}$.

		\item Step 3(b): This description is the crucial part of the construction (at least for our purposes).
		We use {\em inversion tables} to 
		construct the words $\sigma(x_1)\sigma(x_2) \ldots \sigma(x_m)$ 
		and $\sigma(y_1)\sigma(y_2) \ldots \sigma(y_l)$;
		the former is constructed using a ``right-to-left'' inversion table
		and the latter is constructed using a ``left-to-right'' inversion table.
	\end{itemize}
\end{itemize}

It is, a priori, unclear how one might be able to track
the number of cycles of $\sigma$ in this construction.
We resolve this issue by reinterpreting Step~3(b).
We describe a ``history'' of this construction using Laguerre digraphs.

Recall that a Laguerre digraph of size $n$ is a directed graph
where each vertex has a distinct label from the label set $[n]$
and has indegree $0$ or $1$ and outdegree $0$ or $1$.
It follows that the connected components in a Laguerre digraph are 
either directed paths or directed cycles.
Clearly, any subgraph of a Laguerre digraph is also a Laguerre digraph.
A permutation $\sigma$ in cycle notation 
is equivalent to a Laguerre digraph $L$ with no paths
(\cite[pp.~22-23]{Stanley_12}).
The directed edges of $L$ are precisely $u\to \sigma(u)$. 
We will interpret Step~3(b) of the Foata--Zeilberger construction
as building up a permutation as a sequence of Laguerre digraphs,
starting from the empty digraph in which all vertices are isolated
(i.e., have no adjacent edges), 
and ending with the digraph of the permutation $\sigma$ 
in which there are no paths.

For a subset $S\subseteq [n]$,
we let $\left. L \right|_{S}$ denote the subgraph of $L$,
containing the same set of vertices $[n]$, but only the edges $u\to \sigma(u)$,
with $u\in S$ (we are allowed to have $\sigma(u)\not\in S$).
Let $u_1,\ldots, u_n$ be a rewriting of $[n]$.
We consider the ``history''
$\laguerre{\emptyset}\subset \laguerre{\{u_1\}} \subset \laguerre{\{u_1, u_2\}}
\subset \ldots \subset \laguerre{\{u_1,\ldots, u_n\}} = L$
as a process of building up the permutation $\sigma$
by successively considering the status of vertices $u_1, u_2, \ldots, u_n$.
At step $u$, we construct the edge $u\to \sigma(u)$.
Thus, at each step we insert a new edge into the digraph,
and at the end of this process,
the resulting digraph obtained is the digraph of $\sigma$.

The crucial part of our construction is that 
we use a very special order $u_1,\ldots, u_n$: 
we first go through $H$ in increasing order (we call this stage (a)),
we then go through $G$ in increasing order (stage (b)), 
finally we go through $F$ but in decreasing order (stage (c)).
This total order is suggested by the inversion tables.
On building up the permutation $\sigma$ using this history, 
we will see that the cycles can only be obtained during 
stage (c) and we can now count the number of cycles.

Our total order on $[n]$ only depends on the sets $F,G,H$,
and hence, only on the \mbox{3-coloured} Motzkin path $\overline{\omega}$ 
and not on the full description of the labels $\xi$.
This is crucial for our proof to work.

\subsection{Outline of paper}
\label{subsec.intro.outline}

The plan of this paper is as follows:
In Section~\ref{subsec.statistics.2}
we introduce some more permutation statistics
as they play a central role in our results.
We then state our results for permutations
in Section~\ref{sec.permutations};
this will include 
the continued fraction
\cite[Conjecture~2.3]{Sokal-Zeng_masterpoly}
along with its generalisations.
Next, we state our results for D-permutations
in Section~\ref{sec.dperm};
this will include 
the continued fractions
\cite[Conjecture~12]{Randrianarivony_96b}
and 
\cite[Conjecture~4.1]{Deb-Sokal}.
In Section~\ref{sec.prelimproofs} we recall how continued fractions
can be proven by bijection to labelled Dyck, Motzkin or Schr\"oder paths.
In Section~\ref{sec.permutations.proofs} we prove our continued fractions
for permutations
by reinterpreting Sokal and Zeng's 
variant of the Foata--Zeilberger bijection \cite[Section~6.1]{Sokal-Zeng_masterpoly}
using Laguerre digraphs.
In Section~\ref{sec.dperm.proofs} we prove our continued fractions
for D-permutations
by reinterpreting two bijections of Deb and Sokal
\cite[Sections~6.1-6.3 and~6.5]{Deb-Sokal}
using Laguerre digraphs.
We conclude (Section~\ref{sec.final})
with some brief remarks on our work.

\section{Permutation statistics: Crossings and nestings}
     \label{subsec.statistics.2}

We now define (following \cite{Sokal-Zeng_masterpoly, Deb-Sokal})
some permutation statistics that count
\textbfit{crossings} and \textbfit{nestings}. 

First we associate to each permutation $\sigma \in \fS_N$
a pictorial representation 
by placing vertices $1,2,\ldots,N$ along a horizontal axis
and then drawing an arc from $i$ to $\sigma(i)$
above (resp.\ below) the horizontal axis
in case $\sigma(i) > i$ [resp.\ $\sigma(i) < i$];
if $\sigma(i) = i$ we do not draw any arc.
This idea was first introduced by Corteel
in \cite{Corteel_07}.
See Figures~\ref{fig.pictorial} and \ref{fig.pictorial.2}
for our two running examples.

\begin{figure}[t]
\centering
\vspace*{4cm}
\begin{picture}(60,20)(120, -65)
\setlength{\unitlength}{2mm}
\linethickness{.5mm}
\put(-2,0){\line(1,0){54}}
\put(0,0){\circle*{1,3}}\put(0,0){\makebox(0,-6)[c]{\small 1}}
\put(5,0){\circle*{1,3}}\put(5,0){\makebox(0,-6)[c]{\small 2}}
\put(10,0){\circle*{1,3}}\put(10,0){\makebox(0,-6)[c]{\small 3}}
\put(15,0){\circle*{1,3}}\put(15,0){\makebox(0,-6)[c]{\small 4}}
\put(20,0){\circle*{1,3}}\put(20,0){\makebox(0,-6)[c]{\small 5}}
\put(25,0){\circle*{1,3}}\put(25,0){\makebox(0,-6)[c]{\small 6}}
\put(30,0){\circle*{1,3}}\put(30,0){\makebox(0,-6)[c]{\small 7}}
\put(35,0){ \circle*{1,3}}\put(36,0){\makebox(0,-6)[c]{\small 8}}
\put(40,0){\circle*{1,3}}\put(40,0){\makebox(0,-6)[c]{\small 9}}
\put(45,0){\circle*{1,3}}\put(45,0){\makebox(0,-6)[c]{\small 10}}
\put(50,0){\circle*{1,3}}\put(50,0){\makebox(0,-6)[c]{\small 11}}
\red{\qbezier(0,0)(20,14)(40,0)
\qbezier(40,0)(42.5,6)(45,0)}
\blue{\qbezier(4.5,0)(7,5)(9.5,0)
\qbezier(9.5,0)(18.5,10)(29.5,0)
}
\blue{\qbezier(18.5,0)(21,5)(23.8,0)
\qbezier(23.8,0)(36.5,12)(48.8,0)
\qbezier(18.5,0)(24.5,-10)(28.8,0)
}
\blue{
\qbezier(3,0)(25,-20)(48,0)
}
\red{\qbezier(-2.5,0)(22,-20)(42.5,0)}
\end{picture}
\caption{
   Pictorial representation of the permutation
   $\sigma = 9\,\, 3\,\, 7\,\, 4\,\, 6\,\, 11\,\, 5\,\, 8\,\, 10\,\, 1\,\, 2
	   = (1,9,10)\,(2,3,7,5,6,11)\,(4)\,(8) \in \Sym_{11}$.
 \label{fig.pictorial}
 \vspace*{8mm}
}
\end{figure}

\begin{figure}[t]
\centering
\vspace*{4cm}
\begin{picture}(100,0)(140, -45)
\setlength{\unitlength}{2mm}
\linethickness{.5mm}
\put(-2,0){\line(1,0){69}}
\put(0,0){\circle*{1,3}}\put(0,0){\makebox(0,-6)[c]{\small 1}}
\put(5,0){\circle*{1,3}}\put(5,0){\makebox(0,-6)[c]{\small 2}}
\put(10,0){\circle*{1,3}}\put(10,0){\makebox(0,-6)[c]{\small 3}}
\put(15,0){\circle*{1,3}}\put(15,0){\makebox(0,-6)[c]{\small 4}}
\put(20,0){\circle*{1,3}}\put(20,0){\makebox(0,-6)[c]{\small 5}}
\put(25,0){\circle*{1,3}}\put(25,0){\makebox(0,-6)[c]{\small 6}}
\put(30,0){\circle*{1,3}}\put(30,0){\makebox(0,-6)[c]{\small 7}}
\put(35,0){ \circle*{1,3}}\put(35,0){\makebox(0,-6)[c]{\small 8}}
\put(40,0){\circle*{1,3}}\put(40,0){\makebox(0,-6)[c]{\small 9}}
\put(45,0){\circle*{1,3}}\put(45,0){\makebox(0,-6)[c]{\small 10}}
\put(50,0){\circle*{1,3}}\put(50,0){\makebox(0,-6)[c]{\small 11}}
\put(55,0){\circle*{1,3}}\put(55,0){\makebox(0,-6)[c]{\small 12}}
\put(60,0){\circle*{1,3}}\put(60,0){\makebox(0,-6)[c]{\small 13}}
\put(65,0){\circle*{1,3}}\put(65,0){\makebox(0,-6)[c]{\small 14}}
\green{\qbezier(0,0)(15,16)(30,0)
\qbezier(30,0)(33,6)(36,0)
\qbezier(36,0)(30.5,-8)(25,0)
\qbezier(25,0)(20,-8)(15,0)
\qbezier(15,0)(10,-8)(5,0)
\qbezier(5,0)(2.5,-6)(0,0)
}
\red{\qbezier(9,0)(24,16)(39,0)
\qbezier(39,0)(41.5,6)(44,0)
\qbezier(44,0)(26.5,-18)(9,0)
}
\blue{\qbezier(58.5,0)(61,6)(63.5,0)
\qbezier(63.5,0)(61,-6)(58.5,0)
}
\end{picture}
\caption{
   Pictorial representation of the permutation
   $\sigma = 7\, 1\, 9\, 2\, 5\, 4\, 8\, 6\, 10\, 3\, 11\, 12\, 14\, 13\,
           = (1,7,8,6,4,2)\,(3,9,10)\,(5)\,(11)\,(12)\,(13,14) \in \Sym_{14}$.
   This $\sigma$ is a D-permutation.
 \label{fig.pictorial.2}
 \vspace*{7mm}
}
\end{figure}

Each vertex thus has either
out-degree = in-degree = 1 (if it is not a fixed point) or
out-degree = in-degree = 0 (if it is a fixed point).
Of course, the arrows on the arcs are redundant,
because the arrow on an arc above (resp.\ below) the axis
always points to the right (resp.\ left);
we therefore omit the arrows for simplicity.

We then say that a quadruplet $i < j < k < l$ forms an
\begin{itemize}
   \item {\em upper crossing}\/ (ucross) if $k = \sigma(i)$ and $l = \sigma(j)$;
   \item {\em lower crossing}\/ (lcross) if $i = \sigma(k)$ and $j = \sigma(l)$;
   \item {\em upper nesting}\/  (unest)  if $l = \sigma(i)$ and $k = \sigma(j)$;
   \item {\em lower nesting}\/  (lnest)  if $i = \sigma(l)$ and $j = \sigma(k)$.
\end{itemize}
We also consider some ``degenerate'' cases with $j=k$,
by saying that a triplet $i < j < l$ forms an
\begin{itemize}
   \item {\em upper joining}\/ (ujoin) if $j = \sigma(i)$ and $l = \sigma(j)$
      [i.e.\ the index $j$ is a cycle double rise];
   \item {\em lower joining}\/ (ljoin) if $i = \sigma(j)$ and $j = \sigma(l)$
      [i.e.\ the index $j$ is a cycle double fall];
   \item {\em upper pseudo-nesting}\/ (upsnest)
      if $l = \sigma(i)$ and $j = \sigma(j)$;
   \item {\em lower pseudo-nesting}\/ (lpsnest)
      if $i = \sigma(l)$ and $j = \sigma(j)$.
\end{itemize}
These are clearly degenerate cases of crossings and nestings, respectively.
Note that $\upsnest(\sigma) = \lpsnest(\sigma)$ for all $\sigma$,
since for each fixed point~$j$,
the number of pairs $(i,l)$ with $i < j < l$ such that $l = \sigma(i)$
has to equal the number of such pairs with $i = \sigma(l)$;
we therefore write these two statistics simply as
\be
   \psnest(\sigma) \;\eqdef\; \upsnest(\sigma) \;=\;  \lpsnest(\sigma)
   \;.
\ee
And of course $\ujoin = \cdrise$ and $\ljoin = \cdfall$.
It is clear that
\be
\psnest(\sigma) \;=\; \sum_{i\in \Fix} \psnest(i,\sigma)
\ee
where $\psnest$ was defined in \eqref{def.level}
and Fix denotes the set of all fixed points.

If $\sigma$ is a D-permutation, then its diagram has a special property:
all arrows emanating from odd (resp.~even) vertices
are upper (resp.~lower) arrows.
Otherwise put,
the leftmost (resp.~rightmost) vertex of an upper (resp.~lower) arc
is always odd (resp.~even).
It follows that in an upper crossing or nesting $i < j < k < l$,
the indices $i$ and $j$ must be odd;
and in a lower crossing or nesting $i < j < k < l$,
the indices $k$ and $l$ must be even.
Similar comments apply to upper and lower joinings and pseudo-nestings.

We can further refine the four crossing/nesting categories
by examining more closely the status of the inner index ($j$ or $k$)
whose {\em outgoing}\/ arc belongs to the crossing or nesting:
\medskip
\begin{center}
\begin{tabular}{c|c|c|c|c|}
 & ucross & unest & lcross & lnest \\
\hline
$j\in \Cval$ & ucrosscval & unestcval & &\\
$j\in \Cdrise$ & ucrosscdrise  & unestcdrise & &\\
$k\in \Cpeak$ & & & lcrosscpeak & lnestcpeak\\
$k\in \Cdfall$ & & & lcrosscdfall & lnestcdfall\\
\hline
\end{tabular}
\end{center}
\medskip
%
Please note that for the ``upper'' quantities
the distinguished index
(i.e.\ the one for which we examine both $\sigma$ and $\sigma^{-1}$)
is in second position ($j$),
while for the ``lower'' quantities
the distinguished index is in third position ($k$).

In fact, a central role in our work will be played
(just as in \cite{Sokal-Zeng_masterpoly, Deb-Sokal})
by a refinement of these statistics:
rather than counting the {\em total}\/ numbers of quadruplets
$i < j < k < l$ that form upper (resp.~lower) crossings or nestings,
we will count the number of upper (resp.~lower) crossings or nestings
that use a particular vertex $j$ (resp.~$k$)
in second (resp.~third) position.
More precisely, we define the
\textbfit{index-refined crossing and nesting statistics}
\begin{subeqnarray}
   \ucross(j,\sigma)
   & = &
   \#\{ i<j<k<l \colon\: k = \sigma(i) \hbox{ and } l = \sigma(j) \}
         \\[2mm]
   \unest(j,\sigma)
   & = &
   \#\{ i<j<k<l \colon\: k = \sigma(j) \hbox{ and } l = \sigma(i) \}
      \\[2mm]
   \lcross(k,\sigma)
   & = &
   \#\{ i<j<k<l \colon\: i = \sigma(k) \hbox{ and } j = \sigma(l) \}
         \\[2mm]
   \lnest(k,\sigma)
   & = &
   \#\{ i<j<k<l \colon\: i = \sigma(l) \hbox{ and } j = \sigma(k) \}
 \label{def.ucrossnestjk}
\end{subeqnarray}

Note that $\ucross(j,\sigma)$ and $\unest(j,\sigma)$ can be nonzero
only when $j$ is an excedance
(that is, a cycle valley or a cycle double rise),
while $\lcross(k,\sigma)$ and $\lnest(k,\sigma)$ can be nonzero
only when $k$ is an anti-excedance
(that is, a cycle peak or a cycle double fall).
In a D-permutation, this means that
$\ucross(j,\sigma)$ and $\unest(j,\sigma)$ can be nonzero
only when $j$ is odd and not a fixed point,
while $\lcross(k,\sigma)$ and $\lnest(k,\sigma)$ can be nonzero
only when $k$ is even and not a fixed point.


We also use 
a variant of \eqref{def.ucrossnestjk}
in which the roles of second and third position are interchanged:
\begin{subeqnarray}
   \ucross'(k,\sigma)
   & = &
   \#\{ i<j<k<l \colon\: k = \sigma(i) \hbox{ and } l = \sigma(j) \}
         \\[2mm]
   \unest'(k,\sigma)
   & = &
   \#\{ i<j<k<l \colon\: k = \sigma(j) \hbox{ and } l = \sigma(i) \}
      \\[2mm]
   \lcross'(j,\sigma)
   & = &
   \#\{ i<j<k<l \colon\: i = \sigma(k) \hbox{ and } j = \sigma(l) \}
         \\[2mm]
   \lnest'(j,\sigma)
   & = &
   \#\{ i<j<k<l \colon\: i = \sigma(l) \hbox{ and } j = \sigma(k) \}
 \label{def.ucrossnestjk.prime}
\end{subeqnarray}
We remark that since nestings join the vertices
in second and third positions, we have
\begin{subeqnarray}
   \unest'(k,\sigma) & = & \unest(\sinv(k),\sigma)
      \\[2mm]
   \lnest'(j,\sigma) & = & \lnest(\sinv(j),\sigma)
 \label{eq.nestprime}
\end{subeqnarray}
Note that $\ucross'(k,\sigma)$ and $\unest'(k,\sigma)$ can be nonzero
only when $\sinv(k)$ is an excedance
(that is, when $k$ is a cycle peak or a cycle double rise),
while $\lcross'(j,\sigma)$ and $\lnest'(j,\sigma)$ can be nonzero
only when $\sinv(j)$ is an anti-excedance
(that is, $j$ is a cycle valley or a cycle double fall).
In a D-permutation, this means that
$\ucross'(k,\sigma)$ and $\unest'(k,\sigma)$ can be nonzero
only when $\sinv(k)$ is odd and not a fixed point,
while $\lcross'(j,\sigma)$ and $\lnest'(j,\sigma)$ can be nonzero
only when $\sinv(j)$ is even and not a fixed point.
We call \eqref{def.ucrossnestjk.prime} the
\textbfit{variant index-refined crossing and nesting statistics}.

We can also analogously define the statistics
$\ucrosscpeak'$, $\unestcpeak'$, $\lcrosscval'$, $\lnestcval'$,
$\lcrosscdfall'$, $\lnestcdfall'$, $\ucrosscdrise'$, $\ucrosscdrise'$.
We leave the details to the reader.

\section{Permutations: Statements of results}
\label{sec.permutations}

In this section, we state our continued fractions for permutations,
in three increasingly more general versions.
The first and most basic version (Theorem~\ref{thm.conj.SZ}
is a J-fraction in 8 variables and another family of infinitely many variables
that enumerates permutations with respect to the record-and-cycle classification
except for the segregation of cycle valleys;
it resolves \cite[Conjecture~2.3]{Sokal-Zeng_masterpoly}.
The second version (Theorem~\ref{thm.pqgen}) is a $(p,q)$-generalisation
of the first one: it is a J-fraction with 16 variables 
along with one family of infinitely many variables
that enumerates permutations with respect to the record-and-cycle classification
(introduced in Section~\ref{subsec.intro.stats})
together with three pairs of $(p,q)$-variables counting
the refined categories of crossing and nesting
except for cycle valleys,
and one variable corresponding to pseudo-nestings of fixed points.
Finally, our third version (Theorem~\ref{thm.master}) --- is a J-fraction
in five infinite families of indeterminates along with 
one additional variable that keeps track of the number of cycles;
this generalises the previous two 
by employing the index-refined crossing 
and nesting statistics~\eqref{def.ucrossnestjk}.
All these results will be proved in Section~\ref{sec.permutations.proofs}.

\subsection{J-fraction (Sokal--Zeng conjecture)}

Recall the polynomial $\widehat{Q}_n$ defined in 
Equation~\eqref{def.poly.conjecture}/\cite[Equation~(2.29)]{Sokal-Zeng_masterpoly}
\begin{eqnarray}
        & &
        \widehat{Q}_n(x_1,x_2, y_1, y_2,u_1, u_2, v_1, v_2, \mathbf{w}, \lambda)
        \;=\;
        \nonumber\\[4mm]
         & &\qquad\qquad 
         \sum_{\sigma \in \fS_n}
        x_1^{\eareccpeak(\sigma)} x_2^{\eareccdfall(\sigma)}
        y_1^{\ereccval(\sigma)} y_2^{\ereccdrise(\sigma)}
        \:\times
        \qquad\qquad
        \nonumber\\[-1mm]
   & & \qquad\qquad\qquad\:
   u_1^{\nrcpeak(\sigma)} u_2^{\nrcdfall(\sigma)}
   v_1^{\nrcval(\sigma)} v_2^{\nrcdrise(\sigma)}
        \mathbf{w}^{\boldsymbol{\fix}(\sigma)} \lambda^{\cyc(\sigma)}
\nonumber
\end{eqnarray}
where $\mathbf{w}^{\boldsymbol{\fix}(\sigma)}$ is
\begin{equation}
\mathbf{w}^{\boldsymbol{\fix}(\sigma)} \;=\;  \prod_{i\in \Fix} w_{\psnest(i,\sigma)}\;.
\nonumber
\end{equation}

Our first main result for permutations is 
\cite[Conjecture~2.3]{Sokal-Zeng_masterpoly}.

\begin{thm}[{{\cite[Conjecture~2.3]{Sokal-Zeng_masterpoly}}}, J-fraction for permutations]
The ordinary generating function of the polynomials $\widehat{Q}_n$
specialised to $v_1 = y_1$ has the J-type continued fraction
\begin{eqnarray}
	\sum_{n=0}^\infty \widehat{Q}_n(x_1,x_2,y_1,y_2,u_1,u_2,y_1,v_2,\mathbf{w},\lambda)
        \: t^n
	\;=\; \qquad\qquad\qquad\qquad\qquad\qquad\qquad\qquad
        \nonumber \\
	\Scale[0.9]{\cfrac{1}{1 -  \lambda w_0 t - \cfrac{\lambda x_1 y_1 t^2}{1 - (x_2 \!+\! y_2 \!+\! \lambda w_1)t -  \cfrac{(\lambda \!+\! 1)(x_1 \!+\! u_1) y_1 t^2 }{1 -(x_2 \!+\! y_2 \!+\! u_2 \!+\! v_2 \!+\! \lambda w_2)t - \cfrac{(\lambda \!+\! 2)(x_1 \!+\! 2u_1)y_1 t^2}{1 - \cdots}}}}}
        \nonumber \\[1mm]
    \label{eq.thm.conj.SZ}
	\end{eqnarray}
with coefficients
\begin{subeqnarray}
    \gamma_{0} & = & \lambda w_0
         \\[1mm]
    \gamma_{n}   & = & [x_2 \!+\! (n-1)u_2 ] + [y_2 \!+\! (n-1)v_2 ] + \lambda w_n   \qquad\hbox{for $n \ge 1$}
         \\[1mm]
	\beta_n  & = &   (\lambda+n-1)[x_1 \!+\! (n-1)u_1 ] y_1
  \label{def.weights.thm.conj.SZ}
 \end{subeqnarray}
\label{thm.conj.SZ}
\end{thm}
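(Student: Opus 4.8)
The plan is to prove Theorem~\ref{thm.conj.SZ} by re-running the Foata--Zeilberger analysis of \cite{Foata_90,Sokal-Zeng_masterpoly} with one new ingredient --- a Laguerre-digraph reading of the reconstruction step --- and then collapsing the infinite families of indeterminates; the same argument with all five families kept intact yields the master J-fraction (Theorem~\ref{thm.master}), which already has cycle valleys un-segregated and of which the present theorem is the $v_1=y_1$ instance. Recall Flajolet's theorem \cite{Flajolet_80}: a J-fraction with data $(\gamma_n,\beta_n)$ is the generating function of Motzkin paths weighted $\gamma_n$ at a level step at height $n$ and $\beta_n$ at a rise into height $n$ together with its matching fall. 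The Sokal--Zeng variant of Foata--Zeilberger encodes $\sigma\in\fS_n$ by a $3$-coloured Motzkin path $\overline{\omega}$ (rises $\leftrightarrow$ cycle valleys, falls $\leftrightarrow$ cycle peaks, the three level-step colours $\leftrightarrow$ cycle double rises / cycle double falls / fixed points) together with labels $\xi$ that record the index-refined crossing and nesting statistics and the pseudo-nestings of fixed points, the record-vs-nonrecord dichotomies being read off from whether a label is extremal. Everything except the factor $\lambda^{\cyc(\sigma)}$ is already in \cite{Sokal-Zeng_masterpoly}.

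The new point is to interpret Step~3(b) of the bijection --- the inversion-table reconstruction of the two subwords of $\sigma$ --- as the growth of a sequence of Laguerre digraphs. Order $[n]$ by listing the fixed points in increasing order (stage~(a)), then the anti-excedance indices in increasing order (stage~(b)), then the excedance indices in \emph{decreasing} order (stage~(c)); this order depends only on $\overline{\omega}$, which is what makes the argument compatible with Flajolet's theorem. Inserting at step $u$ the edge $u\to\sigma(u)$, we pass through $\laguerre{\emptyset}\subset\cdots\subset\laguerre{\{u_1,\ldots,u_n\}}=L_\sigma$, and I would prove the \textbf{cycle lemma}: a directed cycle is completed at step $u$ if and only if either $u$ is a fixed point (its loop is its whole cycle) or $u$ is the minimum element of its cycle, in which case $u$ is necessarily a cycle valley processed in stage~(c). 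Two observations give this. First, no cycle is closed during stage~(b): every edge inserted there strictly decreases the vertex label, so no directed path can run from a vertex back up to a strictly larger source. Second, in stage~(c) the vertex $u$ already carries its in-edge precisely when $u$ is a cycle valley (then $\sigma^{-1}(u)>u$ is an anti-excedance, placed in stage~(b)), whereas if $u$ is a cycle double rise then $\sigma^{-1}(u)<u$ is still unprocessed and $u$ is the head of its path, so inserting an out-edge cannot create a cycle; and when $u$ is a cycle valley there is a unique directed path currently ending at $u$, with a well-defined tail $t$, and the inserted edge closes a cycle exactly when $\sigma(u)=t$. Since stage~(c) runs in decreasing order and all anti-excedances of a cycle were already placed in stage~(b), the last vertex of any non-singleton cycle $C$ to be processed is its smallest excedance index, which is $\min C$; hence the closure happens precisely at $u=\min C$.

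The step I expect to be the main obstacle is checking that this ``close the cycle'' option is compatible with the Foata--Zeilberger labels: one must revisit the right-to-left inversion-table argument of \cite[Section~6]{Sokal-Zeng_masterpoly} while tracking the head/tail structure of the partial Laguerre digraph, and verify that the target $t$ is always one of the valid label values at $u$, that choosing it versus not is exactly what distinguishes ``$u=\min C$'' from ``$u$ a non-minimal cycle valley'', and that this introduces no new interference with the statistics already recorded. Granting this, the generating-function bookkeeping closes the proof. Each fixed point processed at height $k$ contributes $w_k$ (its pseudo-nesting equals its height) and an extra $\lambda$ (it is its own cycle), giving $\gamma_0=\lambda w_0$ and the summand $\lambda w_n$ of $\gamma_n$. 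A cycle double fall (resp.\ cycle double rise) at height $n$ is a level step whose $n$ labels split into one extremal --- an exclusive antirecord, weight $x_2$ (resp.\ exclusive record, weight $y_2$) --- and $n-1$ non-extremal of weight $u_2$ (resp.\ $v_2$), carrying no $\lambda$; this yields $\gamma_n=[x_2+(n-1)u_2]+[y_2+(n-1)v_2]+\lambda w_n$. At a cycle valley giving a rise into height $n$, the $n$ labels again split into one extremal (weight $y_1$) and $n-1$ non-extremal (weight $v_1$), but \emph{independently} exactly one of them --- the one setting $\sigma(u)$ equal to the tail $t$ --- closes a cycle and carries an extra $\lambda$, and this cycle-closing label need not be the extremal one; it is here, and only here, that the single specialisation $v_1=y_1$ is used, for then every label carries record-weight $y_1$ and the rise weight collapses to $y_1\big((n-1)+\lambda\big)$. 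The matching cycle-peak fall from height $n$ contributes $x_1+(n-1)u_1$ by the symmetric count, so $\beta_n=(\lambda+n-1)\,[x_1+(n-1)u_1]\,y_1$. Flajolet's theorem now gives exactly \eqref{eq.thm.conj.SZ}--\eqref{def.weights.thm.conj.SZ}.
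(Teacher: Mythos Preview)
Your proposal is correct and follows essentially the same route as the paper: the same Sokal--Zeng variant of the Foata--Zeilberger bijection, the same three-stage ``FZ order'' on $[n]$ (fixed points increasing, then anti-excedance positions increasing, then excedance positions decreasing), the same cycle lemma identifying cycle closers with cycle-valley minima, and the same weight computation. The obstacle you flag --- that exactly one of the admissible labels at a cycle valley corresponds to closing its cycle --- is precisely what the paper isolates and proves as Lemmas~\ref{lem.cycle.closer.technical} and~\ref{lem.cycle.closer}; the key point is that the tail $t$ of the path ending at the cycle valley $y_j$ satisfies $t>y_j$ and hence lies among the $h_{y_j-1}+1$ largest available targets, so it is reachable by some label in $\{0,\ldots,h_{y_j-1}\}$.
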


The continued fraction \eqref{eq.thm.conj.SZ}/\eqref{def.weights.thm.conj.SZ}
requires only one specialisation, namely $v_1 = y_1$.
This clearly generalises the second J-fraction for permutations of Sokal and Zeng
\cite[Theorem~2.4]{Sokal-Zeng_masterpoly}
which also requires the specialisation $v_2 = y_2$.

We will prove Theorem~\ref{thm.conj.SZ} in Section~\ref{sec.permutations.proofs}.

\subsection{$p,q$-generalisation}\label{subsec.pq.conj.SZ}

We now state a $p,q$-generalisation
for Theorem~\ref{thm.conj.SZ}
which also generalises \cite[Theorem~2.12]{Sokal-Zeng_masterpoly}.
Let us first recall the polynomial $\widehat{Q}_n$ 
defined in~\cite[Equation~(2.92)]{Sokal-Zeng_masterpoly}
\begin{eqnarray}
   & &
   \widehat{Q}_n(x_1,x_2,y_1,y_2,u_1,u_2,v_1,v_2,\mathbf{w},p_{+1},p_{+2},p_{-1},p_{-2},q_{+1},q_{+2},q_{-1},q_{-2},s,\lambda)
   \;=\;
       \nonumber \\[4mm]
   & & \qquad
   \sum_{\sigma \in \Sym_n}
   x_1^{\eareccpeak(\sigma)} x_2^{\eareccdfall(\sigma)}
   y_1^{\ereccval(\sigma)} y_2^{\ereccdrise(\sigma)}
   \:\times
       \qquad\qquad
       \nonumber \\[-1mm]
   & & \qquad\qquad\:
   u_1^{\nrcpeak(\sigma)} u_2^{\nrcdfall(\sigma)}
   v_1^{\nrcval(\sigma)} v_2^{\nrcdrise(\sigma)}
	\mathbf{w}^{\boldsymbol{\fix}(\sigma)}  \:\times
       \qquad\qquad
       \nonumber \\[3mm]
   & & \qquad\qquad\:
   p_{+1}^{\ucrosscval(\sigma)}
   p_{+2}^{\ucrosscdrise(\sigma)}
   p_{-1}^{\lcrosscpeak(\sigma)}
   p_{-2}^{\lcrosscdfall(\sigma)}
          \:\times
       \qquad\qquad
       \nonumber \\[3mm]
   & & \qquad\qquad\:
   q_{+1}^{\unestcval(\sigma)}
   q_{+2}^{\unestcdrise(\sigma)}
   q_{-1}^{\lnestcpeak(\sigma)}
   q_{-2}^{\lnestcdfall(\sigma)}
   s^{\psnest(\sigma)}
   \lambda^{\cyc(\sigma)}
   \;.
 \label{def.poly.pqgen}
\end{eqnarray}
For the $p,q$-generalisation of their second J-fraction, 
involving the polynomials $\widehat{Q}_n$, 
Sokal and Zeng needed the specialisations $v_1 = y_1$, $v_2 = y_2$, 
$q_{+1} = p_{+1}$, and $q_{+2} = p_{+2}$.
However, we now state a J-fraction that only requires the specialisations
$v_1=y_1$ and $q_{+1} = p_{+1}$.

\begin{thm}[J-fraction with $p,q$-generalisation for permutations] 
The ordinary generating function of the polynomials $\widehat{Q}_n$
specialised to $v_1 = y_1$ and $q_{+1} = p_{+1}$ 
has the J-type continued fraction
%
\begin{eqnarray}
   & & \hspace*{-7mm}
\Scale[1]{
   \sum\limits_{n=0}^\infty \widehat{Q}_n(x_1,x_2, y_1, y_2,u_1, u_2, y_1, v_2, \mathbf{w}, p_{+1}, p_{+2}, p_{-1}, p_{-2}, p_{+1}, q_{+2}, q_{-1}, q_{-2}, s,\lambda)\:t^n\;=\;
}
       \nonumber \\[2mm]
   & & \hspace*{-3mm}
\Scale[0.70]{
	\cfrac{1}{1 -  \lambda w_0 t - \cfrac{\lambda x_1 y_1 t^2}{1 - (x_2 \!+\! y_2 \!+\! \lambda s w_1)t -  \cfrac{(\lambda \!+\! 1)(p_{-1}x_1 \!+\! q_{-1}u_1) p_{+1}y_1 t^2 }{1 -(p_{-2}x_2 \!+\! q_{-2}u_2 \!+\! p_{+2}y_2 \!+\! q_{+2}v_2 \!+\! \lambda s^2 w_2)t - \cfrac{(\lambda \!+\! 2)(p_{-1}^2 x_1 \!+\! [q_{-1}p_{-1} + q_{-1}^2] u_1)p_{+1}^2 y_1 t^2}{1 - \cdots}}}}
}
       \nonumber \\[1mm]
   \label{eq.thm.pqgen}
\end{eqnarray}
with coefficients
\begin{subeqnarray}
    \gamma_{0} & = & \lambda w_0
         \\[1mm]
	 \gamma_{n}   & = & (p_{-2}^{n-1} x_2 \!+\! q_{-2} [n-1]_{p_{-2},q_{-2}}u_2 ) + (p_{+2}^{n-1}y_2 \!+\! q_{+2}[n-1]_{p_{+2},q_{+2}}v_2 ) + \lambda s^n w_n \qquad\nonumber\\   
	&& \qquad\qquad\qquad\qquad\qquad\qquad\qquad\qquad\qquad\qquad\qquad\qquad\hbox{for $n \ge 1$}
         \\[1mm]
	 \beta_n  & = &   (\lambda+n-1)(p_{-1}^{n-1}x_1 \!+\! q_{-1}[n-1]_{p_{-1},q_{-1}}u_1 ) p_{+1}^{n-1} y_1
  \label{def.weights.thm.pqgen}
 \end{subeqnarray}
\label{thm.pqgen}
\end{thm}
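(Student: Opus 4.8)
The cleanest route is to prove the most general statement --- the ``master'' J-fraction for permutations (Theorem~\ref{thm.master}) --- and then to deduce Theorem~\ref{thm.pqgen} from it by a purely algebraic specialization, and Theorem~\ref{thm.conj.SZ} from that by the further specialization setting all the $p$'s, $q$'s and $s$ equal to $1$. For the specialization one substitutes, in the master weights, each family of depth-indexed crossing/nesting indeterminates by a geometric progression in the corresponding $p$ or $q$ variable, keeps the fixed-point family $(w_\ell)_{\ell\ge0}$ together with the variable $s$ recording $\psnest$, and imposes $v_1=y_1$ and $q_{+1}=p_{+1}$. The only computation of substance is that, after these identifications, all $n$ admissible labels of a cycle-valley step at level $n$ collapse to the common base weight $p_{+1}^{n-1}y_1$: such a label carries a factor $p_{+1}^{a}q_{+1}^{b}$ with $a+b=n-1$ (recording its upper crossings and nestings) times $y_1$ or $v_1$ according as the valley is or is not an exclusive record, and after $v_1=y_1$ and $q_{+1}=p_{+1}$ every such product equals $p_{+1}^{n-1}y_1$. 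Since exactly one of these labels is cycle-closing and hence, in the master, carries an extra factor $\lambda$, summing over the $n$ labels produces the factor $\lambda+n-1$ of $\beta_n$; the cycle-peak, cycle-double-rise and cycle-double-fall contributions keep their full $p,q$-integer form (no merging is imposed on them), and fixed-point level steps at height $n$ contribute $\lambda s^n w_n$. This is exactly \eqref{def.weights.thm.pqgen}.

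The substance is therefore the master J-fraction, which I would prove as follows. Start from the Foata--Zeilberger bijection in the form used by Sokal and Zeng \cite[Section~6.1]{Sokal-Zeng_masterpoly}, a bijection $\sigma\mapsto(\overline{\omega},\xi)$ from $\Sym_n$ to labelled 3-coloured Motzkin paths of length $n$ in which $\overline{\omega}$ is determined by the excedance/anti-excedance/fixed-point data $F,F',G,G',H$ and the labels $\xi$ by the restricted maps $\sigma\restrict F$ and $\sigma\restrict G$ and the set $H$; under this bijection every statistic in the master polynomial \emph{except} the cycle count is recorded locally by the colours and labels, exactly as in \cite{Sokal-Zeng_masterpoly} (this, and Flajolet's theorem \cite{Flajolet_80}, is reviewed in Section~\ref{sec.prelimproofs}). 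The new ingredient, following the overview in Section~\ref{subsec.intro.proof}, is to reinterpret Step~3(b) of the inverse map --- the reconstruction of the two subwords of $\sigma$ from a right-to-left and a left-to-right inversion table --- as a Laguerre-digraph ``history'' $\laguerre{\emptyset}\subset\laguerre{\{u_1\}}\subset\cdots\subset\laguerre{[n]}=L$, where $L$ is the Laguerre digraph of $\sigma$ (edges $u\to\sigma(u)$) and the vertices $u_1,\dots,u_n$ are processed in the order: first $H$ increasingly (stage (a)), then $G$ increasingly (stage (b)), then $F$ decreasingly (stage (c)). The decisive feature is that this order depends only on $F,G,H$, hence only on $\overline{\omega}$ and not on $\xi$.

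With the history in place I would establish the key structural fact: the only edge-insertions that complete a directed cycle of $L$ are (i) the loops inserted at fixed points in stage (a), which account for the singleton cycles, and (ii) for each non-singleton cycle $C$, the single insertion in stage (c) that inserts the edge out of the minimum element of $C$ --- which is a cycle valley, and, being the smallest excedance of $C$, is the last vertex of $C$ to be processed. (No insertion in stage (b) can close a cycle, since when an anti-excedance $v$ is processed its unique incoming edge $\sigma^{-1}(v)\to v$ is not yet present.) Hence $\cyc(\sigma)$ equals $\fix(\sigma)$ plus the number of cycle-closing steps in stage (c), which is the number of non-singleton cycles of $\sigma$. Because the processing order is a function of $\overline{\omega}$ alone, whether a given cycle-valley step is cycle-closing is a condition on its label $\xi$ only: among the $n$ admissible labels of a cycle-valley step at level $n$, exactly one is cycle-closing. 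Thus $\lambda^{\cyc(\sigma)}$ factors as a product of local weights over the steps of $\overline{\omega}$ --- each fixed-point level step contributing a factor $\lambda$, each cycle-valley step contributing $\lambda$ on its one distinguished label and $1$ on the remaining $n-1$ --- so Flajolet's theorem applies verbatim and yields the master coefficients, the $\lambda$'s producing the $(\lambda+n-1)$ in $\beta_n$, the $\lambda w_0$ in $\gamma_0$, and the summand $\lambda s^n w_n$ of $\gamma_n$.

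The main obstacle --- the one genuinely new point --- is the locality assertion: that ``this cycle-valley insertion closes a cycle'' is equivalent to a condition on the single label at that step, not on the whole partial history. Pinning this down means unwinding precisely what the right-to-left inversion table of Step~3(b) does to the partially built Laguerre digraph: one must identify, at each stage-(c) step, the ``dangling'' path-ends available for the incoming edge, show that their number equals the current height of $\overline{\omega}$, and show that the unique attachment which re-enters the path-fragment containing the current vertex --- thereby closing a cycle --- is always the one labelled by a fixed extremal value. Once that is in hand, the rest is bookkeeping: the colour/label-to-statistic dictionary is inherited verbatim from \cite{Sokal-Zeng_masterpoly} because we have changed only the interpretation of Step~3(b) and not the bijection; Flajolet's theorem performs the conversion; and the passage to \eqref{def.weights.thm.pqgen} is the elementary $p,q$-integer computation of the first paragraph.
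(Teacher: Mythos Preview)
Your proposal is correct and follows essentially the same route as the paper: prove the master J-fraction (Theorem~\ref{thm.master}) via the Sokal--Zeng variant of Foata--Zeilberger reinterpreted as a Laguerre-digraph history in the FZ order $H$, then $G$ increasing, then $F$ decreasing, establish that cycle closers are exactly the fixed points plus the cycle-valley minima, show that exactly one of the admissible labels at each cycle-valley step is cycle-closing (the paper's Lemmas~\ref{lem.classifyingCycleF}--\ref{lem.cycle.closer}), and then specialize to obtain Theorem~\ref{thm.pqgen}. One small overreach: the paper does not show that the cycle-closing label is a ``fixed extremal value'', only that there is exactly one such label among $\{0,\dots,h_{y_j-1}\}$ --- but that is all the argument needs.
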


We will prove this theorem in Section~\ref{sec.permutations.proofs}, as a special case of a more general result.

\subsection{Master J-fraction}\label{subsec.master.conj.SZ}

We can go much farther and obtain a more general J-fraction
generalising Theorems~\ref{thm.conj.SZ} and~\ref{thm.pqgen}.
We obtain a J-fraction in the following 
five families of
infinitely many indeterminates:
$\bsfa = (\sfa_{\ell})_{\ell \ge 0}$,
$\bsfb = (\sfb_{\ell,\ell'})_{\ell,\ell' \ge 0}$,
$\bsfc = (\sfc_{\ell,\ell'})_{\ell,\ell' \ge 0}$,
$\bsfd = (\sfd_{\ell,\ell'})_{\ell,\ell' \ge 0}$,
$\bsfe = (\sfe_{\ell})_{\ell \ge 0}$;
please note that $\bsfa$ and $\bsfe$ have one index while 
$\bsfb, \bsfc$ and $\bsfd$ have two indices.
Using the index-refined crossing and nesting statistics
defined in \eqref{def.ucrossnestjk},
we define the polynomial 
$\widehat{Q}_n(\bsfa,\bsfb,\bsfc,\bsfd,\bsfe,\lambda)$ by
\begin{eqnarray}
   & & \hspace*{-10mm}
	\widehat{Q}_n(\bsfa,\bsfb,\bsfc,\bsfd,\bsfe,\lambda)
   \;=\;
       \nonumber \\[4mm]
   & &
   \sum_{\sigma \in \fS_{n}}
   \;\:
   \lambda^{\cyc(\sigma)}\;
   \prod\limits_{i \in \Cval(\sigma)}  \! \sfa_{\ucross(i,\sigma) + \unest(i,\sigma)}
   \prod\limits_{i \in \Cpeak(\sigma)} \!\!  \sfb_{\lcross(i,\sigma),\,\lnest(i,\sigma)}
       \:\times
       \qquad\qquad
       \nonumber \\[1mm]
   & & \;
	\prod\limits_{i \in \Cdfall(\sigma)} \!\!  \sfc_{\lcross(i,\sigma),\,\lnest(i,\sigma)}
   \;
   \prod\limits_{i \in \Cdrise(\sigma)} \!\!  \sfd_{\ucross(i,\sigma),\,\unest(i,\sigma)}
   \;
   \prod\limits_{i \in \Fix(\sigma)} \!\!  \sfe_{\psnest(i,\sigma)} 
 \label{def.poly.master}
\end{eqnarray}
where recall that $\Cval(\sigma) = \{ i\colon \sinv(i) > i < \sigma(i) \}$
and likewise for the others.

The polynomials \eqref{def.poly.master} have a beautiful J-fraction:
\begin{thm}[Master J-fraction for permutations]
   \label{thm.master}
The ordinary generating function of the polynomials
$\widehat{Q}_n(\bsfa,\bsfb,\bsfc,\bsfd,\bsfe,\lambda)$
has the J-type continued fraction
\begin{eqnarray}
   \sum_{n=0}^\infty \widehat{Q}_n(\bsfa,\bsfb,\bsfc,\bsfd,\bsfe,\lambda) \: t^n
   \;=\; \qquad\qquad\qquad\qquad\qquad\qquad\qquad\qquad\qquad\qquad\qquad\qquad
	\nonumber\\
\Scale[0.8]{\qquad
	\cfrac{1}{1 - \lambda\sfe_0 t - \cfrac{\lambda\sfa_{0} \sfb_{00} t^2}{1 - (\sfc_{00}+\sfd_{00} + \lambda \sfe_1)t - \cfrac{(\lambda + 1)\sfa_1(\sfb_{01} + \sfb_{10}) t^2}{1 -(\sfc_{01} + \sfc_{10} + \sfd_{01} + \sfd_{10}  + \lambda \sfe_2)t -  \cfrac{(\lambda + 2)\sfa_2(\sfb_{02} + \sfb_{11} + \sfb_{20}) t^2}{1 - \cdots}}}}
	}
	\nonumber\\
   \label{eq.thm.master}
\end{eqnarray}
with coefficients
\begin{subeqnarray}
   \gamma_0
   & = &
   \lambda \sfe_0
        \\[2mm]
   \gamma_n
   & = &
      \left( \sum_{\xi=0}^{n-1} \sfc_{n- 1 -\xi, \xi} \right) \!+\!
      \left( \sum_{\xi=0}^{n-1} \sfd_{n- 1 -\xi, \xi} \right) \!+\!
      \lambda \sfe_n 
       \qquad\hbox{for $n \ge 1$}\\[2mm]
   \beta_{n}
   & = &
   (\lambda + n-1)\; \sfa_{n-1}\;
   \left( \sum_{\xi=0}^{n-1} \sfb_{n- 1 -\xi, \xi} \right)
        \\[2mm]
 \label{def.weights.master}
\end{subeqnarray}
\end{thm}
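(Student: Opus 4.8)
The plan is to deduce Theorem~\ref{thm.master} from Flajolet's master theorem \cite{Flajolet_80} (in the labelled-Motzkin-path form reviewed in Section~\ref{subsec.prelimproofs.1}): it suffices to exhibit a bijection from $\fS_n$ onto labelled Motzkin paths of length $n$ under which $\lambda^{\cyc(\sigma)}$ times the monomial in $\bsfa,\bsfb,\bsfc,\bsfd,\bsfe$ attached to $\sigma$ in~\eqref{def.poly.master} equals the product of the local weights of the corresponding path, arranged so that summing the level-step weights over all labels at height $n$ gives $\gamma_n$, and the product of (sum of rise weights into height $n$) with (sum of fall weights out of height $n$) gives $\beta_n$. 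The bijection will be the Foata--Zeilberger bijection in the form used by Sokal and Zeng \cite[Section~6.1]{Sokal-Zeng_masterpoly}, re-examined through Laguerre digraphs as sketched in Section~\ref{subsec.intro.proof}.

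First I would recall the part of the Foata--Zeilberger analysis that does not see cycles. The underlying $3$-coloured Motzkin path $\overline{\omega}$ records the cycle classification of each index $i\in[n]$: a rise at step $i$ means $i\in\Cval$, a fall means $i\in\Cpeak$, and the three colours of level step mean $i\in\Cdrise$, $i\in\Cdfall$, $i\in\Fix$. The labels encode the crossing--nesting data: at a rise step into height $n$ one has $\ucross(i,\sigma)+\unest(i,\sigma)=n-1$ independently of the label (this is why $\bsfa$ is one-indexed, matching $\sfa_{n-1}$); at a fall step out of height $n$ the pair $(\lcross(i,\sigma),\lnest(i,\sigma))$ is read off from the label with indices summing to $n-1$, producing $\sum_{\xi}\sfb_{n-1-\xi,\xi}$; at $\Cdfall$ and $\Cdrise$ level steps at height $n$ the pairs $(\lcross,\lnest)$ and $(\ucross,\unest)$ are read off similarly with indices summing to $n-1$, producing $\sum_\xi\sfc_{n-1-\xi,\xi}$ and $\sum_\xi\sfd_{n-1-\xi,\xi}$; and at a $\Fix$ level step at height $n$ one has $\psnest(i,\sigma)=n$, so that step carries the single monomial $\sfe_n$. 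This reproduces all of~\eqref{def.weights.master} except for the powers of $\lambda$.

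The new ingredient is the count of cycles, obtained by reinterpreting Step~3(b) of the inverse map as the construction of the functional digraph of $\sigma$ one edge at a time, processing the vertices in the order prescribed by the inversion tables: the fixed points $H$ in increasing order (stage~(a)), then the anti-excedance indices $G$ in increasing order (stage~(b)), then the excedance indices $F$ in decreasing order (stage~(c)). The key lemma I would prove is: (i) in stage~(a) each step inserts a loop and closes exactly one cycle; (ii) in stage~(b) the vertex currently processed is always an isolated vertex, so no cycle is ever closed; (iii) in stage~(c) the vertex $i$ currently processed is the head of a directed path $P$ (the single vertex $\{i\}$ when $i\in\Cdrise$, a path ending at $i$ when $i\in\Cval$), the inserted edge $i\to\sigma(i)$ joins $P$ to the path whose tail is $\sigma(i)$ (this tail is always an admissible value of $\sigma(i)$, as one checks from the processing order), and the edge closes a cycle if and only if those two paths coincide, which happens if and only if $i$ is the minimum element of its own cycle --- in particular never when $i\in\Cdrise$. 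Hence the cycle-closing steps are exactly the $\Fix$ level steps (always) together with the rise steps at cycle-minimum cycle valleys, and $\cyc(\sigma)$ is their number. Crucially, the processing order depends only on the sets $F,G,H$, hence only on $\overline{\omega}$ and not on the labels; and at a rise step into height $n$, among the $n$ admissible labels exactly one --- the one selecting $\sigma(i)$ to be the current tail of $i$'s own path --- is cycle-closing, while all $n$ still give $\ucross(i,\sigma)+\unest(i,\sigma)=n-1$. We may therefore attach a factor $\lambda$ to every $\Fix$ level step and to the single distinguished label at each rise step; summing over labels then turns the rise contribution into $(\lambda+n-1)\sfa_{n-1}$ and the $\Fix$ level contribution into $\lambda\sfe_n$, and assembling this with the weights of the previous paragraph gives exactly~\eqref{eq.thm.master}--\eqref{def.weights.master}. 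Theorems~\ref{thm.conj.SZ} and~\ref{thm.pqgen} then follow by the appropriate specialisations of $\bsfa,\bsfb,\bsfc,\bsfd,\bsfe$.

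I expect the main obstacle to be part~(iii) of the lemma together with the uniformity claims: showing that in stage~(c) the vertex being processed is always the head of a path whose tail is an admissible value of $\sigma(i)$, that ``closes a cycle'' corresponds to one specific label value irrespective of the rest of the path and the other labels, and --- the delicate point flagged in Section~\ref{subsec.intro.proof} --- that the total processing order never implicitly depends on the labels $\xi$. Once that is in place, checking that the local weights reproduce~\eqref{def.weights.master} is routine, but it requires care in aligning the Foata--Zeilberger weight conventions of \cite[Section~6.1]{Sokal-Zeng_masterpoly} with the heights at which each statistic is evaluated.
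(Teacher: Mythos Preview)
Your proposal is correct and follows essentially the same approach as the paper: the same Sokal--Zeng variant of the Foata--Zeilberger bijection, the same Laguerre-digraph reinterpretation with the same processing order ($H$ increasing, $G$ increasing, $F$ decreasing), and the same key lemma that cycle closers in stage~(c) are exactly the cycle valley minima, with exactly one of the $h_{i-1}+1$ labels at each rise step being cycle-closing. Your statement~(ii) (``the vertex currently processed in stage~(b) is always isolated'') is a slightly cleaner way to see that no cycle closes in stage~(b) than the paper's observation that all stage-(b) edges are decreasing, but the content is the same; the obstacles you flag in~(iii) are precisely what the paper handles in its Lemmas~\ref{lem.lastvertexF}--\ref{lem.cycle.closer}.
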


We will prove this theorem in Section~\ref{sec.permutations.proofs}.
It implies Theorems~\ref{thm.conj.SZ} and~\ref{thm.pqgen} by straightforward specialisations.

\begin{rem}
1. We remark that \eqref{def.poly.master} is {\em almost} the same as the polynomial
introduced in \cite[eq.~(2.77)]{Sokal-Zeng_masterpoly},
except for the extra factor $\lambda^{\cyc(\sigma)}$
and the index of $\sfa$ depends on the sum $\ucross(i,\sigma) + \unest(i,\sigma)$.
This is the price we have to pay in order to include the statistic $\cyc$.
See \cite[p.~13]{Sokal-Zeng_masterpoly}.

2. We also note that \eqref{def.poly.master} is {\em almost} the same as the polynomial
\cite[eq.~(2.100)]{Sokal-Zeng_masterpoly} as well,
except our treatment of $\sfd$ is nicer as
we are able to recover both $\ucross(i,\sigma)$, $\unest(i,\sigma)$,
and not just their sum.
In fact, this separation is what allows us
to prove \cite[Conjecture~2.3]{Sokal-Zeng_masterpoly}
by using \cite[Lemma~2.10]{Sokal-Zeng_masterpoly}.

3. The continued fraction \eqref{eq.thm.master}/\eqref{def.weights.master}
is the same as \cite[eq.~(2.101),(2.102),(2.103)]{Sokal-Zeng_masterpoly}
except for the indexing of $\sfd$.
\myendremark
\end{rem}

\section{D-permutations: Statements of results}
\label{sec.dperm}

%
%
%

In this section, we state our continued fractions for D-permutations.
Analogous to \cite[Sections~3.2--3.4, and~3.5]{Deb-Sokal}
our continued fractions have two variants:
the first involve the record classification
and the second involve the variant record classification,
both have been introduced in Section~\ref{subsec.intro.stats}.
The most basic versions in each variant
are a T-fraction (Theorems~\ref{thm.conj.DS} and~\ref{thm.dperm.prime})
in 12 variables that enumerates
D-permutations with respect to the
parity-refined record-and-cycle classification
and the variant parity-refined record-and-cycle classification respectively;
except for the segregation of cycle valleys.
Theorem~\ref{thm.conj.DS} resolves
\cite[Conjecture~4.1]{Deb-Sokal}.
The second versions (Theorems~\ref{thm.DS.pqgen} and~\ref{thm.DS.pqgen.prime})
are respective $(p,q)$-generalisations
of the first versions: they are a T-fraction with 21 variables
that enumerates D-permutations with respect to the
parity-refined record-and-cycle classification
and variant parity-refined record-and-cycle classification respectively,
together with three pairs of $(p,q)$-variables counting
the refined categories of crossings and nestings
except for cycle valleys.
Finally, our third versions
(Theorems~\ref{thm.DS.master} and~\ref{thm.DS.master.prime})
--- is a T-fraction
in six infinite families of indeterminates and 
one additional variable;
this generalises the previous versions
by employing the index-refined crossing and nesting statistics~\eqref{def.ucrossnestjk}
and the variant index-refined crossing and nesting statistics~\eqref{def.ucrossnestjk.prime}.
One of the variables ($\lambda$) in each version counts the number of cycles.

The first variants will be stated in Section~\ref{subsec.dperm.results}
and will be proved in Section~\ref{subsec.dperm.proofs}.
The second variants will be stated in Section~\ref{subsec.dperm.results.prime}
and will be proved in Section~\ref{subsec.dperm.proofs.prime}.

We then rephrase Theorems~\ref{thm.conj.DS} and~\ref{thm.dperm.prime}
using cycle valley minima in Section~\ref{subsec.dperm.minval};
our approach will be similar to that in \cite[Section~4.1.3]{Deb-Sokal}.
This allows us to resolve \cite[Conjecture~12]{Randrianarivony_96b}.

\subsection{Continued fractions using record classification}
\label{subsec.dperm.results}

\subsubsection{T-fraction (Deb--Sokal conjecture)}
\label{subsec.dperm.conj}

Recall the polynomial $\widehat{P}_n$
defined in \eqref{def.Pnhat.dperm}/\cite[Equation~(4.2)]{Deb-Sokal}
\begin{eqnarray}
   & &
   \Scale[0.97]{
   \widehat{P}_n(x_1,x_2,y_1,y_2,u_1,u_2,v_1,v_2,\we,\wo,\ze,\zo,\lambda)}
   \;=\;
       \nonumber \\[4mm]
   & & \qquad\qquad
   \sum_{\sigma \in \dperm_{2n}}
   x_1^{\eareccpeak(\sigma)} x_2^{\eareccdfall(\sigma)}
   y_1^{\ereccval(\sigma)} y_2^{\ereccdrise(\sigma)}
   \:\times
       \qquad\qquad
       \nonumber \\[-1mm]
   & & \qquad\qquad\qquad\:
   u_1^{\nrcpeak(\sigma)} u_2^{\nrcdfall(\sigma)}
   v_1^{\nrcval(\sigma)} v_2^{\nrcdrise(\sigma)}
   \:\times
       \qquad\qquad
       \nonumber \\[3mm]
   & & \qquad\qquad\qquad\:
   \we^{\evennrfix(\sigma)} \wo^{\oddnrfix(\sigma)}
   \ze^{\evenrar(\sigma)} \zo^{\oddrar(\sigma)}
   \, \lambda^{\cyc(\sigma)}.
 \nonumber
\end{eqnarray}

Our first main result for D-permutations is \cite[Conjecture~4.1]{Deb-Sokal}.

\begin{thm}[{{\cite[Conjecture~4.1]{Deb-Sokal}}}]
The ordinary generating function of the polynomials \eqref{def.Pnhat.dperm}
specialised to $v_1 = y_1$ has the T-type continued fraction
\begin{eqnarray}
   & & \hspace*{-12mm}
   \sum_{n=0}^\infty
   \widehat{P}_n(x_1,x_2,y_1,y_2,u_1,u_2,y_1,v_2,\we,\wo,\ze,\zo,\lambda) \: t^n
   \;=\;
       \nonumber \\
   & &
   \cfrac{1}{1 - \lambda^2 \ze \zo  \,t - \cfrac{\lambda x_1 y_1  \,t}{1 -  \cfrac{(x_2\!+\!\lambda\we)(y_2\!+\!\lambda\wo) \,t}{1 - \cfrac{(\lambda+1)(x_1\!+\!u_1) y_1 \,t}{1 - \cfrac{(x_2\!+\!u_2\!+\!\lambda\we)(y_2\!+\!v_2\!+\!\lambda\wo)  \,t}{1 - \cfrac{(\lambda+2)(x_1\!+\!2u_1) y_1 \,t}{1 - \cfrac{(x_2\!+\!2u_2\!+\!\lambda\we)(y_2\!+\!2v_2\!+\!\lambda\wo)  \,t}{1 - \cdots}}}}}}}
       \nonumber \\[1mm]
   \label{eq.thm.DS}
\end{eqnarray}
with coefficients
\begin{subeqnarray}
   \alpha_{2k-1} & = & (\lambda + k-1) \: [x_1 + (k-1) u_1] \: y_1
        \\[1mm]
   \alpha_{2k}   & = & [x_2 + (k-1) u_2 + \lambda\we] \: [y_2 + (k-1)v_2 + \lambda\wo]
        \\[1mm]
   \delta_1  & = &   \lambda^2 \ze \zo   \\[1mm]
   \delta_n  & = &   0    \qquad\hbox{for $n \ge 2$}
   \label{eq.thm.conj.DS.weights}
\end{subeqnarray}
\label{thm.conj.DS}
\end{thm}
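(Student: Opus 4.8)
The plan is to obtain Theorem~\ref{thm.conj.DS} as a specialisation of our \emph{master T-fraction for D-permutations} (Theorem~\ref{thm.DS.master} below), which enumerates $\dperm_{2n}$ with respect to the index-refined crossing and nesting statistics~\eqref{def.ucrossnestjk} together with the extra factor $\lambda^{\cyc(\sigma)}$. Granting that master T-fraction, the reduction is routine: one specialises the six infinite families of indeterminates to the twelve variables $x_1,x_2,y_1,y_2,u_1,u_2,v_1,v_2,\we,\wo,\ze,\zo$ exactly as in the passage from Theorem~\ref{thm.master} to Theorem~\ref{thm.conj.SZ} for permutations, then sets $v_1=y_1$, and collapses the finite sums over crossing/nesting exponents into the linear-in-$k$ coefficients~\eqref{eq.thm.conj.DS.weights} by the identity of \cite[Lemma~2.10]{Sokal-Zeng_masterpoly}. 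So the work is in the master T-fraction.

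For that, I would start from the Deb--Sokal bijection attached to the (unprimed) record-and-cycle classification \cite[Section~6]{Deb-Sokal}, a bijection $\sigma\mapsto(\omega,\xi)$ between $\dperm_{2n}$ and a suitable class of labelled Schr\"oder paths of length $n$, in which each step corresponds to the pair of indices $\{2k-1,2k\}$ and long level steps occur only at height~$0$. The one new ingredient --- exactly as in the permutation case sketched in Section~\ref{subsec.intro.proof} --- is to reinterpret the inversion-table reconstruction of $\sigma$ as a \emph{Laguerre-digraph history}: regarding $\sigma$ as a Laguerre digraph $L$ with edges $u\to\sigma(u)$ (so all components are loops or cycles), one builds up $L$ through the chain $\laguerre{\emptyset}\subset\laguerre{\{u_1\}}\subset\cdots\subset\laguerre{[2n]}=L$, inserting the vertices of $[2n]$ in the order prescribed by the inversion tables: the fixed points in increasing order (stage~(a)), then the anti-excedance indices --- i.e.\ the even non-fixed points --- in increasing order (stage~(b)), then the excedance indices --- i.e.\ the odd non-fixed points --- in \emph{decreasing} order (stage~(c)). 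Crucially this total order depends only on the excedance/anti-excedance/fixed-point sets of $\sigma$, hence only on the unlabelled path $\omega$ and not on $\xi$.

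The structural heart of the argument is the claim that no cycle is closed during stage~(b): in stage~(a) each fixed point immediately becomes a loop (a $1$-cycle), and one checks --- using that every anti-excedance index processed before the current one is smaller, and that the current anti-excedance index has in-degree $0$ at that moment (its only possible in-edge would issue from a vertex not yet inserted) --- that inserting an anti-excedance edge can only start a new path or splice one path onto another. Hence every non-singleton cycle is closed in stage~(c), when the excedance edges are inserted in decreasing order, and $\cyc(\sigma)=\fix(\sigma)+\#\{\text{cycle-closing steps in stage~(c)}\}$. The decisive point is that whether the step for a given vertex closes a cycle is determined by purely local data --- the height of the corresponding step of $\omega$ and the label $\xi$ recording which previously opened ``slot'' the new edge fills --- so that the contribution of $\sigma$ to the weight can be read off step by step along $\omega$: rises and falls supply the crossing-and-nesting families with exponents recorded by the labels (using the relations~\eqref{eq.nestprime} between primed and unprimed nesting statistics where appropriate), level steps supply the fixed-point families, and each cycle-closing supplies one extra factor of $\lambda$. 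In particular the height-$0$ long level steps encode the pivot pairs $\{2k-1,2k\}$ --- an odd record-antirecord together with an even record-antirecord, hence two $1$-cycles --- so they carry the weight $\lambda^2\ze\zo$, giving $\delta_1=\lambda^2\ze\zo$ and $\delta_n=0$ for $n\ge2$. Summing over all labels and applying Flajolet's master theorem \cite{Flajolet_80} (reviewed in Section~\ref{sec.prelimproofs}) to the resulting weighted Schr\"oder paths yields the master T-fraction, and the specialisation above then gives \eqref{eq.thm.DS}/\eqref{eq.thm.conj.DS.weights}.

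I expect the main obstacle to be precisely this locality claim: one must verify carefully that, with the special vertex order (which, again, depends only on $\omega$), the number of cycles closed at each step of the Laguerre-digraph history is a function of that step's path-and-label data alone, with no reference to later steps, so that the per-step weights --- now carrying the $\lambda$ bookkeeping --- multiply to the global weight and Flajolet's theorem applies verbatim. Making this precise requires tracking, through the Deb--Sokal reconstruction, both the even/odd refinement of the fixed points and of the record-antirecords and the exact step at which each ``slot'' opened earlier in the path gets filled; once it is in place, the verification of the master T-fraction's coefficients and the final specialisations are bookkeeping of the type already carried out in \cite{Sokal-Zeng_masterpoly,Deb-Sokal}.
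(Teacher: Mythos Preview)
Your overall strategy is exactly the paper's: derive the master T-fraction (Theorem~\ref{thm.DS.master}) by reinterpreting the Deb--Sokal bijection as a Laguerre-digraph history, locate the cycle closers locally, and then specialise down to \eqref{eq.thm.DS}/\eqref{eq.thm.conj.DS.weights}. The final specialisation and the use of \cite[Lemma~2.10]{Sokal-Zeng_masterpoly} are as you describe.

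A couple of points of detail need correction, and one of them matters. First, the $0$-Schr\"oder path in the Deb--Sokal bijection has length $2n$, not $n$; each step corresponds to a \emph{single} index $i\in[2n]$, not to a pair $\{2k-1,2k\}$ (the pair picture only re-enters when one contracts the underlying almost-Dyck path back to a Schr\"oder path). Second --- and this is the substantive point --- your proposed three-stage vertex order (fixed points, then even non-fixed points, then odd non-fixed points) does \emph{not} work here as stated. In the D-permutation bijection the almost-Dyck path $\omega$ determines only the sets $F'=\{i:\sinv(i)\text{ even}\}$ and $G'=\{i:\sinv(i)\text{ odd}\}$; within $F\cap F'$ one cannot distinguish even fixed points from cycle double falls (both give rises at even $i$), and likewise within $G\cap G'$ one cannot separate odd fixed points from cycle double rises. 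So your stage~(a) is not a function of $\omega$ alone, and the locality argument would break.

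The paper sidesteps this by using a \emph{two}-stage order: first all of $F=\{2,4,\ldots,2n\}$ in increasing order, then all of $G=\{1,3,\ldots,2n-1\}$ in decreasing order. This order is in fact the same for every $\sigma\in\dperm_{2n}$ --- even simpler than the permutation case, where the order genuinely depends on $\overline\omega$. Even (resp.\ odd) fixed points become loops as they arise during stage~(a) (resp.\ (b)); no non-singleton cycle closes during stage~(a); and the cycle-closer analysis in stage~(b) goes through exactly as you outline, with the counting lemma (one label value out of $\lceil h_{y-1}/2\rceil+1$ closes a cycle at each cycle valley). Your reading of the long level steps at height $0$ as contributing $\lambda^2\ze\zo$ is correct.
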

\noindent We will prove Theorem~\ref{thm.conj.DS} in Section~\ref{subsec.dperm.proofs}.

\begin{rem}
1. The continued fraction \eqref{eq.thm.DS}/\eqref{eq.thm.conj.DS.weights}
is almost similar to \cite[eq.~(3.3),~(3.4)]{Deb-Sokal}
except for the extra factor $\lambda$ and the specialisation $v_1 = y_1$.
This continued fraction is also
the same as \cite[eq.~(4.7),~(4.8)]{Deb-Sokal}
which proves the equidistribution of statistics for D-permutations in
\cite[Conjecture~4.1$'$]{Deb-Sokal}.

2. The continued fraction of Theorem~\ref{thm.conj.DS}
specialised to $x_1 = x_2 = y_1 = y_2 = u_1 = u_2 = v_1 = v_2 = 1$,
$\we = \ze$, $\wo = \zo$, enumerates D-permutations of $[2n]$ 
with a weight $\lambda$ for each cycle, $\ze$ for each even fixed point
and $\zo$ for each odd fixed point.
By applying  
\cite[Proposition~2.1]{Deb-Sokal},
to the resulting T-fraction,
we obtain a J-fraction with
coefficients $\gamma_n = (\lambda\ze +n)(\lambda\zo+n) + (\lambda+n)(n+1)$
and $\beta_n  = n (\lambda+n-1)(\lambda \ze +n)(\lambda \zo + n)$.
This J-fraction was first proved by Pan and Zeng \cite[eq.~(4.11)]{Pan_23}
and they used this continued fraction to establish~\cite[Conjecture~6.5]{Lazar_22}.
\myendremark
\end{rem}


We can also enumerate D-cycles by extracting the coefficient of
$\lambda^1$ in Theorem~\ref{thm.conj.DS}
to prove \cite[Conjecture~4.5]{Deb-Sokal}.
The analogous polynomials $P^{\dcycle}_n$
defined in \cite[eq.~(4.17)]{Deb-Sokal} are
\begin{eqnarray}
   & &
   P^{\dcycle}_n(x_1,x_2,y_1,y_2,u_1,u_2,v_1,v_2)
   \;=\;
       \nonumber \\[4mm]
   & & \qquad\qquad
   \sum_{\sigma \in \dcycle_{2n}}
   x_1^{\eareccpeak(\sigma)} x_2^{\eareccdfall(\sigma)}
   y_1^{\ereccval(\sigma)} y_2^{\ereccdrise(\sigma)}
   \:\times
       \qquad\qquad
       \nonumber \\[-1mm]
   & & \qquad\qquad\qquad\quad\!
   u_1^{\nrcpeak(\sigma)} u_2^{\nrcdfall(\sigma)}
   v_1^{\nrcval(\sigma)} v_2^{\nrcdrise(\sigma)}
   \;.
 \label{def.Pn.dcycle}
\end{eqnarray}

\begin{cor}[{{\cite[Conjecture~4.5]{Deb-Sokal}}}]
   \label{conj.Tfrac.second.dcycle}
The ordinary generating function of the polynomials \eqref{def.Pn.dcycle}
specialised to $v_1 = y_1$ has the S-type continued fraction
\be
   \!\!\!
   \sum_{n=0}^\infty
   P^{\dcycle}_{n+1}(x_1,x_2,y_1,y_2,u_1,u_2,y_1,v_2) \: t^n
   \;=\;
   \cfrac{x_1 y_1}{1 - \cfrac{x_2 y_2  \,t}{1 - \cfrac{(x_1\!+\!u_1) y_1 \,t}{1 - \cfrac{(x_2\!+\!u_2\!)(y_2\!+\!v_2)  \,t}{1 - \cfrac{(x_1\!+\!2u_1) 2y_1 \,t}{1 - \cfrac{(x_2\!+\!2u_2)(y_2\!+\!2v_2)  \,t}{1 - \cdots}}}}}}
  \label{eq.conj.Tfrac.second.dcycle}
\ee
with coefficients
\begin{subeqnarray}
   \alpha_{2k-1}   & = & [x_2 + (k-1) u_2] \: [y_2 + (k-1)v_2]
        \\[1mm]
   \alpha_{2k}     & = & [x_1 + k u_1] \: k y_1
   \label{eq.conj.Tfrac.second.weights.dcycle}
\end{subeqnarray}
\end{cor}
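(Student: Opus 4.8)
The plan is to obtain Corollary~\ref{conj.Tfrac.second.dcycle} from Theorem~\ref{thm.conj.DS} by extracting the coefficient of $\lambda^1$ from both sides of \eqref{eq.thm.DS}, with the specialisation $v_1 = y_1$ kept in force throughout. On the combinatorial side this is transparent: for $n \ge 1$, a D-permutation of $[2n]$ with $\cyc(\sigma) = 1$ is exactly a D-cycle, and a D-cycle is a D-derangement, so it has no fixed points and hence $\evennrfix(\sigma) = \oddnrfix(\sigma) = \evenrar(\sigma) = \oddrar(\sigma) = 0$. Consequently $[\lambda^1]\,\widehat{P}_n(x_1,x_2,y_1,y_2,u_1,u_2,y_1,v_2,\we,\wo,\ze,\zo,\lambda)$ is the sum of the monomials $x_1^{\eareccpeak(\sigma)}\cdots v_2^{\nrcdrise(\sigma)}$ over $\sigma \in \dcycle_{2n}$ — the variables $\we,\wo,\ze,\zo$ disappear — that is, it equals $P^{\dcycle}_n(x_1,x_2,y_1,y_2,u_1,u_2,y_1,v_2)$; and for $n=0$ this coefficient is $0$. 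Hence the $\lambda^1$-part of the left side of \eqref{eq.thm.DS} is $t\sum_{n\ge 0} P^{\dcycle}_{n+1}(x_1,x_2,y_1,y_2,u_1,u_2,y_1,v_2)\,t^n$.

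Next I would perform the same extraction on the T-fraction side. Write the right side of \eqref{eq.thm.DS} as $F = 1/(1 - \delta_1 t - \alpha_1 t F_1)$, where $F_1$ is the continued fraction built from $\delta_2,\alpha_2,\delta_3,\alpha_3,\dots$. The crucial structural observation — and the one place where the specific shape of \eqref{eq.thm.conj.DS.weights} is used — is that $\delta_1 = \lambda^2\ze\zo$ is divisible by $\lambda^2$ while $\alpha_1 = \lambda x_1 y_1$ is exactly of order $\lambda^1$; thus in the formal-power-series expansion $F = \sum_{m\ge 0}(\delta_1 t + \alpha_1 t F_1)^m$, whose coefficients are polynomials in all the indeterminates, the only contribution to $[\lambda^1]$ comes from $m=1$, and within it only from the summand $\alpha_1 t F_1$, so that $[\lambda^1] F = x_1 y_1\, t\,(F_1|_{\lambda=0})$. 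Since $\delta_n = 0$ for all $n \ge 2$, the series $F_1|_{\lambda=0}$ is the Stieltjes-type continued fraction with coefficients $\alpha_2|_{\lambda=0},\alpha_3|_{\lambda=0},\alpha_4|_{\lambda=0},\dots$; reading these off from \eqref{eq.thm.conj.DS.weights} gives $\alpha_{2k}|_{\lambda=0} = [x_2+(k-1)u_2][y_2+(k-1)v_2]$ and $\alpha_{2k+1}|_{\lambda=0} = k\,[x_1+ku_1]\,y_1$, which are precisely the coefficients $\alpha_{2k-1}$ and $\alpha_{2k}$ of \eqref{eq.conj.Tfrac.second.weights.dcycle}. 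Therefore $x_1 y_1\cdot(F_1|_{\lambda=0})$ is exactly the right side of \eqref{eq.conj.Tfrac.second.dcycle}, and $[\lambda^1] F$ equals $t$ times that expression.

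Equating the two computations and cancelling the common factor $t$ then proves \eqref{eq.conj.Tfrac.second.dcycle}. The step deserving the most care is the coefficient extraction from the continued fraction, but it reduces to the single observation above that $\delta_1$ is quadratic in $\lambda$ while $\alpha_1$ is linear, which is exactly what makes the $\lambda^1$-part of the T-fraction collapse to an S-fraction with no long level steps; everything else is formal, requiring only that both sides of \eqref{eq.thm.DS} are genuine formal power series in $t$ with coefficients polynomial in the remaining indeterminates (so that $[\lambda^1]$ commutes with the continued-fraction expansion) together with the routine bookkeeping of the specialisation $v_1 = y_1$. So I do not anticipate a genuine difficulty; this really is a corollary of Theorem~\ref{thm.conj.DS}. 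If a combinatorial derivation were preferred, one could instead argue in the Schr\"oder-path model underlying Theorem~\ref{thm.conj.DS}, where $\lambda$ marks cycles and restricting to a single cycle forces the path to avoid the $\delta_1$ long level step and fixes the shape of its first rise and last fall, but the coefficient-extraction argument is shorter and fully sufficient.
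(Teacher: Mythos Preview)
Your proof is correct and follows exactly the approach the paper indicates: extract the coefficient of $\lambda^1$ from both sides of Theorem~\ref{thm.conj.DS}. The paper states this in one sentence and omits the details; your argument fills them in correctly, including the key observation that $\delta_1$ is of order $\lambda^2$ and $\alpha_1$ of order $\lambda^1$, which forces the $\lambda^1$-part of the T-fraction to collapse to the displayed S-fraction with prefactor $x_1 y_1$.
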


\subsubsection{{$p,q$}-generalisation}
\label{subsec.dperm.pqgen}

In this subsection, we shall provide a $p,q$-generalisation
for Theorem~\ref{thm.conj.DS} 
by including four pairs of $(p,q)$--variables corresponding to the four
refined types of crossings and nestings, 
as well as  two variables corresponding to pseudo-nestings for fixed points:
\begin{eqnarray}
   & & \hspace*{-8mm}
   \Scale[0.95]{
   \widehat{P}_n(x_1,x_2,y_1,y_2,u_1,u_2,v_1,v_2,\we,\wo,\ze,\zo,p_{-1},p_{-2},p_{+1},p_{+2},q_{-1},q_{-2},q_{+1},q_{+2},\se,\so,\lambda)
   \;=\; }
       \nonumber \\[4mm]
   & & \qquad\qquad
   \sum_{\sigma \in \dperm_{2n}}
   x_1^{\eareccpeak(\sigma)} x_2^{\eareccdfall(\sigma)}
   y_1^{\ereccval(\sigma)} y_2^{\ereccdrise(\sigma)}
   \:\times
       \qquad\qquad\:
       \nonumber \\[-1mm]
   & & \qquad\qquad\qquad\;\,
   u_1^{\nrcpeak(\sigma)} u_2^{\nrcdfall(\sigma)}
   v_1^{\nrcval(\sigma)} v_2^{\nrcdrise(\sigma)}
   \:\times
       \qquad\qquad\;
       \nonumber \\[3mm]
   & & \qquad\qquad\qquad\;\,
   \we^{\evennrfix(\sigma)} \wo^{\oddnrfix(\sigma)}
   \ze^{\evenrar(\sigma)} \zo^{\oddrar(\sigma)}
   \:\times
       \qquad\qquad\;
       \nonumber \\[3mm]
   & & \qquad\qquad\qquad\;\,
   p_{-1}^{\lcrosscpeak(\sigma)}
   p_{-2}^{\lcrosscdfall(\sigma)}
   p_{+1}^{\ucrosscval(\sigma)}
   p_{+2}^{\ucrosscdrise(\sigma)}
          \:\times
       \qquad\qquad\;
       \nonumber \\[3mm]
   & & \qquad\qquad\qquad\;\,
   q_{-1}^{\lnestcpeak(\sigma)}
   q_{-2}^{\lnestcdfall(\sigma)}
   q_{+1}^{\unestcval(\sigma)}
   q_{+2}^{\unestcdrise(\sigma)}
          \:\times
       \qquad\qquad
       \nonumber \\[3mm]
   & & \qquad\qquad\qquad\;\,
   \se^{\epsnest(\sigma)}
   \so^{\opsnest(\sigma)}
   \, \lambda^{\cyc(\sigma)}
   \;.
 \label{def.dperm.poly.pqgen}
\end{eqnarray}

This is the same as \cite[Equation~(3.21)]{Deb-Sokal} except for the
extra factor of $\lambda^{\cyc(\sigma)}$.
We now state a J-fraction under the specialisations
$v_1=y_1$ and $q_{+1} = p_{+1}$:
\begin{thm}[T-fraction for D-permutations, $p,q$-generalisation]
   \label{thm.DS.pqgen}
The ordinary generating function of the polynomials \eqref{def.dperm.poly.pqgen} specialised to $v_1=y_1$ and $q_{+1} = p_{+1}$ has the
T-type continued fraction
\begin{eqnarray}
   & & \hspace*{-7mm}
\Scale[0.89]{
   \sum\limits_{n=0}^\infty
   \widehat{P}_n(x_1,x_2,y_1,y_2,u_1,u_2,y_1,v_2,\we,\wo,\ze,\zo,p_{-1},p_{-2},p_{+1},p_{+2},q_{-1},q_{-2},p_{+1},q_{+2},\se,\so,\lambda) \: t^n
   \;=\;
}
       \nonumber \\[2mm]
   & & \hspace*{-3mm}
\Scale[0.82]{
   \cfrac{1}{1 - \lambda^2 \ze \zo  \,t - \cfrac{\lambda x_1 y_1  \,t}{1 -  \cfrac{(x_2\!+\!\lambda\se\we)(y_2\!+\!\lambda\so\wo) \,t}{1 - \cfrac{(\lambda + 1) p_{+1} y_1 (p_{-1}x_1\!+\!q_{-1}u_1)  \,t}{1 - \cfrac{(p_{-2}x_2\!+\!q_{-2}u_2\!+\!\lambda\se^2\we)(p_{+2}y_2\!+\!q_{+2}v_2\!+\!\lambda\so^2\wo)  \,t}{1 - \cfrac{(\lambda + 2)p_{+1}^2 y_1(p_{-1}^2 x_1\!+\! q_{-1} [2]_{p_{-1},q_{-1}}u_1)  \,t}{1 - \cfrac{(p_{-2}^2 x_2\!+\! q_{-2} [2]_{p_{-2},q_{-2}} u_2\!+\!\lambda\se^3\we)(p_{+2}^2 y_2\!+\! q_{+2} [2]_{p_{+2},q_{+2}}v_2\!+\!\lambda\so^3\wo)  \,t}{1 - \cdots}}}}}}}
}
       \nonumber \\[1mm]
   \label{eq.thm.dperm.pqgen}
\end{eqnarray}
with coefficients
\begin{subeqnarray}
   \alpha_{2k-1} & = &
   (\lambda + k-1) \: p_{+1}^{k-1} y_1 \:
      \left( p_{-1}^{k-1} x_1 + q_{-1} [k-1]_{p_{-1},q_{-1}} u_1 \right) \:
        \\[1mm]
   \alpha_{2k}   & = &
      \left( p_{-2}^{k-1} x_2 + q_{-2} [k-1]_{p_{-2},q_{-2}} u_2 + \lambda\se^k\we \right) \: \times 
	\nonumber\\
    &&   \left( p_{+2}^{k-1} y_2 + q_{+2} [k-1]_{p_{+2},q_{+2}} v_2 + \lambda\so^k\wo \right) \\[1mm]
   \delta_1  & = &   \lambda^2\ze \zo   \\[1mm]
   \delta_n  & = &   0    \qquad\hbox{for $n \ge 2$}
   \label{eq.weights.dperm.thm.pqgen}
\end{subeqnarray}
\end{thm}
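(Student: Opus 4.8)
The plan is to obtain Theorem~\ref{thm.DS.pqgen} as a specialisation of the master T-fraction for D-permutations (Theorem~\ref{thm.DS.master}, to be stated and proved in Section~\ref{sec.dperm.proofs}), exactly as Theorem~\ref{thm.pqgen} is obtained from Theorem~\ref{thm.master} in the permutation case. Theorem~\ref{thm.DS.master} expresses the ordinary generating function of a polynomial built only from $\lambda^{\cyc(\sigma)}$ and the \emph{index-refined} crossing and nesting statistics~\eqref{def.ucrossnestjk} (attached to the six cycle-classification types of a D-permutation: cycle valley, cycle peak, cycle double fall, cycle double rise, even fixed point, odd fixed point), as a T-fraction whose coefficients are explicit linear combinations of the six families of indeterminates; that theorem itself will be proved by reinterpreting the Deb--Sokal bijection between D-permutations and labelled Schr\"oder paths via Laguerre digraphs, the new ingredient being that the Laguerre-digraph ``history'' of the construction lets one read off $\cyc(\sigma)$. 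Granting Theorem~\ref{thm.DS.master}, the present statement is pure bookkeeping.

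The first step is to record the elementary facts that let the index-refined statistics recover the parity-refined record-and-cycle classification on a D-permutation~$\sigma$: a cycle peak is never a record, and a cycle peak $i$ is an (exclusive) antirecord iff $\lnest(i,\sigma)=0$; a cycle double fall $i$ is an (exclusive) antirecord iff $\lnest(i,\sigma)=0$; dually a cycle valley is never an antirecord and is an (exclusive) record iff $\unest(i,\sigma)=0$, and a cycle double rise $i$ is an (exclusive) record iff $\unest(i,\sigma)=0$; and a fixed point $i$ is a pivot iff $\psnest(i,\sigma)=0$, a neither-record-antirecord otherwise. With these in hand one writes down the specialisation carrying the master polynomial of Theorem~\ref{thm.DS.master} to the polynomial~\eqref{def.dperm.poly.pqgen} under $v_1=y_1$, $q_{+1}=p_{+1}$: on cycle valleys (family $\bsfa$, indexed only by $\ucross(i,\sigma)+\unest(i,\sigma)$) set $\sfa_{\ell}=p_{+1}^{\ell}\,y_1$ --- it is precisely this single index that forces the identification $q_{+1}=p_{+1}$; on cycle peaks set $\sfb_{\ell,0}=p_{-1}^{\ell}x_1$ and $\sfb_{\ell,\ell'}=p_{-1}^{\ell}q_{-1}^{\ell'}u_1$ for $\ell'\ge 1$; on cycle double falls set $\sfc_{\ell,0}=p_{-2}^{\ell}x_2$ and $\sfc_{\ell,\ell'}=p_{-2}^{\ell}q_{-2}^{\ell'}u_2$ for $\ell'\ge 1$; on cycle double rises set $\sfd_{\ell,0}=p_{+2}^{\ell}y_2$ and $\sfd_{\ell,\ell'}=p_{+2}^{\ell}q_{+2}^{\ell'}v_2$ for $\ell'\ge 1$; and on even (resp.\ odd) fixed points set the index-$0$ value to $\ze$ (resp.\ $\zo$) and the index-$\ell$ value to $\se^{\ell}\we$ (resp.\ $\so^{\ell}\wo$) for $\ell\ge 1$.

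It remains to check that the T-fraction coefficients of Theorem~\ref{thm.DS.master} collapse to~\eqref{eq.weights.dperm.thm.pqgen}. The only computation is the evaluation of the master coefficients' inner sums; for instance
\[
   \sum_{\xi=0}^{k-1}\sfb_{k-1-\xi,\,\xi}
   \;=\;
   p_{-1}^{k-1}x_1 \;+\; \sum_{\xi=1}^{k-1}p_{-1}^{k-1-\xi}q_{-1}^{\xi}\,u_1
   \;=\;
   p_{-1}^{k-1}x_1 \;+\; q_{-1}\,[k-1]_{p_{-1},q_{-1}}\,u_1 ,
\]
where $[m]_{p,q}=\sum_{i=0}^{m-1}p^{\,m-1-i}q^{\,i}$; the $\bsfc$- and $\bsfd$-sums are identical with $(p_{-1},q_{-1})$ replaced by $(p_{-2},q_{-2})$ and $(p_{+2},q_{+2})$. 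Pairing $\sfa_{k-1}=p_{+1}^{k-1}y_1$ with the $\bsfb$-sum gives $\alpha_{2k-1}$; adding the even-fixed-point tail $\lambda\se^{k}\we$ to the $\bsfc$-sum, the odd-fixed-point tail $\lambda\so^{k}\wo$ to the $\bsfd$-sum, and multiplying the two factors gives $\alpha_{2k}$; and the index-$0$ data of the two fixed-point families, paired as even--odd in the Deb--Sokal Schr\"oder-path bijection and hence carrying $\lambda^{2}$, gives $\delta_1=\lambda^2\ze\zo$ with $\delta_n=0$ for $n\ge 2$ (pivots have $\psnest=0$, so there are no long level steps at positive height). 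The genuine difficulty is thus not here but inside Theorem~\ref{thm.DS.master}: one must verify, in the Laguerre-digraph reinterpretation of the Deb--Sokal bijection, that cycles of $\sigma$ are created only during the final stage of the insertion history so that $\lambda^{\cyc(\sigma)}$ is correctly tracked, and that the total order driving that history depends only on the underlying Schr\"oder path and not on its labels --- the same point flagged in Section~\ref{subsec.intro.proof} for permutations.
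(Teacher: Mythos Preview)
Your proposal is correct and follows essentially the same route as the paper: Theorem~\ref{thm.DS.pqgen} is obtained by specialising the master T-fraction of Theorem~\ref{thm.DS.master} via exactly the substitutions you list (equations~\eqref{eq.proof.weights.sfa}--\eqref{eq.proof.weights.sff} in the paper), with the record/antirecord dichotomy read off from $\lnest=0$ and $\unest=0$ as in \cite[Lemma~2.10]{Sokal-Zeng_masterpoly}. You have in fact spelled out more of the coefficient computation than the paper does, and correctly pinpointed why the single-index form of $\bsfa$ forces $q_{+1}=p_{+1}$.
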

\noindent We will prove this theorem in Section~\ref{subsec.dperm.proofs}, as a special case of a more general result.

\subsubsection{Master T-fraction}\label{subsec.dperm.master}

In this subsection, we shall provide a master T-fraction
generalising Theorems~\ref{thm.conj.DS} and~\ref{thm.DS.pqgen}.
Let us introduce a polynomial in six infinite families of indeterminates
$\bsfa = (\sfa_{\ell})_{\ell \ge 0}$,
$\bsfb = (\sfb_{\ell,\ell'})_{\ell,\ell' \ge 0}$,
$\bsfc = (\sfc_{\ell,\ell'})_{\ell,\ell' \ge 0}$,
$\bsfd = (\sfd_{\ell,\ell'})_{\ell,\ell' \ge 0}$,
$\bsfe = (\sfe_\ell)_{\ell \ge 0}$,
$\bsff = (\sff_\ell)_{\ell \ge 0}$
that will have a nice T-fraction
and that will include the polynomials 
\eqref{def.Pnhat.dperm} and \eqref{def.dperm.poly.pqgen}
as specialisations.
Please note that $\bsfa, \bsfe$ and $\bsff$ have one index while 
$\bsfb, \bsfc$ and $\bsfd$ have two indices.
Using the index-refined crossing and nesting statistics
defined in \eqref{def.ucrossnestjk},
we define the polynomial $\widehat{Q}_n(\bsfa,\bsfb,\bsfc,\bsfd,\bsfe,\bsff,\lambda)$
by
\begin{eqnarray}
   & & \hspace*{-10mm}
   \widehat{Q}_n(\bsfa,\bsfb,\bsfc,\bsfd,\bsfe,\bsff,\lambda)
   \;=\;
       \nonumber \\[4mm]
   & &
   \sum_{\sigma \in \dperm_{2n}}
   \;\:
   \lambda^{\cyc(\sigma)}
   \prod\limits_{i \in \Cval(\sigma)}  \! \sfa_{\ucross(i,\sigma)\,+\,\unest(i,\sigma)}
   \prod\limits_{i \in \Cpeak(\sigma)} \!\!  \sfb_{\lcross(i,\sigma),\,\lnest(i,\sigma)}
       \:\times
       \qquad\qquad
       \nonumber \\[1mm]
   & & \qquad\;\;\,
   \prod\limits_{i \in \Cdfall(\sigma)} \!\!  \sfc_{\lcross(i,\sigma),\,\lnest(i,\sigma)}
   \;
   \prod\limits_{i \in \Cdrise(\sigma)} \!\!  \sfd_{\ucross(i,\sigma),\,\unest(i,\sigma)}
       \:\times
       \qquad\qquad
       \nonumber \\[1mm]
   & & \qquad\;\:
   \prod\limits_{i \in \Evenfix(\sigma)} \!\!\! \sfe_{\psnest(i,\sigma)}
   \;
   \prod\limits_{i \in \Oddfix(\sigma)}  \!\!\! \sff_{\psnest(i,\sigma)}
   \;.
   \quad
 \label{def.dperm.master}
\end{eqnarray}
where recall that $\Cval(\sigma) = \{ i\colon \sinv(i) > i < \sigma(i) \}$
and likewise for the others.

We remark that \eqref{def.dperm.master} is {\em almost} the same as the polynomial
introduced in \cite[eq.~(3.30)]{Deb-Sokal},
except for the extra factor $\lambda^{\cyc(\sigma)}$
and the index of $\sfa$ depends on the sum $\ucross(i,\sigma) + \unest(i,\sigma)$.
That is the price we have to pay in order to include the statistic $\cyc$. See \cite[Appendix~A]{Deb-Sokal}.
We also note that \eqref{def.dperm.master} is {\em almost} the same as the polynomial
\eqref{def.poly.master} as well,
but restricted to D-permutations and refined to record the parity of
fixed points.

The polynomials \eqref{def.dperm.master} have a beautiful T-fraction:
\begin{thm}[Master T-fraction for D-permutations]
   \label{thm.DS.master}
The ordinary generating function of the polynomials
$\widehat{Q}_n(\bsfa,\bsfb,\bsfc,\bsfd,\bsfe,\bsff,\lambda)$
has the T-type continued fraction
\be
   \sum_{n=0}^\infty \widehat{Q}_n(\bsfa,\bsfb,\bsfc,\bsfd,\bsfe,\bsff,\lambda) \: t^n
   \;=\;
\Scale[0.95]{
	\cfrac{1}{1 - \lambda^2 \sfe_0 \sff_0 t - \cfrac{\lambda \sfa_{0} \sfb_{00} t}{1 - \cfrac{(\sfc_{00} + \lambda\sfe_1)(\sfd_{00} + \lambda\sff_1) t}{1 - \cfrac{(\lambda+1)\sfa_{1}(\sfb_{01} + \sfb_{10}) t}{1 - \cfrac{(\sfc_{01} + \sfc_{10} + \lambda\sfe_2)(\sfd_{01} + \sfd_{10} + \lambda\sff_2)t}{1 - \cdots}}}}}
}
   \label{eq.thm.DS.master}
\ee
with coefficients
\begin{subeqnarray}
   \alpha_{2k-1}
   & = &
	(\lambda+k-1) \sfa_{k-1}
   \left( \sum_{\xi=0}^{k-1} \sfb_{k-1-\xi,\xi} \right)
        \\[2mm]
   \alpha_{2k}
   & = &
   \left( \lambda\sfe_k \,+\, \sum_{\xi=0}^{k-1} \sfc_{k-1-\xi,\xi} \right)
   \left( \lambda\sff_k \,+\, \sum_{\xi=0}^{k-1} \sfd_{k-1-\xi,\xi} \right)
        \\[2mm]
   \delta_1  & = &  \lambda^2\sfe_0 \sff_0 \\[2mm]
   \delta_n  & = &   0    \qquad\hbox{for $n \ge 2$}
 \label{def.weights.DS.master}
\end{subeqnarray}
\end{thm}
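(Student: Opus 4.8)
The plan is to prove Theorem~\ref{thm.DS.master} by the same route as the master J-fraction for permutations (Theorem~\ref{thm.master}), applied now to the bijection of Deb and Sokal between D-permutations and labelled Schr\"oder-type paths \cite{Deb-Sokal}. By Flajolet's theorem \cite{Flajolet_80} (reviewed in Section~\ref{sec.prelimproofs}), the right-hand side of \eqref{eq.thm.DS.master} is the generating function for such labelled paths in which a rise from height $k-1$ to $k$ carries the weight $\alpha_{2k-1}$, a fall from height $k$ to $k-1$ carries $\alpha_{2k}$, and a long level step at height $n-1$ carries $\delta_n$ (so $\delta_1 = \lambda^2\sfe_0\sff_0$ and $\delta_n = 0$ for $n \ge 2$), the weight of a path being the product of its step weights. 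It therefore suffices to exhibit a bijection $\sigma \mapsto (\omega,\xi)$ from $\dperm_{2n}$ onto the appropriate family of labelled paths under which the monomial attached to $\sigma$ in \eqref{def.dperm.master} equals the weight of $(\omega,\xi)$, and then to check that summing the admissible label-weights at each height yields \eqref{def.weights.DS.master}.

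For the bijection I would follow the three-step structure recalled in Section~\ref{subsec.intro.proof}, adapted to D-permutations. Step~1 reads off the shape $\omega$ from the parity-refined cycle classification of $\sigma$: the odd indices (cycle valleys, cycle double rises, odd fixed points) and even indices (cycle peaks, cycle double falls, even fixed points) of Equation~\eqref{eq.parities.1} dictate the rises, falls and level steps, and the D-permutation condition guarantees that the result is of the required Schr\"oder type. Step~2 attaches to $\omega$ the labels $\xi$ recording the index-refined crossing and nesting statistics of \eqref{def.ucrossnestjk}: a function of $(\ucross(j,\sigma),\unest(j,\sigma))$ at a cycle valley $j$, of $(\lcross(k,\sigma),\lnest(k,\sigma))$ at a cycle peak $k$, analogously at cycle double rises and cycle double falls, and $\psnest(i,\sigma)$ at a fixed point $i$, with even and odd fixed points feeding the $\bsfe$- and $\bsff$-indeterminates respectively. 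Step~3 is the inverse map, and Step~3(b) is where cycles are counted: one reconstructs $\sigma$ as an increasing chain of Laguerre digraphs $\laguerre{\emptyset} \subset \cdots \subset \laguerre{[2n]} = L$, inserting at stage $u$ the edge $u \to \sigma(u)$, where $[2n]$ is traversed in the special total order used throughout this paper --- fixed points in increasing order, then anti-excedance indices in increasing order, then excedance indices in \emph{decreasing} order. One verifies that this rebuilding coincides with the inversion-table construction of \cite{Deb-Sokal} (so the map is a bijection inverse to Steps~1--2) and, crucially, that the total order is a function of $\omega$ alone, independent of $\xi$, so the label-weights at a given height may be summed freely.

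The core of the proof is then the cycle-counting lemma: along this history no directed cycle is closed while fixed points or anti-excedance indices are being processed, so every cycle of $\sigma$ is completed at some step of the final, decreasing-excedance stage, where the inserted edge $u \to \sigma(u)$ (with $\sigma(u) < u$) either joins two distinct directed paths of the current digraph or closes one path into a cycle. This extra ``join versus close'' alternative is precisely one additional degree of freedom in the label at that step: when the rise contributing $\alpha_{2k-1}$ has $k$ available path-ends, $k-1$ of them correspond to joins (local weight $1$) and one to a cycle closure (local weight $\lambda$), producing the prefactor $\lambda + k - 1$, while the rest of the label data at the cycle valley and at its matched cycle peak contribute the factors $\sfa_{k-1}$ and $\sum_{\xi=0}^{k-1}\sfb_{k-1-\xi,\xi}$; the single index of $\bsfa$ here (rather than the two indices one might expect) is exactly the price of using one degree of freedom for the cycle-closing choice, as anticipated in the remark following \eqref{def.dperm.master}. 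For the even-index steps the path recursion pairs an even-index event (cycle peak, cycle double fall, or even fixed point) with an odd-index event (cycle double rise or odd fixed point), and summing the label-weights of each over all admissible values gives the two factors $\big(\lambda\sfe_k + \sum_{\xi=0}^{k-1}\sfc_{k-1-\xi,\xi}\big)$ and $\big(\lambda\sff_k + \sum_{\xi=0}^{k-1}\sfd_{k-1-\xi,\xi}\big)$ of $\alpha_{2k}$ --- the $\lambda$ multiplying $\sfe_k$ and $\sff_k$ because a fixed point is a one-vertex cycle, created and closed in a single step --- and likewise $\delta_1 = \lambda^2\sfe_0\sff_0$ collects the contributions of the fixed points of pseudo-nesting $0$.

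The step I expect to be the main obstacle is the cycle-counting lemma together with its bookkeeping: one must prove that the special order genuinely prevents a premature cycle closure (intuitively, when an excedance $u$ is processed in decreasing order, $\sigma(u)$ is a smaller index already lying on some directed path, so inserting $u \to \sigma(u)$ can complete a cycle only when the other end of that path is $u$ itself), and that the join/close choice is both compatible with and independent of the crossing/nesting information carried by the same label, so that the label-weights factor exactly as in \eqref{def.weights.DS.master}. Once Theorem~\ref{thm.DS.master} is established, Theorems~\ref{thm.conj.DS} and~\ref{thm.DS.pqgen}, and (by extracting the coefficient of $\lambda^1$) Corollary~\ref{conj.Tfrac.second.dcycle}, follow by routine specialisations of $\bsfa,\dots,\bsff$.
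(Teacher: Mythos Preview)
Your overall strategy is the paper's: reuse the Deb--Sokal bijection \cite{Deb-Sokal} and reinterpret its inverse as a Laguerre-digraph history so that a cycle-closer lemma supplies the factor $(\lambda+k-1)$.  However, several details you transcribe from the permutation case do not survive the passage to D-permutations, and as written they are wrong.

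First, the total order.  For D-permutations the paper uses only \emph{two} stages, governed by parity rather than by excedance type: Stage~(a) runs through $F=\{2,4,\ldots,2n\}$ (even positions, i.e.\ weak anti-excedances) in increasing order, and Stage~(b) runs through $G=\{1,3,\ldots,2n-1\}$ (odd positions, i.e.\ weak excedances) in decreasing order.  There is no separate fixed-point stage; even and odd fixed points are absorbed into $F$ and $G$ respectively, and their labels are read off the same inversion tables as the other even/odd indices.  This is not cosmetic: the almost-Dyck path of \cite{Deb-Sokal} is built from the parity of $\sigma^{-1}(i)$, and the label bounds (Lemma~\ref{lemma.ineqs.labels.dperm}) and the cycle-closer count (the analogue of Lemma~\ref{lem.cycle.closer}) depend on this parity-based order.

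Second, the edge direction in the final stage.  In Stage~(b) you are processing odd positions $u$, for which $\sigma(u)\ge u$; the inserted edge $u\to\sigma(u)$ points to a \emph{larger} vertex, not smaller.  The cycle closers are still the cycle-valley minima (odd indices in $F'\cap G$), and exactly one value of $\widehat\xi_u\in\{0,\ldots,\lceil h_{u-1}/2\rceil\}$ closes the cycle.

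Third, the attribution of cycle types to $\alpha_{2k-1}$ versus $\alpha_{2k}$.  Cycle peaks contribute to $\alpha_{2k-1}$ (the $\bsfb$-sum), not to $\alpha_{2k}$.  The factor $\alpha_{2k}$ pairs a rise from height $2k-1$ (cycle double fall or even fixed point: the $\lambda\sfe_k+\sum\sfc$ factor) with a fall from height $2k$ (cycle double rise or odd fixed point: the $\lambda\sff_k+\sum\sfd$ factor).  With these corrections in place, the weight computation goes through exactly as in Section~\ref{subsec.computation}.
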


We will prove this theorem in Section~\ref{subsec.dperm.proofs}.
It implies Theorems~\ref{thm.conj.DS} and~\ref{thm.DS.pqgen} by straightforward specialisations.

\begin{rem}
We remark that \eqref{def.dperm.master} is {\em almost} the same as the polynomial
\cite[eq.~(4.34)]{Deb-Sokal} as well,
except our treatment of $\sfd$ is different:
we are able to recover the statistics $\ucross(i,\sigma)$, $\unest(i,\sigma)$,
and not the statistics $\ucross'(i,\sigma)$, $\unest'(i,\sigma)$.
In fact, this separation is what allows us
to prove \cite[Conjecture~4.1]{Deb-Sokal}.
\myendremark
\end{rem}

\subsection{Continued fractions using variant record classification}
\label{subsec.dperm.results.prime}

Similar to \cite[Section~3.5]{Deb-Sokal},
our T-fractions for D-permutations 
(Theorems~\ref{thm.conj.DS}, \ref{thm.DS.pqgen}, \ref{thm.DS.master})
also have variant forms in which
we use the variant index-refined crossing and nesting statistics 
\eqref{def.ucrossnestjk.prime}.
We shall state these variants in this subsection.

\subsubsection{T-fraction}

Let $\widehat{P}_n'$ be the polynomials defined as follows:
\begin{eqnarray}
   & &
   \Scale[0.97]{
	   \widehat{P}_n'(x_1,x_2,y_1,y_2,u_1,u_2,v_1,v_2,\we,\wo,\ze,\zo,\lambda)}
   \;=\;
       \nonumber \\[4mm]
   & & \qquad\qquad
   \sum_{\sigma \in \dperm_{2n}}
   x_1^{\ereccpeak'(\sigma)} x_2^{\eareccdfall'(\sigma)}
   y_1^{\eareccval'(\sigma)} y_2^{\ereccdrise'(\sigma)}
   \:\times
       \qquad\qquad
       \nonumber \\[-1mm]
   & & \qquad\qquad\qquad\:
   u_1^{\nrcpeak'(\sigma)} u_2^{\nrcdfall'(\sigma)}
   v_1^{\nrcval'(\sigma)} v_2^{\nrcdrise'(\sigma)}
   \:\times
       \qquad\qquad
       \nonumber \\[3mm]
   & & \qquad\qquad\qquad\:
   \we^{\evennrfix(\sigma)} \wo^{\oddnrfix(\sigma)}
   \ze^{\evenrar(\sigma)} \zo^{\oddrar(\sigma)}
   \, \lambda^{\cyc(\sigma)}.
 \label{def.Pprime.dperm}
\end{eqnarray}

We have the following variant of Theorem~\ref{thm.conj.DS}.

\begin{thm}
The ordinary generating function of the polynomials $\widehat{P}_n'$
defined in \eqref{def.Pprime.dperm}
specialised to $v_1 = y_1$ has the same T-type continued fraction 
\eqref{eq.thm.DS}/\eqref{eq.thm.conj.DS.weights}
as the polynomials $\widehat{P}_n$ defined in \eqref{def.Pnhat.dperm}.
Therefore,
\be
	\Scale[0.9]{
\widehat{P}'_n(x_1,x_2,y_1,y_2,u_1,u_2,y_1,v_2,\we,\wo,\ze,\zo,\lambda) \;=\; 
	\widehat{P}_n(x_1,x_2,y_1,y_2,u_1,u_2,y_1,v_2,\we,\wo,\ze,\zo,\lambda)}
\;.
\ee
\label{thm.dperm.prime}
\end{thm}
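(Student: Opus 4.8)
The plan is to deduce Theorem~\ref{thm.dperm.prime} from the variant master T-fraction (Theorem~\ref{thm.DS.master.prime}) by exactly the specialization that yields Theorem~\ref{thm.conj.DS} from the master T-fraction Theorem~\ref{thm.DS.master}. Since a formal power series in $t$ is uniquely determined by the continued fraction into which it expands, it suffices to show that $\sum_{n\ge0}\widehat{P}'_n(x_1,x_2,y_1,y_2,u_1,u_2,y_1,v_2,\we,\wo,\ze,\zo,\lambda)\,t^n$ has the T-fraction \eqref{eq.thm.DS}/\eqref{eq.thm.conj.DS.weights}; the asserted identity $\widehat{P}'_n=\widehat{P}_n$ then follows by matching coefficients of $t^n$ against Theorem~\ref{thm.conj.DS}.

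The crux is the role of the specialization $v_1=y_1$. For a D-permutation, the ordinary record-and-cycle classification places each cycle valley in exactly one of $\ereccval,\nrcval$, while the variant classification places it in exactly one of $\eareccval',\nrcval'$; hence $\ereccval(\sigma)+\nrcval(\sigma)=\cval(\sigma)=\eareccval'(\sigma)+\nrcval'(\sigma)$, so after setting $v_1=y_1$ both $\widehat{P}_n$ and $\widehat{P}'_n$ carry only the factor $y_1^{\cval(\sigma)}$ and neither retains the record status of cycle valleys --- which is exactly what lets the cycle-valley indeterminate $\sfa$ in the master polynomial be indexed by a single number. I would then specialize the variant master polynomial (built from the variant index-refined statistics \eqref{def.ucrossnestjk.prime}) in the same way one specializes \eqref{def.dperm.master} to obtain $\widehat{P}_n$: set $\sfa_\ell\mapsto y_1$; for the cycle peaks, cycle double falls and cycle double rises, translate the primed record-classification categories through the vanishing of a primed index-refined nesting statistic --- for instance, a cycle peak is classified $\ereccpeak'$ iff $\unest'(\cdot,\sigma)=0$, which by \eqref{eq.nestprime} is just the unprimed criterion pulled back along $\sigma^{-1}$ --- so that $\sfb_{\ell,0},\sfc_{\ell,0},\sfd_{\ell,0}\mapsto x_1,x_2,y_2$ and $\sfb_{\ell,\ell'},\sfc_{\ell,\ell'},\sfd_{\ell,\ell'}\mapsto u_1,u_2,v_2$ for $\ell'\ge1$; and for fixed points send $\sfe_\ell,\sff_\ell$ to $\ze,\zo$ when $\ell=0$ and to $\we,\wo$ when $\ell\ge1$, exactly as in the derivation of Theorem~\ref{thm.conj.DS} (the four fixed-point statistics $\evennrfix,\oddnrfix,\evenrar,\oddrar$ are literally the same in the two classifications). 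Since the coefficients of the variant master T-fraction have the same structural shape as \eqref{def.weights.DS.master}, this substitution reproduces precisely the coefficients \eqref{eq.thm.conj.DS.weights}, as required.

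The substance of the argument thus lies not in this reduction but in Theorem~\ref{thm.DS.master.prime} itself, whose proof --- reinterpreting Deb--Sokal's variant bijection \cite[Section~6.5]{Deb-Sokal} via Laguerre digraphs and rerunning the cycle-counting analysis used for Theorem~\ref{thm.DS.master} --- is carried out separately in Section~\ref{subsec.dperm.proofs.prime}; granting it, the remaining work here is bookkeeping, namely verifying that the specialization above genuinely reproduces $\widehat{P}'_n$ at $v_1=y_1$, which amounts to the ``primed record category $\Leftrightarrow$ vanishing primed nesting statistic'' translations for the four non-fixed cycle categories together with the (identical) fixed-point translations. An alternative, more hands-on route would be to exhibit an explicit statistic-preserving involution of $\dperm_{2n}$ intertwining the primed and unprimed record-and-cycle classifications while fixing $\cyc$ and the fixed-point statistics --- the relations \eqref{eq.nestprime} make such an involution plausible --- but the systematic route through the master T-fraction is cleaner and parallels \cite[Section~3.5]{Deb-Sokal}, so that is the path I would take.
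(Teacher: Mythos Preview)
Your proposal is correct and takes essentially the same approach as the paper: deduce Theorem~\ref{thm.dperm.prime} from the variant master T-fraction (Theorem~\ref{thm.DS.master.prime}) by specialization, with the substantive work deferred to the proof of Theorem~\ref{thm.DS.master.prime} via the Laguerre-digraph reinterpretation of the second Deb--Sokal bijection. The paper routes the specialization through the intermediate $p,q$-generalization (Theorem~\ref{thm.DS.pqgen.prime}) rather than going directly from master to basic as you do, but this is purely organizational; your identification of the key ingredient --- that the primed record categories are detected by the vanishing of the primed nesting statistics via \eqref{eq.nestprime} and the Sokal--Zeng lemma --- is exactly what makes the specialization \eqref{eq.proof.weights.sfa}--\eqref{eq.proof.weights.sff} work in the variant setting.
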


We will prove Theorem~\ref{thm.dperm.prime} in Section~\ref{subsec.dperm.proofs.prime}.

\subsubsection{$p,q$-generalisation}

We shall now provide a $p,q$-generalisation
for Theorem~\ref{thm.dperm.prime}
by including four pairs of $(p,q)$--variables corresponding to the four
variants of the refined types of crossings and nestings, 
as well as  two variables corresponding to pseudo-nestings for fixed points:
\begin{eqnarray}
   & & \hspace*{-8mm}
   \Scale[0.95]{
   \widehat{P}_n'(x_1,x_2,y_1,y_2,u_1,u_2,v_1,v_2,\we,\wo,\ze,\zo,p_{-1},p_{-2},p_{+1},p_{+2},q_{-1},q_{-2},q_{+1},q_{+2},\se,\so,\lambda)
   \;=\; }
       \nonumber \\[4mm]
   & & \qquad\qquad
   \sum_{\sigma \in \dperm_{2n}}
   x_1^{\ereccpeak'(\sigma)} x_2^{\eareccdfall'(\sigma)}
   y_1^{\eareccval'(\sigma)} y_2^{\ereccdrise'(\sigma)}
   \:\times
       \qquad\qquad\:
       \nonumber \\[-1mm]
   & & \qquad\qquad\qquad\;\,
   u_1^{\nrcpeak'(\sigma)} u_2^{\nrcdfall'(\sigma)}
   v_1^{\nrcval'(\sigma)} v_2^{\nrcdrise'(\sigma)}
   \:\times
       \qquad\qquad\;
       \nonumber \\[3mm]
   & & \qquad\qquad\qquad\;\,
   \we^{\evennrfix(\sigma)} \wo^{\oddnrfix(\sigma)}
   \ze^{\evenrar(\sigma)} \zo^{\oddrar(\sigma)}
   \:\times
       \qquad\qquad\;
       \nonumber \\[3mm]
   & & \qquad\qquad\qquad\;\,
   p_{-1}^{\ucrosscpeak'(\sigma)}
   p_{-2}^{\lcrosscdfall'(\sigma)}
   p_{+1}^{\lcrosscval'(\sigma)}
   p_{+2}^{\ucrosscdrise'(\sigma)}
          \:\times
       \qquad\qquad\;
       \nonumber \\[3mm]
   & & \qquad\qquad\qquad\;\,
   q_{-1}^{\unestcpeak'(\sigma)}
   q_{-2}^{\lnestcdfall'(\sigma)}
   q_{+1}^{\lnestcval'(\sigma)}
   q_{+2}^{\unestcdrise'(\sigma)}
          \:\times
       \qquad\qquad
       \nonumber \\[3mm]
   & & \qquad\qquad\qquad\;\,
   \se^{\epsnest(\sigma)}
   \so^{\opsnest(\sigma)}
   \, \lambda^{\cyc(\sigma)}
   \;.
 \label{def.dperm.poly.pqgen.prime}
\end{eqnarray}

This is the same as \cite[Equation~(3.35)]{Deb-Sokal} except for the
extra factor of $\lambda^{\cyc(\sigma)}$.
We now state a J-fraction under the specialisations
$v_1=y_1$ and $q_{+1} = p_{+1}$:
\begin{thm}
   \label{thm.DS.pqgen.prime}
The ordinary generating function of the polynomials $\widehat{P}_n'$
defined in \eqref{def.dperm.poly.pqgen.prime} 
specialised to $v_1=y_1$ and $q_{+1} = p_{+1}$ has the
same T-type continued fraction 
\eqref{eq.thm.dperm.pqgen}/\eqref{eq.weights.dperm.thm.pqgen}
as the polynomials $\widehat{P}_n$
defined in \eqref{def.dperm.poly.pqgen}.
Therefore
\begin{eqnarray}
   & &  \hspace*{-7mm}
\Scale[0.96]{
   \widehat{P}_n'(x_1,x_2,y_1,y_2,u_1,u_2,y_1,v_2,\we,\wo,\ze,\zo,p_{-1},p_{-2},p_{+1},p_{+2},q_{-1},q_{-2},p_{+1},q_{+2},\se,\so,\lambda)
}
            \nonumber \\
   & &  \hspace*{-7mm}
\Scale[0.96]{
   =\;
   \widehat{P}_n(x_1,x_2,y_1,y_2,u_1,u_2,y_1,v_2,\we,\wo,\ze,\zo,p_{-1},p_{-2},p_{+1},p_{+2},q_{-1},q_{-2},p_{+1},q_{+2},\se,\so,\lambda)
   \;.
}
\nonumber\\
\end{eqnarray}
\end{thm}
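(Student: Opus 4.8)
The plan is to obtain Theorem~\ref{thm.DS.pqgen.prime} as a specialisation of the master T-fraction with variant statistics, Theorem~\ref{thm.DS.master.prime}, by exactly the same substitution that turns Theorem~\ref{thm.DS.master} into Theorem~\ref{thm.DS.pqgen}. Starting from the variant master polynomial $\widehat{Q}_n(\bsfa,\bsfb,\bsfc,\bsfd,\bsfe,\bsff,\lambda)$, one sets $\sfb_{\ell,\ell'}$, $\sfc_{\ell,\ell'}$, $\sfd_{\ell,\ell'}$ equal to the appropriate monomials in $(p_{\pm1},p_{\pm2},q_{\pm1},q_{\pm2})$ and $\sfa_\ell$ equal to a power of $p_{+1}$, with the base weights coming from the variant exclusive-record/antirecord-value classification and from the parity-refined fixed-point classification absorbed into the six families; the finite sums $\sum_{\xi=0}^{k-1}\sfb_{k-1-\xi,\xi}$, etc., appearing in \eqref{def.weights.DS.master} then become exactly the $(p,q)$-integers occurring in \eqref{eq.weights.dperm.thm.pqgen}. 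The two further specialisations $v_1=y_1$ and $q_{+1}=p_{+1}$ play the same roles as in the unprimed case: $v_1=y_1$ lets all cycle valleys carry a common base weight, so that $\bsfa$ can be a one-index family; and $q_{+1}=p_{+1}$ lets the index of $\sfa$ record only the \emph{sum} of the two crossing/nesting statistics at a cycle valley rather than the pair, via the Sokal--Zeng identity \cite[Lemma~2.10]{Sokal-Zeng_masterpoly}.

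The point that makes the ``therefore'' in the statement immediate is that the right-hand side of the master T-fraction \eqref{eq.thm.DS.master}/\eqref{def.weights.DS.master} depends only on the families $\bsfa,\ldots,\bsff$ and on $\lambda$, and not on whether the left-hand side is assembled from the index-refined statistics \eqref{def.ucrossnestjk} or from their variants \eqref{def.ucrossnestjk.prime}. Hence, granting Theorem~\ref{thm.DS.master.prime}, the generating function of the $\widehat{P}_n'$ under $v_1=y_1$, $q_{+1}=p_{+1}$ has the same T-fraction \eqref{eq.thm.dperm.pqgen}/\eqref{eq.weights.dperm.thm.pqgen} as the generating function of the $\widehat{P}_n$ under the same specialisations, which is Theorem~\ref{thm.DS.pqgen}; since a formal power series determines its coefficients, this forces $\widehat{P}_n'=\widehat{P}_n$ coefficientwise. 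One bookkeeping check has to be carried through: the variant index-refined statistics record cycle valleys and cycle double falls via the ``lower primed'' quantities $\lcross',\lnest'$ and cycle peaks and cycle double rises via the ``upper primed'' quantities $\ucross',\unest'$, whereas in the unprimed packaging the roles of ``upper'' and ``lower'' are swapped for valleys and peaks; combined with the dictionary of Section~\ref{subsec.intro.stats} (cycle valleys/peaks exchanging ``exclusive record'' for ``exclusive antirecord'' in the variant classification), one must verify that the substitution above really does carry the primed master polynomial onto the unprimed one.

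The genuine work, and the main obstacle, is Theorem~\ref{thm.DS.master.prime} itself, which is \emph{not} a formal consequence of Theorem~\ref{thm.DS.master} and must be established separately, by reinterpreting the \emph{second} Deb--Sokal bijection --- the one extending Randrianarivony's --- in terms of Laguerre digraphs, in the same spirit in which the proof of Theorem~\ref{thm.DS.master} reinterprets the first. The delicate step there is to verify that, when one runs the Laguerre-digraph ``history'' of the inverse bijection in the prescribed order (fixed points first, then anti-excedances increasing, then excedances decreasing), the crossing and nesting contributions recorded at each edge insertion are precisely the \emph{variant} ones --- the distinguished vertex sitting in third position rather than second, equivalently counted through $\sinv$ --- and that the parity refinement of fixed points and the factor $\lambda^{\cyc(\sigma)}$ survive the reinterpretation unchanged. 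For nestings this is essentially forced by the identities \eqref{eq.nestprime}, $\unest'(k,\sigma)=\unest(\sinv(k),\sigma)$ and $\lnest'(j,\sigma)=\lnest(\sinv(j),\sigma)$; the care is needed for the crossings, where the two labellings genuinely differ. Once Theorem~\ref{thm.DS.master.prime} is proved, Theorem~\ref{thm.DS.pqgen.prime} follows by the purely algebraic specialisation described above.
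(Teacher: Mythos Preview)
Your proposal is correct and follows essentially the same route as the paper: Theorem~\ref{thm.DS.pqgen.prime} is obtained by applying to Theorem~\ref{thm.DS.master.prime} precisely the specialisations \eqref{eq.proof.weights.sfa}--\eqref{eq.proof.weights.sff} that turn Theorem~\ref{thm.DS.master} into Theorem~\ref{thm.DS.pqgen}, and the equality $\widehat P_n'=\widehat P_n$ then follows because both master polynomials share the same T-fraction.

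One small correction in your discussion of the supporting result: for the \emph{variant} bijection the Laguerre-digraph history is not ``fixed points first, then anti-excedances increasing, then excedances decreasing'' --- that is the FZ order for the permutation case. In the paper's variant construction for D-permutations (Section~\ref{subsec.laguerre.dperm.prime}) the history goes through $G'$ in increasing order and then through $F'$ in decreasing order, inserting at step $u$ the edge $\sinv(u)\to u$ rather than $u\to\sigma(u)$; this is what makes the labels record the primed crossing/nesting statistics via the value-based inversion table. This does not affect your argument for Theorem~\ref{thm.DS.pqgen.prime} itself, which is purely a specialisation.
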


We will prove this theorem in Section~\ref{subsec.dperm.proofs.prime},
as a special case of a more general result.

\subsubsection{Master T-fraction}

We introduce a polynomial in six infinite families of indeterminates
$\bsfa,\bsfb,\bsfc,\bsfd,\bsfe,\bsff$ as before but by
using the variant index-refined crossing and nesting statistics
\eqref{def.ucrossnestjk.prime}:
\begin{eqnarray}
   & & \hspace*{-10mm}
   \widehat{Q}_n'(\bsfa,\bsfb,\bsfc,\bsfd,\bsfe,\bsff,\lambda)
   \;=\;
       \nonumber \\[4mm]
   & &
   \sum_{\sigma \in \dperm_{2n}}
   \;\:
   \lambda^{\cyc(\sigma)}
   \prod\limits_{i \in \Cval(\sigma)}  \! \sfa_{\lcross'(i,\sigma)\,+\,\lnest'(i,\sigma)}
   \prod\limits_{i \in \Cpeak(\sigma)} \!\!  \sfb_{\ucross'(i,\sigma),\,\unest'(i,\sigma)}
       \:\times
       \qquad\qquad
       \nonumber \\[1mm]
   & & \qquad\;\;\,
   \prod\limits_{i \in \Cdfall(\sigma)} \!\!  \sfc_{\lcross'(i,\sigma),\,\lnest'(i,\sigma)}
   \;
   \prod\limits_{i \in \Cdrise(\sigma)} \!\!  \sfd_{\ucross'(i,\sigma),\,\unest'(i,\sigma)}
       \:\times
       \qquad\qquad
       \nonumber \\[1mm]
   & & \qquad\;\:
   \prod\limits_{i \in \Evenfix(\sigma)} \!\!\! \sfe_{\psnest(i,\sigma)}
   \;
   \prod\limits_{i \in \Oddfix(\sigma)}  \!\!\! \sff_{\psnest(i,\sigma)}
   \;.
   \quad
 \label{def.dperm.master.prime}
\end{eqnarray}

We remark that \eqref{def.dperm.master.prime}
is {\em almost} the same as the polynomial
introduced in \cite[eq.~(3.33)]{Deb-Sokal},
except for the extra factor $\lambda^{\cyc(\sigma)}$
and the index of $\sfa$ depends on the sum 
$\lcross'(i,\sigma) + \lnest'(i,\sigma)$.

We have the following variant of Theorem~\ref{thm.DS.master}
\begin{thm}
   \label{thm.DS.master.prime}
The ordinary generating function of the polynomials
$\widehat{Q}_n'$ defined in \eqref{def.dperm.master.prime}
has the same T-type continued fraction 
\eqref{eq.thm.DS.master}/\eqref{def.weights.DS.master}
as the polynomials $\widehat{Q}_n$
defined in \eqref{def.dperm.master}.
Therefore
\be
\widehat{Q}_n'(\bsfa,\bsfb,\bsfc,\bsfd,\bsfe,\bsff,\lambda)
\;=\;
\widehat{Q}_n(\bsfa,\bsfb,\bsfc,\bsfd,\bsfe,\bsff,\lambda)
\;.
\ee
\end{thm}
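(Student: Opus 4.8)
The plan is to prove Theorem~\ref{thm.DS.master.prime} by running the argument used for Theorem~\ref{thm.DS.master}, but with the \emph{second} bijection of Deb and Sokal \cite[Section~6.5]{Deb-Sokal} in place of the first, reinterpreted through Laguerre digraphs in exactly the same way. Concretely, it suffices to show that $\sum_{n \ge 0}\widehat{Q}_n'(\bsfa,\bsfb,\bsfc,\bsfd,\bsfe,\bsff,\lambda)\,t^n$ equals the same explicit T-fraction \eqref{eq.thm.DS.master}/\eqref{def.weights.DS.master} already shown in Theorem~\ref{thm.DS.master} to be the generating function of $\sum_{n \ge 0}\widehat{Q}_n(\bsfa,\bsfb,\bsfc,\bsfd,\bsfe,\bsff,\lambda)\,t^n$; the two series are then equal, whence $\widehat{Q}_n' = \widehat{Q}_n$ for every $n$.

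First I would recall that Deb and Sokal's second bijection differs from the first only in the conventions used to reconstruct the two subwords of $\sigma$ from the inversion tables, with the ``left-to-right'' and ``right-to-left'' roles of the excedance and anti-excedance blocks interchanged. Under the Laguerre-digraph interpretation this amounts to building up $\sigma$ through the total order on $[2n]$ obtained from the one used in the proof of Theorem~\ref{thm.DS.master} by swapping the roles of the excedance block $F$ and the anti-excedance block $G$: one processes the fixed-point block $H$ in increasing order, then $F$ in increasing order, and finally $G$ in decreasing order. As in the unprimed case, this total order depends only on the partition of $[2n]$ into $F$, $G$, $H$ --- equivalently, only on the underlying Schr\"oder-path skeleton --- and not on the finer label data, and one checks, exactly as for Theorem~\ref{thm.DS.master} with the roles of $F$ and $G$ exchanged, that cycles can close only during the final stage of this history, so that $\cyc(\sigma)$, and hence the factor $\lambda^{\cyc(\sigma)}$, is recovered step by step.

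Second I would verify that this reinterpreted bijection is still a bijection from $\dperm_{2n}$ onto the same set of labelled Schr\"oder paths that appears in the proof of Theorem~\ref{thm.DS.master}, producing the same unlabelled path (rises at cycle valleys, falls at cycle peaks, colored short level steps at cycle double rises and double falls, and two kinds of long level steps at odd and even fixed points), the only change being that the label attached to a rise or fall over a vertex now records the \emph{variant} index-refined crossing and nesting data \eqref{def.ucrossnestjk.prime} in place of \eqref{def.ucrossnestjk}, while the label at a fixed point is still $\psnest(i,\sigma)$. Using \eqref{eq.nestprime} --- which shows the primed nesting statistics are the unprimed ones relabelled through $\sinv$ --- together with the fact that the index of $\bsfa$ at a cycle valley depends only on the \emph{sum} of the two crossing/nesting numbers there, one reads off from the labelled Schr\"oder path precisely the monomial in \eqref{def.dperm.master.prime}. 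Summing over the labels at each height by means of the Schr\"oder-path form of Flajolet's theorem (Section~\ref{subsec.prelimproofs.1}) then produces the T-fraction \eqref{eq.thm.DS.master}/\eqref{def.weights.DS.master}, completing the proof. The primed versions of Theorems~\ref{thm.conj.DS} and~\ref{thm.DS.pqgen} (Theorems~\ref{thm.dperm.prime} and~\ref{thm.DS.pqgen.prime}) then follow by the same specialisations as in the unprimed case.

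The step I expect to be the main obstacle is the verification in the previous paragraph: one must check carefully that, after translating the variant bijection into a Laguerre-digraph history with $F$ and $G$ interchanged, the \emph{same} unlabelled Schr\"oder path is produced and the labels really encode the primed statistics --- in particular that the $\lambda$-grading by cycles is unaffected by cycles now closing in the $G$-stage rather than the $F$-stage, and that it is the primed \emph{crossing} counts (the primed nesting counts being transparent via \eqref{eq.nestprime}) that the inversion-table labels yield at each step. This is the computation already performed for Theorem~\ref{thm.DS.master}, redone with the roles of excedances and anti-excedances exchanged, and I would organise it as a step-by-step parallel to that proof rather than repeating it in full.
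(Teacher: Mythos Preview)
Your overall strategy is exactly right: you use the second Deb--Sokal bijection \cite[Section~6.5]{Deb-Sokal}, reinterpret the inverse map as a Laguerre-digraph history, check that cycle closers are detected by a single distinguished label at each cycle valley, and conclude that the same T-fraction \eqref{eq.thm.DS.master}/\eqref{def.weights.DS.master} results. This is precisely what the paper does in Section~\ref{subsec.dperm.proofs.prime}. However, several of your concrete details are garbled, and following them literally would not give a proof.

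First, you have imported the permutation setup into the D-permutation setting. For D-permutations there is no separate fixed-point block $H$: the sets are $F=\{2,4,\ldots,2n\}$ and $G=\{1,3,\ldots,2n-1\}$, with even (resp.\ odd) fixed points absorbed into $F$ (resp.\ $G$); see \eqref{eq.steps2excedanceclassification.dperm}. Likewise your description of the path (``colored short level steps at cycle double rises and double falls'') is the Motzkin picture for permutations, not the almost-Dyck/0-Schr\"oder picture for D-permutations: here cycle double falls and even fixed points are \emph{rises}, cycle double rises and odd fixed points are \emph{falls}, and the only level steps are the long level steps at height~0 coming from record-antirecord pairs.

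Second, and more importantly, the ``swap'' you propose is not actually a swap. In the proof of Theorem~\ref{thm.DS.master} the order is already: $F$ increasing (stage~(a)), then $G$ decreasing (stage~(b)); cycle closers occur in stage~(b) and are cycle valley minima (Lemma~\ref{lem.classifyingCycleG.dperm}). Your ``swapped'' order ``$H$, then $F$ increasing, then $G$ decreasing'' is the same order, so it would reproduce the unprimed statistics, not the primed ones. The paper's actual move is different and is the point you are missing: because the second bijection uses \emph{value-based} inversion tables (indexed by $F'$ and $G'$, not $F$ and $G$), the natural history runs through $G'$ in increasing order and then $F'$ in decreasing order, and one uses the variant Laguerre convention $\laguerrep{S}$ in which the edge recorded at step $i\in S$ is $\sinv(i)\to i$ (i.e.\ edges are indexed by their \emph{target}). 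With this convention the initial (not final) vertex of each nontrivial path is a cycle valley, and the cycle closers are again exactly the cycle valley minima (Lemmas~\ref{lem.initialvertexFp.dperm.prime}--\ref{lem.cycle.closer.dperm.prime}); the labels $\widehat{\xi}_i$ then read off $\lnest'$, $\unest'$ and (via Lemma~\ref{lemma.xihat.crossing}) $\lcross'$, $\ucross'$, which is what makes the primed statistics appear. Once you correct to this $G'/F'$ history with the $\laguerrep{\cdot}$ convention, your plan goes through verbatim and matches the paper.
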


Theorem~\ref{thm.DS.master.prime} will be proved in Section~\ref{subsec.dperm.proofs.prime}.
It implies Theorems~\ref{thm.dperm.prime} and~\ref{thm.DS.pqgen.prime}
by straightforward specialisations.

\begin{prob}
Is there a simple bijective proof of Theorem~\ref{thm.DS.master.prime}?
\end{prob}

\begin{sloppy}
\subsection[Reformulation of results using cycle valley minima\\and the Randrianarivony--Zeng conjecture ]{Reformulation of results using cycle valley minima and the Randrianarivony--Zeng conjecture}
\label{subsec.dperm.minval}
\end{sloppy}

We will now rephrase our results in this section using some new statistics
which will also help us to prove Conjecture~\ref{conj.RZ}${}^{\bf\prime}$
as a corollary. 
Our approach here will be the same as that of \cite[Section~4.1.3]{Deb-Sokal}.

We notice that the number of cycles in a permutation can be recovered
if we know the number of  (even and odd) fixed points
and the number of cycle valley minima (or the number of cycle peak maxima).
We will rephrase our results by distributing the weight $\lambda$,
that we had been using for the number of cycles,
among fixed points and cycle valley minima.
In \cite{Deb-Sokal},
this was done by introducing the polynomial
$\widetilde{P}_n$ in \cite[eq.~(4.14)]{Deb-Sokal}
which we recall here:
\begin{eqnarray}
   & &
   \widetilde{P}_n(x_1,x_2,\ytilde_1,y_2,u_1,u_2,\vtilde_1,v_2,\we,\wo,\ze,\zo)
   \;=\;
       \nonumber \\[4mm]
   & & \qquad\qquad
   \sum_{\sigma \in \dperm_{2n}}
   x_1^{\eareccpeak(\sigma)} x_2^{\eareccdfall(\sigma)}
   \ytilde_1^{\minval(\sigma)} y_2^{\ereccdrise(\sigma)}
   \:\times
       \qquad\qquad
       \nonumber \\[-1mm]
   & & \qquad\qquad\qquad\:
   u_1^{\nrcpeak(\sigma)} u_2^{\nrcdfall(\sigma)}
   \vtilde_1^{\nminval(\sigma)} v_2^{\nrcdrise(\sigma)}
   \:\times
       \qquad\qquad
       \nonumber \\[3mm]
   & & \qquad\qquad\qquad\:
   \we^{\evennrfix(\sigma)} \wo^{\oddnrfix(\sigma)}
   \ze^{\evenrar(\sigma)} \zo^{\oddrar(\sigma)}
   \;.
 \label{def.Pntilde}
\end{eqnarray}
We can rephrase Theorem~\ref{thm.conj.DS} by replacing
the factor $(\lambda + k-1)y_1$ with $\ytilde + (k-1) \vtilde$
and removing the factors of $\lambda$ multiplying $\we,\wo,\ze,\zo$.
\addtocounter{thm}{-7}
\begin{thmdot}\hspace{-2mm}${{}^{\bf \prime}}${\em ({\cite[Conjecture~4.2${}^{\bf\prime\prime}$]{Deb-Sokal}}){\bf .} }
The ordinary generating function of the polynomials \eqref{def.Pntilde}
has the T-type continued fraction
\begin{eqnarray}
   & & \hspace*{-12mm}
   \sum_{n=0}^\infty
   \widetilde{P}_n(x_1,x_2,\ytilde_1,y_2,u_1,u_2,\vtilde_1,v_2,\we,\wo,\ze,\zo) \: t^n
   \;=\;
       \nonumber \\ 
   & &
   \cfrac{1}{1 - \ze \zo  \,t - \cfrac{x_1 \ytilde_1  \,t}{1 -  \cfrac{(x_2\!+\!\we)(y_2\!+\!\wo) \,t}{1 - \cfrac{(x_1\!+\!u_1) (\ytilde_1 + \vtilde_1) \,t}{1 - \cfrac{(x_2\!+\!u_2\!+\!\we)(y_2\!+\!v_2\!+\!\wo)  \,t}{1 - \cfrac{(x_1\!+\!2u_1) (\ytilde_1 + 2\vtilde_1) \,t}{1 - \cfrac{(x_2\!+\!2u_2\!+\!\we)(y_2\!+\!2v_2\!+\!\wo)  \,t}{1 - \cdots}}}}}}}
       \nonumber \\[1mm]
   \label{eq.thm.Tfrac.second.bis}
\end{eqnarray}
with coefficients
\begin{subeqnarray}
   \alpha_{2k-1} & = & [x_1 + (k-1) u_1] \: [\ytilde_1 + (k-1) \vtilde_1]
        \\[1mm]
   \alpha_{2k}   & = & [x_2 + (k-1) u_2 + \we] \: [y_2 + (k-1)v_2 + \wo]
        \\[1mm]
   \delta_1  & = &   \ze \zo   \\[1mm]
   \delta_n  & = &   0    \qquad\hbox{for $n \ge 2$}
   \label{eq.thm.Tfrac.second.weights.bis}
\end{subeqnarray}
\label{thm.conjDS.primeprime}
\end{thmdot}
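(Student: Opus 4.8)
The plan is to deduce Theorem~\ref{thm.conjDS.primeprime} as a formal consequence of Theorem~\ref{thm.conj.DS} by a change of variables, using the elementary combinatorial identity
\(
\cyc(\sigma) \;=\; \minval(\sigma) + \evenrar(\sigma) + \oddrar(\sigma) + \evennrfix(\sigma) + \oddnrfix(\sigma)
\),
valid for every $\sigma \in \dperm_{2n}$ (every cycle of length $\ge 2$ contains exactly one cycle valley minimum, while every fixed point lies in exactly one of the classes evenrar, oddrar, evennrfix, oddnrfix), together with $\cval(\sigma) = \minval(\sigma) + \nminval(\sigma)$.

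First I would make explicit the polynomial identity relating the two families of polynomials. Specializing $\widehat{P}_n$ of \eqref{def.Pnhat.dperm} to $v_1 = y_1$, each cycle valley contributes a factor $y_1$ irrespective of its record status, so the exponent of $y_1$ becomes $\cval(\sigma)$; using the identities above one checks termwise over $\sigma \in \dperm_{2n}$ that
\[
\widehat{P}_n(x_1,x_2,y_1,y_2,u_1,u_2,y_1,v_2,\we,\wo,\ze,\zo,\lambda)
\;=\;
\widetilde{P}_n(x_1,x_2,\lambda y_1,y_2,u_1,u_2,y_1,v_2,\lambda\we,\lambda\wo,\lambda\ze,\lambda\zo),
\]
since on the right each minval and each of the four fixed-point classes acquires one extra factor of $\lambda$, which together reconstruct $\lambda^{\cyc(\sigma)}$, while $(\lambda y_1)^{\minval(\sigma)} y_1^{\nminval(\sigma)} = \lambda^{\minval(\sigma)} y_1^{\cval(\sigma)}$.

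Next I would introduce the substitution $\phi$ sending $\ytilde_1 \mapsto \lambda y_1$, $\vtilde_1 \mapsto y_1$, $\we \mapsto \lambda\we$, $\wo \mapsto \lambda\wo$, $\ze \mapsto \lambda\ze$, $\zo \mapsto \lambda\zo$ and fixing the remaining indeterminates, where $\lambda$ is a fresh indeterminate. The crucial observation is that $\phi$ is an \emph{injective} ring homomorphism: a monomial $\ytilde_1^{a}\vtilde_1^{b}\we^{c}\wo^{d}\ze^{e}\zo^{f}\mu$, with $\mu$ a monomial in the other variables, is sent to $\lambda^{a+c+d+e+f}\,y_1^{a+b}\,\we^{c}\wo^{d}\ze^{e}\zo^{f}\mu$, and distinct such monomials have distinct images, so $\phi$ maps the monomial basis injectively into a set of monomials. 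Hence $\phi$ extends coefficientwise to an injective map on formal power series in $t$, and, being a ring homomorphism, it commutes with the continued-fraction operation (the coefficient of $t^n$ in a T-fraction is a polynomial with integer coefficients in the weights): it sends the right-hand side of \eqref{eq.thm.Tfrac.second.bis} to the T-fraction whose weights are $\phi$ applied to \eqref{eq.thm.Tfrac.second.weights.bis}. A short computation gives $\phi(\alpha_{2k-1}) = (\lambda+k-1)[x_1+(k-1)u_1]y_1$, $\phi(\alpha_{2k}) = [x_2+(k-1)u_2+\lambda\we][y_2+(k-1)v_2+\lambda\wo]$, $\phi(\delta_1) = \lambda^2\ze\zo$, and $\phi(\delta_n) = 0$ for $n \ge 2$, i.e.\ exactly the weights \eqref{eq.thm.conj.DS.weights}. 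Applying $\phi$ to the asserted identity \eqref{eq.thm.Tfrac.second.bis}, the left-hand side becomes $\sum_n \widehat{P}_n(x_1,x_2,y_1,y_2,u_1,u_2,y_1,v_2,\we,\wo,\ze,\zo,\lambda)\,t^n$ by the displayed polynomial identity, and the right-hand side becomes the T-fraction with weights \eqref{eq.thm.conj.DS.weights}; these coincide by Theorem~\ref{thm.conj.DS}. Injectivity of $\phi$ then gives \eqref{eq.thm.Tfrac.second.bis}.

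I do not anticipate a real obstacle here: granted Theorem~\ref{thm.conj.DS}, this is a bookkeeping argument. The only points requiring a little care are verifying the injectivity of $\phi$ (so that a polynomial identity may legitimately be checked after the substitution) and tracking that, when the weight $\lambda^{\cyc}$ is redistributed, exactly one factor of $\lambda$ is assigned per cycle — one to each fixed point and one to each cycle valley minimum.
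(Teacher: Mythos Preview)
Your proposal is correct and follows essentially the same approach as the paper: the paper introduces Theorem~\ref{thm.conjDS.primeprime} precisely by ``replacing the factor $(\lambda + k-1)y_1$ with $\ytilde_1 + (k-1)\vtilde_1$ and removing the factors of $\lambda$ multiplying $\we,\wo,\ze,\zo$'' in Theorem~\ref{thm.conj.DS}, which is exactly your substitution $\phi$. Your version is more explicit than the paper's in spelling out why the substitution is invertible (the injectivity of $\phi$ on monomials), which the paper leaves implicit by referring back to \cite[Section~4.1.3]{Deb-Sokal}.
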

\addtocounter{thm}{6}

We leave the rephrasings of Theorems~\ref{thm.DS.pqgen}
and~\ref{thm.DS.master} to the reader
and directly proceed to
rephrase Theorem~\ref{thm.dperm.prime}.
To do this, we introduce the following polynomial:
\begin{eqnarray}
   & &
   \widetilde{P}_n'(x_1,x_2,\ytilde_1,y_2,u_1,u_2,\vtilde_1,v_2,\we,\wo,\ze,\zo)
   \;=\;
       \nonumber \\[4mm]
   & & \qquad\qquad
   \sum_{\sigma \in \dperm_{2n}}
   x_1^{\ereccpeak'(\sigma)} x_2^{\eareccdfall'(\sigma)}
   \ytilde_1^{\minval(\sigma)} y_2^{\ereccdrise'(\sigma)}
   \:\times
       \qquad\qquad
       \nonumber \\[-1mm]
   & & \qquad\qquad\qquad\:
   u_1^{\nrcpeak'(\sigma)} u_2^{\nrcdfall'(\sigma)}
   \vtilde_1^{\nminval(\sigma)} v_2^{\nrcdrise'(\sigma)}
   \:\times
       \qquad\qquad
       \nonumber \\[3mm]
   & & \qquad\qquad\qquad\:
   \we^{\evennrfix(\sigma)} \wo^{\oddnrfix(\sigma)}
   \ze^{\evenrar(\sigma)} \zo^{\oddrar(\sigma)}
   \;.
 \label{def.Pntilde.prime}
\end{eqnarray}

Theorem~\ref{thm.dperm.prime} can now simply be restated as follows:
\addtocounter{thm}{-3}
\begin{thm}\hspace{-3mm}${}^{\bf\prime}$
The ordinary generating function of the polynomials $\widetilde{P}_n'$
defined in \eqref{def.Pntilde.prime}
has the same T-type continued fraction
\eqref{eq.thm.Tfrac.second.bis}/\eqref{eq.thm.Tfrac.second.weights.bis}
as the polynomials $\widetilde{P}_n$ defined in \eqref{def.Pntilde}.
Therefore,
\be
        \Scale[0.9]{
\widetilde{P}_n'(x_1,x_2,\ytilde_1,y_2,u_1,u_2,\vtilde_1,v_2,\we,\wo,\ze,\zo) \;=\;
\widetilde{P}_n(x_1,x_2,\ytilde_1,y_2,u_1,u_2,\vtilde_1,v_2,\we,\wo,\ze,\zo)\;.
}
\ee
\end{thm}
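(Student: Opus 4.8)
The plan is to reduce this statement to Theorem~\ref{thm.dperm.prime} by means of the change of variables that links the ``$\lambda^{\cyc}$'' polynomials \eqref{def.Pnhat.dperm} and \eqref{def.Pprime.dperm} to the ``cycle-valley-minima'' polynomials \eqref{def.Pntilde} and \eqref{def.Pntilde.prime}. The underlying combinatorial fact is that for every $\sigma\in\dperm_{2n}$ one has $\cyc(\sigma)=\minval(\sigma)+\fix(\sigma)$: each cycle of length $\ge 2$ contains a unique element that is both the minimum of its cycle and a cycle valley (and conversely), while the cycles of length $1$ are exactly the fixed points. Together with $\fix=\evennrfix+\oddnrfix+\evenrar+\oddrar$, the partition of $\Cval$ into cycle valley minima and cycle valley non-minima, and the fact that cycle valleys are precisely the indices counted by $\ereccval$ or $\nrcval$, a direct rewriting of the summand in \eqref{def.Pnhat.dperm} after setting $v_1=y_1$ --- the same manipulation that the text performs just before Theorem~\ref{thm.conjDS.primeprime} --- yields
\be
   \widehat{P}_n(x_1,x_2,y_1,y_2,u_1,u_2,y_1,v_2,\we,\wo,\ze,\zo,\lambda)
   \;=\;
   \widetilde{P}_n(x_1,x_2,\lambda y_1,y_2,u_1,u_2,y_1,v_2,\lambda\we,\lambda\wo,\lambda\ze,\lambda\zo)
   \;.
\ee

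The key observation is that \emph{exactly the same} substitution relates the primed polynomials. In \eqref{def.Pprime.dperm} the cycle valleys are split as the indices counted by $\eareccval'$ or $\nrcval'$ rather than by $\ereccval$ or $\nrcval$, but after the specialisation $v_1=y_1$ both splittings collapse to the single weight $y_1^{\cval}$; and the cycle-valley-minima partition, the four fixed-point categories, and the identity $\cyc=\minval+\fix$ are insensitive to which record classification is used. All the remaining factors ($\ereccpeak',\eareccdfall',\ereccdrise',\nrcpeak',\nrcdfall',\nrcdrise'$, together with $\we,\wo,\ze,\zo$) play in \eqref{def.Pprime.dperm} exactly the role that $\eareccpeak,\eareccdfall,\ldots$ play in \eqref{def.Pnhat.dperm}. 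Hence, with no extra work,
\be
   \widehat{P}_n'(x_1,x_2,y_1,y_2,u_1,u_2,y_1,v_2,\we,\wo,\ze,\zo,\lambda)
   \;=\;
   \widetilde{P}_n'(x_1,x_2,\lambda y_1,y_2,u_1,u_2,y_1,v_2,\lambda\we,\lambda\wo,\lambda\ze,\lambda\zo)
   \;.
\ee

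Now I would invoke Theorem~\ref{thm.dperm.prime}, which asserts that the two left-hand sides of the displays above coincide; combining with the displays, $\widetilde{P}_n'$ and $\widetilde{P}_n$ agree after the substitution $\ytilde_1\mapsto\lambda y_1$, $\vtilde_1\mapsto y_1$, $(\we,\wo,\ze,\zo)\mapsto(\lambda\we,\lambda\wo,\lambda\ze,\lambda\zo)$. To upgrade this to the full polynomial identity $\widetilde{P}_n'=\widetilde{P}_n$, note that this substitution is injective on monomials: the other exponents are untouched, and from the exponents of $\lambda$ and of $y_1$ in the image one recovers the original exponents of $\ytilde_1$ and $\vtilde_1$; hence the induced ring homomorphism is injective and the specialised identity descends. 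Finally, the asserted T-fraction \eqref{eq.thm.Tfrac.second.bis}/\eqref{eq.thm.Tfrac.second.weights.bis} for $\widetilde{P}_n'$ is then immediate from Theorem~\ref{thm.conjDS.primeprime} (equivalently, directly from Theorem~\ref{thm.conj.DS} under the same substitution).

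I do not anticipate a real obstacle: the argument is a bookkeeping reduction to Theorem~\ref{thm.dperm.prime}. The only point that needs genuine care is verifying that the rewriting of \eqref{def.Pprime.dperm} into \eqref{def.Pntilde.prime} is governed by \emph{the same} substitution as in the unprimed case --- which comes down to (i) after $v_1=y_1$, cycle valleys carry weight $y_1^{\cval}$ independently of how the variant record classification partitions them, and (ii) the identity $\cyc=\minval+\fix$, with the four fixed-point categories and the two cycle-valley categories behaving identically in the record and the variant-record settings.
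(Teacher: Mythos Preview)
Your proposal is correct and essentially matches the paper's approach: the paper simply declares that Theorem~\ref{thm.dperm.prime} ``can now simply be restated'' as the tilde version, via the very substitution you spell out (replacing $(\lambda+k-1)y_1$ by $\ytilde_1+(k-1)\vtilde_1$ and absorbing $\lambda$ into $\we,\wo,\ze,\zo$). Your explicit verification that this substitution is injective on monomials is a welcome extra bit of rigour that the paper leaves to the reader.
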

\addtocounter{thm}{2}

Recall that we observed
in Section~\ref{subsec.intro.Dperm.conj} 
the equivalence between
{\cite[Conjecture~12]{Randrianarivony_96b}}
and
Conjecture~\ref{conj.RZ}${}^{\bf\prime}$.
We now obtain Conjecture~\ref{conj.RZ}${}^{\bf\prime}$
as a corollary of Theorem~\ref{thm.dperm.prime}${}^{\bf\prime}$.

\begin{cor}[{Conjecture~\ref{conj.RZ}${}^{\bf\prime}$}]
Recall the polynomials $G_n$,
defined in \eqref{eq.def.Gn.RZ}/\eqref{eq.Gn.stats},
\begin{eqnarray}
	G_n(x,y,\bar{x},\bar{y})  & = & \sum_{\sigma \in \dperm^{\rm o}_{2n}}
      x^{{\rm comi}(\sigma)}
      y^{{\rm lema}(\sigma)}
      \bar{x}^{{\rm cemi}(\sigma)}
      \bar{y}^{{\rm remi}(\sigma)}\nonumber\\
	&=&
	\sum_{\sigma \in \dperm^{\rm o}_{2n}}
      x^{\minval(\sigma)}
      y^{{\ereccpeak'}(\sigma)}
      \bar{x}^{{\evennrfix}(\sigma)}
      \bar{y}^{{\eareccdfall'}(\sigma)}\nonumber\;.
\end{eqnarray}
The ordinary generating functions of $G_n$
has the S-type continued fraction
\begin{equation}
        1 + \sum_{n=1}^{\infty} G_n(x,y,\bar{x},\bar{y}) t^n \; = \; 
        \cfrac{1}{1-\cfrac{xy t}{1-\cfrac{1(\bar{x} +\bar{y})t}{1-\cfrac{(x+1)(y+1)t}{1-\cfrac{2(\bar{x} + \bar{y} + 1)}{1-\cfrac{(x+2)(y+2)t}{1-\cfrac{3(\bar{x}+\bar{y}+2)t}{\cdots}}}}}}}
\end{equation}

with coefficients
\begin{subeqnarray}
	\alpha_{2k-1} & = &  (x \,+\, k-1)\: (y\,+\,k-1)
        \\[1mm]
	\alpha_{2k}   & = & k\: (\bar{x} \,+\, \bar{y} \,+\, k-1)
\end{subeqnarray}
\end{cor}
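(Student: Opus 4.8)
The plan is to deduce this corollary from our master T-fraction for D-permutations in its cycle-valley-minima reformulation, by a single specialisation of variables. Concretely, I would start from the continued fraction \eqref{eq.thm.Tfrac.second.bis}/\eqref{eq.thm.Tfrac.second.weights.bis} for the generating function $\sum_{n\ge 0}\widetilde{P}_n'(x_1,x_2,\ytilde_1,y_2,u_1,u_2,\vtilde_1,v_2,\we,\wo,\ze,\zo)\,t^n$, where $\widetilde{P}_n'$ is the polynomial \eqref{def.Pntilde.prime} summed over all of $\dperm_{2n}$ (this is Theorem~\ref{thm.dperm.prime}${}^{\bf\prime}$). Since $G_n$ lives on D-o-semiderangements and involves only four statistics, two reductions are needed: first cut the sum down to $\dperm^{\rm o}_{2n}$, then collapse the unused variables to $1$.

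\emph{Restricting to D-o-semiderangements.} The key point is that setting $\wo=\zo=0$ in $\widetilde{P}_n'$ kills exactly those D-permutations that have at least one odd fixed point. Indeed, a fixed point is never an exclusive record or an exclusive antirecord: if a fixed point $i$ is a record, then $\sigma(1),\dots,\sigma(i-1)\in\{1,\dots,i-1\}$ and $\sigma(i)=i$, so $\{\sigma(1),\dots,\sigma(i)\}=\{1,\dots,i\}$, hence $\sigma(j)>i$ for all $j>i$ and $i$ is also an antirecord. Thus every odd fixed point of a D-permutation is either an odd record-antirecord or an odd neither-record-antirecord, i.e.\ contributes to $\oddrar$ or to $\oddnrfix$. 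So after $\wo=\zo=0$ the surviving $\sigma$ are precisely the elements of $\dperm^{\rm o}_{2n}$ (for which $\oddrar=\oddnrfix=0$ automatically), and moreover $\delta_1=\ze\zo=0$, so the T-fraction degenerates to an S-fraction.

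\emph{Collapsing the remaining variables.} On $\dperm^{\rm o}_{2n}$ I would further set $y_2=u_1=u_2=\vtilde_1=v_2=\ze=1$ and rename $\ytilde_1=x$, $x_1=y$, $\we=\bar{x}$, $x_2=\bar{y}$. By the dictionary between the Randrianarivony--Zeng statistics and ours already recorded in \eqref{eq.Gn.stats} --- namely ${\rm comi}=\minval$, ${\rm lema}=\ereccpeak'$, ${\rm cemi}=\evennrfix$, ${\rm remi}=\eareccdfall'$ --- the specialised polynomial is exactly $G_n(x,y,\bar{x},\bar{y})$. Feeding the same values into the weights \eqref{eq.thm.Tfrac.second.weights.bis} gives $\alpha_{2k-1}=[x_1+(k-1)u_1][\ytilde_1+(k-1)\vtilde_1]=(x+k-1)(y+k-1)$ and $\alpha_{2k}=[x_2+(k-1)u_2+\we][y_2+(k-1)v_2+\wo]=k\,(\bar{x}+\bar{y}+k-1)$, which is precisely the claimed S-fraction; and since $G_0=1$, the constant term $1$ matches, so $1+\sum_{n\ge 1}G_n(x,y,\bar{x},\bar{y})\,t^n=\sum_{n\ge 0}G_n(x,y,\bar{x},\bar{y})\,t^n$ has the asserted expansion.

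The only step that is more than bookkeeping is the restriction argument in the second paragraph, and even there the substance is just the elementary observation that a fixed point cannot be an exclusive record or an exclusive antirecord; so I expect no genuine obstacle here, consistent with this statement being a corollary of Theorem~\ref{thm.dperm.prime}${}^{\bf\prime}$.
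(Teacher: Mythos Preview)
Your proposal is correct and follows essentially the same route as the paper: specialise $\widetilde{P}_n'$ via $\ytilde_1=x$, $x_1=y$, $\we=\bar{x}$, $x_2=\bar{y}$, $\wo=\zo=0$, all other variables $=1$, and read off the weights from \eqref{eq.thm.Tfrac.second.weights.bis}. You simply spell out the step the paper leaves as ``evident'', namely that setting $\wo=\zo=0$ restricts the sum to $\dperm^{\rm o}_{2n}$ because every odd fixed point is either an oddrar or an oddnrfix (this is already implicit in the record-and-cycle table of Section~\ref{subsec.intro.stats}).
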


\begin{proof} It is evident from 
\eqref{eq.Gn.stats}/\eqref{def.Pntilde.prime}
that $G_n$ can be obtained from  $\widetilde{P}_n'$
by specialising $\ytilde_1=x, x_1 = y, \we = \bar{x}, x_2 = \bar{y}$
and $\wo=\zo =0$, and setting all other variables to $1$.
This along with Theorems~\ref{thm.conjDS.primeprime}${}^{\bf\prime}$/\ref{thm.dperm.prime}${}^{\bf\prime}$
proves the result.
\end{proof}


\section{Proof preliminaries}
\label{sec.prelimproofs}

A powerful way to prove continued fraction results is Flajolet's 
\cite{Flajolet_80}
combinatorial interpretation of continued fractions
which interprets Jacobi and Stieltjes-type continued fractions 
in terms of Motzkin and Dyck paths
and its generalisation
\cite{Fusy_15,Oste_15,Josuat-Verges_18,Sokal_totalpos,Elvey-Price-Sokal_wardpoly}
to Schr\"oder paths.
One then uses a bijection from
the combinatorial object of study to labelled paths. 
In our situtation,
the later bijections are
the variant Foata--Zeilberger bijection used in 
\cite[Section~6.1]{Sokal-Zeng_masterpoly} (for permutations),
and the two bijections in \cite[Section~6]{Deb-Sokal} (for D-permutations).
We begin by reviewing briefly these two ingredients.

Our exposition in this section will closely follow 
\cite{Sokal-Zeng_masterpoly, Deb-Sokal}. 

\subsection{Combinatorial interpretation of continued fractions}
   \label{subsec.prelimproofs.1}

Recall that a \textbfit{Motzkin path} of length $n \ge 0$
is a path $\omega = (\omega_0,\ldots,\omega_n)$
in the right quadrant $\N \times \N$,
starting at $\omega_0 = (0,0)$ and ending at $\omega_n = (n,0)$,
whose steps $s_j = \omega_j - \omega_{j-1}$
are $(1,1)$ [``rise'' or ``up step''], $(1,-1)$ [``fall'' or ``down step'']
or $(1,0)$ [``level step''].
We write $h_j$ for the \textbfit{height} of the Motzkin path at abscissa~$j$,
i.e.\ $\omega_j = (j,h_j)$;
note in particular that $h_0 = h_n = 0$.
We write $\scrm_n$ for the set of Motzkin paths of length~$n$,
and $\scrm = \bigcup_{n=0}^\infty \scrm_n$.
A Motzkin path is called a \textbfit{Dyck path} if it has no level steps.
A Dyck path always has even length;
we write $\scrd_{2n}$ for the set of Dyck paths of length~$2n$,
and $\scrd = \bigcup_{n=0}^\infty \scrd_{2n}$.

Let ${\bf a} = (a_i)_{i \ge 0}$, ${\bf b} = (b_i)_{i \ge 1}$
and ${\bf c} = (c_i)_{i \ge 0}$ be indeterminates;
we will work in the ring $\Z[[{\bf a},{\bf b},{\bf c}]]$
of formal power series in these indeterminates.
To each Motzkin path $\omega$ we assign a weight
$W(\omega) \in \Z[{\bf a},{\bf b},{\bf c}]$
that is the product of the weights for the individual steps,
where a rise starting at height~$i$ gets weight~$a_i$,
a~fall starting at height~$i$ gets weight~$b_i$,
and a level step at height~$i$ gets weight~$c_i$.
Flajolet \cite{Flajolet_80} showed that
the generating function of Motzkin paths
can be expressed as a continued fraction:

\begin{thm}[Flajolet's master theorem]
   \label{thm.flajolet}
We have
\be
   \sum_{\omega \in \scrm}  W(\omega)
   \;=\;
   \cfrac{1}{1 - c_0 - \cfrac{a_0 b_1}{1 - c_1 - \cfrac{a_1 b_2}{1- c_2 - \cfrac{a_2 b_3}{1- \cdots}}}}
 \label{eq.thm.flajolet}
\ee
as an identity in $\Z[[{\bf a},{\bf b},{\bf c}]]$.
\end{thm}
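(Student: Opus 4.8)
The plan is to prove \eqref{eq.thm.flajolet} by the classical first-return decomposition. First I would introduce, for each integer $j \ge 0$, the set $\scrm^{[j]}$ of \emph{paths based at height $j$}: lattice paths that start at $(0,j)$, end at $(m,j)$ for some $m \ge 0$, use steps $(1,1)$, $(1,-1)$, $(1,0)$, and never descend below height $j$. I weight these exactly as in the statement --- a rise from height $i$ gets $a_i$, a fall from height $i+1$ to $i$ gets $b_{i+1}$, a level step at height $i$ gets $c_i$ --- and set $F_j \eqdef \sum_{\omega \in \scrm^{[j]}} W(\omega)$. Since the weight of a path of length $n$ is a monomial of total degree $n$ and there are only finitely many such paths, each $F_j$ is a well-defined element of $\Z[[{\bf a},{\bf b},{\bf c}]]$, and $F_0 = \sum_{\omega \in \scrm} W(\omega)$ is exactly the left-hand side of \eqref{eq.thm.flajolet}.

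Next I would derive the functional equation
\be
   F_j \;=\; 1 \,+\, c_j F_j \,+\, a_j b_{j+1} F_{j+1} F_j \qquad (j \ge 0)
\ee
by classifying a path in $\scrm^{[j]}$ by its first step: it is either the empty path (contributing $1$); or it begins with a level step at height $j$ (weight $c_j$) followed by an arbitrary path in $\scrm^{[j]}$; or it begins with a rise to height $j+1$ (weight $a_j$), in which case, cutting at the first return to height $j$, it decomposes as (that rise)(a path in $\scrm^{[j+1]}$)(a fall from height $j+1$, weight $b_{j+1}$)(a path in $\scrm^{[j]}$). The series $1 - c_j - a_j b_{j+1} F_{j+1}$ has constant term $1$, since the indeterminates $c_j$ and $a_j b_{j+1}$ contribute none, so it is a unit in $\Z[[{\bf a},{\bf b},{\bf c}]]$ and I obtain $F_j = (1 - c_j - a_j b_{j+1} F_{j+1})^{-1}$. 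Iterating this $N$ times presents $F_0$ as a finite continued fraction whose innermost entry is $F_{N+1}$.

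Then I would take the limit $N \to \infty$. Let $C_N$ be the finite continued fraction obtained by instead cutting off the iteration at the level $1 - c_N$ --- that is, the one produced by the same recursion but with $F_{N+1}$ replaced by $0$. The same decomposition argument, run with this truncation, shows that $C_N = \sum_\omega W(\omega)$ over Motzkin paths $\omega$ of height at most $N$ (i.e.\ with $h_j \le N$ for all $j$); hence $F_0 - C_N = \sum_\omega W(\omega)$ over Motzkin paths of height $\ge N+1$. Every such path has length $\ge 2N+2$, so all terms of $F_0 - C_N$ have total degree $\ge 2N+2$. Therefore $C_N \to F_0$ coefficientwise, which is precisely the assertion that $F_0$ equals the infinite continued fraction on the right-hand side of \eqref{eq.thm.flajolet}.

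I expect the only point needing care to be this final convergence step: it is what gives the infinite continued fraction a meaning as an element of $\Z[[{\bf a},{\bf b},{\bf c}]]$ and what legitimizes iterating the functional equation indefinitely. If one wants the convergence to be even more transparent, one can attach a bookkeeping variable $t$ marking the length of the path, but this is not needed here, because every step-weight already carries positive degree and so the degree filtration does the work. The rest --- the decomposition and the invertibility --- is routine.
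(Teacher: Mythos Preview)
Your proof is correct and is the classical first-return decomposition argument. Note, however, that the paper does not actually prove Theorem~\ref{thm.flajolet}: it merely states the result and attributes it to Flajolet \cite{Flajolet_80}, so there is no paper proof to compare against. Your argument is essentially the one Flajolet gave in the cited reference.
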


In particular, if $a_{i-1} b_i = \beta_i t^2$ and $c_i = \gamma_i t$
(note that the parameter $t$ is conjugate to the length of the Motzkin path),
we have
\be
   \sum_{n=0}^\infty t^n \sum_{\omega \in \scrm_n}  W(\omega)
   \;=\;
   \cfrac{1}{1 - \gamma_0 t - \cfrac{\beta_1 t^2}{1 - \gamma_1 t - \cfrac{\beta_2 t^2}{1 - \cdots}}}
   \;\,,
 \label{eq.flajolet.motzkin}
\ee
so that the generating function of Motzkin paths with height-dependent weights
is given by the J-type continued fraction \eqref{def.Jtype}.
Similarly, if $a_{i-1} b_i = \alpha_i t$ and $c_i = 0$
(note that $t$ is now conjugate to the semi-length of the Dyck path), we have
\be
   \sum_{n=0}^\infty t^n \sum_{\omega \in \scrd_{2n}}  W(\omega)
   \;=\;
   \cfrac{1}{1 - \cfrac{\alpha_1 t}{1 - \cfrac{\alpha_2 t}{1 - \cdots}}}
   \;\,,
 \label{eq.flajolet.dyck}
\ee
so that the generating function of Dyck paths with height-dependent weights
is given by the S-type continued fraction \eqref{def.Stype}.

Let us now show how to handle Schr\"oder paths within this framework.
A \textbfit{Schr\"oder path} of length $2n$ ($n \ge 0$)
is a path $\omega = (\omega_0,\ldots,\omega_{2n})$
in the right quadrant $\N \times \N$,
starting at $\omega_0 = (0,0)$ and ending at $\omega_{2n} = (2n,0)$,
whose steps are $(1,1)$ [``rise'' or ``up step''],
$(1,-1)$ [``fall'' or ``down step'']
or $(2,0)$ [``long level step''].
We write $s_j$ for the step starting at abscissa $j-1$.
If the step $s_j$ is a rise or a fall,
we set $s_j = \omega_j - \omega_{j-1}$ as before.
If the step $s_j$ is a long level step,
we set $s_j = \omega_{j+1} - \omega_{j-1}$ and leave $\omega_j$ undefined;
furthermore, in this case there is no step $s_{j+1}$.
We write $h_j$ for the height of the Schr\"oder path at abscissa~$j$
whenever this is defined, i.e.\ $\omega_j = (j,h_j)$.
Please note that $\omega_{2n} = (2n,0)$ and $h_{2n} = 0$
are always well-defined,
because there cannot be a long level step starting at abscissa $2n-1$.
Note also that a long level step at even (resp.~odd) height
can occur only at an odd-numbered (resp.~even-numbered) step.
We write $\scrs_{2n}$ for the set of Schr\"oder paths of length~$2n$,
and $\scrs = \bigcup_{n=0}^\infty \scrs_{2n}$.

There is an obvious bijection between Schr\"oder paths and Motzkin paths:
namely, every long level step is mapped onto a level step.
If we apply Flajolet's master theorem with
$a_{i-1} b_i = \alpha_i t$ and $c_i = \delta_{i+1} t$
to the resulting Motzkin path
(note that $t$ is now conjugate to the semi-length
 of the underlying Schr\"oder path),
we obtain
\be
   \sum_{n=0}^\infty t^n \sum_{\omega \in \scrs_{2n}}  W(\omega)
   \;=\;
   \cfrac{1}{1 - \delta_1 t - \cfrac{\alpha_1 t}{1 - \delta_2 t - \cfrac{\alpha_2 t}{1 - \cdots}}}
   \;\,,
 \label{eq.flajolet.schroder}
\ee
so that the generating function of Schr\"oder paths
with height-dependent weights
is given by the T-type continued fraction \eqref{def.Ttype}.
More precisely, every rise gets a weight~1,
every fall starting at height~$i$ gets a weight $\alpha_i$,
and every long level step at height~$i$ gets a weight $\delta_{i+1}$.
This combinatorial interpretation of T-fractions in terms of Schr\"oder paths
was found recently by several authors
\cite{Fusy_15,Oste_15,Josuat-Verges_18,Sokal_totalpos}.

\subsection{Labelled Dyck, Motzkin and Schr\"oder paths}

Let $\bfscra = (\scra_h)_{h \ge 0}$, $\bfscrb = (\scrb_h)_{h \ge 1}$
and $\bfscrc = (\scrc_h)_{h \ge 0}$ be sequences of finite sets.
An
\textbfit{$(\bfscra,\bfscrb,\bfscrc)$-labelled Motzkin path of length $\bm{n}$}
is a pair $(\omega,\xi)$
where $\omega = (\omega_0,\ldots,\omega_n)$
is a Motzkin path of length $n$,
and $\xi = (\xi_1,\ldots,\xi_n)$ is a sequence satisfying
\be
   \xi_i  \:\in\:
   \begin{cases}
       \scra(h_{i-1})  & \textrm{if step $i$ is a rise (i.e.\ $h_i = h_{i-1} + 1$)}
              \\[1mm]
       \scrb(h_{i-1})  & \textrm{if step $i$ is a fall (i.e.\ $h_i = h_{i-1} - 1$)}
              \\[1mm]
       \scrc(h_{i-1})  & \textrm{if step $i$ is a level step (i.e.\ $h_i = h_{i-1}$)}
   \end{cases}
 \label{eq.xi.ineq}
\ee
where $h_{i-1}$ (resp.~$h_i$) is the height of the Motzkin path
before (resp.~after) step $i$.
[For typographical clarity
 we have here written $\scra(h)$ as a synonym for $\scra_h$, etc.]
We call $\xi_i$ the \textbfit{label} associated to step $i$.
We call the pair $(\omega,\xi)$
an \textbfit{$(\bfscra,\bfscrb)$-labelled Dyck path}
if $\omega$ is a Dyck path (in this case $\bfscrc$ plays no role).
We denote by $\scrm_n(\bfscra,\bfscrb,\bfscrc)$
the set of $(\bfscra,\bfscrb,\bfscrc)$-labelled Motzkin paths of length $n$,
and by $\scrd_{2n}(\bfscra,\bfscrb)$
the set of $(\bfscra,\bfscrb)$-labelled Dyck paths of length $2n$.

We define a \textbfit{$(\bfscra,\bfscrb,\bfscrc)$-labelled
Schr\"oder path}
in an analogous way;
now the sets $\scrc_h$ refer to long level steps.
We denote by $\scrs_{2n}(\bfscra,\bfscrb,\bfscrc)$
the set of $(\bfscra,\bfscrb,\bfscrc)$-labelled Schr\"oder paths
of length $2n$.

Let us stress that the sets $\scra_h$, $\scrb_h$ and $\scrc_h$ are allowed
to be empty.
Whenever this happens, the path $\omega$ is forbidden to take a step
of the specified kind starting at the specified height.


\bigskip

\begin{sloppypar}
Following Flajolet \cite[Proposition~7A]{Flajolet_80},
we can state a ``master J-fraction'' for
$(\bfscra,\bfscrb,\bfscrc)$-labelled Motzkin paths.
Let ${\bf a} = (a_{h,\xi})_{h \ge 0 ,\; \xi \in \scra(h)}$,
${\bf b} = (b_{h,\xi})_{h \ge 1 ,\; \xi \in \scrb(h)}$
and \mbox{${\bf c} = (c_{h,\xi})_{h \ge 0 ,\; \xi \in \scrc(h)}$}
be indeterminates;
we give an $(\bfscra,\bfscrb,\bfscrc)$-labelled Motzkin path $(\omega,\xi)$
a weight $W(\omega,\xi)$
that is the product of the weights for the individual steps,
where a rise starting at height~$h$ with label $\xi$ gets weight~$a_{h,\xi}$,
a~fall starting at height~$h$ with label $\xi$ gets weight~$b_{h,\xi}$,
and a level step at height~$h$ with label $\xi$ gets weight~$c_{h,\xi}$.
Then:
\end{sloppypar}
\begin{thm}[Flajolet's master theorem for labelled Motzkin paths]
   \label{thm.flajolet_master_labeled_Motzkin}
We have
\be
   \sum_{n=0}^\infty t^n
   \!\!
   \sum_{(\omega,\xi) \in \scrm_n(\bfscra,\bfscrb,\bfscrc)} \!\!\!  W(\omega,\xi)
   \;=\;
   \cfrac{1}{1 - c_0 t - \cfrac{a_0 b_1 t^2}{1 - c_1 t - \cfrac{a_1 b_2 t^2}{1- c_2 t - \cfrac{a_2 b_3 t^2}{1- \cdots}}}}
\ee
as an identity in $\Z[{\bf a},{\bf b},{\bf c}][[t]]$, where
\be
   a_h  \;=\;  \sum_{\xi \in \scra(h)} a_{h,\xi}
   \;,\qquad
   b_h  \;=\;  \sum_{\xi \in \scrb(h)} b_{h,\xi}
   \;,\qquad
   c_h  \;=\;  \sum_{\xi \in \scrc(h)} c_{h,\xi}
   \;.
 \label{def.weights.akbkck}
\ee
\end{thm}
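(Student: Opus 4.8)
The plan is to reduce the statement to the unlabelled master theorem (Theorem~\ref{thm.flajolet}) by summing out the labels. The key observation is that for a \emph{fixed} Motzkin path $\omega = (\omega_0,\ldots,\omega_n)$, the constraint \eqref{eq.xi.ineq} on the label sequence $\xi = (\xi_1,\ldots,\xi_n)$ is local and uncoupled: the label $\xi_i$ is merely required to lie in a finite set --- one of $\scra(h_{i-1})$, $\scrb(h_{i-1})$, $\scrc(h_{i-1})$ --- that is determined by the type of step $i$ and by the height $h_{i-1}$ of $\omega$ just before step~$i$, with no interaction between distinct $\xi_i$'s. Hence $\xi$ ranges freely over the Cartesian product $\prod_{i=1}^n S_i(\omega)$, where $S_i(\omega)$ is the appropriate one of those three sets.

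First I would use the distributive law to factor the label-sum. Writing $W(\omega,\xi) = \prod_{i=1}^n w_i(\xi_i)$, where $w_i(\xi_i)$ is $a_{h_{i-1},\xi_i}$, $b_{h_{i-1},\xi_i}$, or $c_{h_{i-1},\xi_i}$ according to the type of step $i$, we obtain
\[
\sum_{\xi} W(\omega,\xi)
\;=\;
\prod_{i=1}^n \; \sum_{\xi_i \in S_i(\omega)} w_i(\xi_i)
\;=\;
\prod_{i=1}^n \widetilde{w}_i \,,
\]
where $\widetilde{w}_i$ equals $a_{h_{i-1}}$, $b_{h_{i-1}}$, or $c_{h_{i-1}}$, with $a_h,b_h,c_h$ as in \eqref{def.weights.akbkck}. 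In other words, $\sum_\xi W(\omega,\xi)$ is precisely the weight of $\omega$, viewed as an \emph{unlabelled} Motzkin path, in which a rise from height $h$ carries weight $a_h$, a fall from height $h$ carries weight $b_h$, and a level step at height $h$ carries weight $c_h$. Summing over $n$ and over $\omega \in \scrm_n$, and recording a factor $t$ for each of the $n$ steps, the left-hand side of the theorem becomes $\sum_{\omega \in \scrm} \widetilde{W}(\omega)$, where $\widetilde{W}$ gives a rise from height $h$ the weight $a_h t$, a fall from height $h$ the weight $b_h t$, and a level step at height $h$ the weight $c_h t$.

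Next I would apply Theorem~\ref{thm.flajolet} after the substitution $a_i \mapsto a_i t$, $b_i \mapsto b_i t$, $c_i \mapsto c_i t$, which turns $a_{i-1}b_i$ into $a_{i-1}b_i t^2$ and $c_i$ into $c_i t$; this gives at once
\[
\sum_{\omega \in \scrm} \widetilde{W}(\omega)
\;=\;
\cfrac{1}{1 - c_0 t - \cfrac{a_0 b_1 t^2}{1 - c_1 t - \cfrac{a_1 b_2 t^2}{1- c_2 t - \cfrac{a_2 b_3 t^2}{1- \cdots}}}}\,,
\]
which is the desired identity. (One could equally well quote \eqref{eq.flajolet.motzkin} with $\beta_i = a_{i-1}b_i$ and $\gamma_i = c_i$; the difference is purely cosmetic.)

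The only point requiring a word of care is that everything is to be read in the ring $\Z[{\bf a},{\bf b},{\bf c}][[t]]$, so one should note that the continued fraction is a well-defined element there and that the two sides agree coefficientwise in $t$. This is the standard observation: a Motzkin path of length $n$ never rises above height $\lfloor n/2\rfloor$, so the coefficient of $t^n$ on the left involves only the finitely many indeterminates indexed by heights $\le \lfloor n/2\rfloor$, and correspondingly truncating the continued fraction below level $\lfloor n/2\rfloor + 1$ does not affect its coefficient of $t^n$. Since all of the real content is already packaged in Theorem~\ref{thm.flajolet}, I do not anticipate any genuine obstacle; the one thing to be careful about is exactly that the finite label sets $\scra(h),\scrb(h),\scrc(h)$ influence the answer only through the weight-sums \eqref{def.weights.akbkck}, which is precisely what the independence of the $\xi_i$'s and the distributive law provide.
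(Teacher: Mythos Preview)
Your proposal is correct and follows exactly the approach the paper takes: the paper simply states that the theorem ``is an immediate consequence of Theorem~\ref{thm.flajolet} together with the definitions,'' which is precisely your reduction via summing out the independent labels and applying the unlabelled master theorem. Your write-up is more detailed than the paper's one-line justification, but the content is the same.
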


\noindent
This is an immediate consequence of Theorem~\ref{thm.flajolet}
together with the definitions.

By specialising to ${\bf c} = \bzero$ and replacing $t^2$ by $t$,
we obtain the corresponding theorem
for $(\bfscra,\bfscrb)$-labelled Dyck paths:

\begin{cor}[Flajolet's master theorem for labelled Dyck paths]
   \label{cor.flajolet_master_labeled_Dyck}
We have
\be
   \sum_{n=0}^\infty t^n
   \!\!
   \sum_{(\omega,\xi) \in \scrd_{2n}(\bfscra,\bfscrb)} \!\!\!  W(\omega,\xi)
   \;=\;
   \cfrac{1}{1 - \cfrac{a_0 b_1 t}{1 - \cfrac{a_1 b_2 t}{1- \cfrac{a_2 b_3 t}{1- \cdots}}}}
\ee
as an identity in $\Z[{\bf a},{\bf b}][[t]]$, where
$a_h$ and $b_h$ are defined by \eqref{def.weights.akbkck}.
\end{cor}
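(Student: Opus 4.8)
The plan is to obtain Corollary~\ref{cor.flajolet_master_labeled_Dyck} as a direct specialisation of Theorem~\ref{thm.flajolet_master_labeled_Motzkin}, exactly as the sentence preceding the statement suggests. First I would take the family $\bfscrc = (\scrc_h)_{h \ge 0}$ to consist entirely of empty sets, $\scrc_h = \emptyset$ for all $h \ge 0$. By the defining condition \eqref{eq.xi.ineq}, a level step of a labelled Motzkin path starting at height $h$ must carry a label in $\scrc_h$; since $\scrc_h = \emptyset$, no level step can occur at any height. Hence the surviving $(\bfscra,\bfscrb,\bfscrc)$-labelled Motzkin paths are precisely the $(\bfscra,\bfscrb)$-labelled Dyck paths, and for such a path the Motzkin weight $W(\omega,\xi)$ agrees with its Dyck weight, since there are no level-step factors. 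On the continued-fraction side, each quantity $c_h = \sum_{\xi \in \scrc(h)} c_{h,\xi}$ appearing in Theorem~\ref{thm.flajolet_master_labeled_Motzkin} becomes an empty sum, hence $0$, so the right-hand side collapses to the continued fraction whose $h$-th numerator is $a_{h-1} b_h t^2$ and whose level terms all vanish.

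Next I would account for the difference between length and semilength. A Dyck path has even length, so $\scrm_n(\bfscra,\bfscrb,\bzero)$ is empty for $n$ odd and, under the identification above, equals $\scrd_{2m}(\bfscra,\bfscrb)$ when $n = 2m$; thus the left-hand side of Theorem~\ref{thm.flajolet_master_labeled_Motzkin} becomes $\sum_{m=0}^\infty t^{2m} \sum_{(\omega,\xi) \in \scrd_{2m}(\bfscra,\bfscrb)} W(\omega,\xi)$, a formal power series in $t^2$, while the right-hand side is a continued fraction in which $t$ likewise appears only through $t^2$. Both sides therefore lie in $\Z[{\bf a},{\bf b}][[t^2]]$, and substituting $t^2 \mapsto t$ — a well-defined substitution in that ring — yields exactly the identity claimed in the corollary, with $h$-th numerator $a_{h-1} b_h t$ and the coefficients $a_h,b_h$ given by \eqref{def.weights.akbkck}. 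This finishes the argument.

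I do not expect any genuine obstacle here: all the substance is contained in Theorem~\ref{thm.flajolet_master_labeled_Motzkin}, which is itself an immediate repackaging of Flajolet's Theorem~\ref{thm.flajolet}. The only two points that warrant a moment's attention are (i) verifying that forbidding every level step genuinely identifies the restricted class of labelled Motzkin paths with the labelled Dyck paths together with their weights, and (ii) checking that the $t^2 \mapsto t$ substitution is legitimate, which it is precisely because the generating function on the left lies in $\Z[{\bf a},{\bf b}][[t^2]]$ once the level steps have been removed.
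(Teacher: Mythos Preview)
Your proposal is correct and follows essentially the same approach as the paper, which simply states ``By specialising to ${\bf c} = \bzero$ and replacing $t^2$ by $t$'' before the corollary. Your version is slightly more careful in that you forbid level steps combinatorially via $\scrc_h = \emptyset$ rather than only setting their weights to zero, and you justify the $t^2 \mapsto t$ substitution explicitly, but the substance is identical.
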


Similarly, for labelled Schr\"oder paths we have:

\begin{thm}[Flajolet's master theorem for labelled Schr\"oder paths]
   \label{thm.flajolet_master_labeled_Schroder}
We have
\be
   \sum_{n=0}^\infty t^n
   \!\!
   \sum_{(\omega,\xi) \in \scrs_{2n}(\bfscra,\bfscrb,\bfscrc)} \!\!\!  W(\omega,\xi)
   \;=\;
   \cfrac{1}{1 - c_0 t - \cfrac{a_0 b_1 t}{1 - c_1 t - \cfrac{a_1 b_2 t}{1- c_2 t - \cfrac{a_2 b_3 t}{1- \cdots}}}}
\ee
as an identity in $\Z[{\bf a},{\bf b},{\bf c}][[t]]$, where
$a_h, b_h, c_h$ are defined by \eqref{def.weights.akbkck},
with $c_{h,\xi}$ now referring to long level steps.
\end{thm}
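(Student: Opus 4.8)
The plan is to derive this theorem from the labelled Motzkin master theorem (Theorem~\ref{thm.flajolet_master_labeled_Motzkin}) --- equivalently, from Flajolet's master theorem (Theorem~\ref{thm.flajolet}) in its labelled form --- via the standard bijection between Schr\"oder and Motzkin paths, exactly as the unlabelled interpretation \eqref{eq.flajolet.schroder} was obtained. Let $\phi$ be the map sending a Schr\"oder path to the Motzkin path obtained by contracting every long level step $(2,0)$ to a level step $(1,0)$; this is the bijection $\scrs \to \scrm$ already mentioned in Section~\ref{subsec.prelimproofs.1}. Since $\phi$ sends rises to rises, falls to falls and long level steps to level steps, and preserves the height before each step, it lifts to a bijection between $(\bfscra,\bfscrb,\bfscrc)$-labelled Schr\"oder paths and $(\bfscra,\bfscrb,\bfscrc)$-labelled Motzkin paths: a rise (resp.\ fall, long level step) starting at height $h$ carries a label in $\scra_h$ (resp.\ $\scrb_h$, $\scrc_h$), and under $\phi$ it becomes a rise (resp.\ fall, level step) at height $h$, which is precisely what an $(\bfscra,\bfscrb,\bfscrc)$-labelled Motzkin path requires. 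In particular $\phi$ preserves the label sequence $\xi$, hence the weight $W(\omega,\xi)$.

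Next I would track the semi-length: a Schr\"oder path of length $2n$ has $k$ rises, $k$ falls and $\ell$ long level steps with $k+\ell=n$, and its image under $\phi$ is a Motzkin path with $k$ rises, $k$ falls and $\ell$ level steps, so $n$ equals the number of falls plus the number of level steps of the image path. Hence the left-hand side of the claimed identity equals $\sum_{(\omega,\xi)\in\scrm(\bfscra,\bfscrb,\bfscrc)} W(\omega,\xi)\, t^{\#\mathrm{falls}+\#\mathrm{levels}}$. To evaluate this I would invoke Theorem~\ref{thm.flajolet_master_labeled_Motzkin} with $t$ set to $1$ (equivalently, the labelled form of Theorem~\ref{thm.flajolet}) applied with the weights $a_{h,\xi}$ kept unchanged and $b_{h,\xi}$, $c_{h,\xi}$ each multiplied by $t$: this inserts exactly one factor of $t$ per fall and per level step, so $a_h b_{h+1} \mapsto a_h b_{h+1} t$ and $c_h \mapsto c_h t$, and the continued fraction becomes $\cfrac{1}{1-c_0 t - \cfrac{a_0 b_1 t}{1 - c_1 t - \cfrac{a_1 b_2 t}{1-\cdots}}}$ with $a_h,b_h,c_h$ as in \eqref{def.weights.akbkck}. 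This is exactly the right-hand side, so combining the two steps completes the proof.

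The only point requiring any care --- the ``obstacle'', such as it is --- is the bookkeeping of the variable $t$: one wants a single power of $t$ for each rise--fall pair and for each (long) level step, which is why one should not read the identity off the length-graded form of the labelled Motzkin theorem but rather use its ungraded form with $t$ reinstated through the weights of the falls and the level steps. Everything else is a routine unwinding of the definitions, just as in the note following Theorem~\ref{thm.flajolet_master_labeled_Motzkin}.
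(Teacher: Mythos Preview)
Your proposal is correct and matches the paper's approach: the paper does not give an explicit proof but simply says ``Similarly, for labelled Schr\"oder paths we have\ldots'', relying on the same Schr\"oder-to-Motzkin bijection already used to obtain \eqref{eq.flajolet.schroder} together with the definitions of labelled paths. Your write-up just makes explicit the bookkeeping (in particular the tracking of $t$ via falls and long level steps) that the paper leaves to the reader.
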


\section{Permutations: Proof of Theorems~\ref{thm.conj.SZ}, \ref{thm.pqgen}, \ref{thm.master}}
\label{sec.permutations.proofs}

Sokal and Zeng \cite[Section~6.1]{Sokal-Zeng_masterpoly}
used a variant of the Foata--Zeilberger bijection \cite{Foata_90}
to prove \cite[Theorems~2.1(a), 2.2, 2.5, 2.7 and 2.9]{Sokal-Zeng_masterpoly}
i.e., their ``first theorems'' for permutations.
We will provide a new interpretation to this bijection
in terms of Laguerre digraphs
and then use this interpretation
to prove our theorems for permutations.

We first recall Sokal and Zeng's bijection 
in Subsection~\ref{subsec.FZrecall.perm},
and then introduce our interpretation in 
Subsection~\ref{subsec.laguerre}.
In Subsection~\ref{subsec.FZ.laguerre.eg}, 
we write out our construction explicitly for our two running examples.
Finally, we complete our proofs in Subsection~\ref{subsec.computation}.

\subsection{Sokal--Zeng variant of the Foata--Zeilberger bijection}
\label{subsec.FZrecall.perm}

Sokal and Zeng employed a variant of the Foata--Zeilberger bijection
to prove \cite[Theorems~2.1(a), 2.2, 2.5, 2.7, and 2.9]{Sokal-Zeng_masterpoly},
i.e., their ``first theorems'' for permutations.
We begin by recalling this bijection
which is a correspondence between $\Sym_n$ and the set of
$(\bfscra, \bfscrb,\bfscrc)$-labelled
Motzkin paths of length $n$, where the labels $\xi_i$ lie in the sets
\begin{subeqnarray}
	\scra_h        & = & \{0,\ldots, h\}     \,\:\;\;\qquad\qquad\qquad\quad \qquad \qquad\qquad\qquad \qquad\hbox{for $h \ge 0$}  \\
	\scrb_h        & = &  \{0,\ldots,h-1\}   \:\qquad\qquad\qquad\quad  \qquad\qquad\qquad \qquad\quad\hbox{for $h \ge 1$}  \\
	\scrc_h  & = &  
	\left(\{1\}\times C_h^{(1)}\right)
	\,\cup\, 
	\left(\{2\}\times C_h^{(2)}\right)
	\,\cup\,
	\left(\{3\}\times C_h^{(3)}\right)
	\qquad\hbox{for $h \ge 0$}  
 \label{def.abc}
\end{subeqnarray}
where
\begin{subeqnarray}
	C_h^{(1)} & = & \{0,\ldots,h-1\} \qquad\hbox{for $k \ge 0$} \\
	C_h^{(2)} & = & \{0,\ldots,h-1\} \qquad\hbox{for $k \ge 0$}  \\
	C_h^{(3)} & = & \{0\}  \qquad\qquad\qquad\;\:\hbox{for $k \ge 0$} 
\end{subeqnarray}

Notice that our convention for labels in this paper are slightly different from
\cite{Sokal-Zeng_masterpoly} and 
we instead follow those in 
\cite[Section~5]{Deb-Sokal}. A key difference 
is that our label set starts at $0$ and not at $1$. 
A level step that has label 
$\xi_h\in\{i\}\times C_h^{(i)}$
will be called a level step of type $i$ ($i=1,2,3$).
By a 3-coloured Motzkin path, we will refer to a Motzkin path $\omega$ 
where each of its level steps is assigned a type.
We will use $\overline{\omega}$ to denote a 3-coloured Motzkin path.

We will begin by recalling how the Motzkin path $\omega$ is defined;
then we will explain how the labels $\xi$ are defined;
next we sketch the proof that the mapping is indeed a bijection.
There were two more steps in \cite[Section~6.1]{Sokal-Zeng_masterpoly}
which we will not recall;
these were the translation of the various statistics from
$\Sym_n$ to labelled Motzkin paths;
and summing over labels $\xi$ to obtain the weight $W(\omega)$.

\bigskip

{\bf Step 1: Definition of the Motzkin path.}
Given a permutation $\sigma \in \Sym_n$,
we classify the indices $i \in [n]$
according to the cycle classification.
We then define a path $\omega = (\omega_0,\ldots,\omega_n)$
starting at $\omega_0 = (0,0)$ and ending at $\omega_n = (n,0)$,
with steps $s_1,\ldots,s_n$, as follows:
\begin{itemize}
   \item If $i$ is a cycle valley, then $s_i$ is a rise.
   \item If $i$ is a cycle peak, then $s_i$ is a fall.
   \item If $i$ is a cycle double fall, then $s_i$ is a level step of type 1.
   \item If $i$ is a cycle double rise, then $s_i$ is a level step of type 2.
   \item If $i$ is a fixed point, then $s_i$ is a level step of type 3.
\end{itemize}

The fact that the resulting path is indeed a Motzkin path was proved by
providing an interpretation of the height $h_i$ which we recall here:

\begin{lem}[{{\cite[Lemma~6.1]{Sokal-Zeng_masterpoly}}}]
   \label{lemma.heights}
For $i \in [n+1]$ we have
\begin{subeqnarray}
   h_{i-1}  & = &  \# \{j < i \colon\:  \sigma(j) \ge i \}
       \slabel{eq.lemma.heights.a}  \\[2mm]
            & = &  \# \{j < i \colon\:  \sigma^{-1}(j) \ge i \}.
       \slabel{eq.lemma.heights.b}
 \label{eq.lemma.heights}
\end{subeqnarray}
\end{lem}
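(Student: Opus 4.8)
The plan is to prove \eqref{eq.lemma.heights.a} by a telescoping/induction argument on $i$, and then deduce \eqref{eq.lemma.heights.b} from it by a bijection count. The starting observation is that, by the definition of $\omega$ in Step 1, the step $s_i$ is a rise exactly when $i$ is a cycle valley, a fall exactly when $i$ is a cycle peak, and a level step (of one of the three types) in the three remaining cases. Hence the height increments satisfy $h_i - h_{i-1} = \mathbf{1}[i \in \Cval(\sigma)] - \mathbf{1}[i \in \Cpeak(\sigma)]$ for $1 \le i \le n$, and since $h_0 = 0$ this gives $h_{i-1} = \#\{j < i : j \in \Cval(\sigma)\} - \#\{j < i : j \in \Cpeak(\sigma)\}$. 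So it suffices to show that $f(i) := \#\{j < i : \sigma(j) \ge i\}$ has the same initial value and the same increments.

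First, $f(1) = 0 = h_0$. For $1 \le i \le n$ I would compare $f(i) = \#\{j \le i-1 : \sigma(j) \ge i\}$ with $f(i+1) = \#\{j \le i : \sigma(j) \ge i+1\}$: enlarging the index range from $[i-1]$ to $[i]$ adds the index $j = i$ iff $\sigma(i) \ge i+1$, i.e.\ iff $\sigma(i) > i$; and raising the threshold from $\sigma(j) \ge i$ to $\sigma(j) \ge i+1$ deletes precisely the index $j$ with $\sigma(j) = i$, namely $j = \sinv(i)$, and only if $\sinv(i) \le i-1$, i.e.\ iff $\sinv(i) < i$. Since these added and deleted indices are distinct unless $i$ is a fixed point (in which case neither occurs), we get $f(i+1) - f(i) = \mathbf{1}[\sigma(i) > i] - \mathbf{1}[\sinv(i) < i]$. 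Inspecting the five cases of the cycle classification, this increment is $+1$ when $i$ is a cycle valley ($\sinv(i) > i < \sigma(i)$), $-1$ when $i$ is a cycle peak ($\sinv(i) < i > \sigma(i)$), and $0$ for cycle double rises, cycle double falls and fixed points --- exactly matching the increments of $h_{i-1}$ found above. By induction, $f(i) = h_{i-1}$ for all $i \in [n+1]$, which is \eqref{eq.lemma.heights.a}; in particular $h_{i-1} = f(i) \ge 0$ and $h_n = f(n+1) = 0$, re-confirming a posteriori that $\omega$ is indeed a Motzkin path.

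For \eqref{eq.lemma.heights.b} I would observe that $\#\{j < i : \sigma(j) \ge i\}$ and $\#\{j < i : \sinv(j) \ge i\}$ are both equal to $(i-1) - |[i-1] \cap \sigma([i-1])|$: the bijection $j \mapsto \sigma(j)$ carries $\{j \in [i-1] : \sigma(j) \in [i-1]\}$ onto $[i-1] \cap \sigma([i-1])$, so the first count is the complement in $[i-1]$ of that set, while $\#\{j < i : \sinv(j) \ge i\}$ counts exactly the elements of $[i-1]$ that do not lie in $\sigma([i-1])$, which is the same complement. Equivalently, one can note that the cycle classification --- and therefore the underlying Motzkin path $\omega$ --- is invariant under $\sigma \leftrightarrow \sinv$, with cycle double rises and cycle double falls interchanged but both still level steps, so \eqref{eq.lemma.heights.b} is simply \eqref{eq.lemma.heights.a} applied to $\sinv$. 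The only point demanding care in this argument is the increment computation $f(i+1) - f(i)$ together with its five-case evaluation against the step types; everything else is bookkeeping, and one should be careful not to presuppose $\omega \in \scrm_n$ (nonnegativity of the heights) before the formula has been established.
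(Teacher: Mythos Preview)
Your proof is correct. The paper does not actually prove this lemma: it merely recalls it as \cite[Lemma~6.1]{Sokal-Zeng_masterpoly} without reproducing the argument, so there is no in-paper proof to compare against. Your telescoping computation of the increment $f(i+1)-f(i)=\mathbf{1}[\sigma(i)>i]-\mathbf{1}[\sinv(i)<i]$ and the five-case match against the step types is exactly the standard argument, and your derivation of \eqref{eq.lemma.heights.b} via the common value $(i-1)-|[i-1]\cap\sigma([i-1])|$ (or equivalently by applying \eqref{eq.lemma.heights.a} to $\sinv$) is clean and correct.
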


We also recall (\cite[eq.~(6.4)]{Sokal-Zeng_masterpoly})
which is an equivalent formulation of Equation~\eqref{eq.lemma.heights}: 
\begin{subeqnarray}
   h_{i}  & = &  \# \{j \leq i \colon\:  \sigma(j) > i \}
       \slabel{eq.lemma.heights.equiv.a}  \\[2mm]
            & = &  \# \{j \leq i \colon\:  \sinv(j) > i \}.
       \slabel{eq.lemma.heights.equiv.b}
 \label{eq.lemma.heights.equiv}
\end{subeqnarray}

Notice that if $i$ is a fixed point,
then by comparing \eqref{eq.lemma.heights.a}/\eqref{eq.lemma.heights.equiv.a}
with \eqref{def.level} 
we see that the height of the Motzkin path after (or before) step $i$
equals the number of pseudo-nestings of the fixed point:
\be
   h_{i-1}  \;=\;  h_i  \;=\;   \psnest(i,\sigma)
   \;.
 \label{eq.height.fix}
\ee

\bigskip

{\bf Step 2: Definition of the labels $\bm{\xi_i}$.}
We now recall the definition of the labels
\be
   \xi_i
   \;=\;
   \begin{cases}
        \#\{j \colon\: j < i \hbox{ and } \sigma(j) > \sigma(i) \}
           & \textrm{if $\sigma(i) > i$ \quad i.e. $i\in \Cval\cup \Cdrise$}
              \\[1mm]
        \#\{j \colon\: j > i \hbox{ and }  \sigma(j) < \sigma(i) \}
	& \textrm{if $\sigma(i) < i$ \quad i.e. $i\in \Cpeak\cup \Cdfall$}
              \\[1mm]
      0   & \textrm{if $\sigma(i) = i$ \quad i.e. $i\in\Fix$}
   \end{cases}
 \label{def.xi}
\ee
where recall that $\Cval$ is the set of all cycle valleys of $\sigma$
and likewise for the others.
For sake of brevity, we are abusing notation
for fixed points, cycle double rises and cycle double falls 
by dropping the first index of $\xi_i$.

\begin{sloppy}
Compare our definition~\eqref{def.xi} with \cite[(6.5)]{Sokal-Zeng_masterpoly}
and notice the shift from $1$ to $0$.
These definitions have a simple interpretation in terms of the
nesting statistics defined in (\ref{def.ucrossnestjk}b,d):
\end{sloppy}
\be
   \xi_i 
   \;=\;
   \begin{cases}
       \unest(i,\sigma) & \textrm{if $\sigma(i) > i$ \quad i.e. $i\in \Cval\cup \Cdrise$}
              \\[1mm]
	      \lnest(i,\sigma) & \textrm{if $\sigma(i) < i$ \quad i.e. $i\in \Cpeak\cup \Cdfall$}
              \\[1mm]
       0   & \textrm{if $\sigma(i) = i$ \quad i.e. $i\in\Fix$}
   \end{cases}
 \label{def.xi.bis}
\ee
To verify that the inequalities \eqref{eq.xi.ineq}/\eqref{def.abc}
are satisfied; to do this, we interpret $h_{i-1} - \xi_i$
in terms of the crossing statistics defined in (\ref{def.ucrossnestjk}a,c):

\begin{lem}[Crossing statistics]
   \label{lemma.crossing}
We have
\begin{eqnarray}
   h_{i-1}  - \xi_i   & = &  \ucross(i,\sigma)
         \quad\textrm{\rm if $i \in \Cval$}
     \label{eq.lemma.ucrosscval} \\[1mm]
   h_{i-1}-1 - \xi_i   & = &  \ucross(i,\sigma)
         \quad\textrm{\rm if $i \in \Cdrise$}
     \label{eq.lemma.ucrosscdrise} \\[1mm]
   h_{i-1}-1 - \xi_i   & = &  \lcross(i,\sigma)
         \quad\:\textrm{\rm if $i \in \Cpeak \,\cup\, \Cdfall$}
     \label{eq.lemma.lcrosscpeak}
\end{eqnarray}
\end{lem}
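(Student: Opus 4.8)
The plan is to read off both sides of each identity from facts already in hand: the height formulas of Lemma~\ref{lemma.heights}, the label formula \eqref{def.xi} (equivalently \eqref{def.xi.bis}), and the definitions \eqref{def.ucrossnestjk} of the crossing statistics. As a preliminary step I would rewrite the two relevant crossing statistics with the index $i$ in its distinguished slot: expanding (\ref{def.ucrossnestjk}a) with the second-position index equal to $i$ gives $\ucross(i,\sigma) = \#\{a<i \colon i<\sigma(a)<\sigma(i)\}$, while expanding (\ref{def.ucrossnestjk}c) with the third-position index equal to $i$ gives $\lcross(i,\sigma) = \#\{l>i \colon \sigma(i)<\sigma(l)<i\}$. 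These are the shapes I aim to recover on the left-hand sides.

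For the excedance cases $i \in \Cval \cup \Cdrise$ (so $\sigma(i)>i$): by \eqref{def.xi} the label is $\xi_i = \#\{j<i \colon \sigma(j)>\sigma(i)\}$, and by \eqref{eq.lemma.heights.a} the height is $h_{i-1} = \#\{j<i \colon \sigma(j)\ge i\}$. Since $\sigma(i)>i$, the set defining $\xi_i$ is contained in the one defining $h_{i-1}$, and --- using that $\sigma(j)=\sigma(i)$ forces $j=i$ --- the set difference consists of $\{j<i \colon i<\sigma(j)<\sigma(i)\}$ together with the index $\sinv(i)$ exactly when $\sinv(i)<i$ (the unique $j<i$, if any, with $\sigma(j)=i$). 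The cycle classification of $i$ settles this dichotomy: a cycle valley has $\sinv(i)>i$, so no correction occurs and $h_{i-1}-\xi_i = \ucross(i,\sigma)$, which is \eqref{eq.lemma.ucrosscval}; a cycle double rise has $\sinv(i)<i$, contributing exactly one extra index, so $h_{i-1}-1-\xi_i = \ucross(i,\sigma)$, which is \eqref{eq.lemma.ucrosscdrise}.

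For the anti-excedance cases $i \in \Cpeak \cup \Cdfall$ (so $\sigma(i)<i$): here I would instead use the second height formula \eqref{eq.lemma.heights.b}, namely $h_{i-1} = \#\{j<i \colon \sinv(j)\ge i\}$, and apply the substitution $j\mapsto\sigma(j)$ to rewrite it as $h_{i-1} = \#\{j\ge i \colon \sigma(j)<i\}$; this is valid for every $i$ since $\sigma$ is a bijection of $[n]$. Because $\sigma(i)<i$, the index $j=i$ belongs to this set, so $h_{i-1}-1 = \#\{j>i \colon \sigma(j)<i\}$ --- and this uniform bookkeeping covers cycle peaks and cycle double falls simultaneously, without needing to inspect $\sinv(i)$. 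Meanwhile $\xi_i = \lnest(i,\sigma) = \#\{j>i \colon \sigma(j)<\sigma(i)\}$ by \eqref{def.xi}, and since $\sigma(i)<i$ this is a subset of $\{j>i \colon \sigma(j)<i\}$, with complement $\{j>i \colon \sigma(i)<\sigma(j)<i\}$, which is exactly the reformulation of $\lcross(i,\sigma)$ displayed above. Hence $h_{i-1}-1-\xi_i = \lcross(i,\sigma)$, which is \eqref{eq.lemma.lcrosscpeak}.

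I do not expect a genuine obstacle: the whole argument is elementary set bookkeeping. The only point requiring care is the off-by-one accounting --- deciding, from the cycle type of $i$, whether the boundary index ($\sinv(i)$ in the excedance case, or $i$ itself in the anti-excedance case) is counted by $h_{i-1}$. Once Lemma~\ref{lemma.crossing} is established, the admissibility inequalities \eqref{eq.xi.ineq}/\eqref{def.abc} for the Foata--Zeilberger labels follow immediately, since $\ucross(i,\sigma)$ and $\lcross(i,\sigma)$ are nonnegative.
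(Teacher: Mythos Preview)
Your proof is correct and complete; the set-difference bookkeeping is exactly right, including the handling of the boundary cases (whether $\sinv(i)$ contributes in the excedance case, and the uniform subtraction of the index $i$ itself in the anti-excedance case). The paper does not actually prove this lemma in the text --- it simply refers to \cite[Lemma~6.2]{Sokal-Zeng_masterpoly} and remarks on the index shift --- so your self-contained argument is a welcome expansion, and it follows the same elementary approach one would find in the cited source.
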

\noindent Again compare Lemma~\ref{lemma.crossing} with 
\cite[Lemma~6.2]{Sokal-Zeng_masterpoly} to see how the shift of $1$
affects these quantities.

Since the quantities \eqref{eq.lemma.ucrosscval}--\eqref{eq.lemma.lcrosscpeak}
are manifestly nonnegative,
it follows immediately 
that the inequalities \eqref{eq.xi.ineq}/\eqref{def.abc} are satisfied.

%


\bigskip

{\bf Step 3: Proof of bijection.}
We recall the description of the inverse map for the mapping 
$\sigma \mapsto (\omega,\xi)$.

First, some preliminaries:
Given a permutation $\sigma \in \Sym_n$,
we define five subsets of~$[n]$:
\begin{subeqnarray}
   F & = & \{i \colon\: \sigma(i) > i\}  \;=\; \hbox{positions of excedances}
       \\[1mm]
   F' & = & \{i \colon\: i > \sigma^{-1}(i)\}  \;=\; \hbox{values of excedances}
       \\[1mm]
   G & = & \{i \colon\: \sigma(i) < i\}  \;=\; \hbox{positions of anti-excedances}
       \\[1mm]
   G' & = & \{i \colon\: i < \sigma^{-1}(i)\}  \;=\; \hbox{values of anti-excedances}
       \\[1mm]
   H  & = & \{i \colon\: \sigma(i) = i\}  \;=\;  \hbox{fixed points}
\label{eq.steps2excedanceclassification}
\end{subeqnarray}
Let us observe that
\begin{subeqnarray}
   F \cap F'  & = &  \hbox{cycle double rises}  \\[1mm]
   G \cap G'  & = &  \hbox{cycle double falls}  \\[1mm]
   F \cap G'  & = &  \hbox{cycle valleys}  \\[1mm]
   F' \cap G  & = &  \hbox{cycle peaks}  \\[1mm]
   F \cap G   & = &  \emptyset \\[1mm]
   F' \cap G'   & = &  \emptyset
\label{eq.steps2cycleclassification}
\end{subeqnarray}
and of course $H$ is disjoint from $F,F',G,G'$.

Let us also recall the notion of an {\em inversion table}\/:
Let $S$ be a totally ordered set of cardinality $k$,
and let $\bm{x} = (x_1,\ldots,x_k)$ be an enumeration of $S$;
then the (left-to-right) inversion table corresponding to $\bm{x}$
is the sequence $\bm{p} = (p_1,\ldots,p_k)$ of nonnegative integers
defined by $p_\alpha = \#\{\beta < \alpha \colon\: x_\beta > x_\alpha \}$.
Note that $0 \le p_\alpha \le \alpha-1$ for all $\alpha \in [k]$,
so there are exactly $k!$ possible inversion tables.
Given the inversion table $\bm{p}$,
we can reconstruct the sequence $\bm{x}$ 
by working from right to left, as follows:
There are $p_k$ elements of $S$ larger than $x_k$,
so $x_k$ must be the $(p_k+1)$th largest element of $S$.
Then there are $p_{k-1}$ elements of $S \setminus \{x_k\}$
larger than $x_{k-1}$,
so $x_{k-1}$ must be the $(p_{k-1}+1)$th largest element
of $S \setminus \{x_k\}$.
And so forth.
[Analogously, the right-to-left inversion table corresponding to $\bm{x}$
is the sequence $\bm{p} = (p_1,\ldots,p_k)$ of nonnegative integers
defined by $p_\alpha = \#\{\beta > \alpha \colon\: x_\beta < x_\alpha \}$.]

With these preliminaries out of the way,
we can now describe the map $(\omega,\xi) \mapsto \sigma$.
Given a Motzkin path along with an assignment of types for its level steps $\overline{\omega}$,
we read off which indices $i$ correspond to cycle valleys,
cycle peaks, cycle double falls, cycle double rises, and fixed points;
this allows us to reconstruct the sets $F,F',G,G',H$.
We now use the labels $\xi$ to reconstruct the maps
$\sigma \restrict F \colon\: F \to F'$ and
$\sigma \restrict G \colon\: G \to G'$, as follows:
Let $i_1,\ldots,i_k$ be the elements of $F$ written in increasing order;
then the sequence $j_1,\ldots,j_k$ defined by $j_\alpha = \sigma(i_\alpha)$
is a listing of $F'$ whose inversion table is given by
$p_\alpha = \xi_{i_\alpha}$:
this is the content of \eqref{def.xi} in the case $\sigma(i) > i$.
So we can use $\xi \restrict F$ to reconstruct $\sigma \restrict F$.
In a similar way we can use $\xi \restrict G$
to reconstruct $\sigma \restrict G$,
but now we must use the right-to-left inversion table
because of how \eqref{def.xi} is written in the case $\sigma(i) < i$.

\bigskip

\begin{rem}
\label{rem.original.FZ}
In the original version of the bijection by Foata and Zeilberger
\cite{Foata_90}, 
fixed points were treated differently and were clubbed together with 
anti-excedances (thus forming the set of non-excedances).
Also, the notion of inversion table was different;
unlike the ``position-based'' inversion table by 
Sokal and Zeng \cite{Sokal-Zeng_masterpoly},
Foata and Zeilberger used a ``value-based'' inversion table.
We will employ both versions of inversion tables in Section~\ref{sec.dperm.proofs}.
\myendremark
\end{rem}

We will now look at the Foata--Zeilberger bijection for our two running examples in 
Sections~\ref{subsubsec.FZbij.eg.1} and~\ref{subsubsec.FZbij.eg.2}.

\subsubsection{Running example 1}
\label{subsubsec.FZbij.eg.1}

First let us take
$\sigma = 9\,\, 3\,\, 7\,\, 4\,\, 6\,\, 11\,\, 5\,\, 8\,\, 10\,\, 1\,\, 2
           = (1,9,10)\,(2,3,7,5,6,11)\,(4)\,(8) \in \Sym_{11}$
which was depicted in Figure~\ref{fig.pictorial}.
From the cycle classification of $\sigma$,
which was recorded in Equation~\eqref{eq.example.1.cycle.classification},
we immediately obtain the Motzkin path $\omega$ corresponding to $\sigma$ 
which has been drawn in Figure~\ref{fig.motzkin.running.example.1}.

\begin{figure}[!h]
\centering
\begin{tikzpicture}[scale = 1]
\draw[help lines, color=gray!30, dashed] (-0.5,-0.5) grid (11.5,4.5);
\draw[->] (0,0)--(11.5,0) node[right]{$x$};
\draw[->] (0,0)--(0,4.5) node[above]{$y$};
\draw[rounded corners=1, color=red, line width=1] (0,0)-- (1,1);
\draw[rounded corners=1, color=red, line width=1] (1,1)-- (2,2);
\draw[rounded corners=1, color=red, line width=1] (2,2)-- (3,2);
\draw[rounded corners=1, color=red, line width=1] (3,2)-- (4,2);
\draw[rounded corners=1, color=red, line width=1] (4,2)-- (5,3);
\draw[rounded corners=1, color=red, line width=1] (5,3)-- (6,3);
\draw[rounded corners=1, color=red, line width=1] (6,3)-- (7,2);
\draw[rounded corners=1, color=red, line width=1] (7,2)-- (8,2);
\draw[rounded corners=1, color=red, line width=1] (8,2)-- (9,2);
\draw[rounded corners=1, color=red, line width=1] (9,2)-- (10,1);
\draw[rounded corners=1, color=red, line width=1] (10,1)-- (11,0);

\node[circle, fill=red, minimum size=5pt, inner sep=1pt] at (0,0) {};
\node[circle, fill=red, minimum size=5pt, inner sep=1pt] at (1,1) {};
\node[circle, fill=red, minimum size=5pt, inner sep=1pt] at (2,2) {};
\node[circle, fill=red, minimum size=5pt, inner sep=1pt] at (3,2) {};
\node[circle, fill=red, minimum size=5pt, inner sep=1pt] at (4,2) {};
\node[circle, fill=red, minimum size=5pt, inner sep=1pt] at (5,3) {};
\node[circle, fill=red, minimum size=5pt, inner sep=1pt] at (6,3) {};
\node[circle, fill=red, minimum size=5pt, inner sep=1pt] at (7,2) {};
\node[circle, fill=red, minimum size=5pt, inner sep=1pt] at (8,2) {};
\node[circle, fill=red, minimum size=5pt, inner sep=1pt] at (9,2) {};
\node[circle, fill=red, minimum size=5pt, inner sep=1pt] at (10,1) {};
\node[circle, fill=red, minimum size=5pt, inner sep=1pt] at (11,0) {};

\node[circle, fill=black, minimum size=5pt, inner sep=1pt] at (1,0) {};
\node[circle, fill=black, minimum size=5pt, inner sep=1pt] at (2,0) {};
\node[circle, fill=black, minimum size=5pt, inner sep=1pt] at (3,0) {};
\node[circle, fill=black, minimum size=5pt, inner sep=1pt] at (4,0) {};
\node[circle, fill=black, minimum size=5pt, inner sep=1pt] at (5,0) {};
\node[circle, fill=black, minimum size=5pt, inner sep=1pt] at (6,0) {};
\node[circle, fill=black, minimum size=5pt, inner sep=1pt] at (7,0) {};
\node[circle, fill=black, minimum size=5pt, inner sep=1pt] at (8,0) {};
\node[circle, fill=black, minimum size=5pt, inner sep=1pt] at (9,0) {};
\node[circle, fill=black, minimum size=5pt, inner sep=1pt] at (10,0) {};

\node[circle, fill=black, minimum size=5pt, inner sep=1pt] at (0,1) {};
\node[circle, fill=black, minimum size=5pt, inner sep=1pt] at (0,2) {};
\node[circle, fill=black, minimum size=5pt, inner sep=1pt] at (0,3) {};
\node[circle, fill=black, minimum size=5pt, inner sep=1pt] at (0,4) {};

\node[] at (0,-0.5) {$0$};
\node[] at (1,-0.5) {$1$};
\node[] at (2,-0.5) {$2$};
\node[] at (3,-0.5) {$3$};
\node[] at (4,-0.5) {$4$};
\node[] at (5,-0.5) {$5$};
\node[] at (6,-0.5) {$6$};
\node[] at (7,-0.5) {$7$};
\node[] at (8,-0.5) {$8$};
\node[] at (9,-0.5) {$9$};
\node[] at (10,-0.5) {$10$};
\node[] at (11,-0.5) {$11$};

\node[] at (-0.5,0) {$0$};
\node[] at (-0.5,1) {$1$};
\node[] at (-0.5,2) {$2$};
\node[] at (-0.5,3) {$3$};
\node[] at (-0.5,4) {$4$};

%
\end{tikzpicture}
\caption{Motzkin path $\omega$ corresponding to the permutation
$\sigma = 9\,\, 3\,\, 7\,\, 4\,\, 6\,\, 11\,\, 5\,\, 8\,\, 10\,\, 1\,\, 2
           = (1,9,10)\,(2,3,7,5,6,11)\,(4)\,(8)$.
}
	\label{fig.motzkin.running.example.1}
\end{figure}

The sets $F,F',G,G',H$ are 
\begin{subeqnarray}
	F & = & \{1, 2, 3, 5, 6, 9\} \\ 
	F' & = & \{3, 6, 7, 9, 10, 11\} \\ 
	G & = & \{7, 10, 11\} \\ 
	G' & = & \{1, 2, 5\} \\ 
	H & = & \{4, 8\} 
\label{eq.FGH.running.example.1}
\end{subeqnarray}

and the labels $\xi_i$ are given in Equation~\eqref{eq.labels.eg.1}. 

\begin{subeqnarray}
\begin{array}{r}
i\in F = \{1, 2, 3, 5, 6, 9\}  \\[3mm]
F' = \{\sigma(i)\; | \: i\in F\}   \\[3mm]
\text{Left-to-right inversion table:} \; \xi_{i}   \\
\end{array}
	&&\hspace*{-6mm}
\begin{pmatrix}
\; 1 & 2 & 3 & 5 & 6 & 9 \; \\[3mm]
\; 9 & 3 & 7 & 6 & 11 & 10 \; \\[3mm]
\; 0 & 1 & (2,1) & 2 & (2,0) & (2,1)  \; 
\end{pmatrix}
\\[5mm]
\begin{array}{r}
i\in G = \{7, 10, 11\}  \\[3mm]
G' = \{\sigma(i)\; | \: i\in G\}   \\[3mm]
\text{Right-to-left inversion table:} \; \xi_{i}   \\
\end{array}
	&&\hspace*{-6mm}
\begin{pmatrix}
\; 7 & 10 & 11 \; \\[3mm]
\; 5 & 1 & 2 \; \\[3mm]
\; 2 & 0 & 0
\end{pmatrix}
\label{eq.labels.eg.1}
\end{subeqnarray}



\subsubsection{Running example 2}
\label{subsubsec.FZbij.eg.2}

Next let us take
\begin{eqnarray}
	\sigma &=& 7\, 1\, 9\, 2\, 5\, 4\, 8\, 6\, 10\, 3\, 11\, 12\, 14\, 13\, \nonumber\\
	  &=& (1,7,8,6,4,2)\,(3,9,10)\,(5)\,(11)\,(12)\,(13,14) \in \Sym_{14}
\end{eqnarray}
which was depicted in Figure~\ref{fig.pictorial.2}.
From the cycle classification of $\sigma$,
which was recorded in Equation~\eqref{eq.example.2.cycle.classification},
we immediately obtain the Motzkin path $\omega$ corresponding to $\sigma$
which has been drawn in Figure~\ref{fig.motzkin.running.example.2}.

\begin{figure}[!h]
\centering
\begin{tikzpicture}[scale = 0.8]
\draw[help lines, color=gray!30, dashed] (-0.5,-0.5) grid (14.5,4.5);
\draw[->] (0,0)--(14.5,0) node[right]{$x$};
\draw[->] (0,0)--(0,4.5) node[above]{$y$};

\node[circle, fill=black, minimum size=5pt, inner sep=1pt] at (1,0) {};
\node[circle, fill=black, minimum size=5pt, inner sep=1pt] at (2,0) {};
\node[circle, fill=black, minimum size=5pt, inner sep=1pt] at (3,0) {};
\node[circle, fill=black, minimum size=5pt, inner sep=1pt] at (4,0) {};
\node[circle, fill=black, minimum size=5pt, inner sep=1pt] at (5,0) {};
\node[circle, fill=black, minimum size=5pt, inner sep=1pt] at (6,0) {};
\node[circle, fill=black, minimum size=5pt, inner sep=1pt] at (7,0) {};
\node[circle, fill=black, minimum size=5pt, inner sep=1pt] at (8,0) {};
\node[circle, fill=black, minimum size=5pt, inner sep=1pt] at (9,0) {};
\node[circle, fill=black, minimum size=5pt, inner sep=1pt] at (10,0) {};
\node[circle, fill=black, minimum size=5pt, inner sep=1pt] at (11,0) {};
\node[circle, fill=black, minimum size=5pt, inner sep=1pt] at (12,0) {};
\node[circle, fill=black, minimum size=5pt, inner sep=1pt] at (13,0) {};
\node[circle, fill=black, minimum size=5pt, inner sep=1pt] at (14,0) {};

\node[circle, fill=black, minimum size=5pt, inner sep=1pt] at (0,1) {};
\node[circle, fill=black, minimum size=5pt, inner sep=1pt] at (0,2) {};
\node[circle, fill=black, minimum size=5pt, inner sep=1pt] at (0,3) {};
\node[circle, fill=black, minimum size=5pt, inner sep=1pt] at (0,4) {};

\node[] at (0,-0.5) {$0$};
\node[] at (1,-0.5) {$1$};
\node[] at (2,-0.5) {$2$};
\node[] at (3,-0.5) {$3$};
\node[] at (4,-0.5) {$4$};
\node[] at (5,-0.5) {$5$};
\node[] at (6,-0.5) {$6$};
\node[] at (7,-0.5) {$7$};
\node[] at (8,-0.5) {$8$};
\node[] at (9,-0.5) {$9$};
\node[] at (10,-0.5) {$10$};
\node[] at (11,-0.5) {$11$};
\node[] at (12,-0.5) {$12$};
\node[] at (13,-0.5) {$13$};
\node[] at (14,-0.5) {$14$};

\node[] at (-0.5,0) {$0$};
\node[] at (-0.5,1) {$1$};
\node[] at (-0.5,2) {$2$};
\node[] at (-0.5,3) {$3$};
\node[] at (-0.5,4) {$4$};

\draw[rounded corners=1, color=red, line width=1] (0,0)-- (1,1);
\draw[rounded corners=1, color=red, line width=1] (1,1)-- (2,1);
\draw[rounded corners=1, color=red, line width=1] (2,1)-- (3,2);
\draw[rounded corners=1, color=red, line width=1] (3,2)-- (4,2);
\draw[rounded corners=1, color=red, line width=1] (4,2)-- (5,2);
\draw[rounded corners=1, color=red, line width=1] (5,2)-- (6,2);
\draw[rounded corners=1, color=red, line width=1] (6,2)-- (7,2);
\draw[rounded corners=1, color=red, line width=1] (7,2)-- (8,1);
\draw[rounded corners=1, color=red, line width=1] (8,1)-- (9,1);
\draw[rounded corners=1, color=red, line width=1] (9,1)-- (10,0);
\draw[rounded corners=1, color=red, line width=1] (10,0)-- (11,0);
\draw[rounded corners=1, color=red, line width=1] (11,0)-- (12,0);
\draw[rounded corners=1, color=red, line width=1] (12,0)-- (13,1);
\draw[rounded corners=1, color=red, line width=1] (13,1)-- (14,0);

\node[circle, fill=red, minimum size=5pt, inner sep=1pt] at (0,0) {};
\node[circle, fill=red, minimum size=5pt, inner sep=1pt] at (1,1) {};
\node[circle, fill=red, minimum size=5pt, inner sep=1pt] at (2,1) {};
\node[circle, fill=red, minimum size=5pt, inner sep=1pt] at (3,2) {};
\node[circle, fill=red, minimum size=5pt, inner sep=1pt] at (4,2) {};
\node[circle, fill=red, minimum size=5pt, inner sep=1pt] at (5,2) {};
\node[circle, fill=red, minimum size=5pt, inner sep=1pt] at (6,2) {};
\node[circle, fill=red, minimum size=5pt, inner sep=1pt] at (7,2) {};
\node[circle, fill=red, minimum size=5pt, inner sep=1pt] at (8,1) {};
\node[circle, fill=red, minimum size=5pt, inner sep=1pt] at (9,1) {};
\node[circle, fill=red, minimum size=5pt, inner sep=1pt] at (10,0) {};
\node[circle, fill=red, minimum size=5pt, inner sep=1pt] at (11,0) {};
\node[circle, fill=red, minimum size=5pt, inner sep=1pt] at (12,0) {};
\node[circle, fill=red, minimum size=5pt, inner sep=1pt] at (13,1) {};
\node[circle, fill=red, minimum size=5pt, inner sep=1pt] at (14,0) {};

%
\end{tikzpicture}
\caption{Motzkin path $\omega$ corresponding to the permutation\\
$\sigma = 7\, 1\, 9\, 2\, 5\, 4\, 8\, 6\, 10\, 3\, 11\, 12\, 14\, 13\,
           = (1,7,8,6,4,2)\,(3,9,10)\,(5)\,(11)\,(12)\,(13,14) \in \Sym_{14}$.
}
	\label{fig.motzkin.running.example.2}
\end{figure}

The sets $F,F',G,G',H$ are
\begin{subeqnarray}
        F & = & \{1, 3, 7, 9, 13\} \\
        F' & = & \{7, 8, 9, 10, 14\} \\
        G & = & \{2, 4, 6, 8, 10, 14 \} \\
        G' & = & \{1, 2, 3, 4, 6, 13\} \\
        H & = & \{5,11,12\}
\label{eq.FGH.running.example.2}
\end{subeqnarray}

and the labels $\xi_i$ are given in Equation~\eqref{eq.labels.eg.2}.

\begin{subeqnarray}
\begin{array}{r}
i\in F = \{1, 3, 7, 9, 13\} \\[3mm]
F' = \{\sigma(i)\; | \: i\in F\}   \\[3mm]
\text{Left-to-right inversion table:} \; \xi_{i}   \\
\end{array}
&&\hspace*{-6mm}
\begin{pmatrix}
\; 1 &  3 & 7 & 9 & 13 \; \\[3mm]
\; 7 & 9 & 8 & 10 & 14 \; \\[3mm]
\; 0 & 0 & (2,2) & (2,0) & 0  \; 
\end{pmatrix} 
	\\[5mm]
\begin{array}{r}
i\in G = \{2, 4, 6, 8, 10, 14\}  \\[3mm]
G' = \{\sigma(i)\; | \: i\in G\}   \\[3mm]
\text{Right-to-left inversion table:} \; \xi_{i}   \\
\end{array}
	&&\hspace*{-6mm}
\begin{pmatrix}
	\; 2 & 4 & 6 & 8 &  10 & 14 \; \\[3mm]
	\; 1 & 2 & 4 & 6 & 3 & 13 \; \\[3mm]
	\; (1,0) & (1,0) & (1,1) & 1 & 0  & 0
\end{pmatrix}
\label{eq.labels.eg.2}
\end{subeqnarray}



\subsection{Combinatorial interpretation using Laguerre digraphs}
\label{subsec.laguerre}

We begin with a Motzkin path $\omega$ 
and an assignment of labels $\xi$ satisfying 
\eqref{eq.xi.ineq}/\eqref{def.abc}.
The inverse bijection 
(Section~\ref{subsec.FZrecall.perm} Step~3),
gives us a permutation $\sigma$.
We will now break this process into several intermediate steps
and reinterpret it using Laguerre digraphs.
Recall that a Laguerre digraph of size $n$ is a directed graph 
where each vertex has a distinct label from the label set $[n]$
and has indegree $0$ or $1$ and outdegree $0$ or $1$.
Clearly, any subgraph of a Laguerre digraph is also a Laguerre digraph.
The connected components in a Laguerre digraph are
either directed paths or directed cycles,
where a path with one vertex is called an isolated vertex
and a cycle with one vertex is called a loop.

Notice that a Laguerre digraph with no paths 
is a diagrammatic representation of a permutation in cycle notation
(see \cite[pp.~22-23]{Stanley_12}).
Let $L^{\sigma}$ denote this Laguerre digraph
corresponding to a permutation $\sigma\in\fS_n$. The directed edges of $L^{\sigma}$
are precisely $u\to \sigma(u)$. Also for $S\subseteq [n]$,
we let $\left. L^{\sigma} \right|_{S}$ denote the subgraph of $L^{\sigma}$,
containing the same set of vertices~$[n]$, but only the edges $u\to \sigma(u)$,
with $u\in S$ (we are allowed to have $\sigma(u)\not\in S$). 
Thus, $\left. L^{\sigma}\right|_{[n]} = L^{\sigma}$, and
$\left. L^{\sigma}\right|_{\emptyset}$ is the digraph containing $n$ vertices
and no edges. Whenever the permutation $\sigma$ is understood,
we shall drop the superscript and 
use $\laguerre{S}$ to denote this digraph.

Recall that the inverse bijection in 
Step~3 Section~\ref{subsec.FZrecall.perm}, 
begins by obtaining the sets $F,F',G,G',H$
from the 3-coloured Motzkin path $\overline{\omega}$.
We then construct $\sigma\restrict F\colon \: F \to F'$ 
and $\sigma\restrict G\colon \: G\to G'$ separately 
by using the labels $\xi\restrict F$ and $\xi\restrict G$ respectively.
(Note that just knowing the set $H$ suffices 
to reconstruct $\sigma\restrict H$.)

%

In this interpretation, we start with the digraph $\laguerre{\emptyset}$.
We then go through the set $[n]$.
However, the crucial part of our approach is that
we use the following unusual total order on $[n]$ 
(notice that $[n]=F\cup G\cup H$):
\begin{description}
\item[Stage (a):] We first go through the set $H$ in increasing order.
\item[Stage (b):] We then go through the set $G$ in increasing order.
\item[Stage (c):] Finally, we go through the set $F$ 
		but in \emph{decreasing} order.
\end{description}
As $F,G$ and $H$ are entirely determined by the 3-coloured path 
$\overline{\omega}$,
we call the above order the FZ order on $[n]$ with respect to
the 3-coloured Motzkin path $\overline{\omega}$.
Thus, the FZ order corresponding to two different permutations $\sigma$
and $\sigma'$ coming from the same 3-coloured Motzkin path $\overline{\omega}$ are the same.

Let $u_1,\ldots, u_n$ be a rewriting of $[n]$ as per the FZ order.
We now consider the ``FZ history'' 
$\laguerre{\emptyset}\subset \laguerre{\{u_1\}} \subset \laguerre{\{u_1, u_2\}}
\subset \ldots \subset \laguerre{\{u_1,\ldots, u_n\}} = L$
as a process of building up the permutation $\sigma$
by successively considering the status of vertices $u_1, u_2, \ldots, u_n$.
Thus, at step $u$ 
(where the step number is given by the vertex $u\in [n]$)
we use the inversion tables to construct the edge $u\to \sigma(u)$.
Thus, at each step we insert a new edge into the digraph,
and at the end of this process,
the resulting digraph obtained is the permutation $\sigma$ in cycle notation.

Let us now look at the intermediate Laguerre digraphs obtained 
during stages (a), (b) and (c) more closely.

\medskip

{\bf Stage (a): Going through $\boldsymbol{H}$:}\\
For each vertex $u\in H$, we introduce a loop edge $u\to u$
thus creating a new loop at the end of each step.
After all steps $u\in H$ have been carried out,
the resulting Laguerre digraph $\laguerre{H}$ consists of loops 
at all vertices in $H$.
All other vertices are isolated vertices, i.e., have no adjacent edges.

\medskip

{\bf Stage (b): Going through $\boldsymbol{G}$:}\\
From \eqref{eq.steps2cycleclassification}, we know that $G = \Cdfall(\sigma) \cup \Cpeak(\sigma)$ where $\sigma$ is the resulting permutation obtained at the end
of the inverse bijection.
Let us recall the construction for this case.
We construct 
$\sigma\restrict G\colon\: G\to G'$ using
the right-to-left inversion table.
Let $G = \{x_1<x_2<\ldots < x_k\}$ 
be the elements of $G$ arranged in increasing order,
and similarly let $G' = \{x_1'< x_2' < \ldots <x_k'\}$.
Then the $(p_{j}+1)$th smallest element of
$G'\backslash\{\sigma(x_1),\ldots,\sigma(x_{j-1})\}$ 
is chosen to be $\sigma(x_j)$ where
$p_{j} = \xi_{x_j}$.

Let us reinterpret this in terms of Laguerre digraphs.
At this stage, the vertices in $G$ (resp.~$G'$) 
are our designated starting vertices (ending vertices)
arranged in increasing order.
We then look through the starting vertices
in increasing order.
At the end of step $x_{j-1}$,
directed edges $x_{1}\to \sigma(x_1), \ldots, x_{j-1}\to\sigma(x_{j-1})$
have been inserted, 
the available starting vertices are $x_j,\ldots, x_k$
and the available ending vertices belong to
$G'\backslash \{\sigma(x_1),\ldots, \sigma(x_{j-1})\}$.
We then pick the smallest available starting vertex,
which is $x_j$,
and connect it to the
$(p_{j}+1)$th smallest ending vertex available,
i.e., the $(p_{j}+1)$th smallest element of the set
$G'\backslash \{\sigma(x_1),\ldots, \sigma(x_{j-1})\}$.
(This is analogous to the partial interpretation
of the labels in terms of bipartite digraph
for cycle double falls and cycle peaks on
\cite[p.~96]{Sokal-Zeng_masterpoly}.)

The vertices in $\laguerre{H\,\cup\, \{x_1,\ldots, x_j\}}$
have out-edges in the following situations:
\begin{itemize}
	\item if $u\in H$, $u$ is a loop.

	\item if $u$ is one of the $j$ smallest
		elements of $G$,
		it has an edge going to some vertex $v\neq u$ and $v\in G'$.
	
	\item All other vertices have no out-edges.

\end{itemize}
		
Also, from definition of $G$ in (\ref{eq.steps2excedanceclassification}c),
it follows that for any non-loop edge $u\to v$,
we must have $u>v$.
Thus, the Laguerre digraph $\laguerre{H\,\cup\, \{x_1,\ldots, x_j\}}$
only consists of decreasing directed paths (including isolated vertices)
and loops; there are no non-loop cycles.

\begin{lem} The Laguerre digraph $\laguerre{H\,\cup\, G}$
consists of the following connected components:
\begin{itemize} 
        \item loops on vertices $u\in H$,


        \item directed paths with at least two vertices,
		in which the initial vertex of the path
		is a cycle peak in $\sigma$
		(i.e. contained in the set $F'\,\cap\, G$),
		the final vertex is a cycle valley in $\sigma$
		(i.e. contained in the set $F\,\cap\, G'$),
		and the intermediate vertices (if any)
		are cycle double falls
		(i.e. contained in the set $G\,\cap\, G'$).


	\item  isolated vertices at $u\in F\,\cap\, F' = \Cdrise(\sigma)$.
\end{itemize} 
Furthermore, it contains no directed cycles.
\label{lem.afterHG}
\end{lem}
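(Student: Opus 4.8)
The plan is to analyze the Laguerre digraph $\laguerre{H\cup G}$ directly, building on the structural observations already made for the intermediate digraphs $\laguerre{H\cup\{x_1,\ldots,x_j\}}$. First I would recall the setup from Stage (b): after processing all of $G$, every vertex of $G$ has received exactly one out-edge, and by construction these out-edges land in $G'$; moreover each vertex of $G'$ receives exactly one in-edge (since $\sigma\restrict G\colon G\to G'$ is a bijection), and vertices of $H$ carry loops. So in $\laguerre{H\cup G}$ the vertices with an out-edge are precisely $H\cup G$, and the vertices with an in-edge are precisely $H\cup G'$. Every connected component is therefore either a loop (at a vertex of $H$), or a directed path, or a directed cycle; I would first dispose of the ``no directed cycles'' claim.

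For the absence of non-loop directed cycles, I would invoke the monotonicity already noted: by \eqref{eq.steps2excedanceclassification}(c), for every non-loop edge $u\to v$ we have $u>v$ (since $v=\sigma(u)<u$ because $u\in G$). A directed cycle through at least two distinct vertices would force a strictly decreasing cyclic sequence of labels, which is impossible. Loops at $H$ are the only cycles. This is the easy part.

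The substance is the classification of the path components. A directed path component has an \emph{initial} vertex (in-degree $0$, out-degree $1$) and a \emph{final} vertex (out-degree $0$, in-degree $1$), with all interior vertices having in-degree $=$ out-degree $=1$; an isolated vertex is the degenerate case. I would argue: the initial vertex has an out-edge, so it lies in $H\cup G$, but it has no in-edge, so it lies outside $H\cup G'$; since $H$-vertices are loops (both degrees $1$), the initial vertex lies in $G\setminus G' = G\setminus(G\cap G') = F'\cap G$ by \eqref{eq.steps2cycleclassification}, i.e.\ it is a cycle peak of $\sigma$. Symmetrically, the final vertex has an in-edge (so in $H\cup G'$) but no out-edge (so outside $H\cup G$), hence lies in $G'\setminus G = F\cap G'$, a cycle valley of $\sigma$. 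Any interior vertex has both an in-edge and an out-edge, so it lies in $(H\cup G')\cap(H\cup G)$; excluding the $H$-loops, it lies in $G\cap G' = \Cdfall(\sigma)$. Finally an isolated vertex in $\laguerre{H\cup G}$ is one with no in-edge and no out-edge among the edges indexed by $H\cup G$; since it has no out-edge it is not in $H\cup G$, and since it has no in-edge it is not in $H\cup G'$, so it lies in $[n]\setminus(H\cup G\cup G') = F\cap F' = \Cdrise(\sigma)$ (using that $F,G,H$ partition $[n]$ and that $G\cup G'$ within $F\cup G$ is exactly $G\cup(G'\cap F)=G\cup\Cpeak$; more directly, a vertex not in $H\cup G$ and not in $H\cup G'$ must be in $F$ and also not in $G'$, hence in $F\cap F'=\Cdrise$). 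It remains only to check that every path component has at least two vertices unless it is this isolated-vertex type, which is automatic since a path with one vertex and one edge is a loop, already accounted for under $H$. The main obstacle I anticipate is making the membership bookkeeping between the index sets $F,F',G,G',H$ and the cycle-classification labels crisp and unambiguous — all the statements are forced, but one must be careful that the initial/final/interior roles are pinned down purely by in/out-degree in the \emph{restricted} digraph $\laguerre{H\cup G}$ (not in $L^\sigma$), and then translated correctly via \eqref{eq.steps2cycleclassification}. Once that dictionary is set up, the proof is a short case analysis.
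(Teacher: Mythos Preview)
Your proposal is correct and follows essentially the same approach as the paper: the paper has already observed (in the paragraph just before the lemma) that every non-loop edge $u\to v$ satisfies $u>v$ and hence there are no non-loop cycles, and its proof then identifies the initial and final vertices of a path with at least two vertices by exactly your in/out-degree bookkeeping against the sets $G,G',H$. You are simply more explicit than the paper, spelling out the interior-vertex and isolated-vertex cases that the paper leaves implicit.
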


\begin{proof} It suffices to prove that for a directed path with 
at least two vertices, the initial vertex is a cycle peak in $\sigma$
(i.e. contained in the set $F'\,\cap\, G$)
and the final vertex is a cycle valley in $\sigma$
(i.e. contained in the set $F\,\cap\, G'$).

Notice that the initial vertex $u$ of such a path must already have an out-neighbour
and hence must belong to the set $G$ as only the elements of the sets $G$ and $H$
have been assigned out-neighbours.
As all vertices in $G'$ have already been assigned in-neighbours,
$u\not\in G'$ and using the fact that 
$G = \Cdfall\, \cup\, \Cpeak$ (from \eqref{eq.steps2cycleclassification}), 
we get that $u\in \Cpeak = F'\,\cap\, G$.
The proof for the final vertex is similar and we omit it.
\end{proof}

\medskip

{\bf Stage (c): Going through $\boldsymbol{F}$:}\\
Similar to both the previous cases,
at each step we introduce edges $u\to \sigma(u)$.
However, now we go through elements of $F$ in decreasing order.

From \eqref{eq.steps2cycleclassification}, we know that $F = \Cdrise \cup \Cval$.
Let us now recall the construction for this case.
We construct 
$\sigma\restrict F\colon\: F\to F'$ using
the left-to-right inversion table.
Let $F = \{y_1<y_2<\ldots < y_l\}$
be the elements of $F$ arranged in increasing order.
Then the $(p_{j}+1)$th largest element of
$F'\backslash\{\sigma(y_l),\ldots,\sigma(y_{j+1})\}$
is chosen to be $\sigma(y_j)$ where
$p_{j} = \xi_{y_j}$.

We now reinterpret this in terms of Laguerre digraphs.
At this stage, the vertices in $F$ ($F'$)
are our designated starting vertices (ending vertices).
We look through the starting vertices
in decreasing order.
At the end of step $y_{j+1}$,
directed edges $y_{l}\to \sigma(y_l), \ldots, y_{j+1}\to\sigma(y_{j+1})$
have been inserted,
the available starting vertices are $y_j,\ldots, y_1$
and the available ending vertices belong to
$F'\backslash \{\sigma(y_l),\ldots, \sigma(y_{j+1})\}$.
We then pick the largest available starting vertex,
which is $y_j$ and connect it to the
$(p_{j}+1)$th largest ending vertex available,
i.e., the $(p_{j}+1)$th largest element of the set
$F'\backslash \{\sigma(y_l),\ldots, \sigma(y_{j+1})\}$.

\begin{lem} Let $u$ be the final vertex of a path
with at least two vertices in\\
$\laguerre{H\,\cup\, G \,\cup\, \{y_l,\ldots, y_j\}}$.
Then $u\in \Cval = F\,\cap \, G'$.
\label{lem.lastvertexF}
\end{lem}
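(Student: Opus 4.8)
The plan is to argue exactly as in the proof of Lemma~\ref{lem.afterHG}: follow the (unique) in-edge of $u$ and determine at which stage of the construction it was created. First I would describe the vertices of out-degree~$0$ in the digraph $\laguerre{H\,\cup\,G\,\cup\,\{y_l,\ldots,y_j\}}$. A vertex $v$ carries an out-edge in this digraph precisely when the step~$v$ has already been performed, i.e.\ when $v\in H\cup G\cup\{y_l,\ldots,y_j\}$; since $[n]=F\cup G\cup H$ is a disjoint union and $F=\{y_1<\cdots<y_l\}$, the out-degree-$0$ vertices are exactly the elements $y_1,\ldots,y_{j-1}$, the members of $F$ not yet processed in stage~(c). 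In particular the final vertex $u$ of any path lies in $F=\Cdrise\cup\Cval$ (using~\eqref{eq.steps2cycleclassification}), so it suffices to exclude the possibility $u\in\Cdrise=F\cap F'$.

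So I would assume $u\in\Cdrise$ and derive a contradiction. Being the final vertex of a path with at least two vertices, $u$ has in-degree~$1$; let $w\to u$ be its unique in-edge, and examine when it was inserted. It was not inserted in stage~(a), where the only edges created are loops $h\to h$ with $h\in H$, because $u\notin H$. It was not inserted in stage~(c) either: there every edge has the form $w\to\sigma(w)$ with $w\in F$, so $\sigma(w)=u$ and hence, $w\in F$ being an excedance position, $u=\sigma(w)>w$; but $w$ has already been processed in stage~(c) while $u$ has not, so $w\in\{y_j,\ldots,y_l\}$ and $u\in\{y_1,\ldots,y_{j-1}\}$, giving $w\ge y_j>y_{j-1}\ge u$ and contradicting $u>w$. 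Therefore the edge $w\to u$ was inserted during stage~(b), where all edges run from $G$ into $G'$; thus $u\in G'$, and since also $u\in F$ we obtain $u\in F\cap G'=\Cval$ by~\eqref{eq.steps2cycleclassification}, contradicting $u\in\Cdrise$. Hence $u\in\Cval$, as asserted.

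I expect the only non-formal step to be the one excluding a stage-(c) in-edge, and this is exactly where the \emph{decreasing} processing order of $F$ in stage~(c) enters: the excedance inequality $\sigma(w)>w$ for $w\in F$ says the target of a stage-(c) edge is larger than its source, whereas the decreasing order forces an already-processed source $w$ to be larger than the not-yet-processed final vertex $u$, a direct contradiction. The remaining checks are bookkeeping with the partition~\eqref{eq.steps2cycleclassification} and the description of $\laguerre{H\,\cup\,G}$ from Lemma~\ref{lem.afterHG}, so no real obstacle arises.
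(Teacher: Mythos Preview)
Your proof is correct and follows essentially the same approach as the paper: both hinge on the contradiction between the decreasing processing order of $F$ in stage~(c) and the excedance inequality $\sigma(w)>w$ for $w\in F$. The paper packages this slightly differently---it directly uses the $\Cdrise$ property $\sinv(u)<u$ to locate the in-neighbour $v=\sinv(u)$ in $F$ with $v<u$, then notes that $v$ having an out-edge while the larger $u$ does not contradicts the descending order---whereas you reach the same contradiction via a systematic case analysis on which stage created the in-edge; in fact your stage-(c) exclusion does not even need the $\Cdrise$ assumption, so your argument could be streamlined into a direct proof that the in-edge must come from stage~(b), hence $u\in G'$.
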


\begin{proof} 
	Since all vertices in $H\,\cup\, G$ were already assigned out-neighbours
	during stages (a) and (b), it must be that $u\in F$.
	As $F = \Cdrise\cup \Cval$, $u$ 
	is either a cycle valley or a cycle double rise in $\sigma$.
	
	Let us assume that $u$ is a cycle double rise in $\sigma$, i.e.,
	$\sinv(u) <u< \sigma(u)$.
	We know that $u$ must have an in-neighbour $v$
	as its path has atleast two vertices.
	Thus, $v = \sinv(u) < u$, which implies $v\in F$
	(from definition of $F$ in~(\ref{eq.steps2excedanceclassification}a)).
	However, this is a situation where we have two vertices $u,v\in F$
	with $u>v$ such that the smaller vertex has an out-neighbour
	even though the larger vertex does not.
	This clearly cannot happen as we assign out-neighbours
	to vertices in $F$ in descending order.
	This is a contradiction and thus $u\in \Cval= F\,\cap \, G'$.	
\end{proof}

Let $u_1,\ldots,u_n$ be the elements of $[n]$ rearranged as per the FZ order
with respect to the 3-coloured Motzkin path $\overline{\omega}$.
We say that $u_j\in [n]\backslash H$ is a {\em cycle closer}
if the edge $u_j\to \sigma(u_j)$ 
is introduced in $\left.L\right|_{\{u_1,\ldots, u_{j-1}\}}$
as an edge between the two ends of a path
turning the path into a cycle.
The following lemma classifies all cycle closers:

\begin{lem}(Classifying cycle closers) Given a permutation $\sigma$,
	an element $u\in [n]$ is a cycle closer
	if and only if it is a cycle valley minimum,
	i.e., 
	it is the smallest element in its cycle.
\label{lem.classifyingCycleF}
\end{lem}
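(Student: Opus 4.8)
The plan is to pin down exactly when, during the FZ history, the edge $u\to\sigma(u)$ joins the two ends of a single path into a cycle. First I would observe that this can only happen during stage~(c): by Lemma~\ref{lem.afterHG} the Laguerre digraph $\laguerre{H\cup G}$ contains no directed cycle, and every digraph arising during stages~(a) and~(b) is a subgraph of $\laguerre{H\cup G}$, so no edge inserted in those stages can close a cycle; hence every cycle closer is some element of $F$, processed in decreasing order. Next I would dispose of the cycle double rises: if $u\in\Cdrise$ is about to be processed, then its in-neighbour $\sinv(u)<u$ lies in $F$ and so has not yet been processed (stage~(c) proceeds in decreasing order), while $u$ itself has no outgoing edge yet; thus $u$ is currently an isolated vertex (equivalently, were $u$ the final vertex of a path with $\ge 2$ vertices, Lemma~\ref{lem.lastvertexF} would force $u\in\Cval$). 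Adding an edge out of an isolated vertex cannot create a cycle, so $u$ is not a cycle closer, and every cycle closer must be a cycle valley.

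The heart of the argument is the case $u\in\Cval$; let $C$ be the $\sigma$-cycle of $u$, which has at least two elements. At the moment $u$ is about to be processed, its in-neighbour $\sinv(u)>u$ lies in $G$ and was already given its out-edge in stage~(b), so $u$ is the final vertex of a path with $\ge 2$ vertices; and $\sigma(u)>u$ has as its only possible in-neighbour the still-unprocessed vertex $u$, so $\sigma(u)$ is currently the initial vertex of its path. Therefore the edge $u\to\sigma(u)$ closes a cycle iff $u$ and $\sigma(u)$ lie on a common path, iff the directed path obtained by following edges forward from $\sigma(u)$ — which runs through $\sigma(u),\sigma^2(u),\dots$ inside $C$ — actually reaches $u$. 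That path reaches $u$ iff every vertex of $C\setminus\{u\}$ already has an outgoing edge at this moment; and a vertex $v\in C\setminus\{u\}$ (so $v\notin H$) has its out-edge already inserted iff $v\in G$ or ($v\in F$ and $v>u$), and lacks it iff $v\in F$ and $v<u$. Hence $u$ is a cycle closer iff no vertex of $C\setminus\{u\}$ lies in $F$ and is smaller than $u$.

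To finish, I would invoke the fact (recorded just before the statement) that the minimum element of any non-singleton cycle is a cycle valley, hence lies in $\Cval\subseteq F$. Consequently the condition ``no $v\in C\setminus\{u\}$ with $v\in F$ and $v<u$'' is equivalent to ``$u=\min C$''. Combining the three steps: $u$ is a cycle closer iff $u$ is a cycle valley that is the minimum of its cycle, i.e.\ iff $u$ is a cycle valley minimum, which is the claim.

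The one point requiring care is the forward-path analysis in the second paragraph: one must check that the walk emanating from $\sigma(u)$ in the partially built digraph really is a simple directed path confined to $C$ — in particular that it cannot wander into one of the $\sigma$-cycles already completed by earlier cycle-closing steps in stage~(c). This holds because $\sigma(u)$ has in-degree $0$ at that moment, so it is the initial vertex of a path-component of the current Laguerre digraph, and the walk simply traverses that component to its terminus; entering a closed cycle would force some vertex to acquire two incoming edges. Granting this, and the bookkeeping of which out-edges are present when $u$ is processed, the rest is a direct application of Lemmas~\ref{lem.afterHG} and~\ref{lem.lastvertexF}.
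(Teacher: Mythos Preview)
Your proof is correct and follows essentially the same approach as the paper's. The paper's argument is terser: it shows that a cycle closer $u_j$ must be the final vertex of a path with at least two vertices, hence (by Lemma~\ref{lem.lastvertexF}) a cycle valley, and then observes that any other cycle valley $v$ in the same cycle already has an out-neighbour, forcing $v>u_j$; the converse is left implicit via the uniqueness of the cycle closer per non-singleton cycle. Your version spells out both directions explicitly --- in particular the forward-path analysis from $\sigma(u)$ and the characterisation ``no $v\in C\setminus\{u\}$ with $v\in F$ and $v<u$'' --- which is a nice way to get the iff directly rather than by counting, but the underlying idea is the same.
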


\begin{proof} Let $u_j$ be a cycle closer.
Notice that $u_j$ must have been the final vertex of a path
in $\left.L\right|_{\{u_1,\ldots, u_{j-1}\}}$ 
that contains at least two vertices.
From Lemma~\ref{lem.lastvertexF},
$u_j\in F\cap G' = \Cval$.

Any other cycle valley $v\neq u_j$ in the resulting cycle
must have already been present in this path
and hence must have an out-neighbour.
Hence, it must be that $v>u_j$ as $\Cval \subseteq F$ 
and we assign out-neighbours 
to vertices belonging to $F$ in descending order.
Thus, the cycle closer $u_j$ is the smallest cycle valley 
in its cycle in $\sigma$.
\end{proof}

As each non-singleton cycle has exactly one cycle closer,
counting cycle closers will give us the number of
non-singleton cycles.
This is what we do next.
But before doing that, we require a technical lemma.
However, before going into the lemma, recall that if 
$y_j \in F\,\cap\,G' = \Cval(\sigma)$,
step $s_{y_{j}}$ must be a rise from height $h_{y_j-1}$ to height $h_{y_j}$ 
and hence, $h_{y_j-1}+1 = h_{y_j}$.

\begin{lem}
Given a permutation $\sigma$ and associated sets $F,F',G,G',H$
with $F = \{y_1<y_2<\ldots < y_l\}$, 
and an index $j$ ($1\leq j\leq l$) such that $y_j \in F\,\cap\,G'$.
Then the following is true:
\be
\left|\{u \in  F'\backslash \{\sigma(y_l),\ldots, \sigma(y_{j+1})\} \colon \: u>y_j\}\right| 
	= h_{y_j-1}+1 = h_{y_j}
\label{eq.lem.cycle.closer.technical}
\ee
where $h_i$ denotes the height at position $i$ of the Motzkin path $\omega$
associated to $\sigma$
in Step~1.
\label{lem.cycle.closer.technical}
\end{lem}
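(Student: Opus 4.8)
The plan is to rewrite the set appearing on the left of \eqref{eq.lem.cycle.closer.technical} as a set of the form $\{i \le y_j \colon \sigma(i) > y_j\}$ and then apply Lemma~\ref{lemma.heights} in the equivalent formulation \eqref{eq.lemma.heights.equiv}.

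First I would use that $\sigma \restrict F \colon F \to F'$ is a bijection. Since $F = \{y_1 < y_2 < \cdots < y_l\}$, this gives $F' = \{\sigma(y_1), \ldots, \sigma(y_l)\}$; as the values $\sigma(y_m)$ are pairwise distinct, removing $\sigma(y_{j+1}), \ldots, \sigma(y_l)$ from $F'$ leaves exactly $\{\sigma(y_1), \ldots, \sigma(y_j)\}$. Hence the left-hand side of \eqref{eq.lem.cycle.closer.technical} equals $\#\{m \in [j] \colon \sigma(y_m) > y_j\}$. Because $F$ is listed in increasing order, $\{y_1, \ldots, y_j\}$ is precisely the set of elements of $F$ that are $\le y_j$, so this count equals $\#\{i \in F \colon i \le y_j \text{ and } \sigma(i) > y_j\}$.

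Next I would observe that the constraint $i \in F$ is automatically satisfied by every $i$ with $i \le y_j$ and $\sigma(i) > y_j$: indeed $\sigma(i) > y_j \ge i$ forces $\sigma(i) > i$, i.e.\ $i \in F$ by the definition \eqref{eq.steps2excedanceclassification}. Therefore the left-hand side of \eqref{eq.lem.cycle.closer.technical} equals $\#\{i \le y_j \colon \sigma(i) > y_j\}$, which is exactly $h_{y_j}$ by \eqref{eq.lemma.heights.equiv.a}. Finally, since $y_j \in F \cap G' = \Cval(\sigma)$, the step $s_{y_j}$ of the Motzkin path $\omega$ is a rise (Step~1), so $h_{y_j} = h_{y_j - 1} + 1$; combining the two displays yields \eqref{eq.lem.cycle.closer.technical}.

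I do not expect a genuine obstacle here: the argument is pure bookkeeping. The one point needing care is the passage from ``$F'$ minus the values already assigned to $y_{j+1}, \ldots, y_l$'' to ``the first $j$ images $\sigma(y_1), \ldots, \sigma(y_j)$'', which relies on $\sigma(F) = F'$ together with injectivity of $\sigma$, and the small observation that membership in $F$ comes for free once $i \le y_j < \sigma(i)$. Everything else is an immediate application of the height formula \eqref{eq.lemma.heights.equiv}.
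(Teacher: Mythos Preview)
Your proof is correct and takes essentially the same approach as the paper: both identify the set on the left of \eqref{eq.lem.cycle.closer.technical} with $\{i \le y_j \colon \sigma(i) > y_j\}$ (the paper phrases this via $\sigma^{-1}$ as $\{u > y_j \colon \sigma^{-1}(u) \le y_j\}$ and then substitutes $u \mapsto \sigma(u)$, while you use the bijection $\sigma\restrict F$ directly), and then invoke \eqref{eq.lemma.heights.equiv.a} to get $h_{y_j}$.
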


\begin{proof} We first establish the following equality of sets:
\be
\{u > y_j \colon \: \sinv(u) \leq  y_j\} \; = \;  \{u \in   F'\backslash \{\sigma(y_l),\ldots, \sigma(y_{j+1})\} \colon\: u > y_j \}.
\label{eq.technical.equation.a}
\ee

Whenever $u\in F'$, we have that $\sinv(u)\in F$
(by description of $F$, $F'$ in (\ref{eq.steps2excedanceclassification}a,b)).
Additionally, if $u\not\in \{\sigma(y_l),\ldots, \sigma(y_{j+1})\}$
then it must be that $\sinv(u)\leq y_j$.
This establishes the containment $\{u > y_j \colon \: \sinv(u) \leq  y_j\} \; \supseteq \; \{u \in   F'\backslash \{\sigma(y_l),\ldots, \sigma(y_{j+1})\} \colon\: u > y_j \}$.

On the other hand, if $u>y_j$ and $\sinv(u)\leq y_j$,
then $u> \sinv(u)$ and hence $u\in F'$.
As $\sinv(u)\leq y_j< y_{j+1}<\ldots< y_l$,
$u$ cannot be one of $\sigma(y_{j+1}), \ldots, \sigma(y_l)$.
Therefore, $u\in F'\backslash \{\sigma(y_l),\ldots, \sigma(y_{j+1})\} $.
This establishes \eqref{eq.technical.equation.a}.

To obtain Equation~\eqref{eq.lem.cycle.closer.technical},
it suffices to show that the cardinality of the set
$\{u > y_j \colon \: \sinv(u) \leq  y_j\}$
is $h_{y_j}$.
To do this, recall the interpretation of heights
in Equation~\eqref{eq.lemma.heights.equiv.a} and observe that
\begin{eqnarray}
    h_{y_j}  & = &  \# \{u \leq y_j \colon\:  \sigma(u) > y_j\} \nonumber \\
            & = &  \# \{u > y_j \colon\:  \sinv(u) \leq y_j \}
\end{eqnarray}
where the second equality is obtained by replacing $u$ with $\sinv(u)$.
\end{proof}

We are now ready to count the number of cycle closers.

\begin{lem}[Counting of cycle closers for permutations]
	Fix a 3-coloured Motzkin path~$\overline{\omega}$ of length $n$ 
        and construct the sets $F,F',G,G',H$ 
	(these are totally
	determined by $\overline{\omega}$).  
        Let $F = \{y_1<y_2<\ldots < y_l\}$ and
	fix an index $j$ ($1\leq j\leq l$) such that $y_j \in F\,\cap\,G'$.
	Also fix labels $\xi_u$ for vertices
        $u\in H\,\cup\,G\,\cup\,\{y_l,y_{l-1},\ldots,y_{j+1}\}$
	satisfying~\eqref{eq.xi.ineq}/\eqref{def.abc}.
	Then
	\begin{itemize}
		\item[(a)] The value of $\xi_{y_j}$ completely determines 
			if $y_j$ is a cycle closer or not.
		\item[(b)] There is exactly one value $\xi_{y_j}\in \{0,1,\ldots,h_{y_j-1}\}$
			that makes $y_j$ a cycle closer, and conversely.
	\end{itemize}
\label{lem.cycle.closer}
\end{lem}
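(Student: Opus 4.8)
### Proof plan for Lemma~\ref{lem.cycle.closer}

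The plan is to pin down exactly when adding the edge $y_j \to \sigma(y_j)$ closes a cycle, and then to count the admissible labels that achieve this. Recall from Lemma~\ref{lem.classifyingCycleF} (or rather from its proof via Lemma~\ref{lem.lastvertexF}) that at the moment we process $y_j$ during stage (c), the vertex $y_j$ is the final vertex of some path in $\laguerre{\{u_1,\ldots,u_{j-1}\}} = \laguerre{H\,\cup\,G\,\cup\,\{y_l,\ldots,y_{j+1}\}}$, and that path either has one vertex (just $y_j$ itself) or at least two vertices. First I would observe that $y_j$ is a cycle closer precisely when the edge $y_j \to \sigma(y_j)$ joins $y_j$ to the \emph{initial} vertex of the very path whose final vertex is $y_j$; equivalently, when $\sigma(y_j)$ is the initial vertex of that path. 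Since in stage (c) we choose $\sigma(y_j)$ to be the $(\xi_{y_j}+1)$th \emph{largest} element of the available ending set $F'\backslash\{\sigma(y_l),\ldots,\sigma(y_{j+1})\}$, this is a condition that $\xi_{y_j}$ must match a specific rank among the available ending vertices.

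The second step is to identify which available ending vertex is the initial vertex of $y_j$'s path, and to show it is characterized by an inequality. I claim the initial vertex of the path ending at $y_j$ is exactly the smallest available ending vertex that exceeds $y_j$. The intuition: all ending vertices (elements of $F'$) currently available are sources of no incoming edge yet, hence are either isolated vertices or initial vertices of paths; among these, the path structure built up so far is a collection of decreasing paths from stages (a),(b), then modified in stage (c) by attaching edges $y_i \to \sigma(y_i)$ with $y_i > y_j$ for $i > j$. A careful bookkeeping argument — tracking which vertices of $F'$ lie above $y_j$ and have already been consumed as targets of edges $y_i \to \sigma(y_i)$ with $y_i>y_j$ — shows that among the available ending vertices above $y_j$, the path containing $y_j$ terminates at $y_j$ and originates at the unique smallest such available vertex. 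Then by Lemma~\ref{lem.cycle.closer.technical}, the number of available ending vertices exceeding $y_j$ is exactly $h_{y_j}=h_{y_j-1}+1$. So the initial vertex of $y_j$'s path is the $h_{y_j}$th largest available ending vertex; hence $y_j$ is a cycle closer if and only if $\xi_{y_j}+1 = h_{y_j} = h_{y_j-1}+1$, i.e.\ $\xi_{y_j} = h_{y_j-1}$.

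This simultaneously proves (a) and (b): the value of $\xi_{y_j}$ determines whether $y_j$ closes a cycle (part (a)), and there is exactly one such value in the admissible range $\scra_{h_{y_j-1}}=\{0,1,\ldots,h_{y_j-1}\}$, namely $\xi_{y_j}=h_{y_j-1}$, the top of the range (part (b)). I would also remark on the degenerate subcase where $y_j$'s path is a single isolated vertex: then the edge $y_j\to\sigma(y_j)$ closes a cycle iff $\sigma(y_j)=y_j$, which is impossible since $y_j\in F$ means $\sigma(y_j)>y_j$; correspondingly, in that subcase the smallest available ending vertex exceeding $y_j$ is \emph{not} $y_j$ itself and the same rank computation goes through, or alternatively one checks directly that no value of $\xi_{y_j}$ makes $y_j$ a cycle closer — but in fact this subcase cannot occur when $y_j$ is genuinely going to be a cycle valley minimum of a non-singleton cycle, and we only care about cycle closers, so it is consistent.

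The main obstacle I anticipate is the bookkeeping in the second step: rigorously verifying that the initial vertex of the path currently ending at $y_j$ is \emph{exactly} the smallest available ending vertex above $y_j$, rather than merely some available ending vertex above $y_j$. This requires an induction on the stage-(c) steps $y_l, y_{l-1}, \ldots, y_{j+1}$, tracking how each edge insertion $y_i \to \sigma(y_i)$ either extends a path backward or merges two paths, and maintaining the invariant that at each moment the correspondence between ``final vertex of a path lying above a given threshold'' and ``smallest available ending vertex above that threshold'' is preserved. The decreasing order of processing in $F$ is precisely what makes this invariant survive, and spelling it out carefully — perhaps by exhibiting the bijection between current path-endpoints and available ending vertices explicitly, ordered compatibly — is the technical heart of the argument. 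Everything else (the rank computation, the identification of the admissible range, the conclusion) then follows immediately from Lemma~\ref{lem.cycle.closer.technical} and the definition of the left-to-right inversion table reconstruction.
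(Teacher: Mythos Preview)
Your overall strategy --- show that $y_j$ closes a cycle iff $\sigma(y_j)$ equals the initial vertex $\rbf$ of the path ending at $y_j$, and then argue that exactly one label achieves this --- is sound and matches the paper. But the heart of your argument is the claim that $\rbf$ is \emph{exactly the smallest} available ending vertex exceeding $y_j$, from which you deduce the explicit formula $\xi_{y_j}=h_{y_j-1}$. That claim is false.

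Take the paper's first running example $\sigma = 9\,3\,7\,4\,6\,11\,5\,8\,10\,1\,2$ and consider the cycle valley $y_j=5$. After stages~(a),~(b) and after processing $9$ and $6$ in stage~(c), the available ending set is $F'\setminus\{10,11\}=\{3,6,7,9\}$, and the path ending at $5$ is $7\to 5$ with initial vertex $\rbf=7$. The smallest available ending vertex exceeding $5$ is $6$, not $7$; and the cycle-closing label is $\xi_5=1$ (which selects $7$, the second largest), not $\xi_5=h_4=2$ (which selects $6$). So no inductive bookkeeping will rescue the invariant you proposed, because the invariant itself does not hold: the initial vertex $\rbf$ can sit anywhere among the available vertices above $y_j$, depending on the earlier label choices.

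The paper's proof is less ambitious and thereby avoids this trap. It does \emph{not} attempt to identify which label value closes the cycle. Instead it argues: (i) distinct labels give distinct targets, so at most one label works; (ii) $\rbf\in F'$ is available (it has no in-neighbour), and $\rbf>y_j$ by a short path-tracing argument; (iii) by Lemma~\ref{lem.cycle.closer.technical} the available ending vertices exceeding $y_j$ are exactly the $h_{y_j-1}+1$ largest available vertices, which is precisely the range reachable by labels in $\{0,\ldots,h_{y_j-1}\}$. Hence some label reaches $\rbf$. The only nontrivial ingredient is the inequality $\rbf>y_j$, proved by walking along the path from $\rbf$ to $y_j$: the prefix in $G$ is strictly decreasing, and the first vertex in $F$ along the path is either $y_j$ itself or was processed before $y_j$ and hence is $>y_j$. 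You should replace your rank-identification step with this inequality argument.
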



\begin{proof}
	As $y_j \in F\,\cap\,G'$, it must be that $y_j$
        is a cycle valley in $\sigma$.
	As $y_j$ does not have an out-neighbour in the Laguerre digraph
        $\laguerre{H\,\cup\, G \,\cup\, \{y_l,\ldots,y_{j+1}\}}$,
        it must be the final vertex of a path.
	Let $\rbf$ be the initial vertex of this path.
	During step $y_j$, we choose one of the available ending vertices
        from the set $F'\backslash\{\sigma(y_l),\ldots,\sigma(y_{j+1})\}$
	to be $\sigma(y_j)$.

	For $y_j$ to be a cycle closer, it must be that $\sigma(y_j)=\rbf$
	(so that inserting the edge $y_j \to \rbf$ turns its path into a cycle).
	As each value for label $\xi_{y_j}\in \{0,1,\ldots, h_{y_j-1}\}$
        assigns a different vertex to be $\sigma(y_j)$,
	there is at most one label for which $y_j$ is a cycle closer.

	Next we observe that $\rbf\not\in G'$.
	This is because $\rbf$ does not have an in-neighbour
        in $\laguerre{H\,\cup\, G \,\cup\, \{y_l,\ldots,y_{j+1}\}}$
	whereas all $v\in G'$ had in-neighbours at the end of stage (b).
	Therefore, $\rbf\in F'$ and $y_j \neq\rbf$ as $y_j\in G'$.
	To show that there is a $\xi_{y_j} \in \{0,1,\ldots, h_{y_j-1}\}$
        which can connect $y_j$ to $\rbf$,
	we must show that $\rbf$ is among the largest $h_{y_j-1}+1$ elements
	of $F'\backslash \{\sigma(y_l),\ldots, \sigma(y_{j+1})\}$.
	However, from Lemma~\ref{lem.cycle.closer.technical},
	we only need to show that $\rbf> y_j$. 
	The remainder of the proof does this.

	Let $\rbf=\rbf_0,\rbf_1,\ldots, \rbf_{\alpha} = y_j$ be the vertices
        of the path containing $y_j$ and $\rbf$
	with edges {\hbox{} $\rbf_{i} \to \rbf_{i+1}$}.
        Let $\beta$ be the smallest index such that $\rbf_{\beta}\in F$.
        Using the description of $G$ in 
	(\ref{eq.steps2excedanceclassification}c),
        we get $\rbf=\rbf_0>\ldots > \rbf_\beta$ when $\beta>0$. Thus,
        \be
        \rbf = \rbf_0 \leq \rbf_\beta
        \label{eq.rbf0.rbfbeta}
        \ee
        with equality if and only if $\beta = 0$.
        On the other hand, if $\beta <\alpha$,
        then $\rbf_{\beta}$ already has an out-neighbour
        and thus $\rbf_{\beta} > y_j$
        (as we are going through elements of $F$ in descending order).
        Thus,
        \be
        \rbf_\beta \leq \rbf_\alpha = y_j
        \label{eq.rbfbeta.rbalpha}
        \ee
        with equality if and only if $\beta = \alpha$.
        Using \eqref{eq.rbf0.rbfbeta}, \eqref{eq.rbfbeta.rbalpha}
        and the fact that $\rbf\neq y_j$, we obtain
        $\rbf>y_j$.
        This completes the proof.
\end{proof}

\begin{rem}
1. Notice that one can also construct a variant of this
interpretation where stage~(c) occurs before stage~(b). The role of cycle closer
will then be played by cycle peak maxima. 

2. In fact, for our results to hold, we could carry out stages~(a) and~(b) in any order
as we only require the digraph $\laguerre{H\cup G}$ and not the previous ones 
for our Lemmas~\ref{lem.afterHG}-\ref{lem.cycle.closer} to hold.
\myendremark
\end{rem}

\subsection{Examples}
\label{subsec.FZ.laguerre.eg}


\subsubsection{Running example 1}

We will now draw the FZ history of our first example
$\sigma = 9\,\, 3\,\, 7\,\, 4\,\, 6\,\, 11\,\, 5\,\, 8\,\, 10\,\, 1\,\, 2\\
           = (1,9,10)\,(2,3,7,5,6,11)\,(4)\,(8) \in \Sym_{11}$.

The sets $F,G$ and $H$ were already recorded in~\eqref{eq.FGH.running.example.1}
and we recall that\\
$F = \{1,2,3,5,6,9\}$, $G = \{7,10,11\}$ and $H=\{4,8\}$.
Thus, the FZ order consists of the following stages:
\begin{itemize}
\item Stage (a): 4, 8
\item Stage (b): 7, 10, 11
\item Stage (c): 9, 6, 5, 3, 2, 1
\end{itemize}
Stages~(a) and~(b) of the FZ history of $\sigma$ has been drawn in 
Figure~\ref{fig.running.example.1.FZ.history.ab},
and Stage~(c) has been drawn in Figure~\ref{fig.running.example.1.FZ.history.c}.
Non-singleton cycles are formed in Stage~(c) when the edges $2\to 3$ and $1\to 9$ are inserted.


\begin{figure}[!pt]
\vspace*{-8mm}
\centering
\begin{tabular}{l}
\begin{tikzpicture}[scale = 1]
{
\pgfmathsetmacro{\yshift}{-2.5};

\pgfmathsetmacro{\ay}{1+\yshift};
\pgfmathsetmacro{\by}{1+\yshift};
\pgfmathsetmacro{\cy}{0+\yshift};
\pgfmathsetmacro{\dy}{0+\yshift};
\pgfmathsetmacro{\ey}{0+\yshift};
\pgfmathsetmacro{\fy}{1+\yshift};
\pgfmathsetmacro{\gy}{0+\yshift};
\pgfmathsetmacro{\hy}{0+\yshift};
\pgfmathsetmacro{\iy}{0+\yshift};
\pgfmathsetmacro{\jy}{0+\yshift};
\pgfmathsetmacro{\ky}{1+\yshift};

\node[right] at (-4,\yshift) {$\laguerre{\emptyset}$};

	\node[circle,fill=black,inner sep=1pt,minimum size=5pt] (a) at (0,\ay) {};
	\node[circle,fill=black,inner sep=1pt,minimum size=5pt] (i) at (0,\iy) {};
	\node[circle,fill=black,inner sep=1pt,minimum size=5pt] (j) at (1,\jy) {};

	\node[circle,fill=black,inner sep=1pt,minimum size=5pt] (b) at (3,\by) {};
	\node[circle,fill=black,inner sep=1pt,minimum size=5pt] (c) at (3,\cy) {};
	\node[circle,fill=black,inner sep=1pt,minimum size=5pt] (g) at (4,\gy) {};
	\node[circle,fill=black,inner sep=1pt,minimum size=5pt] (e) at (5,\ey) {};
	\node[circle,fill=black,inner sep=1pt,minimum size=5pt] (f) at (5,\fy) {};
	\node[circle,fill=black,inner sep=1pt,minimum size=5pt] (k) at (4,\ky) {};

	\node[circle,fill=black,inner sep=1pt,minimum size=5pt] (d) at (7,\dy) {};
	\node[circle,fill=black,inner sep=1pt,minimum size=5pt] (h) at (8,\hy) {};

	\node[above = 1pt of a] {{$1$}};
	\node[above = 1pt of b] {{$2$}};
	\node[below = 1pt of c] {{$3$}};
	\node[below = 1pt of d] {{$4$}};
	\node[below = 1pt of e] {{$5$}};
	\node[above = 1pt of f] {{$6$}};
	\node[below = 1pt of g] {{$7$}};
	\node[below = 1pt of h] {{$8$}};
	\node[below = 1pt of i] {{$9$}};
	\node[below = 1pt of j] {{$10$}};
	\node[above = 1pt of k] {{$11$}};
}
\end{tikzpicture}\\
\hline\\[-4mm]
\hline\\
After Stage (a): $H = \{4,8\}$\\[2mm]
\hline\\[2mm]
\begin{tikzpicture}[scale = 1]
{
\pgfmathsetmacro{\yshift}{-6.5};

\pgfmathsetmacro{\ay}{1+\yshift};
\pgfmathsetmacro{\by}{1+\yshift};
\pgfmathsetmacro{\cy}{0+\yshift};
\pgfmathsetmacro{\dy}{0+\yshift};
\pgfmathsetmacro{\ey}{0+\yshift};
\pgfmathsetmacro{\fy}{1+\yshift};
\pgfmathsetmacro{\gy}{0+\yshift};
\pgfmathsetmacro{\hy}{0+\yshift};
\pgfmathsetmacro{\iy}{0+\yshift};
\pgfmathsetmacro{\jy}{0+\yshift};
\pgfmathsetmacro{\ky}{1+\yshift};

\node[right] at (-4,\yshift) {$\laguerre{\{4,8\}}$};

	\node[circle,fill=black,inner sep=1pt,minimum size=5pt] (a) at (0,\ay) {};
	\node[circle,fill=black,inner sep=1pt,minimum size=5pt] (i) at (0,\iy) {};
	\node[circle,fill=black,inner sep=1pt,minimum size=5pt] (j) at (1,\jy) {};

	\node[circle,fill=black,inner sep=1pt,minimum size=5pt] (b) at (3,\by) {};
	\node[circle,fill=black,inner sep=1pt,minimum size=5pt] (c) at (3,\cy) {};
	\node[circle,fill=black,inner sep=1pt,minimum size=5pt] (g) at (4,\gy) {};
	\node[circle,fill=black,inner sep=1pt,minimum size=5pt] (e) at (5,\ey) {};
	\node[circle,fill=black,inner sep=1pt,minimum size=5pt] (f) at (5,\fy) {};
	\node[circle,fill=black,inner sep=1pt,minimum size=5pt] (k) at (4,\ky) {};

	\node[circle,fill=black,inner sep=1pt,minimum size=5pt] (d) at (7,\dy) {};
	\node[circle,fill=black,inner sep=1pt,minimum size=5pt] (h) at (8,\hy) {};

	\node[above = 1pt of a] {{$1$}};
	\node[above = 1pt of b] {{$2$}};
	\node[below = 1pt of c] {{$3$}};
	\node[below = 1pt of d] {{$4$}};
	\node[below = 1pt of e] {{$5$}};
	\node[above = 1pt of f] {{$6$}};
	\node[below = 1pt of g] {{$7$}};
	\node[below = 1pt of h] {{$8$}};
	\node[below = 1pt of i] {{$9$}};
	\node[below = 1pt of j] {{$10$}};
	\node[above = 1pt of k] {{$11$}};

\node[circle,fill=black,inner sep=1pt,minimum size=5pt] (d) at (7,\dy) {} edge [in=45,out=135, loop above, thick, color=red] node {} ();
\node[circle,fill=black,inner sep=1pt,minimum size=5pt] (d) at (8,\hy) {} edge [in=45,out=135, loop above, thick, color=red] node {} ();
}
\end{tikzpicture}\\
\hline\\[-4mm]
\hline\\
Stage (b): $G = \{7, 10, 11\}$ in increasing order\\[2mm]
\hline\\[2mm] 

\begin{tikzpicture}[scale = 1]
{
\pgfmathsetmacro{\yshift}{-10.5};

\pgfmathsetmacro{\ay}{1+\yshift};
\pgfmathsetmacro{\by}{1+\yshift};
\pgfmathsetmacro{\cy}{0+\yshift};
\pgfmathsetmacro{\dy}{0+\yshift};
\pgfmathsetmacro{\ey}{0+\yshift};
\pgfmathsetmacro{\fy}{1+\yshift};
\pgfmathsetmacro{\gy}{0+\yshift};
\pgfmathsetmacro{\hy}{0+\yshift};
\pgfmathsetmacro{\iy}{0+\yshift};
\pgfmathsetmacro{\jy}{0+\yshift};
\pgfmathsetmacro{\ky}{1+\yshift};

\node[right] at (-4,\yshift) {$\laguerre{\{4,8,7\}}$};

	\node[circle,fill=black,inner sep=1pt,minimum size=5pt] (a) at (0,\ay) {};
	\node[circle,fill=black,inner sep=1pt,minimum size=5pt] (i) at (0,\iy) {};
	\node[circle,fill=black,inner sep=1pt,minimum size=5pt] (j) at (1,\jy) {};

	\node[circle,fill=black,inner sep=1pt,minimum size=5pt] (b) at (3,\by) {};
	\node[circle,fill=black,inner sep=1pt,minimum size=5pt] (c) at (3,\cy) {};
	\node[circle,fill=black,inner sep=1pt,minimum size=5pt] (g) at (4,\gy) {};
	\node[circle,fill=black,inner sep=1pt,minimum size=5pt] (e) at (5,\ey) {};
	\node[circle,fill=black,inner sep=1pt,minimum size=5pt] (f) at (5,\fy) {};
	\node[circle,fill=black,inner sep=1pt,minimum size=5pt] (k) at (4,\ky) {};

	\node[circle,fill=black,inner sep=1pt,minimum size=5pt] (d) at (7,\dy) {};
	\node[circle,fill=black,inner sep=1pt,minimum size=5pt] (h) at (8,\hy) {};

	\node[above = 1pt of a] {{$1$}};
	\node[above = 1pt of b] {{$2$}};
	\node[below = 1pt of c] {{$3$}};
	\node[below = 1pt of d] {{$4$}};
	\node[below = 1pt of e] {{$5$}};
	\node[above = 1pt of f] {{$6$}};
	\node[below = 1pt of g] {{$7$}};
	\node[below = 1pt of h] {{$8$}};
	\node[below = 1pt of i] {{$9$}};
	\node[below = 1pt of j] {{$10$}};
	\node[above = 1pt of k] {{$11$}};

\node[circle,fill=black,inner sep=1pt,minimum size=5pt] (d) at (7,\dy) {} edge [in=45,out=135, loop above, thick] node {} ();
\node[circle,fill=black,inner sep=1pt,minimum size=5pt] (d) at (8,\hy) {} edge [in=45,out=135, loop above, thick] node {} ();

\graph [multi, edges = {thick,red}] {(g) -> (e);};

}
\end{tikzpicture}\\
\hdashline\\
\begin{tikzpicture}
{
\pgfmathsetmacro{\yshift}{-13.5};

\pgfmathsetmacro{\ay}{1+\yshift};
\pgfmathsetmacro{\by}{1+\yshift};
\pgfmathsetmacro{\cy}{0+\yshift};
\pgfmathsetmacro{\dy}{0+\yshift};
\pgfmathsetmacro{\ey}{0+\yshift};
\pgfmathsetmacro{\fy}{1+\yshift};
\pgfmathsetmacro{\gy}{0+\yshift};
\pgfmathsetmacro{\hy}{0+\yshift};
\pgfmathsetmacro{\iy}{0+\yshift};
\pgfmathsetmacro{\jy}{0+\yshift};
\pgfmathsetmacro{\ky}{1+\yshift};

\node[right] at (-4,\yshift) {$\laguerre{\{4,8,7,10\}}$};

	\node[circle,fill=black,inner sep=1pt,minimum size=5pt] (a) at (0,\ay) {};
	\node[circle,fill=black,inner sep=1pt,minimum size=5pt] (i) at (0,\iy) {};
	\node[circle,fill=black,inner sep=1pt,minimum size=5pt] (j) at (1,\jy) {};

	\node[circle,fill=black,inner sep=1pt,minimum size=5pt] (b) at (3,\by) {};
	\node[circle,fill=black,inner sep=1pt,minimum size=5pt] (c) at (3,\cy) {};
	\node[circle,fill=black,inner sep=1pt,minimum size=5pt] (g) at (4,\gy) {};
	\node[circle,fill=black,inner sep=1pt,minimum size=5pt] (e) at (5,\ey) {};
	\node[circle,fill=black,inner sep=1pt,minimum size=5pt] (f) at (5,\fy) {};
	\node[circle,fill=black,inner sep=1pt,minimum size=5pt] (k) at (4,\ky) {};

	\node[circle,fill=black,inner sep=1pt,minimum size=5pt] (d) at (7,\dy) {};
	\node[circle,fill=black,inner sep=1pt,minimum size=5pt] (h) at (8,\hy) {};

	\node[above = 1pt of a] {{$1$}};
	\node[above = 1pt of b] {{$2$}};
	\node[below = 1pt of c] {{$3$}};
	\node[below = 1pt of d] {{$4$}};
	\node[below = 1pt of e] {{$5$}};
	\node[above = 1pt of f] {{$6$}};
	\node[below = 1pt of g] {{$7$}};
	\node[below = 1pt of h] {{$8$}};
	\node[below = 1pt of i] {{$9$}};
	\node[below = 1pt of j] {{$10$}};
	\node[above = 1pt of k] {{$11$}};

\node[circle,fill=black,inner sep=1pt,minimum size=5pt] (d) at (7,\dy) {} edge [in=45,out=135, thick, loop above] node {} ();
\node[circle,fill=black,inner sep=1pt,minimum size=5pt] (d) at (8,\hy) {} edge [in=45,out=135, thick, loop above] node {} ();

\graph [multi, edges = {thick}] {(g) -> (e); };

\graph [multi, edges = {thick, red}] {(j) -> (a)  };

}
\end{tikzpicture}\\
\hdashline\\
\begin{tikzpicture}
{
\pgfmathsetmacro{\yshift}{-16.5};

\pgfmathsetmacro{\ay}{1+\yshift};
\pgfmathsetmacro{\by}{1+\yshift};
\pgfmathsetmacro{\cy}{0+\yshift};
\pgfmathsetmacro{\dy}{0+\yshift};
\pgfmathsetmacro{\ey}{0+\yshift};
\pgfmathsetmacro{\fy}{1+\yshift};
\pgfmathsetmacro{\gy}{0+\yshift};
\pgfmathsetmacro{\hy}{0+\yshift};
\pgfmathsetmacro{\iy}{0+\yshift};
\pgfmathsetmacro{\jy}{0+\yshift};
\pgfmathsetmacro{\ky}{1+\yshift};

\node[right] at (-4,\yshift) {$\laguerre{\{4,8,7,10,11\}}$};

	\node[circle,fill=black,inner sep=1pt,minimum size=5pt] (a) at (0,\ay) {};
	\node[circle,fill=black,inner sep=1pt,minimum size=5pt] (i) at (0,\iy) {};
	\node[circle,fill=black,inner sep=1pt,minimum size=5pt] (j) at (1,\jy) {};

	\node[circle,fill=black,inner sep=1pt,minimum size=5pt] (b) at (3,\by) {};
	\node[circle,fill=black,inner sep=1pt,minimum size=5pt] (c) at (3,\cy) {};
	\node[circle,fill=black,inner sep=1pt,minimum size=5pt] (g) at (4,\gy) {};
	\node[circle,fill=black,inner sep=1pt,minimum size=5pt] (e) at (5,\ey) {};
	\node[circle,fill=black,inner sep=1pt,minimum size=5pt] (f) at (5,\fy) {};
	\node[circle,fill=black,inner sep=1pt,minimum size=5pt] (k) at (4,\ky) {};

	\node[circle,fill=black,inner sep=1pt,minimum size=5pt] (d) at (7,\dy) {};
	\node[circle,fill=black,inner sep=1pt,minimum size=5pt] (h) at (8,\hy) {};

	\node[above = 1pt of a] {{$1$}};
	\node[above = 1pt of b] {{$2$}};
	\node[below = 1pt of c] {{$3$}};
	\node[below = 1pt of d] {{$4$}};
	\node[below = 1pt of e] {{$5$}};
	\node[above = 1pt of f] {{$6$}};
	\node[below = 1pt of g] {{$7$}};
	\node[below = 1pt of h] {{$8$}};
	\node[below = 1pt of i] {{$9$}};
	\node[below = 1pt of j] {{$10$}};
	\node[above = 1pt of k] {{$11$}};

\node[circle,fill=black,inner sep=1pt,minimum size=5pt] (d) at (7,\dy) {} edge [in=45,out=135, thick, loop above] node {} ();
\node[circle,fill=black,inner sep=1pt,minimum size=5pt] (d) at (8,\hy) {} edge [in=45,out=135, thick, loop above] node {} ();

\graph [multi, edges = {thick}] {(g) -> (e); (j) -> (a); };
\graph [multi, edges = {thick,red}] {(k) -> (b); };

}
\end{tikzpicture}
\end{tabular}
\caption{Stages (a) and (b) of the FZ history for the permutation\\
$\sigma = 9\,\, 3\,\, 7\,\, 4\,\, 6\,\, 11\,\, 5\,\, 8\,\, 10\,\, 1\,\, 2
= (1,9,10)\,(2,3,7,5,6,11)\,(4)\,(8) \in \Sym_{11}$.}
\label{fig.running.example.1.FZ.history.ab}
\end{figure}

\begin{figure}
\centering
\begin{tabular}{l}
\hline\\[-4mm]
\hline\\
Stage (c): $F = \{9, 6, 5, 3, 2, 1\}$ in decreasing order\\[2mm]
\hline\\[2mm]
\begin{tikzpicture}[scale = 1]
{
\pgfmathsetmacro{\yshift}{-20.5};

\pgfmathsetmacro{\ay}{1+\yshift};
\pgfmathsetmacro{\by}{1+\yshift};
\pgfmathsetmacro{\cy}{0+\yshift};
\pgfmathsetmacro{\dy}{0+\yshift};
\pgfmathsetmacro{\ey}{0+\yshift};
\pgfmathsetmacro{\fy}{1+\yshift};
\pgfmathsetmacro{\gy}{0+\yshift};
\pgfmathsetmacro{\hy}{0+\yshift};
\pgfmathsetmacro{\iy}{0+\yshift};
\pgfmathsetmacro{\jy}{0+\yshift};
\pgfmathsetmacro{\ky}{1+\yshift};

\node[right] at (-4,\yshift) {$\laguerre{\{4,8,7,10,11,9\}}$};

	\node[circle,fill=black,inner sep=1pt,minimum size=5pt] (a) at (0,\ay) {};
	\node[circle,fill=black,inner sep=1pt,minimum size=5pt] (i) at (0,\iy) {};
	\node[circle,fill=black,inner sep=1pt,minimum size=5pt] (j) at (1,\jy) {};

	\node[circle,fill=black,inner sep=1pt,minimum size=5pt] (b) at (3,\by) {};
	\node[circle,fill=black,inner sep=1pt,minimum size=5pt] (c) at (3,\cy) {};
	\node[circle,fill=black,inner sep=1pt,minimum size=5pt] (g) at (4,\gy) {};
	\node[circle,fill=black,inner sep=1pt,minimum size=5pt] (e) at (5,\ey) {};
	\node[circle,fill=black,inner sep=1pt,minimum size=5pt] (f) at (5,\fy) {};
	\node[circle,fill=black,inner sep=1pt,minimum size=5pt] (k) at (4,\ky) {};

	\node[circle,fill=black,inner sep=1pt,minimum size=5pt] (d) at (7,\dy) {};
	\node[circle,fill=black,inner sep=1pt,minimum size=5pt] (h) at (8,\hy) {};

	\node[above = 1pt of a] {{$1$}};
	\node[above = 1pt of b] {{$2$}};
	\node[below = 1pt of c] {{$3$}};
	\node[below = 1pt of d] {{$4$}};
	\node[below = 1pt of e] {{$5$}};
	\node[above = 1pt of f] {{$6$}};
	\node[below = 1pt of g] {{$7$}};
	\node[below = 1pt of h] {{$8$}};
	\node[below = 1pt of i] {{$9$}};
	\node[below = 1pt of j] {{$10$}};
	\node[above = 1pt of k] {{$11$}};

\node[circle,fill=black,inner sep=1pt,minimum size=5pt] (d) at (7,\dy) {} edge [in=45,out=135, thick, loop above] node {} ();
\node[circle,fill=black,inner sep=1pt,minimum size=5pt] (d) at (8,\hy) {} edge [in=45,out=135, thick, loop above] node {} ();

\graph [multi, edges = {thick}] {(g) -> (e); (j) -> (a); (k) -> (b); };

\graph [multi, edges = {thick,red}] {(i) -> (j);};
}
\end{tikzpicture}\\
\hdashline\\
\begin{tikzpicture}
{
\pgfmathsetmacro{\yshift}{-23.5};

\pgfmathsetmacro{\ay}{1+\yshift};
\pgfmathsetmacro{\by}{1+\yshift};
\pgfmathsetmacro{\cy}{0+\yshift};
\pgfmathsetmacro{\dy}{0+\yshift};
\pgfmathsetmacro{\ey}{0+\yshift};
\pgfmathsetmacro{\fy}{1+\yshift};
\pgfmathsetmacro{\gy}{0+\yshift};
\pgfmathsetmacro{\hy}{0+\yshift};
\pgfmathsetmacro{\iy}{0+\yshift};
\pgfmathsetmacro{\jy}{0+\yshift};
\pgfmathsetmacro{\ky}{1+\yshift};

\node[right] at (-4,\yshift) {$\laguerre{\{4,8,7,10,11,9,6\}}$};

	\node[circle,fill=black,inner sep=1pt,minimum size=5pt] (a) at (0,\ay) {};
	\node[circle,fill=black,inner sep=1pt,minimum size=5pt] (i) at (0,\iy) {};
	\node[circle,fill=black,inner sep=1pt,minimum size=5pt] (j) at (1,\jy) {};

	\node[circle,fill=black,inner sep=1pt,minimum size=5pt] (b) at (3,\by) {};
	\node[circle,fill=black,inner sep=1pt,minimum size=5pt] (c) at (3,\cy) {};
	\node[circle,fill=black,inner sep=1pt,minimum size=5pt] (g) at (4,\gy) {};
	\node[circle,fill=black,inner sep=1pt,minimum size=5pt] (e) at (5,\ey) {};
	\node[circle,fill=black,inner sep=1pt,minimum size=5pt] (f) at (5,\fy) {};
	\node[circle,fill=black,inner sep=1pt,minimum size=5pt] (k) at (4,\ky) {};

	\node[circle,fill=black,inner sep=1pt,minimum size=5pt] (d) at (7,\dy) {};
	\node[circle,fill=black,inner sep=1pt,minimum size=5pt] (h) at (8,\hy) {};

	\node[above = 1pt of a] {{$1$}};
	\node[above = 1pt of b] {{$2$}};
	\node[below = 1pt of c] {{$3$}};
	\node[below = 1pt of d] {{$4$}};
	\node[below = 1pt of e] {{$5$}};
	\node[above = 1pt of f] {{$6$}};
	\node[below = 1pt of g] {{$7$}};
	\node[below = 1pt of h] {{$8$}};
	\node[below = 1pt of i] {{$9$}};
	\node[below = 1pt of j] {{$10$}};
	\node[above = 1pt of k] {{$11$}};

\node[circle,fill=black,inner sep=1pt,minimum size=5pt] (d) at (7,\dy) {} edge [in=45,out=135, thick, loop above] node {} ();
\node[circle,fill=black,inner sep=1pt,minimum size=5pt] (d) at (8,\hy) {} edge [in=45,out=135, thick, loop above] node {} ();

\graph [multi, edges = {thick}] {(g) -> (e); (j) -> (a); (k) -> (b); };

\graph [multi, edges = {thick}] {(i) -> (j);};
\graph [multi, edges = {thick,red}] {(f) -> (k);};

}
\end{tikzpicture}\\
\hdashline\\
\begin{tikzpicture}

{
\pgfmathsetmacro{\yshift}{-26.5};

\pgfmathsetmacro{\ay}{1+\yshift};
\pgfmathsetmacro{\by}{1+\yshift};
\pgfmathsetmacro{\cy}{0+\yshift};
\pgfmathsetmacro{\dy}{0+\yshift};
\pgfmathsetmacro{\ey}{0+\yshift};
\pgfmathsetmacro{\fy}{1+\yshift};
\pgfmathsetmacro{\gy}{0+\yshift};
\pgfmathsetmacro{\hy}{0+\yshift};
\pgfmathsetmacro{\iy}{0+\yshift};
\pgfmathsetmacro{\jy}{0+\yshift};
\pgfmathsetmacro{\ky}{1+\yshift};

\node[right] at (-4,\yshift) {$\laguerre{\{4,8,7,10,11,9,6,5\}}$};

	\node[circle,fill=black,inner sep=1pt,minimum size=5pt] (a) at (0,\ay) {};
	\node[circle,fill=black,inner sep=1pt,minimum size=5pt] (i) at (0,\iy) {};
	\node[circle,fill=black,inner sep=1pt,minimum size=5pt] (j) at (1,\jy) {};

	\node[circle,fill=black,inner sep=1pt,minimum size=5pt] (b) at (3,\by) {};
	\node[circle,fill=black,inner sep=1pt,minimum size=5pt] (c) at (3,\cy) {};
	\node[circle,fill=black,inner sep=1pt,minimum size=5pt] (g) at (4,\gy) {};
	\node[circle,fill=black,inner sep=1pt,minimum size=5pt] (e) at (5,\ey) {};
	\node[circle,fill=black,inner sep=1pt,minimum size=5pt] (f) at (5,\fy) {};
	\node[circle,fill=black,inner sep=1pt,minimum size=5pt] (k) at (4,\ky) {};

	\node[circle,fill=black,inner sep=1pt,minimum size=5pt] (d) at (7,\dy) {};
	\node[circle,fill=black,inner sep=1pt,minimum size=5pt] (h) at (8,\hy) {};

	\node[above = 1pt of a] {{$1$}};
	\node[above = 1pt of b] {{$2$}};
	\node[below = 1pt of c] {{$3$}};
	\node[below = 1pt of d] {{$4$}};
	\node[below = 1pt of e] {{$5$}};
	\node[above = 1pt of f] {{$6$}};
	\node[below = 1pt of g] {{$7$}};
	\node[below = 1pt of h] {{$8$}};
	\node[below = 1pt of i] {{$9$}};
	\node[below = 1pt of j] {{$10$}};
	\node[above = 1pt of k] {{$11$}};

\node[circle,fill=black,inner sep=1pt,minimum size=5pt] (d) at (7,\dy) {} edge [in=45,out=135, thick, loop above] node {} ();
\node[circle,fill=black,inner sep=1pt,minimum size=5pt] (d) at (8,\hy) {} edge [in=45,out=135, thick, loop above] node {} ();

\graph [multi, edges = {thick}] {(g) -> (e); (j) -> (a); (k) -> (b); };

\graph [multi, edges = {thick}] {(i) -> (j); (f) -> (k);};
\graph [multi, edges = {thick,red}] {(e) -> (f);};
}
\end{tikzpicture}\\
\hdashline\\
\begin{tikzpicture}

{
\pgfmathsetmacro{\yshift}{-29.5};

\pgfmathsetmacro{\ay}{1+\yshift};
\pgfmathsetmacro{\by}{1+\yshift};
\pgfmathsetmacro{\cy}{0+\yshift};
\pgfmathsetmacro{\dy}{0+\yshift};
\pgfmathsetmacro{\ey}{0+\yshift};
\pgfmathsetmacro{\fy}{1+\yshift};
\pgfmathsetmacro{\gy}{0+\yshift};
\pgfmathsetmacro{\hy}{0+\yshift};
\pgfmathsetmacro{\iy}{0+\yshift};
\pgfmathsetmacro{\jy}{0+\yshift};
\pgfmathsetmacro{\ky}{1+\yshift};

\node[right] at (-4,\yshift) {$\laguerre{\{4,8,7,10,11,9,6,5,3\}}$};

	\node[circle,fill=black,inner sep=1pt,minimum size=5pt] (a) at (0,\ay) {};
	\node[circle,fill=black,inner sep=1pt,minimum size=5pt] (i) at (0,\iy) {};
	\node[circle,fill=black,inner sep=1pt,minimum size=5pt] (j) at (1,\jy) {};

	\node[circle,fill=black,inner sep=1pt,minimum size=5pt] (b) at (3,\by) {};
	\node[circle,fill=black,inner sep=1pt,minimum size=5pt] (c) at (3,\cy) {};
	\node[circle,fill=black,inner sep=1pt,minimum size=5pt] (g) at (4,\gy) {};
	\node[circle,fill=black,inner sep=1pt,minimum size=5pt] (e) at (5,\ey) {};
	\node[circle,fill=black,inner sep=1pt,minimum size=5pt] (f) at (5,\fy) {};
	\node[circle,fill=black,inner sep=1pt,minimum size=5pt] (k) at (4,\ky) {};

	\node[circle,fill=black,inner sep=1pt,minimum size=5pt] (d) at (7,\dy) {};
	\node[circle,fill=black,inner sep=1pt,minimum size=5pt] (h) at (8,\hy) {};

	\node[above = 1pt of a] {{$1$}};
	\node[above = 1pt of b] {{$2$}};
	\node[below = 1pt of c] {{$3$}};
	\node[below = 1pt of d] {{$4$}};
	\node[below = 1pt of e] {{$5$}};
	\node[above = 1pt of f] {{$6$}};
	\node[below = 1pt of g] {{$7$}};
	\node[below = 1pt of h] {{$8$}};
	\node[below = 1pt of i] {{$9$}};
	\node[below = 1pt of j] {{$10$}};
	\node[above = 1pt of k] {{$11$}};

\node[circle,fill=black,inner sep=1pt,minimum size=5pt] (d) at (7,\dy) {} edge [in=45,out=135, thick, loop above] node {} ();
\node[circle,fill=black,inner sep=1pt,minimum size=5pt] (d) at (8,\hy) {} edge [in=45,out=135, thick, loop above] node {} ();

\graph [multi, edges = {thick}] {(g) -> (e); (j) -> (a); (k) -> (b); };

\graph [multi, edges = {thick}] {(i) -> (j); (f) -> (k); (e) -> (f); };
\graph [multi, edges = {thick,red}] {(c) -> (g);};
}
\end{tikzpicture}\\
\hdashline\\
\begin{tikzpicture}

{
\pgfmathsetmacro{\yshift}{-32.5};

\pgfmathsetmacro{\ay}{1+\yshift};
\pgfmathsetmacro{\by}{1+\yshift};
\pgfmathsetmacro{\cy}{0+\yshift};
\pgfmathsetmacro{\dy}{0+\yshift};
\pgfmathsetmacro{\ey}{0+\yshift};
\pgfmathsetmacro{\fy}{1+\yshift};
\pgfmathsetmacro{\gy}{0+\yshift};
\pgfmathsetmacro{\hy}{0+\yshift};
\pgfmathsetmacro{\iy}{0+\yshift};
\pgfmathsetmacro{\jy}{0+\yshift};
\pgfmathsetmacro{\ky}{1+\yshift};

\node[right] at (-4,\yshift) {$\laguerre{\{4,8,7,10,11,9,6,5,3,2\}}$};

	\node[circle,fill=black,inner sep=1pt,minimum size=5pt] (a) at (0,\ay) {};
	\node[circle,fill=black,inner sep=1pt,minimum size=5pt] (i) at (0,\iy) {};
	\node[circle,fill=black,inner sep=1pt,minimum size=5pt] (j) at (1,\jy) {};

	\node[circle,fill=black,inner sep=1pt,minimum size=5pt] (b) at (3,\by) {};
	\node[circle,fill=black,inner sep=1pt,minimum size=5pt] (c) at (3,\cy) {};
	\node[circle,fill=black,inner sep=1pt,minimum size=5pt] (g) at (4,\gy) {};
	\node[circle,fill=black,inner sep=1pt,minimum size=5pt] (e) at (5,\ey) {};
	\node[circle,fill=black,inner sep=1pt,minimum size=5pt] (f) at (5,\fy) {};
	\node[circle,fill=black,inner sep=1pt,minimum size=5pt] (k) at (4,\ky) {};

	\node[circle,fill=black,inner sep=1pt,minimum size=5pt] (d) at (7,\dy) {};
	\node[circle,fill=black,inner sep=1pt,minimum size=5pt] (h) at (8,\hy) {};

	\node[above = 1pt of a] {{$1$}};
	\node[above = 1pt of b] {{$2$}};
	\node[below = 1pt of c] {{$3$}};
	\node[below = 1pt of d] {{$4$}};
	\node[below = 1pt of e] {{$5$}};
	\node[above = 1pt of f] {{$6$}};
	\node[below = 1pt of g] {{$7$}};
	\node[below = 1pt of h] {{$8$}};
	\node[below = 1pt of i] {{$9$}};
	\node[below = 1pt of j] {{$10$}};
	\node[above = 1pt of k] {{$11$}};

\node[circle,fill=black,inner sep=1pt,minimum size=5pt] (d) at (7,\dy) {} edge [in=45,out=135, thick, loop above] node {} ();
\node[circle,fill=black,inner sep=1pt,minimum size=5pt] (d) at (8,\hy) {} edge [in=45,out=135, thick, loop above] node {} ();

\graph [multi, edges = {thick}] {(g) -> (e); (j) -> (a); (k) -> (b); };

\graph [multi, edges = {thick}] {(i) -> (j); (f) -> (k); (e) -> (f); (c) -> (g);};
\graph [multi, edges = {thick,red}] {(b) -> (c);};
}
\end{tikzpicture}\\
\hdashline\\
\begin{tikzpicture}

{
\pgfmathsetmacro{\yshift}{-35.5};

\pgfmathsetmacro{\ay}{1+\yshift};
\pgfmathsetmacro{\by}{1+\yshift};
\pgfmathsetmacro{\cy}{0+\yshift};
\pgfmathsetmacro{\dy}{0+\yshift};
\pgfmathsetmacro{\ey}{0+\yshift};
\pgfmathsetmacro{\fy}{1+\yshift};
\pgfmathsetmacro{\gy}{0+\yshift};
\pgfmathsetmacro{\hy}{0+\yshift};
\pgfmathsetmacro{\iy}{0+\yshift};
\pgfmathsetmacro{\jy}{0+\yshift};
\pgfmathsetmacro{\ky}{1+\yshift};

\node[right] at (-4,\yshift) {$\laguerre{\{4,8,7,10,11,9, 6, 5, 3, 2, 1\}}$};

	\node[circle,fill=black,inner sep=1pt,minimum size=5pt] (a) at (0,\ay) {};
	\node[circle,fill=black,inner sep=1pt,minimum size=5pt] (i) at (0,\iy) {};
	\node[circle,fill=black,inner sep=1pt,minimum size=5pt] (j) at (1,\jy) {};

	\node[circle,fill=black,inner sep=1pt,minimum size=5pt] (b) at (3,\by) {};
	\node[circle,fill=black,inner sep=1pt,minimum size=5pt] (c) at (3,\cy) {};
	\node[circle,fill=black,inner sep=1pt,minimum size=5pt] (g) at (4,\gy) {};
	\node[circle,fill=black,inner sep=1pt,minimum size=5pt] (e) at (5,\ey) {};
	\node[circle,fill=black,inner sep=1pt,minimum size=5pt] (f) at (5,\fy) {};
	\node[circle,fill=black,inner sep=1pt,minimum size=5pt] (k) at (4,\ky) {};

	\node[circle,fill=black,inner sep=1pt,minimum size=5pt] (d) at (7,\dy) {};
	\node[circle,fill=black,inner sep=1pt,minimum size=5pt] (h) at (8,\hy) {};

	\node[above = 1pt of a] {{$1$}};
	\node[above = 1pt of b] {{$2$}};
	\node[below = 1pt of c] {{$3$}};
	\node[below = 1pt of d] {{$4$}};
	\node[below = 1pt of e] {{$5$}};
	\node[above = 1pt of f] {{$6$}};
	\node[below = 1pt of g] {{$7$}};
	\node[below = 1pt of h] {{$8$}};
	\node[below = 1pt of i] {{$9$}};
	\node[below = 1pt of j] {{$10$}};
	\node[above = 1pt of k] {{$11$}};

\node[circle,fill=black,inner sep=1pt,minimum size=5pt] (d) at (7,\dy) {} edge [in=45,out=135, thick, loop above] node {} ();
\node[circle,fill=black,inner sep=1pt,minimum size=5pt] (d) at (8,\hy) {} edge [in=45,out=135, thick, loop above] node {} ();

\graph [multi, edges = {thick}] {(g) -> (e); (j) -> (a); (k) -> (b); };

\graph [multi, edges = {thick}] {(i) -> (j); (f) -> (k); (e) -> (f); (c) -> (g); (b) -> (c);};
\graph [multi, edges = {thick,red}] {(a) -> (i);};
}
\end{tikzpicture}
\end{tabular}
\caption{Stage (c) of the FZ history for the permutation
$\sigma = 9\,\, 3\,\, 7\,\, 4\,\, 6\,\, 11\,\, 5\,\, 8\,\, 10\,\, 1\,\, 2
	   = (1,9,10)\,(2,3,7,5,6,11)\,(4)\,(8) \in \Sym_{11}$.} 
\label{fig.running.example.1.FZ.history.c}
\end{figure}

\subsubsection{Running example 2}

Let us now look at the FZ history of our second example
$\sigma  =  7\, 1\, 9\, 2\, 5\, 4\, 8\, 6\, 10\, 3\, 11\, 12\, 14\, 13\,\\
=      (1,7,8,6,4,2)\,(3,9,10)\,(5)\,(11)\,(12)\,(13,14) \in \Sym_{14}.$
The sets $F,G$ and $H$ were already recorded in~\eqref{eq.FGH.running.example.2}
and we recall that $F = \{1, 3, 7, 9, 13\}$, $G = \{2, 4, 6, 8, 10, 14\}$ and \\ $H=\{5, 11, 12\}$.
Thus, the FZ order consists of the following stages:
\begin{itemize}
\item Stage (a): 5, 11, 12
\item Stage (b): 2, 4, 6, 8, 10, 14
\item Stage (c): 13, 9, 7, 3, 1
\end{itemize}
Stages~(a) and~(b) of the FZ history of $\sigma$ has been drawn in
Figure~\ref{fig.running.example.2.FZ.history.ab},
and Stage~(c) has been drawn in Figure~\ref{fig.running.example.2.FZ.history.c}.
Non-singleton cycles are formed in Stage~(c) when the edges $13\to 14$, $3\to 9$ 
and $1\to 7$ are inserted.


\newcommand{\emptysigmatwo}[1]{%
\pgfmathsetmacro{\yshift}{#1};

\pgfmathsetmacro{\ay}{1+\yshift};
\pgfmathsetmacro{\by}{1+\yshift};
\pgfmathsetmacro{\cy}{1+\yshift};
\pgfmathsetmacro{\dy}{1+\yshift};
\pgfmathsetmacro{\ey}{0+\yshift};
\pgfmathsetmacro{\fy}{0+\yshift};
\pgfmathsetmacro{\gy}{0+\yshift};
\pgfmathsetmacro{\hy}{0+\yshift};
\pgfmathsetmacro{\iy}{0+\yshift};
\pgfmathsetmacro{\jy}{0+\yshift};
\pgfmathsetmacro{\ky}{0+\yshift};
\pgfmathsetmacro{\ly}{0+\yshift};
\pgfmathsetmacro{\my}{1+\yshift};
\pgfmathsetmacro{\ny}{0+\yshift};

\node[circle,fill=black,inner sep=1pt,minimum size=5pt] (a) at (0,\ay) {};
\node[circle,fill=black,inner sep=1pt,minimum size=5pt] (g) at (0,\gy) {};
\node[circle,fill=black,inner sep=1pt,minimum size=5pt] (h) at (1,\hy) {};
\node[circle,fill=black,inner sep=1pt,minimum size=5pt] (f) at (2,\fy) {};
\node[circle,fill=black,inner sep=1pt,minimum size=5pt] (d) at (2,\dy) {};
\node[circle,fill=black,inner sep=1pt,minimum size=5pt] (b) at (1,\by) {};

\node[circle,fill=black,inner sep=1pt,minimum size=5pt] (c) at (4,\cy) {};
\node[circle,fill=black,inner sep=1pt,minimum size=5pt] (i) at (4,\iy) {};
\node[circle,fill=black,inner sep=1pt,minimum size=5pt] (j) at (5,\jy) {};

\node[circle,fill=black,inner sep=1pt,minimum size=5pt] (m) at (7,\my) {};
\node[circle,fill=black,inner sep=1pt,minimum size=5pt] (n) at (7,\ny) {};

\node[circle,fill=black,inner sep=1pt,minimum size=5pt] (e) at (9,\ey) {};
\node[circle,fill=black,inner sep=1pt,minimum size=5pt] (k) at (10,\ky) {};
\node[circle,fill=black,inner sep=1pt,minimum size=5pt] (l) at (11,\ly) {};

\node[above = 1pt of a] {{$1$}};
\node[above = 1pt of b] {{$2$}};
\node[above = 1pt of c] {{$3$}};
\node[above = 1pt of d] {{$4$}};
\node[below = 1pt of e] {{$5$}};
\node[below = 1pt of f] {{$6$}};
\node[below = 1pt of g] {{$7$}};
\node[below = 1pt of h] {{$8$}};
\node[below = 1pt of i] {{$9$}};
\node[below = 1pt of j] {{$10$}};
\node[below = 1pt of k] {{$11$}};
\node[below = 1pt of l] {{$12$}};
\node[above = 1pt of m] {{$13$}};
\node[below = 1pt of n] {{$14$}};
}

\begin{figure}[p]
\vspace*{-12mm}
\centering
\begin{tabular}{l}
%
\begin{tikzpicture}[scale=0.6]

\node[right] at (-7,0) {$\laguerre{\emptyset}$};
\emptysigmatwo{0}%

\end{tikzpicture}\\
\hline\\[-4mm]
\hline\\[-3mm]
After Stage (a): $H = \{5, 11, 12\}$\\[2mm]
\hline\\[-3mm]
\begin{tikzpicture}[scale=0.6]
\node[right] at (-7,0) {$\laguerre{\{5,11,12\}}$};
\emptysigmatwo{0}
\node[circle,fill=black,inner sep=1pt,minimum size=5pt] (e) at (9,\ey) {} edge [in=45,out=135, thick, loop above, color=red] node {} ();
\node[circle,fill=black,inner sep=1pt,minimum size=5pt] (k) at (10,\ky) {} edge [in=45,out=135, thick, loop above,color=red] node {} ();
\node[circle,fill=black,inner sep=1pt,minimum size=5pt] (l) at (11,\ly) {} edge [in=45,out=135, thick, loop above,color=red] node {} ();
\end{tikzpicture}\\
\hline\\[-4mm]
\hline\\[-3mm]
Stage (b): $G = \{2, 4, 6, 8, 10, 14\}$ in increasing order\\[2mm]
\hline\\[-3mm]
\begin{tikzpicture}[scale=0.6]
\node[right] at (-7,0) {$\laguerre{\{5,11,12,2\}}$};
\emptysigmatwo{0}
\node[circle,fill=black,inner sep=1pt,minimum size=5pt] (e) at (9,\ey) {} edge [in=45,out=135, thick, loop above] node {} ();
\node[circle,fill=black,inner sep=1pt,minimum size=5pt] (k) at (10,\ky) {} edge [in=45,out=135, thick, loop above] node {} ();
\node[circle,fill=black,inner sep=1pt,minimum size=5pt] (l) at (11,\ly) {} edge [in=45,out=135, thick, loop above] node {} ();
\graph [multi, edges = {thick,red}] {(b) -> (a); };
\end{tikzpicture}\\
\hdashline\\[-3mm]
\begin{tikzpicture}[scale=0.6]
\node[right] at (-7,0) {$\laguerre{\{5,11,12,2,4\}}$};
\emptysigmatwo{0}
\node[circle,fill=black,inner sep=1pt,minimum size=5pt] (e) at (9,\ey) {} edge [in=45,out=135, thick, loop above] node {} ();
\node[circle,fill=black,inner sep=1pt,minimum size=5pt] (k) at (10,\ky) {} edge [in=45,out=135, thick, loop above] node {} ();
\node[circle,fill=black,inner sep=1pt,minimum size=5pt] (l) at (11,\ly) {} edge [in=45,out=135, thick, loop above] node {} ();
	\graph [multi, edges = {thick}] {(b) -> (a); };
	\graph [multi, edges = {thick,red}] {(d) -> (b); };
\end{tikzpicture}\\
\hdashline\\[-3mm]
\begin{tikzpicture}[scale=0.6]
\node[right] at (-7,0) {$\laguerre{\{5,11,12,2,4,6\}}$};
\emptysigmatwo{0}
\node[circle,fill=black,inner sep=1pt,minimum size=5pt] (e) at (9,\ey) {} edge [in=45,out=135, thick, loop above] node {} ();
\node[circle,fill=black,inner sep=1pt,minimum size=5pt] (k) at (10,\ky) {} edge [in=45,out=135, thick, loop above] node {} ();
\node[circle,fill=black,inner sep=1pt,minimum size=5pt] (l) at (11,\ly) {} edge [in=45,out=135, thick, loop above] node {} ();
\graph [multi, edges = {thick}] {(b) -> (a); (d) -> (b); };
\graph [multi, edges = {thick,red}] {(f) -> (d); };
\end{tikzpicture}\\
\hdashline\\[-3mm]
\begin{tikzpicture}[scale=0.6]
\node[right] at (-7,0) {$\laguerre{\{5,11,12,2,4,6,8\}}$};
\emptysigmatwo{0}
\node[circle,fill=black,inner sep=1pt,minimum size=5pt] (e) at (9,\ey) {} edge [in=45,out=135, thick, loop above] node {} ();
\node[circle,fill=black,inner sep=1pt,minimum size=5pt] (k) at (10,\ky) {} edge [in=45,out=135, thick, loop above] node {} ();
\node[circle,fill=black,inner sep=1pt,minimum size=5pt] (l) at (11,\ly) {} edge [in=45,out=135, thick, loop above] node {} ();
\graph [multi, edges = {thick}] {(b) -> (a); (d) -> (b); (f) -> (d); };
\graph [multi, edges = {thick,red}] {(h) -> (f); };
\end{tikzpicture}\\
\hdashline\\[-3mm]
\begin{tikzpicture}[scale=0.6]
\node[right] at (-7,0) {$\laguerre{\{5,11,12,2,4,6,8,10\}}$};
\emptysigmatwo{0}
\node[circle,fill=black,inner sep=1pt,minimum size=5pt] (e) at (9,\ey) {} edge [in=45,out=135, thick, loop above] node {} ();
\node[circle,fill=black,inner sep=1pt,minimum size=5pt] (k) at (10,\ky) {} edge [in=45,out=135, thick, loop above] node {} ();
\node[circle,fill=black,inner sep=1pt,minimum size=5pt] (l) at (11,\ly) {} edge [in=45,out=135, thick, loop above] node {} ();
\graph [multi, edges = {thick}] {(b) -> (a); (d) -> (b); (f) -> (d), (h) -> (f); }; 
\graph [multi, edges = {thick,red}] {(j) -> (c); }; 
\end{tikzpicture}\\
\hdashline\\[-3mm]
\begin{tikzpicture}[scale=0.6]
\node[right] at (-7,0) {$\laguerre{\{5,11,12,2,4,6,8,10,14\}}$};
\emptysigmatwo{0}
\node[circle,fill=black,inner sep=1pt,minimum size=5pt] (e) at (9,\ey) {} edge [in=45,out=135, thick, loop above] node {} ();
\node[circle,fill=black,inner sep=1pt,minimum size=5pt] (k) at (10,\ky) {} edge [in=45,out=135, thick, loop above] node {} ();
\node[circle,fill=black,inner sep=1pt,minimum size=5pt] (l) at (11,\ly) {} edge [in=45,out=135, thick, loop above] node {} ();
\graph [multi, edges = {thick}] {(b) -> (a); (d) -> (b); (f) -> (d), (h) -> (f); 
	(j) -> (c); }; 

\graph [multi, edges = {thick,red}] {(n) -> [bend right] (m)};

\end{tikzpicture}
\end{tabular}
\caption{Stages (a) and (b) of the FZ history for the permutation\\
$\sigma = 7\, 1\, 9\, 2\, 5\, 4\, 8\, 6\, 10\, 3\, 11\, 12\, 14\, 13\,
         =  (1,7,8,6,4,2)\,(3,9,10)\,(5)\,(11)\,(12)\,(13,14) \in \Sym_{14}.$
}
\label{fig.running.example.2.FZ.history.ab}
\end{figure}

\begin{figure}[p]
\centering
\begin{tabular}{l}
\hline\\[-4mm]
\hline\\
Stage (c): $F = \{13,9,7,3,1\}$ in decreasing order\\[2mm]
\hline\\[2mm]
\begin{tikzpicture}[scale=0.7]
\node[right] at (-7,0) {$\laguerre{\{5,11,12,2,4,6,8,10,14,13\}}$};
\emptysigmatwo{0}
\node[circle,fill=black,inner sep=1pt,minimum size=5pt] (e) at (9,\ey) {} edge [in=45,out=135, thick, loop above] node {} ();
\node[circle,fill=black,inner sep=1pt,minimum size=5pt] (k) at (10,\ky) {} edge [in=45,out=135, thick, loop above] node {} ();
\node[circle,fill=black,inner sep=1pt,minimum size=5pt] (l) at (11,\ly) {} edge [in=45,out=135, thick, loop above] node {} ();
\graph [multi, edges = {thick}] {(b) -> (a); (d) -> (b); (f) -> (d), (h) -> (f);
        (j) -> (c); };

\graph [multi, edges = {thick}] {(n) -> [bend right] (m);};
\graph [multi, edges = {thick,red}] {(m) -> [bend right] (n);};

\end{tikzpicture}\\
\hdashline\\
\begin{tikzpicture}[scale=0.7]
\node[right] at (-7,0) {$\laguerre{\{5,11,12,2,4,6,8,10,14,13,9\}}$};
\emptysigmatwo{0}
\node[circle,fill=black,inner sep=1pt,minimum size=5pt] (e) at (9,\ey) {} edge [in=45,out=135, thick, loop above] node {} ();
\node[circle,fill=black,inner sep=1pt,minimum size=5pt] (k) at (10,\ky) {} edge [in=45,out=135, thick, loop above] node {} ();
\node[circle,fill=black,inner sep=1pt,minimum size=5pt] (l) at (11,\ly) {} edge [in=45,out=135, thick, loop above] node {} ();
\graph [multi, edges = {thick}] {(b) -> (a); (d) -> (b); (f) -> (d), (h) -> (f);
        (j) -> (c); };

\graph [multi, edges = {thick}] {(n) -> [bend right] (m);};

\graph [multi, edges = {thick}] {(m) -> [bend right] (n);};

\graph [multi, edges = {thick,red}] {(i) -> (j);};
\end{tikzpicture}\\
\hdashline\\
\begin{tikzpicture}[scale=0.7]
\node[right] at (-7,0) {$\laguerre{\{5,11,12,2,4,6,8,10,14,13,9,7\}}$};
\emptysigmatwo{0}
\node[circle,fill=black,inner sep=1pt,minimum size=5pt] (e) at (9,\ey) {} edge [in=45,out=135, thick, loop above] node {} ();
\node[circle,fill=black,inner sep=1pt,minimum size=5pt] (k) at (10,\ky) {} edge [in=45,out=135, thick, loop above] node {} ();
\node[circle,fill=black,inner sep=1pt,minimum size=5pt] (l) at (11,\ly) {} edge [in=45,out=135, thick, loop above] node {} ();
\graph [multi, edges = {thick}] {(b) -> (a); (d) -> (b); (f) -> (d), (h) -> (f);
        (j) -> (c); };

\graph [multi, edges = {thick}] {(n) -> [bend right] (m);};

\graph [multi, edges = {thick}] {(m) -> [bend right] (n);};

\graph [multi, edges = {thick}] {(i) -> (j); };
\graph [multi, edges = {thick,red}] {(g) -> (h); };
\end{tikzpicture}\\
\hdashline\\
\begin{tikzpicture}[scale=0.7]
\node[right] at (-7,0) {$\laguerre{\{5,11,12,2,4,6,8,10,14,13,9,7,3\}}$};
\emptysigmatwo{0}
\node[circle,fill=black,inner sep=1pt,minimum size=5pt] (e) at (9,\ey) {} edge [in=45,out=135, thick, loop above] node {} ();
\node[circle,fill=black,inner sep=1pt,minimum size=5pt] (k) at (10,\ky) {} edge [in=45,out=135, thick, loop above] node {} ();
\node[circle,fill=black,inner sep=1pt,minimum size=5pt] (l) at (11,\ly) {} edge [in=45,out=135, thick, loop above] node {} ();
\graph [multi, edges = {thick}] {(b) -> (a); (d) -> (b); (f) -> (d), (h) -> (f);
        (j) -> (c); };

\graph [multi, edges = {thick}] {(n) -> [bend right] (m);};

\graph [multi, edges = {thick}] {(m) -> [bend right] (n);};

\graph [multi, edges = {thick}] {(i) -> (j); (g) -> (h); };
\graph [multi, edges = {thick,red}] {(c) -> (i); };
\end{tikzpicture}\\
\hdashline\\
\begin{tikzpicture}[scale=0.7]
\node[right] at (-7.5,0) {$\laguerre{\{5,11,12,2,4,6,8,10,14,13,9,7,3,1\}}$};
\emptysigmatwo{0}
\node[circle,fill=black,inner sep=1pt,minimum size=5pt] (e) at (9,\ey) {} edge [in=45,out=135, thick, loop above] node {} ();
\node[circle,fill=black,inner sep=1pt,minimum size=5pt] (k) at (10,\ky) {} edge [in=45,out=135, thick, loop above] node {} ();
\node[circle,fill=black,inner sep=1pt,minimum size=5pt] (l) at (11,\ly) {} edge [in=45,out=135, thick, loop above] node {} ();
\graph [multi, edges = {thick}] {(b) -> (a); (d) -> (b); (f) -> (d), (h) -> (f);
        (j) -> (c); };

\graph [multi, edges = {thick}] {(n) -> [bend right] (m);};

\graph [multi, edges = {thick}] {(m) -> [bend right] (n);};

\graph [multi, edges = {thick}] {(i) -> (j); (g) -> (h); (c) -> (i);};
\graph [multi, edges = {thick,red}] {(a) -> (g)};
\end{tikzpicture}
\end{tabular}
\caption{Stage (c) of the FZ history for the permutation
$\sigma = 7\, 1\, 9\, 2\, 5\, 4\, 8\, 6\, 10\, 3\, 11\, 12\, 14\, 13\,\\
	 =  (1,7,8,6,4,2)\,(3,9,10)\,(5)\,(11)\,(12)\,(13,14) \in \Sym_{14}.$}
\label{fig.running.example.2.FZ.history.c}
\end{figure}

\subsection{Computation of weights}
\label{subsec.computation}

To each index $i\in[n]$ of a given permutation $\sigma\in \Sym_n$,
we assign a weight $\wt(i)$ as follows:
\begin{itemize}
	\item if $i$ is a cycle valley minimum, we set 
		$\wt(i) = \lambda\,\sfa_{\ucross(i,\sigma) + \unest(i,\sigma)}$.
		For all other cycle valleys, we set 
		$\wt(i) = \sfa_{\ucross(i,\sigma) + \unest(i,\sigma)}$,

	\item if $i$ is a cycle peak, we set
		$\wt(i) = \sfb_{\lcross(i,\sigma),\,\lnest(i,\sigma)}$,
	
	\item if $i$ is a cycle double fall, we set
		$\wt(i) = \sfc_{\lcross(i,\sigma),\,\lnest(i,\sigma)}$,

	\item if $i$ is a cycle double rise, we set
		$\wt(i) = \sfd_{\ucross(i,\sigma),\,\unest(i,\sigma)}$,

	\item and finally, if $i$ is a fixed point, we set
		$\wt(i) = \lambda\, \sfe_{\psnest(i,\sigma)}$.
\end{itemize}
We then set the weight of the permutation $\sigma$ to be 
$\wt(\sigma)  = \prod_{i=1}^n \wt(i)$.
It is clear that our polynomial $\widehat{Q}_n$ defined in \eqref{def.poly.master}
is simply 
$\widehat{Q}_n(\bsfa, \bsfb, \bsfc, \bsfd, \bsfe, \lambda) = \sum_{\sigma\in\Sym_n} \wt(\sigma)$.

We now use the bijection $\sigma \mapsto (\omega, \xi)$
to transfer the weights $\wt(i)$ onto the pair $(\omega, \xi)$.
Let $k = h_{i-1}$ the height at which step $s_i$ starts.
From~\ref{eq.height.fix},~\ref{def.xi.bis}, and 
Lemma~\ref{lemma.crossing}, we immediately see that
\begin{itemize}
	\item if $s_i$ is a rise, then 
		$\wt(i) = 
		\bigl(1 + {\rm I}[\text{$i$ is a cycle valley minimum}](\lambda -1) \bigr)\, 
		\sfa_{k}$.\\
		(Here ${\rm I}[\hbox{\sl proposition}] = 1$ if {\sl proposition} is true,
and 0 if it is false.)
	
	\item if $s_i$ is a fall, then $\wt(i) = \sfb_{k-1-\xi_i,\, \xi_i}$,

	\item if $s_i$ is a level step of type 1 (here $i$ is a cycle double fall),
		$\wt(i) = \sfc_{k-1-\xi_i,\, \xi_i}$,
	
	\item if $s_i$ is a level step of type 2 (here $i$ is a cycle double rise), 
		$\wt(i) = \sfd_{k-1-\xi_i,\, \xi_i}$,
	
	\item if $s_i$ is a level step of type 3 (here $i$ is a fixed point), 
		$\wt(i) = \lambda\sfe_{k}$,
	
\end{itemize}

Next, we fix a Motzkin path $\omega$ and sum over all labels $\xi$ 
to obtain the weight $W(\omega)$. 
This weight is then factorised over the individual steps $s_i$
to obtain the weight contributed by each step $s_i$.
As all the variables involved commute, 
we can consider the weights for the steps $s_i$ in any order.
We do this in the FZ order as follows:
\begin{itemize}
	\item if $s_i$ is a rise, then $i$ is a cycle valley.
		Among the possible choices of labels $\xi\in [0,k]$
		there is exactly one which closes a cycle and the others don't
		(Lemma~\ref{lem.cycle.closer}).
		Therefore, we obtain
		\be
		a_k \;\eqdef\; 
		\sum_{\xi_i}  
		\bigl(1 + {\rm I}[\text{$i$ is a cycle valley minimum}](\lambda -1) \bigr)
		\, 
                \sfa_{k}
		\;=\;  (\lambda+k) \sfa_k.
		\ee

	\item if $s_i$ is a fall, then the weight is
		\be
		b_k \;=\; \sum_{\xi_i} \sfb_{k-\xi_i-1,\xi_i} 
		\ee
	\item if $s_i$ is a level step of type 1, then the weight is
                \be
                c_k \;=\; \sum_{\xi_i} \sfc_{k-\xi_i-1,\xi_i}
                \ee

	\item if $s_i$ is a level step of type 2, then the weight is
                \be
                d_k \;=\; \sum_{\xi_i} \sfd_{k-\xi_i-1,\xi_i}
                \ee

	\item if $s_i$ is a level step of type 3, then the weight is
                \be
                e_k \;=\; \lambda \sfe_{k}
                \ee

\end{itemize}

Setting $\beta_k = \sfa_{k-1}\sfb_k$ and $\gamma_k = \sfc_k+\sfd_k+\sfe_k$
as instructed in Theorem~\ref{thm.flajolet_master_labeled_Motzkin},
we obtain the weights~\ref{def.weights.master}.
This completes the proof of Theorem~\ref{thm.master}. \qed

\proofof{Theorem~\ref{thm.pqgen}}
We recall 
\cite[Lemma~2.10]{Sokal-Zeng_masterpoly}
which was used to separate records and antirecords:
Let $\sigma \in \Sym_n$ and $i \in [n]$.
\begin{itemize}
   \item[(a)]  If $i$ is a cycle valley or cycle double rise,
       then $i$ is a record if and only if $\unest(i,\sigma) = 0$;
       and in this case it is an exclusive record.
   \item[(b)]  If $i$ is a cycle peak or cycle double fall,
       then $i$ is an antirecord if and only if $\lnest(i,\sigma) = 0$;
       and in this case it is an exclusive antirecord.
\end{itemize}

We then specialise Theorem~\ref{thm.master} to 
\begin{subeqnarray}
   \sfa_{\ell} & = & p_{+1}^{\ell} y_1\\[2mm]
   \sfb_{\ell,\ell'}
	& = & p_{-1}^{\ell}q_{-1}^{\ell'} \times
   \begin{cases}
        x_1  &  \textrm{if $\ell' = 0$}  \\
        u_1  &  \textrm{if $\ell' \ge 1$}
   \end{cases}
       \\[2mm]
   \sfc_{\ell,\ell'}
        & = & p_{-2}^{\ell}q_{-2}^{\ell'} \times
   \begin{cases}
        x_2  &  \textrm{if $\ell' = 0$}  \\
        u_2  &  \textrm{if $\ell' \ge 1$}
   \end{cases}
       \\[2mm]
   \sfd_{\ell,\ell'}
        & = & p_{+2}^{\ell}q_{+2}^{\ell'} \times
   \begin{cases}
        y_2  &  \textrm{if $\ell' = 0$}  \\
        v_2  &  \textrm{if $\ell' \ge 1$}
   \end{cases}
	\\[2mm]
   \sfe_{\ell} & = & s^{\ell} w_\ell
\end{subeqnarray}

\qed

\begin{rem}
Notice that the specialisation in the above proof is almost the same
as \cite[eq.~(2.81)]{Sokal-Zeng_masterpoly}, except for the treatment of $\sfa$. 
\myendremark
\end{rem}

\proofof{Theorem~\ref{thm.conj.SZ}} Specialise Theorem~\ref{thm.pqgen} to $p_{+1}=p_{-1}=p_{+2}=p_{-2}=q_{-1}=q_{+2}=q_{-2}=s=1$.
\qed


\section{D-permutations: Proofs}
\label{sec.dperm.proofs}

Motivated by Randrianarivony's \cite{Randrianarivony_97} bijection
for D-o-semiderangements,
Deb and Sokal came up with two different bijections
for D-permutations.
One of them \cite[Sections~6.1-6.3]{Deb-Sokal} was used
to prove \cite[Theorems~3.3, 3.9, 3.11]{Deb-Sokal},
i.e., their ``first T-fractions'' for D-permutations.
The second bijection \cite[Section~6.5]{Deb-Sokal} was used to prove 
\cite[Theorems~3.12, 3.13]{Deb-Sokal},
i.e. their ``variant forms of the first T-fractions''
for D-permutations.
We will provide new interpretations to both of these bijections
and use them to prove our theorems for D-permutations.
As both bijections have the same path but different description of labels,
we state the description of the paths for both bijections here.

%

An \textbfit{almost-Dyck path} of length $2n$ is defined
to be a path $\omega = (\omega_0,\ldots,\omega_{2n})$
in the right half-plane $\N \times \Z$,
starting at $\omega_0 = (0,0)$ and ending at $\omega_{2n} = (2n,0)$,
using the steps $(1,1)$ and $(1,-1)$,
that stays always at height $\ge -1$.
Thus, an almost-Dyck path is like a Dyck path {\em except that}\/
a down step from height 0 to height $-1$ is allowed;
note, however, that it must be immediately followed by
an up step back to height 0.
Each non-Dyck part of the path is therefore of the form
$(h_{2i-2},h_{2i-1},h_{2i}) = (0,-1,0)$.
We write $\scrd^\sharp_{2n}$ for the set of almost-Dyck paths of length $2n$.

A \textbfit{0-Schr\"oder path} is defined
to be a Schr\"oder path in which long level steps, if any,
occur only at height 0.
We write $\scrs^0_{2n}$ for the set of 0-Schr\"oder paths of length $2n$.
There is an obvious bijection $\psi\colon \scrd^\sharp_{2n} \to \scrs^0_{2n}$
from almost-Dyck paths to 0-Schr\"oder paths:
namely, we replace each down-up pair starting and ending at height 0
with a long level step at height 0.

Both bijections
(\cite[Sections~6.1-6.3]{Deb-Sokal} 
and 
\cite[Section~6.5]{Deb-Sokal})
are correspondences between $\dperm_{2n}$ and the set of
$(\bfscra, \bfscrb,\bfscrc)$-labelled
0-Schr\"oder paths of length $2n$, where
the labels $\xi_i$ lie in the sets
\cite[eq.~(6.1)]{Deb-Sokal}
\begin{subeqnarray}
   \scra_h        & = &  \{0,\ldots, \lceil h/2 \rceil \}  \qquad\quad\;\;\hbox{for $h \ge 0$}  \\
   \scrb_h        & = &  \{0,\ldots, \lceil (h-1)/2 \rceil \}    \quad\hbox{for $h \ge 1$}  \\
   \scrc_0        & = &  \{0\}       \\
   \scrc_h        & = &  \emptyset   \qquad\qquad\qquad\qquad\quad\hbox{for $h \ge 1$}
 \label{def.abc.dperm}
\end{subeqnarray}

The Schr\"oder path was obtained by
first constructing an almost-Dyck path $\omega$
and then transforming it into a 0-Schr\"oder path $\omegahat = \psi(\omega)$.
The almost-Dyck path $\omega$ is described as follows:

\medskip

{\bf Step 1: Definition of the almost-Dyck path.}
Given a D-permutation $\sigma \in \dperm_{2n}$,
we define a path $\omega = (\omega_0,\ldots,\omega_{2n})$
starting at $\omega_0 = (0,0)$,
with steps $s_1,\ldots,s_{2n}$ as follows:
\begin{itemize}
   \item If $\sinv(i)$ is even, then $s_i$ is a rise.
      (Note that in this case we must have $\sinv(i) \ge i$,
       by definition of D-permutation.)
   \item If $\sinv(i)$ is odd, then $s_i$ is a fall.
      (Note that in this case we must have $\sinv(i) \le i$,
       by definition of D-permutation.)
\end{itemize}
An alternative way of saying this is:
\begin{itemize}
   \item If $i$ is a cycle valley, cycle double fall or even fixed point,
      then $s_i$ is a rise.
   \item If $i$ is a cycle peak, cycle double rise or odd fixed point,
      then $s_i$ is a fall.
\end{itemize}

The resulting path is indeed an almost-Dyck path;
this was proved by obtaining a precise interpretation of the height $h_i$,
which then implied for every $i\in [2n]$, $h_i\ge -1$, and also $h_{2n} = 0$.
We recall this interpretation here:


\begin{lem}[{{\cite[Lemma~6.1]{Deb-Sokal}}}]
   \label{lemma.heights.dperm}
For $k \in [2n]$ we have
\be
   h_k
   \;=\;
   \begin{cases}
       2f_k - 1   & \textrm{if $k$ is odd}  \\
       2f_k       & \textrm{if $k$ is even}
   \end{cases}
 \label{eq.lemma.heights.dperm}
\ee
	where $f_k$ is defined as \cite[eq.~(6.2)/(6.3)]{Deb-Sokal}
\be
f_k \;=\; \#\{ i \le k \colon\: \sigma(i) > k \} \;=\; \#\{ i \le k \colon\: \sinv(i) > k \} \;. 
\label{eq.def.fk.dperm}
\ee
\end{lem}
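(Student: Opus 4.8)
The plan is to establish Lemma~\ref{lemma.heights.dperm} by relating the height $h_k$ of the almost-Dyck path to the step-by-step accounting of rises and falls, following the same strategy used for the Foata--Zeilberger bijection for permutations (Lemma~\ref{lemma.heights}), but tracking the parity of $k$ carefully since D-permutations produce two rises or two falls in a row at consecutive indices of the same parity only in constrained ways. First I would note that $h_k = (\text{number of rises among } s_1,\ldots,s_k) - (\text{number of falls among } s_1,\ldots,s_k)$, and by Step~1 a step $s_i$ is a rise precisely when $\sinv(i)$ is even and a fall precisely when $\sinv(i)$ is odd. Therefore
\be
   h_k \;=\; \#\{i \le k \colon\: \sinv(i) \text{ even}\} \;-\; \#\{i \le k \colon\: \sinv(i) \text{ odd}\}.
\ee
The key combinatorial identity to prove is that this difference equals $2f_k - 1$ when $k$ is odd and $2f_k$ when $k$ is even, where $f_k = \#\{i \le k \colon\: \sinv(i) > k\}$.

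The main step is a counting argument partitioning $\{1,\ldots,k\}$ (as a set of \emph{values} $i$) according to whether $\sinv(i) \le k$ or $\sinv(i) > k$. Among the $i \le k$ with $\sinv(i) \le k$, the map $\sinv$ restricts to a bijection from $\{i \le k : \sinv(i) \le k\}$ onto $\{j \le k : \sigma(j) \le k\}$, so those $i$ contribute to the even/odd count of $\sinv$-values exactly as $\{j \le k : \sigma(j) \le k\}$ splits into even and odd $j$. Since $\{1,\ldots,k\}$ contains $\lceil k/2\rceil$ odd numbers and $\lfloor k/2\rfloor$ even numbers, and $\{j \le k : \sigma(j) > k\}$ has cardinality $f_k$, I would use the D-permutation constraint: $\sigma(j) > k \ge j$ forces $j$ to be an excedance position, hence (by the parity dictionary \eqref{eq.parities.1}, since even indices are never excedances except at fixed points which are not excedances, wait --- more precisely) an excedance index $j$ with $\sigma(j) > j$ must have $j$ odd when it is a cycle valley or cycle double rise. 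Actually the cleanest route is: for a D-permutation, $j$ is an excedance index ($\sigma(j) > j$) implies $j$ is odd, because even indices satisfy $\sigma(2k) \le 2k$. Hence all $f_k$ indices $j \le k$ with $\sigma(j) > k$ are odd. Similarly, by the second expression $f_k = \#\{i \le k : \sinv(i) > k\}$, and $\sinv(i) > k \ge i$ means $i$ is an excedance value, so $\sinv(i) > i$, forcing $i$ odd again (even values are never excedance values in a D-permutation). This is the crucial parity input.

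With that in hand, I would count directly: the number of $i \le k$ with $\sinv(i)$ even equals the number of even values among $\{j \le k : \sigma(j) \le k\}$ (via the bijection $i \mapsto \sinv(i)$ on the "interior" part) --- but since the $f_k$ excluded indices on the $\sigma$-side are all odd, the even numbers in $\{1,\ldots,k\}$ are all of the form $\sigma(j)$ for some $j \le k$ with $j$ even (and $\sigma(j)\le k$), giving $\#\{i\le k:\sinv(i)\text{ even}\} = \lfloor k/2\rfloor$; and the odd numbers in $\{1,\ldots,k\}$ split as $\lceil k/2\rceil = (\text{odd }i\text{ with }\sinv(i)\le k) + f_k$, so $\#\{i\le k:\sinv(i)\text{ odd}\} = \lceil k/2\rceil - f_k$. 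Therefore
\be
   h_k \;=\; \lfloor k/2\rfloor \;-\; \bigl(\lceil k/2\rceil - f_k\bigr) \;=\; f_k \;-\; \bigl(\lceil k/2\rceil - \lfloor k/2\rfloor\bigr),
\ee
which is $f_k$ when $k$ is even and $f_k - 1$ when $k$ is odd. Hmm --- this gives $f_k$ and $f_k - 1$, not $2f_k$ and $2f_k - 1$, so I have miscounted the even side. The fix: the even numbers in $\{1,\ldots,k\}$ need not all be images $\sigma(j)$ with $j$ even; rather I must also subtract the even values $i \le k$ with $\sinv(i) > k$, but we showed those $i$ are odd, so there are none --- meaning every even value $i\le k$ has $\sinv(i)\le k$. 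But $\sinv(i)\le k$ with $i$ even: is $\sinv(i)$ even or odd? Here I need the full parity table: $i$ even and not a fixed point forces $i$ to be a cycle peak or cycle double fall, i.e.\ $\sinv(i)$ odd in the cdfall case and $\sinv(i)$ odd in the cpeak case --- wait, \eqref{eq.parities.1} says cycle peak has $\sinv(i)$ odd and cycle double fall has $\sinv(i)$ even. So the even values $i\le k$ contribute a mix. The hard part will be exactly this bookkeeping: correctly distributing the even and odd indices $i\le k$ among "rise" ($\sinv(i)$ even) and "fall" ($\sinv(i)$ odd) using the parity dictionary together with the excedance-value constraint, and verifying the factor of $2$. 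I anticipate this is where one must be most careful, essentially redoing the induction of \cite[Lemma~6.1]{Deb-Sokal}: proceed by induction on $k$, checking that passing from $k-1$ to $k$ changes $h$ by $+1$ or $-1$ according to the rise/fall rule and that $f_k$ changes consistently (with the parity of $k$ toggling which of $2f_k-1$, $2f_k$ is in force), and separately verify $h_{2n}=0$ and $h_k \ge -1$. I would present the induction cleanly rather than the flawed direct count sketched above, and cite \eqref{eq.parities.1} for the parity inputs and the definition of D-permutation for the excedance constraints.
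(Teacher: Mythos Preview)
The paper does not actually prove this lemma --- it is merely recalled from \cite{Deb-Sokal} --- so there is no ``paper's own proof'' to compare against. I will therefore assess your argument on its own.

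Your overall strategy (write $h_k = A - B$ with $A+B=k$, hence $h_k = 2A - k$, and compute $A = \#\{i\le k : \sinv(i)\text{ even}\}$ by splitting into $\sinv(i)\le k$ versus $\sinv(i)>k$) is exactly the right one, and it works cleanly. The reason you obtained $f_k$ instead of $2f_k$ is a concrete parity slip in the ``exterior'' piece.

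You argued that if $i\le k$ and $\sinv(i)>k$, then $i$ is forced to be odd. That is false (e.g.\ in the running example, $\sinv(6)=8>6$ with $i=6$ even), and in any case the parity of $i$ is irrelevant: what matters for rise/fall is the parity of $\sinv(i)$. The correct observation is: if $\sinv(i)>k\ge i$ then $j\eqdef\sinv(i)$ satisfies $\sigma(j)=i<j$, so $j$ is a strict anti-excedance position, and for a D-permutation this forces $j$ \emph{even}. Hence all $f_k$ ``exterior'' elements have $\sinv(i)$ even and contribute to $A$. For the ``interior'' piece your bijection gives $\#\{j\le k: \sigma(j)\le k,\ j\text{ even}\}$, and since every even $j\le k$ satisfies $\sigma(j)\le j\le k$ this equals $\lfloor k/2\rfloor$. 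Thus
\[
A \;=\; \lfloor k/2\rfloor + f_k, \qquad h_k \;=\; 2A-k \;=\; 2f_k + 2\lfloor k/2\rfloor - k,
\]
which is $2f_k$ for $k$ even and $2f_k-1$ for $k$ odd, as required. With this one-line fix your direct count is complete; the induction fallback is unnecessary.
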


By \cite[Lemma~3.2]{Deb-Sokal}, 
we have that $2i-1$ and $2i$ are record-antirecords
if and only if $\sigma$ maps
$\{1,\ldots,2i-1\}$ onto itself
and this is the only situation when $h_k = -1$ where $k=2i-1$ and $f_k = 0$.
In this situation, $f_{2i-2} = f_{2i-1} = f_{2i} = 0$,
so that $(h_{2i-2}, h_{2i-1}, h_{2i}) = (0, -1, 0)$.

\medskip

\begin{rem}
\label{rem.almostDyck.FZ}
We will now show that the almost-Dyck path $\omega$ described here 
can be obtained from the Motzkin path of length $2n$ 
used in the Foata--Zeilberger bijection. 
For a given D-permutation $\sigma\in \dperm_{2n}$,
let $\widetilde{\omega}$ be the Motzkin path of length $2n$
constructed in Step~1 of Section~\ref{subsec.FZrecall.perm}.
We now construct a path $\omega'$ of length $2n$
which will consist only of rises and falls (with no level steps) as follows:
	\begin{itemize}
		\item if step $i$ in 
			$\widetilde{\omega}$ is a rise (fall resp.)
			then step $i$ in
			$\omega'$ is also a rise (fall resp.).
		\item if step $i$ in
                        $\widetilde{\omega}$ is a level step,
			then step $i$ in
                        $\omega'$ is a rise if $i$ is even,
			else it is a fall when $i$ is odd.
	\end{itemize}

We now note down the steps of $\omega'$ according to the different
cycle types of indices in $\sigma$:
	\begin{itemize}
		\item if $i$ is a cycle valley (cycle peak resp.) in $\sigma$
			then step $i$ in both 
			$\widetilde{\omega}$ and $\omega'$ are 
			rises (falls resp.).
		\item if $i$ is a cycle double fall 
			or an even fixed point in $\sigma$,
			$i$ must be even and
			step $i$ in
                        $\widetilde{\omega}$ must be a level step.
			Thus, step $i$ in 
			$\omega'$ is a rise.
		\item similarly, if $i$ is a cycle double rise 
                        or an odd fixed point in $\sigma$,
                        $i$ must be odd and 
                        step $i$ in
                        $\widetilde{\omega}$ must be a level step.
                        Thus, step $i$ in
                        $\omega'$ is a fall.
	\end{itemize}
From this description, it is clear that 
the path $\omega'$ is identical to the almost-Dyck path $\omega$
which we just constructed.
Furthermore, comparing $f_k$ in Lemma~\ref{lemma.heights.dperm} 
with Equation~\ref{eq.lemma.heights.equiv},
we get that Lemma~\ref{lemma.heights.dperm} can be thought of as
the relation between heights of steps in $\widetilde{\omega}$ 
(given by $f_i$)
and the corresponding heights in $\omega = \omega'$
(given by $h_i$).

For their first bijection, Deb and Sokal noted that the labels
were exactly the same except for the treatment of fixed points;
they said ``The surprise
was that the definitions of the labels 
in our constructions turned out to be almost
identical to those employed in \cite{Sokal-Zeng_masterpoly}
(where “almost” means that fixed points are treated
differently)''.
We have shown here that even the path, and not just the labels,
can be obtained from the bijection of Sokal and Zeng 
\cite{Sokal-Zeng_masterpoly} 
(which we recalled in Section~\ref{subsec.FZrecall.perm}.)
\myendremark
\end{rem}

\bigskip

\subsection[Continued fractions using record classification:\\ Proof of Theorems~\ref{thm.conj.DS},~\ref{thm.DS.pqgen},~\ref{thm.DS.master}]{Continued fractions using record classification:\\ Proof of Theorems~\ref{thm.conj.DS},~\ref{thm.DS.pqgen},~\ref{thm.DS.master}}
\label{subsec.dperm.proofs}

We first recall the description of labels and the inverse bijection
from \cite[Sections~6.2, 6.3]{Deb-Sokal} \
in Subsection~\ref{subsec.DSrecall.dperm}.
We then reinterpret the inverse bijection
in Subsection~\ref{subsec.laguerre.dperm}.

\subsubsection{Description of labels and inverse bijection}
\label{subsec.DSrecall.dperm}

\indent {\bf Step 2: Definition of the labels $\bm{\xi_i}$.}
We now recall the definition of the labels \cite[eq.~(6.9)]{Deb-Sokal}
\be
   \xi_i
   \;=\;
   \begin{cases}
       \#\{j \colon\: \sigma(j) < \sigma(i) \leq i <j\}
           & \textrm{if $i$ is even}
              \\[2mm]
     \#\{j \colon\: j<i \leq \sigma(i)<\sigma(j) \}
           & \textrm{if $i$ is odd}
   \end{cases}
 \label{def2.xi.dperm}
\ee
The definition \eqref{def2.xi.dperm} can be written equivalently as
\cite[eq.~(6.10)]{Deb-Sokal}
\be
   \xi_i
   \;=\;
   \begin{cases}
       \#\{2l > 2k \colon\: \sigma(2l) < \sigma(2k) \}
           & \textrm{if $i=2k$}
              \\[2mm]
     \#\{2l-1 < 2k-1 \colon\: \sigma(2l-1) > \sigma(2k-1) \}
           & \textrm{if $i=2k-1$}
   \end{cases}
 \label{def2.xi.bis.dperm}
\ee
since $\sigma(j) < j$ implies that $j$ is even,
and $j < \sigma(j)$ implies that $j$ is odd.

These definitions have a simple interpretation in terms of the
nesting statistics defined in \eqref{def.level}/(\ref{def.ucrossnestjk}b,d):
\be
   \xi_i 
   \;=\;
   \begin{cases}
        \lnest(i,\sigma)
           & \hbox{\rm if $i$ is even and $\neq \sigma(i)$}
             \;\, \hbox{\rm [equivalently, $i > \sigma(i)$]}
              \\[0.5mm]
        \unest(i,\sigma)
           & \hbox{\rm if $i$ is odd and $\neq \sigma(i)$}
             \;\, \hbox{\rm [equivalently, $i < \sigma(i)$]}
              \\[0.5mm]
        \psnest(i,\sigma)
           & \hbox{\rm if $i = \sigma(i)$  (that is, $i$ is a fixed point)}
   \end{cases}
 \label{def.xi.bis.dperm}
\ee

We state the inequalities \eqref{eq.xi.ineq}/\eqref{def.abc.dperm}
satisfied by the labels:
\begin{lem}[{\cite[Lemma~6.3]{Deb-Sokal}}]
   \label{lemma.ineqs.labels.dperm}
We have
\begin{subeqnarray}
   & 0 \;\le\; \xi_i  \;\le\; \left\lceil \dfrac{h_i-1}{2} \right\rceil \;=\;
                      \left\lceil\dfrac{h_{i-1}}{2} \right\rceil
       & \textrm{if $\sinv(i) $ is even (i.e., $s_i$ is a rise)}
    \qquad
	\slabel{eq.xi.ineqs.dperm.a}
      \\[3mm]
   & 0 \;\le\; \xi_i  \;\le\; \left\lceil \dfrac{h_i}{2} \right\rceil  \;=\;
                      \left\lceil\dfrac{h_{i-1}-1}{2} \right\rceil
       & \textrm{if $\sinv(i) $ is odd (i.e., $s_i$ is a fall)}
 \slabel{eq.xi.ineqs.dperm.b}
 \label{eq.xi.ineqs.dperm}
\end{subeqnarray}
\end{lem}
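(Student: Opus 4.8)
The lower bound $0 \le \xi_i$ is immediate: in each case of \eqref{def2.xi.dperm} the quantity $\xi_i$ is the cardinality of a set. So the whole content is the upper bound, and my plan is to deduce it from the permutation-case results already recorded above rather than redo the combinatorics from scratch.

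The key point is the one made in Remark~\ref{rem.almostDyck.FZ}: a D-permutation $\sigma \in \dperm_{2n}$ is in particular an ordinary permutation in $\Sym_{2n}$, and if $\widetilde\omega$ is its Foata--Zeilberger Motzkin path from Step~1 of Section~\ref{subsec.FZrecall.perm}, then comparing \eqref{eq.lemma.heights.equiv} with the definition \eqref{eq.def.fk.dperm} of $f_k$ shows that the height of $\widetilde\omega$ after step $k$ is exactly $f_k$; and, comparing \eqref{def.xi.bis.dperm} with \eqref{def.xi.bis}, for every non-fixed index $i$ the label $\xi_i$ of \eqref{def2.xi.dperm} equals the Sokal--Zeng label (both give $\unest(i,\sigma)$ at an excedance and $\lnest(i,\sigma)$ at an anti-excedance). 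Therefore Lemma~\ref{lemma.crossing} applies verbatim (its ``$h_{i-1}$'' being $f_{i-1}$), and since $\ucross(i,\sigma),\lcross(i,\sigma) \ge 0$ it gives
\[
  \xi_i \;\le\; f_{i-1} \ \text{ if } i \in \Cval(\sigma),
  \qquad
  \xi_i \;\le\; f_{i-1}-1 \ \text{ if } i \in \Cdrise(\sigma)\cup\Cpeak(\sigma)\cup\Cdfall(\sigma).
\]
It then remains to convert these bounds on $f_{i-1}$ into the advertised bounds on $\lceil h_{i-1}/2\rceil$ or $\lceil h_i/2\rceil$. This is a routine split on the cycle classification, during which one reads off (i) the parity of $\sinv(i)$, i.e.\ whether $s_i$ is a rise or a fall, which selects between \eqref{eq.xi.ineqs.dperm.a} and \eqref{eq.xi.ineqs.dperm.b}; and (ii) the parity of $i-1$, which by \eqref{eq.lemma.heights.dperm} decides whether $h_{i-1}=2f_{i-1}$ or $h_{i-1}=2f_{i-1}-1$. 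For a cycle valley ($i$ odd, $\sinv(i)$ even) one gets $\lceil h_{i-1}/2\rceil = f_{i-1}$, and for a cycle peak ($i$ even, $\sinv(i)$ odd) one gets $\lceil(h_{i-1}-1)/2\rceil = f_{i-1}-1$, so both match exactly; for cycle double rises and cycle double falls the target ceiling is $f_{i-1}$, which exceeds the bound $f_{i-1}-1$ already in hand. (The two forms of each bound displayed in \eqref{eq.xi.ineqs.dperm} are equal for free because $h_i = h_{i-1}\pm 1$.)

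Fixed points need a separate, direct argument, since \eqref{def2.xi.dperm} sets $\xi_i = \psnest(i,\sigma)$ there instead of $0$ --- the one genuine deviation from the permutation case. Here I would note that, since $\sigma(i)=i$, the term $j=i$ contributes nothing to $f_i$, whence $\psnest(i,\sigma) = \#\{j<i\colon \sigma(j)>i\} = \#\{j\le i\colon \sigma(j)>i\} = f_i$; combined with \eqref{eq.lemma.heights.dperm} ($h_i=2f_i$ for even $i$, $h_i=2f_i-1$ for odd $i$) this yields $\xi_i = \lceil(h_i-1)/2\rceil$ when $i$ is even (a rise) and $\xi_i = \lceil h_i/2\rceil$ when $i$ is odd (a fall), so $\xi_i$ in fact saturates the bound.

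The work here is bookkeeping rather than insight: the only thing to watch is that the parities, the rise/fall dichotomy, and the ceiling functions stay aligned across all six cases. An alternative, self-contained route --- essentially that of \cite[Lemma~6.3]{Deb-Sokal} --- is to prove a D-permutation analogue of Lemma~\ref{lemma.crossing} outright, exhibiting $\lceil h_{i-1}/2\rceil - \xi_i$ (for rises) and $\lceil h_i/2\rceil - \xi_i$ (for falls) as explicit nonnegative crossing counts; that avoids invoking the permutation bijection at the price of re-deriving the identity in the D-permutation setting.
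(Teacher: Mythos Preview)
Your argument is correct, and it takes a genuinely different route from the paper. The paper (following \cite{Deb-Sokal}) does not reduce to the permutation case: it instead invokes Lemma~\ref{lemma.xii.crossing.dperm} (the D-permutation analogue of Lemma~\ref{lemma.crossing}), which expresses the slack $\lceil (h_i-1)/2\rceil - \xi_i$ or $\lceil h_i/2\rceil - \xi_i$ directly as a crossing count plus possibly an indicator, and then observes that these quantities are manifestly nonnegative. Your approach, by contrast, transports the permutation-side crossing identities through the dictionary of Remark~\ref{rem.almostDyck.FZ} (Motzkin heights $=f_k$, and the non-fixed-point labels coincide) and then does the parity bookkeeping via \eqref{eq.lemma.heights.dperm}; fixed points you handle separately via $\psnest(i,\sigma)=f_i$.

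Both routes are short. Yours has the virtue of recycling Lemma~\ref{lemma.crossing} rather than proving a new identity, which is economical if one only wants the inequality. The paper's route has a different payoff: Lemma~\ref{lemma.xii.crossing.dperm} is needed anyway for the weight computation in Section~\ref{subsec.computation.dperm} (it identifies the index of $\sfa,\sfb,\sfc,\sfd$ with the crossing/nesting pair), so deriving the inequality as a by-product of that lemma avoids a separate argument. Your final paragraph correctly anticipates this alternative.
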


To verify that the inequalities \eqref{eq.xi.ineqs.dperm}
are satisfied, we interpret 
$\left\lceil \dfrac{h_i-1}{2} \right\rceil - \xi_i$ when $s_i$ is a rise,
and $\left\lceil \dfrac{h_i}{2} \right\rceil - \xi_i$ when $s_i$ is a fall,
in terms of the crossing statistics defined in (\ref{def.ucrossnestjk}a,c):

\begin{lem}[{\cite[Lemma~6.4]{Deb-Sokal}}]
   \label{lemma.xii.crossing.dperm}
\nopagebreak
\quad\hfill
\vspace*{-1mm}
\begin{itemize}
   \item[(a)]  If $s_i$ a rise and $i$ (hence also $h_i$) is odd, then
\be
   \left\lceil \dfrac{h_i - 1}{2} \right\rceil - \xi_i
   \;=\;
   \ucross(i,\sigma)
   \;.
 \label{eq.lemma.xii.crossing.dperm.a}
\ee
   \item[(b)]  If $s_i$ a rise and $i$ (hence also $h_i$) is even, then
\begin{subeqnarray}
   \left\lceil \dfrac{h_i - 1}{2} \right\rceil - \xi_i
   & = &
   \lcross(i,\sigma) \,+\, {\rm I}[\sigma(i) \neq i]
       \\[-1mm]
   & = &
   \lcross(i,\sigma) \,+\, {\rm I}[\hbox{\rm $i$ is a cycle double fall}]
   \;.
 \label{eq.lemma.xii.crossing.dperm.b}
\end{subeqnarray}
   \item[(c)]  If $s_i$ a fall and $i$ (hence also $h_i$) is odd, then
\begin{subeqnarray}
   \left\lceil \dfrac{h_i}{2} \right\rceil - \xi_i
   & = &
   \ucross(i,\sigma) \,+\, {\rm I}[\sigma(i) \neq i]
       \\[-1mm]
   & = &
   \ucross(i,\sigma) \,+\, {\rm I}[\hbox{\rm $i$ is a cycle double rise}]
   \;.
 \label{eq.lemma.xii.crossing.dperm.c}
\end{subeqnarray}
   \item[(d)]  If $s_i$ a fall and $i$ (hence also $h_i$) is even, then
\be
   \left\lceil \dfrac{h_i}{2} \right\rceil - \xi_i
   \;=\;
   \lcross(i,\sigma)
   \;.
 \label{eq.lemma.xii.crossing.dperm.d}
\ee
\end{itemize}
(Here ${\rm I}[\hbox{\sl proposition}] = 1$ if {\sl proposition} is true,
and 0 if it is false.)
\end{lem}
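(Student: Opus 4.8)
The plan is to verify each of the four cases by unwinding the definitions of $h_i$ (via $f_i$, from Lemma~\ref{lemma.heights.dperm}) and of $\xi_i$ (from the equivalent form \eqref{def.xi.bis.dperm}), and then identifying the remaining count as a crossing statistic from \eqref{def.ucrossnestjk}a,c. Recall that $f_i = \#\{j\le i\colon \sigma(j)>i\} = \#\{j\le i\colon \sinv(j)>i\}$, and $h_i = 2f_i-1$ or $2f_i$ according as $i$ is odd or even; hence $\lceil(h_i-1)/2\rceil = f_i - 1$ when $i$ is odd and $=f_i$ when $i$ is even, while $\lceil h_i/2\rceil = f_i$ when $i$ is odd and $=f_i$ when $i$ is even. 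So in every case the left-hand side is $f_i - \xi_i$ (for cases (b), (c), (d)) or $f_i - 1 - \xi_i$ (for case (a)), and the task reduces to a bookkeeping identity between $f_i$, $\xi_i$, and the relevant crossing count.

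First I would do case (a): $s_i$ a rise with $i$ odd, so $i<\sigma(i)$ and $\sinv(i)$ is even. Here $\xi_i = \#\{j<i\colon j<\sigma(j), \sigma(j)>\sigma(i)\}$ counts odd indices $j<i$ with $\sigma(j)>\sigma(i)$; and $f_i - 1 = \#\{j\le i\colon \sigma(j)>i\} - 1 = \#\{j<i\colon \sigma(j)>i\}$ (subtracting the contribution $j=i$, which is present since $\sigma(i)>i$). Among the $j<i$ with $\sigma(j)>i$, split according to whether $\sigma(j)>\sigma(i)$ or $i<\sigma(j)<\sigma(i)$ (equality $\sigma(j)=\sigma(i)$ impossible): the first type contributes exactly $\xi_i$, while the second type is precisely $\{j<i<\sigma(j)<\sigma(i)\}$, i.e.\ with $k=\sigma(j)$, $l=\sigma(i)$, a quadruple $j<i<k<l$ with $k=\sigma(j)$ and $l=\sigma(i)$ — this is $\ucross(i,\sigma)$ by \eqref{def.ucrossnestjk}a. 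Hence $f_i-1-\xi_i = \ucross(i,\sigma)$. Cases (c) and (b) and (d) proceed by the same strategy but with a watch on the indicator term: in cases (b) and (c) the vertex $i$ itself satisfies the relevant inequality (it is counted in $f_i$) precisely when $\sigma(i)\ne i$, which for an even (resp.\ odd) index $i$ with $s_i$ a rise (resp.\ fall) means $i$ is a cycle double fall (resp.\ cycle double rise) — this accounts for the $\mathrm{I}[\sigma(i)\ne i]$ term — while in cases (a) and (d) no such boundary term appears because $i$ is already subtracted or not counted. For the lower-crossing cases (b) and (d) one uses instead the complementary count $f_i = \#\{j\le i\colon \sinv(j)>i\}$ together with the form $\xi_i = \#\{2l>2k\colon \sigma(2l)<\sigma(2k)\}$ when $i=2k$, and reads off the residual count as $\{j<i<k<l, \sigma(k)=j, \sigma(l)=\text{something}\}$, matching $\lcross(i,\sigma)$ from \eqref{def.ucrossnestjk}c after the substitution.

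The main obstacle I anticipate is purely combinatorial careful-accounting: making sure that each splitting of $f_i$ is into exactly two classes (crossing vs.\ nesting contributions) with no double-counting and no missed cases, and correctly tracking when the ``diagonal'' index $j=i$ or the image $\sigma(i)$ falls inside or outside the range under consideration — this is exactly what produces (or suppresses) the $\mathrm{I}[\sigma(i)\ne i]$ terms, and it is the subtlest point. Since the corresponding unprimed statement is \cite[Lemma~6.4]{Deb-Sokal}, I would organize the proof to follow that argument closely, checking only that the shift from the $1$-based to the $0$-based label convention and the definitions \eqref{def.xi.bis.dperm}, \eqref{eq.xi.ineqs.dperm} are consistent with our ceilings; the bijection's path $\omega$ being literally the same almost-Dyck path as in \cite{Deb-Sokal} (as noted in Remark~\ref{rem.almostDyck.FZ}) means no genuinely new computation is required, only verification.
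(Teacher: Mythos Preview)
Your approach is correct and is essentially the standard direct computation: rewrite the ceiling in terms of $f_i$, then split the set defining $f_i$ (using either form in \eqref{eq.def.fk.dperm}) into the nesting part (giving $\xi_i$) and the crossing part (giving $\ucross$ or $\lcross$), with the boundary contribution $j=i$ or $j=\sinv(i)$ producing the indicator $\mathrm{I}[\sigma(i)\ne i]$ in cases (b) and (c). Your handling of case (a) is clean and correct, and the remaining cases go through exactly as you outline; in particular, for the lower cases it is cleanest to use $f_i = \#\{j>i : \sigma(j)\le i\}$ and split on $\sigma(j)<\sigma(i)$, $\sigma(i)<\sigma(j)<i$, and $\sigma(j)=i$.

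Note, however, that the paper does not give its own proof of this lemma: it is quoted verbatim as \cite[Lemma~6.4]{Deb-Sokal} and used as a black box. So there is no ``paper's proof'' to compare against beyond that citation; your argument is precisely the kind of bookkeeping one expects the cited proof to contain. One small correction to your last paragraph: in the D-permutation section the label convention here is identical to that of \cite{Deb-Sokal} (labels start at $0$ in both), so there is no shift to check; the $1\to 0$ shift you mention pertains only to the permutation section relative to \cite{Sokal-Zeng_masterpoly}.
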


Since the quantities \eqref{eq.lemma.xii.crossing.dperm.a}--\eqref{eq.lemma.xii.crossing.dperm.d}
are manifestly nonnegative, the inequalities~\eqref{eq.xi.ineqs.dperm}
are satisfied.

\bigskip

{\bf Step 3: Proof of bijection.}
We recall the description of the inverse map for the mapping 
$\sigma \mapsto (\omega,\xi)$.

First, some preliminaries: Given a D-permutation $\sigma\in \dperm_{2n}$
we can define four subsets of $[2n]$:
\begin{subeqnarray}
F & = & \{2,4,\ldots,2n\} \; = \;  \hbox{even positions} \\
F' & = & \{i \colon \sinv(i) \text{ is even} \} \; = \; \{\sigma(2),\sigma(4),\ldots, \sigma(2n)\}\\
G & = & \{1,3,\ldots, 2n-1\} \; = \; \hbox{odd positions}  \\
G' & = & \{i \colon \sinv(i) \text{ is odd} \} \; = \; \{\sigma(1),\sigma(3),\ldots, \sigma(2n-1)\}
\label{eq.steps2excedanceclassification.dperm}
\end{subeqnarray}
Note that $F'$ (resp. $G'$) are the positions of the rises (resp. falls) in the
almost-Dyck path~$\omega$.

Let us observe that
\begin{subeqnarray}
F\cap F'    &=& \hbox{cycle double falls and even fixed points}\\
G\cap G'    &=& \hbox{cycle double rises and odd fixed points}\\
F\cap G'    &=& \hbox{cycle peaks}\\
F'\cap G    &=& \hbox{cycle valleys} \\
F \cap G    &=& \emptyset \\
F' \cap G'  &=& \emptyset
\label{eq.steps2cycleclassification.dperm}
\end{subeqnarray}

We can now describe the map $(\omega,\xi)\mapsto \sigma$. 
Given the almost-Dyck path $\omega$, 
we can immediately reconstruct the sets $F, F',G, G'$. 
We now use the labels $\xi$ to reconstruct the maps
$\sigma \restrict F \colon\: F \to F'$ and
$\sigma \restrict G \colon\: G \to G'$ as follows: 
The even subword $\sigma(2)\sigma(4)\cdots\sigma(2n)$ is a listing of $F'$
whose right-to-left inversion table is given by $q_\alpha = \xi_{2\alpha}$;
this is the content of~(\ref{def2.xi.bis.dperm}a).
Similarly, the odd subword $\sigma(1)\sigma(3)\cdots\sigma(2n-1)$ 
is a listing of $G'$
whose left-to-right inversion table is given by $p_\alpha = \xi_{2\alpha-1}$;
this is the content of~(\ref{def2.xi.bis.dperm}b).

\medskip

\begin{rem}
\label{rem.DS.FZ}
Using Remark~\ref{rem.almostDyck.FZ} and 
the fact that the description of labels~\eqref{def2.xi.dperm} other than fixed points is 
exactly the same as that in~\eqref{def.xi},
we can think of this bijection of Deb and Sokal \cite[Sections~6.1-6.3]{Deb-Sokal}
as almost a special case of the 
variant Foata--Zeilberger bijection of Sokal and Zeng \cite[Section~6.1]{Sokal-Zeng_masterpoly}
when restricted to D-permutations,
except for the treatment of fixed points; 
they are distinguished by parity.
The odd fixed points are clubbed together with excedances 
and the even fixed points are clubbed together with anti-excedances.\footnote{We
thank the anonymous referee for their suggestion
using which we were able to formalise why this bijection
seems to be almost a special case of the bijection 
in Section~\ref{subsec.FZrecall.perm}.}
\myendremark
\end{rem}

\medskip

\subsubsection{Running example 2}
\label{subsubsec.DSbij.eg.2}

Now let us take our second running example
\begin{eqnarray}
	\sigma &=& 7\, 1\, 9\, 2\, 5\, 4\, 8\, 6\, 10\, 3\, 11\, 12\, 14\, 13\, \nonumber\\
	  &=& (1,7,8,6,4,2)\,(3,9,10)\,(5)\,(11)\,(12)\,(13,14) \in \dperm_{14}
\end{eqnarray}
which was depicted in Figure~\ref{fig.pictorial.2}.
From the cycle classification of $\sigma$,
which was recorded in Equation~\eqref{eq.example.2.cycle.classification},
we immediately obtain the sets $F', G'$:
\begin{subeqnarray}
        F' & = & \{1,2,3,4,6,12,13\}\;, \\
        G' & = & \{5,7,8,9,10,11,14\}\;. 
	\label{eq.DSbij.eg.2.FGprime}
\end{subeqnarray}
This also gives us the positions of rises and falls in the almost-Dyck path 
$\omega$ corresponding to $\sigma$
which has been drawn in Figure~\ref{fig.almostdyck.running.example.2}.

\begin{figure}[!h]
\centering
\begin{tikzpicture}[scale = 0.8]
\draw[help lines, color=gray!30, dashed] (-0.5,-1.5) grid (14.5,5.5);
\draw[->] (0,0)--(14.5,0) node[right]{$x$};
\draw[->] (0,0)--(0,4.5) node[above]{$y$};
\draw[gray,dotted] (-1,-1) grid (6,2);

\node[circle, fill=black, minimum size=5pt, inner sep=1pt] at (1,0) {};
\node[circle, fill=black, minimum size=5pt, inner sep=1pt] at (2,0) {};
\node[circle, fill=black, minimum size=5pt, inner sep=1pt] at (3,0) {};
\node[circle, fill=black, minimum size=5pt, inner sep=1pt] at (4,0) {};
\node[circle, fill=black, minimum size=5pt, inner sep=1pt] at (5,0) {};
\node[circle, fill=black, minimum size=5pt, inner sep=1pt] at (6,0) {};
\node[circle, fill=black, minimum size=5pt, inner sep=1pt] at (7,0) {};
\node[circle, fill=black, minimum size=5pt, inner sep=1pt] at (8,0) {};
\node[circle, fill=black, minimum size=5pt, inner sep=1pt] at (9,0) {};
\node[circle, fill=black, minimum size=5pt, inner sep=1pt] at (10,0) {};
\node[circle, fill=black, minimum size=5pt, inner sep=1pt] at (11,0) {};
\node[circle, fill=black, minimum size=5pt, inner sep=1pt] at (12,0) {};
\node[circle, fill=black, minimum size=5pt, inner sep=1pt] at (13,0) {};
\node[circle, fill=black, minimum size=5pt, inner sep=1pt] at (14,0) {};

\node[circle, fill=black, minimum size=5pt, inner sep=1pt] at (0,1) {};
\node[circle, fill=black, minimum size=5pt, inner sep=1pt] at (0,2) {};
\node[circle, fill=black, minimum size=5pt, inner sep=1pt] at (0,3) {};
\node[circle, fill=black, minimum size=5pt, inner sep=1pt] at (0,4) {};

\node[] at (0,-0.5) {$0$};
\node[] at (1,-0.5) {$1$};
\node[] at (2,-0.5) {$2$};
\node[] at (3,-0.5) {$3$};
\node[] at (4,-0.5) {$4$};
\node[] at (5,-0.5) {$5$};
\node[] at (6,-0.5) {$6$};
\node[] at (7,-0.5) {$7$};
\node[] at (8,-0.5) {$8$};
\node[] at (9,-0.5) {$9$};
\node[] at (10,-0.5) {$10$};
\node[] at (11,-0.5) {$11$};
\node[] at (12,-0.5) {$12$};
\node[] at (13,-0.5) {$13$};
\node[] at (14,-0.5) {$14$};

\node[] at (-0.5,0) {$0$};
\node[] at (-0.5,1) {$1$};
\node[] at (-0.5,2) {$2$};
\node[] at (-0.5,3) {$3$};
\node[] at (-0.5,4) {$4$};

\draw[rounded corners=1, color=red, line width=1] (0,0)-- (1,1);
\draw[rounded corners=1, color=red, line width=1] (1,1)-- (2,2);
\draw[rounded corners=1, color=red, line width=1] (2,2)-- (3,3);
\draw[rounded corners=1, color=red, line width=1] (3,3)-- (4,4);
\draw[rounded corners=1, color=red, line width=1] (4,4)-- (5,3);
\draw[rounded corners=1, color=red, line width=1] (5,3)-- (6,4);
\draw[rounded corners=1, color=red, line width=1] (6,4)-- (7,3);
\draw[rounded corners=1, color=red, line width=1] (7,3)-- (8,2);
\draw[rounded corners=1, color=red, line width=1] (8,2)-- (9,1);
\draw[rounded corners=1, color=red, line width=1] (9,1)-- (10,0);
\draw[rounded corners=1, color=red, line width=1] (10,0)-- (11,-1);
\draw[rounded corners=1, color=red, line width=1] (11,-1)-- (12,0);
\draw[rounded corners=1, color=red, line width=1] (12,0)-- (13,1);
\draw[rounded corners=1, color=red, line width=1] (13,1)-- (14,0);

\node[circle, fill=red, minimum size=5pt, inner sep=1pt] at (0,0) {};
\node[circle, fill=red, minimum size=5pt, inner sep=1pt] at (1,1) {};
\node[circle, fill=red, minimum size=5pt, inner sep=1pt] at (2,2) {};
\node[circle, fill=red, minimum size=5pt, inner sep=1pt] at (3,3) {};
\node[circle, fill=red, minimum size=5pt, inner sep=1pt] at (4,4) {};
\node[circle, fill=red, minimum size=5pt, inner sep=1pt] at (5,3) {};
\node[circle, fill=red, minimum size=5pt, inner sep=1pt] at (6,4) {};
\node[circle, fill=red, minimum size=5pt, inner sep=1pt] at (7,3) {};
\node[circle, fill=red, minimum size=5pt, inner sep=1pt] at (8,2) {};
\node[circle, fill=red, minimum size=5pt, inner sep=1pt] at (9,1) {};
\node[circle, fill=red, minimum size=5pt, inner sep=1pt] at (10,0) {};
\node[circle, fill=red, minimum size=5pt, inner sep=1pt] at (11,-1) {};
\node[circle, fill=red, minimum size=5pt, inner sep=1pt] at (12,0) {};
\node[circle, fill=red, minimum size=5pt, inner sep=1pt] at (13,1) {};
\node[circle, fill=red, minimum size=5pt, inner sep=1pt] at (14,0) {};

\end{tikzpicture}
\caption{Almost Dyck path $\omega$ corresponding to the permutation\\
$\sigma = 7\, 1\, 9\, 2\, 5\, 4\, 8\, 6\, 10\, 3\, 11\, 12\, 14\, 13\,
           = (1,7,8,6,4,2)\,(3,9,10)\,(5)\,(11)\,(12)\,(13,14) \in \Sym_{14}$.}
\label{fig.almostdyck.running.example.2}
\end{figure}


The labels $\xi_i$ are given in Equation~\eqref{eq.labels.DS.eg.2}.
\begin{subeqnarray}
\begin{array}{r}
2i\in F  \\[3mm]
	F' = \{\sigma(2i)\; | \: i\in [7]\}   \\[3mm]
\text{Right-to-left inversion table:} \; \xi_{2i}   \\
\end{array}
&&\hspace*{-6mm}
\begin{pmatrix}
	\; 2 &  4 & 6 & 8 & 10  & 12 & 14\; \\[3mm]
	\; 1 & 2 & 4 & 6 & 3 & 12 & 13 \; \\[3mm]
	\; 0 & 0 & 1 & 1 & 0 & 0 & 0  \; 
\end{pmatrix} 
	\\[5mm]
\begin{array}{r}
2i-1\in G  \\[3mm]
	G' = \{\sigma(2i-1)\; | \: i\in [7]\}   \\[3mm]
\text{Left-to-right inversion table:} \; \xi_{2i-1}   \\
\end{array}
	&&\hspace*{-6mm}
\begin{pmatrix}
	\; 1 & 3 & 5 & 7 &  9 & 11 & 13 \; \\[3mm]
	\; 7 & 9 & 5 & 8 & 10 & 11 & 14 \; \\[3mm]
	\; 0 & 0 & 2 & 1 & 0 & 0 & 0
\end{pmatrix}
\label{eq.labels.DS.eg.2}
\end{subeqnarray}
%
%
%
%
%
%
\bigskip

\subsubsection{Combinatorial interpretation using Laguerre digraphs}
\label{subsec.laguerre.dperm}

The construction here will almost mirror 
the construction in Section~\ref{subsec.laguerre}.
We will only include the necessary details and state the necessary lemmas
and will omit most of the proofs.

We begin with an almost-Dyck path $\omega$ 
and an assignment of labels $\xi$ satisfying 
\eqref{eq.xi.ineqs.dperm}.
The inverse bijection in 
Section~\ref{subsec.DSrecall.dperm} Step~3,
gives us a D-permutation $\sigma$.
We will again break this process into several intermediate steps
and provide a reinterpretation using Laguerre digraphs.
We will use the same conventions for denoting Laguerre digraphs 
as in Section~\ref{subsec.laguerre}.

Recall that the inverse bijection  
(Section~\ref{subsec.DSrecall.dperm} Step 3)
begins by obtaining the sets $F, G$ (which are fixed for any given $n$)
and $F',G'$ from the almost-Dyck path $\omega$.
We then construct $\sigma\restrict F\colon \: F \to F'$ 
and $\sigma\restrict G\colon \: G\to G'$ separately 
by using the labels $\xi\restrict F$ and $\xi\restrict G$ respectively.

We again start with the digraph $\laguerre{\emptyset}$ and 
then go through the set $[2n]$.
However, we first go through $F$ and then go through $G$,
i.e., the order of our steps is now
$2,4,\ldots, 2n,2n-1,\ldots, 3, 1$.
In this situation, stage (a) will involve going through the even vertices in
increasing order
and then stage (b) will involve going through the odd vertices but
in decreasing order.
Thus, unlike the situation in Section~\ref{subsec.laguerre},
the FZ order corresponding to two different 
D-permutations $\sigma,\sigma' \in \dperm_{2n}$
are the same, 
irrespective of the underlying
almost-Dyck path.

Let us now look at the intermediate Laguerre digraphs obtained 
during stages~(a) and~(b).

\medskip

%
%
%

{\bf Stage (a): Going through $\boldsymbol{F = \{2,4,\ldots,2n\}}$:}\\
We go through the even vertices in increasing order.
From \eqref{eq.steps2cycleclassification.dperm}, 
we know that 
$F = \Cdfall(\sigma) \, \cup \, \Evenfix(\sigma) \, \cup \, \Cpeak(\sigma)$ 
and 
$F' = \Cdfall(\sigma) \, \cup \, \Evenfix(\sigma) \, \cup \, \Cval(\sigma)$
where $\sigma$ is the resulting D-permutation obtained at the end
of the inverse bijection.

The construction here is similar to the construction in
stage (b) Section~\ref{subsec.laguerre}.
The connected components at the end of this stage can be described as follows:

\begin{lem} The Laguerre digraph $\laguerre{F}$
consists of the following connected components:
\begin{itemize} 
        \item loops on vertices $u\in \Evenfix$,


	\item directed paths with at least two vertices,
                in which the initial vertex of the path
                is a cycle peak in $\sigma$
                (i.e. contained in the set $F\,\cap\, G'$),
                the final vertex is a cycle valley in $\sigma$
                (i.e. contained in the set $F'\,\cap\, G$),
                and the intermediate vertices (if any)
                are cycle double falls
                (which belong to the set $F\,\cap\, F'$).

	\item  isolated vertices at $u\in G\,\cap\, G' = \Cdrise(\sigma)\, \cup \, \Oddfix(\sigma)$.
\end{itemize} 
Furthermore, it contains no directed cycles.
\label{lem.afterF.dperm}
\end{lem}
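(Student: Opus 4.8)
The plan is to follow the same strategy used in Lemma~\ref{lem.afterHG}, adapted to the D-permutation setting and to the new order of stages. Since Stage~(a) here plays the role that Stage~(b) played in Section~\ref{subsec.laguerre} — going through a set in increasing order, inserting edges $u\to\sigma(u)$ — the core argument is essentially identical; the only genuine differences are the decomposition \eqref{eq.steps2cycleclassification.dperm} of $F,F',G,G'$ for D-permutations, and the bookkeeping of fixed points by parity (even fixed points live in $F\cap F'$, odd fixed points in $G\cap G'$).

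First I would recall that $F=\{2,4,\ldots,2n\}$ is the set of even positions, and that during Stage~(a) we go through its elements in increasing order, at step $x_j$ inserting the edge $x_j\to\sigma(x_j)$ where $\sigma(x_j)$ is chosen via the right-to-left inversion table as the $(\xi_{x_j}+1)$th smallest available element of $F'\setminus\{\sigma(x_1),\ldots,\sigma(x_{j-1})\}$. As in Section~\ref{subsec.laguerre}, the key structural facts are: (i) every vertex in $F$ acquires exactly one out-edge, and every vertex in $F'$ acquires exactly one in-edge; (ii) for a non-loop edge $u\to v$ with $u\in F$, $v\in F'$, $u\neq v$, we have $u>v$ — this follows because $u\in F\cap G'=\Cpeak$ or $u\in F\cap F'=\Cdfall$, and in both cases $\sigma(u)<u$ by \eqref{eq.steps2excedanceclassification.dperm}/\eqref{eq.steps2cycleclassification.dperm}; (iii) therefore $\laguerre{F}$ contains no non-loop directed cycle, since any such cycle would need a strictly decreasing sequence of vertices returning to its start.

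Next I would pin down the three types of connected components. A loop $u\to u$ arises exactly when $\sigma(u)=u$ with $u$ even, i.e.\ $u\in\Evenfix$; this uses \eqref{eq.steps2cycleclassification.dperm}b for $F\cap F'$. An isolated vertex (no adjacent edge) must fail to be in $F$ (no out-edge) and fail to be in $F'$ (no in-edge), hence lies in $G\cap G'=\Cdrise\cup\Oddfix$ by \eqref{eq.steps2cycleclassification.dperm}. For a directed path with at least two vertices, it suffices — exactly as in the proof of Lemma~\ref{lem.afterHG} — to identify the initial and final vertices. The initial vertex $u$ has an out-neighbour (so $u\in F$) but no in-neighbour; since all of $F'$ has been assigned in-neighbours by the end of Stage~(a), $u\notin F'$, so $u\in F\setminus F'$; combined with $F=\Cdfall\cup\Evenfix\cup\Cpeak$ and $F'=\Cdfall\cup\Evenfix\cup\Cval$ from \eqref{eq.steps2cycleclassification.dperm}, we get $u\in\Cpeak=F\cap G'$. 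Symmetrically, the final vertex $u$ has an in-neighbour but no out-neighbour (so $u\in F'$ but $u\notin F$), whence $u\in\Cval=F'\cap G$. The intermediate vertices, having both an in- and an out-neighbour, lie in $F\cap F'=\Cdfall\cup\Evenfix$; but a vertex with a loop would be a whole component by itself, so intermediate vertices cannot be fixed points, leaving $\Cdfall$. The ``furthermore'' clause (no directed cycles) is fact (iii) above, noting that loops are permitted as components but are not counted as directed cycles here in the sense relevant to Stage~(a) — or more precisely, a loop is a cycle on one vertex and these are listed separately.

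I do not anticipate a serious obstacle: the argument is a near-verbatim transcription of Lemma~\ref{lem.afterHG}'s proof with the dictionary $H\cup G \leftrightarrow F$ (the set processed first), and with \eqref{eq.steps2cycleclassification.dperm} replacing \eqref{eq.steps2cycleclassification}/\eqref{eq.steps2cycleclassification.dperm}'s permutation analogue. The one point requiring a little care is the parity bookkeeping for fixed points — making sure even fixed points end up as loops inside $\laguerre{F}$ while odd fixed points are left as isolated vertices to be handled in Stage~(b) — but this is immediate from the definitions \eqref{eq.steps2excedanceclassification.dperm} of $F$ and $G$ as even and odd positions. Accordingly, in the write-up I would state: ``The proof is entirely analogous to that of Lemma~\ref{lem.afterHG}; it suffices to identify the initial and final vertices of a path with at least two vertices as a cycle peak and a cycle valley respectively, which follows from \eqref{eq.steps2excedanceclassification.dperm}/\eqref{eq.steps2cycleclassification.dperm} exactly as before. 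The absence of directed cycles follows from the fact that every non-loop edge $u\to\sigma(u)$ in $\laguerre{F}$ satisfies $u>\sigma(u)$.''
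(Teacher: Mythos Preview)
Your proposal is correct and takes essentially the same approach as the paper: the paper explicitly omits the proof of this lemma, stating at the outset of Section~\ref{subsec.laguerre.dperm} that ``the construction here will almost mirror the construction in Section~\ref{subsec.laguerre}'' and that most proofs are omitted. Your argument is precisely the adaptation of the proof of Lemma~\ref{lem.afterHG} that the paper intends, with the correct dictionary (the set $F$ of even positions here playing the role of $H\cup G$ there) and the correct use of \eqref{eq.steps2excedanceclassification.dperm}/\eqref{eq.steps2cycleclassification.dperm} to handle the parity-refined bookkeeping of fixed points.
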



\medskip

{\bf Stage (b): Going through $\boldsymbol{G = \{2n-1,\ldots, 3,1\}}$:}\\
We now go through the odd vertices in decreasing order.
From \eqref{eq.steps2cycleclassification.dperm},
we know that
$G = \Cdrise(\sigma) \, \cup \, \Oddfix(\sigma) \, \cup \, \Cval(\sigma)$,
and
$G' = \Cdrise(\sigma) \, \cup \, \Oddfix(\sigma) \, \cup \, \Cpeak(\sigma)$,
where $\sigma$ is the resulting D-permutation obtained at the end
of the inverse bijection.

The construction here is similar to stage (c) 
in Section~\ref{subsec.laguerre}.
The final vertices of a path with at least two vertices
have the following description:
\begin{sloppy}
\begin{lem} Let $u$ be the final vertex of a path
with at least two vertices in $\laguerre{F \,\cup\, \{2n-1,\ldots, 2(n-j)+1\}}$ 
for some index $j$ ($1\leq j \leq n$). 
Then $u\in \Cval$.
\label{lem.lastvertexG.dperm}
\end{lem}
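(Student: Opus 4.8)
The statement to prove is Lemma~\ref{lem.lastvertexG.dperm}: the final vertex of any path with at least two vertices in $\laguerre{F \,\cup\, \{2n-1,\ldots, 2(n-j)+1\}}$ must be a cycle valley. The plan is to mirror the proof of Lemma~\ref{lem.lastvertexF} almost verbatim, adjusting for the fact that the roles of even/odd and the stages (a)/(b) are interchanged compared to the permutation case: here stage (a) went through the even vertices $F$ (all of which got out-neighbours) and stage (b) goes through the odd vertices $G$ in \emph{decreasing} order.

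First I would observe that since all vertices in $F$ were assigned out-neighbours during stage (a), the final vertex $u$ of a path (which by definition has no out-neighbour yet) must lie in $G$. By~\eqref{eq.steps2cycleclassification.dperm} we have $G = \Cdrise(\sigma) \cup \Oddfix(\sigma) \cup \Cval(\sigma)$; moreover $u$ has an in-neighbour since its path has at least two vertices, so $u$ is not a fixed point, leaving $u \in \Cdrise(\sigma) \cup \Cval(\sigma)$. It then suffices to exclude the possibility $u \in \Cdrise(\sigma)$.

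The argument excluding $u \in \Cdrise$ is the same contradiction as in Lemma~\ref{lem.lastvertexF}: if $u$ is a cycle double rise then $\sinv(u) < u < \sigma(u)$, so its in-neighbour $v = \sinv(u)$ satisfies $v < u$ and $v \in F' \cap G' \cup \ldots$; more precisely $v < u$ with $v < \sigma(v) = u$ forces $v$ to be odd, so $v \in G$. But then $v$ is an odd vertex smaller than $u$ that already has an out-neighbour while the larger odd vertex $u$ does not — impossible, since in stage (b) we process $G$ in \emph{decreasing} order, so every odd vertex larger than $u$ is handled before $u$. This contradiction gives $u \in \Cval = F' \cap G$, as desired. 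I do not anticipate any genuine obstacle here: the lemma is a direct transcription of Lemma~\ref{lem.lastvertexF} under the even$\leftrightarrow$odd, stage~(a)$\leftrightarrow$stage~(b) dictionary supplied by Remark~\ref{rem.DS.FZ} and~\eqref{eq.steps2cycleclassification.dperm}, and indeed the authors explicitly say they ``only include the necessary details'' and ``omit most of the proofs'' in this subsection. The only point requiring a moment's care is bookkeeping the parity: verifying that $\sinv(u) < \sigma(u)$ together with the D-permutation constraint pins down $\sinv(u)$ as odd (hence in $G$), which is immediate from the display preceding~\eqref{eq.steps2cycleclassification.dperm} characterizing cycle double rises as having both $\sinv(i)$ and $i$ odd.
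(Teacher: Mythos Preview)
Your proposal is correct and follows precisely the approach the paper intends: the authors explicitly omit this proof, noting that the construction ``will almost mirror'' Section~\ref{subsec.laguerre}, and your argument is a faithful transcription of the proof of Lemma~\ref{lem.lastvertexF} under the dictionary $F\leftrightarrow G$, stage~(b)/(c)$\leftrightarrow$stage~(a)/(b). One small clarification worth making explicit: the reason $u\notin\Oddfix$ is not merely that $u$ has an in-neighbour, but that an odd fixed point can only receive an in-edge from itself (since $\sinv(u)=u$), and that loop edge is created only when $u$ is processed---but $u$ has no out-neighbour, so it has not been processed, hence it has no in-edge either, contradicting the path having at least two vertices.
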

\end{sloppy}

%

Our definition of cycle closers is again the same as in 
Section~\ref{subsec.laguerre}.
The following lemma classifies all cycle closers.

\begin{lem}(Classifying cycle closers) Given a D-permutation $\sigma$,
	an element $u\in [2n]$ is a cycle closer
	if and only if 
	it is a cycle valley minimum,
	i.e., 
	it is the smallest element
	in its cycle.
\label{lem.classifyingCycleG.dperm}
\end{lem}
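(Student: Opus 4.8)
The plan is to prove Lemma~\ref{lem.classifyingCycleG.dperm} by mirroring the proof of the analogous statement for permutations (Lemma~\ref{lem.classifyingCycleF}), making the adjustments forced by the fact that here stage~(a) runs through the \emph{even} vertices in increasing order and stage~(b) runs through the \emph{odd} vertices in \emph{decreasing} order. The two directions of the ``if and only if'' are handled separately, but both rest on the structural description already recorded in Lemma~\ref{lem.lastvertexG.dperm} together with the rule that in stage~(b) we assign out-neighbours to vertices of $G$ in descending order.

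First I would prove that every cycle closer is a cycle valley minimum. Suppose $u = u_j$ is a cycle closer, so that at the moment the edge $u \to \sigma(u)$ is inserted, $u$ is the final vertex of a path $P$ with at least two vertices, and inserting the edge turns $P$ into a cycle. By Lemma~\ref{lem.lastvertexG.dperm}, $u \in \Cval$. Now let $v \neq u$ be any other cycle valley lying in the cycle that is being closed; since that cycle is exactly the path $P$ plus the new edge, $v$ was already a vertex of $P$, hence $v$ already had an out-neighbour before step $u$. But $\Cval \subseteq G$ by \eqref{eq.steps2cycleclassification.dperm}, and in stage~(b) we process the elements of $G$ in decreasing order, so a vertex of $G$ that already has an out-neighbour when we are processing $u$ must satisfy $v > u$. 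Since the minimum element of any non-singleton cycle is a cycle valley, it follows that $u$ is the smallest element of its cycle, i.e.\ a cycle valley minimum.

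Conversely, I would show every cycle valley minimum is a cycle closer. Let $u$ be the smallest element of a non-singleton cycle $C$ of $\sigma$; being the minimum of $C$, it is a cycle valley, hence $u \in F' \cap G$, and in particular $u$ is processed during stage~(b). Every other vertex $w$ of $C$ is either an even vertex (a cycle peak, cycle double fall, or even fixed point — but $C$ is non-singleton so no fixed points occur), processed in stage~(a), or an odd vertex in $G$ with $w > u$ (as just argued, since $u$ is the minimum of $C$), processed earlier in stage~(b). Hence at the moment step $u$ begins, every vertex of $C$ other than $u$ already has its out-edge inserted, and $u$ has none; so the edges of $C$ already present form a single directed path $P$ from $\sigma^{-1}(u)$ to $u$ containing all of $C$, and $u$ is its final vertex. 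The edge $u \to \sigma(u) = \sigma(u)$ that step $u$ inserts (by correctness of the inverse bijection, Section~\ref{subsec.DSrecall.dperm} Step~3) closes $P$ into the cycle $C$, so $u$ is a cycle closer.

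The main obstacle I anticipate is the converse direction: one must be careful that at the start of step~$u$ the partial edges of $C$ genuinely assemble into one path rather than several path fragments. This is where it matters that \emph{all} vertices of $C$ except $u$ precede $u$ in the FZ order — the even ones automatically (stage~(a) is entirely before stage~(b)), and the odd ones because $u$ is the minimum of $C$ and stage~(b) descends. A clean way to phrase this is: in $\laguerre{F \cup \{2n-1,\dots,u+2\}}$ every vertex of $C \setminus \{u\}$ has an out-edge and $u$ has none, and $C$ has no proper subset that is a cycle (it is a single cycle of $\sigma$), so the induced subgraph on $C$ is a Hamiltonian path of $C$ ending at $u$. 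Everything else is bookkeeping already supplied by Lemmas~\ref{lem.afterF.dperm} and~\ref{lem.lastvertexG.dperm}, so the proof should be short once this point is stated correctly.
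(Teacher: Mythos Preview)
Your proposal is correct and matches the paper's approach. The paper in fact omits the proof of this lemma entirely, stating that the construction ``will almost mirror'' Section~\ref{subsec.laguerre} and that most proofs are omitted; the intended argument is exactly the adaptation of the proof of Lemma~\ref{lem.classifyingCycleF} that you carry out in your first paragraph.

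One small remark: the paper's proof of Lemma~\ref{lem.classifyingCycleF} only writes out the forward implication (cycle closer $\Rightarrow$ cycle valley minimum) and leaves the converse implicit via the counting observation that each non-singleton cycle is closed exactly once during the history. Your explicit converse argument is a fine alternative. There is a harmless slip in it, however: when all edges of the cycle $C$ except $u\to\sigma(u)$ have been inserted, the resulting path runs from $\sigma(u)$ (the vertex missing its in-edge) to $u$ (the vertex missing its out-edge), not from $\sigma^{-1}(u)$ to $u$. This does not affect the argument, since all you need is that $u$ is the final vertex of the path and that inserting $u\to\sigma(u)$ closes it.
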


%

Next, we will count the number of cycle closers.
But before doing that, we require a technical lemma similar to 
Lemma~\ref{lem.cycle.closer.technical} for the case of permutations.
However, first  notice that if 
$i \in G\,\cap\,F' = \Cval(\sigma)$,
step $s_{i}$ must be a rise from height $h_{i-1}$ to height $h_{i}$ 
and hence, $h_{i-1}+1 = h_{i}$.
Also, from the interpretation of the heights in Lemma~\ref{lemma.heights.dperm}, 
we must have $\lceil h_{i-1}/2\rceil +1 = \lceil (h_{i}+1)/2\rceil = f_i$.

\begin{lem}
Given a D-permutation $\sigma$ and associated sets $F',G'$
and an odd\\ \mbox{$y\in \{2n-1, \ldots, 3,1\}$} such that $y\in G\,\cap\, F' = \Cval$.
Then the following is true:
\begin{eqnarray}
\#\{u \in  G'\backslash \{\sigma(2n-1),\sigma(2n-3),\ldots, \sigma(y+2)\} \colon \: u>y\}
 &=& \lceil h_{y-1}/2\rceil +1 \nonumber\\
	&=& \lceil (h_{y}+1)/2\rceil \nonumber\\
	&=& f_y	
\label{eq.lem.cycle.closer.technical.dperm}
\end{eqnarray}
where $h_i$ denotes the height at position $i$ of the almost-Dyck path $\omega$
associated to $\sigma$
in Step~1,
and $f_i$ is defined in \eqref{eq.def.fk.dperm}.
\label{lem.cycle.closer.technical.dperm}
\end{lem}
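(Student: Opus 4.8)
The plan is to transcribe the proof of Lemma~\ref{lem.cycle.closer.technical} almost verbatim, with $F,F',G,G'$ replaced by their D-permutation analogues from \eqref{eq.steps2excedanceclassification.dperm} and with the defining property of D-permutations (no even excedances, no odd anti-excedances) used to control parities. Since the chain of equalities $\lceil h_{y-1}/2\rceil+1=\lceil(h_y+1)/2\rceil=f_y$ has already been recorded just before the lemma statement (it is immediate from Lemma~\ref{lemma.heights.dperm} together with the fact that $s_y$ is a rise), the only thing left to prove is that the cardinality in \eqref{eq.lem.cycle.closer.technical.dperm} equals $f_y$.

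First I would establish the set identity
\begin{equation}
\{u > y \colon\: \sigma^{-1}(u) \le y\}
\;=\;
\{u \in G' \setminus \{\sigma(2n-1),\sigma(2n-3),\ldots,\sigma(y+2)\} \colon\: u > y\}.
\end{equation}
For the inclusion $\supseteq$: if $u \in G'$ then $\sigma^{-1}(u)$ is odd by \eqref{eq.steps2excedanceclassification.dperm}, and if moreover $u$ is not one of $\sigma(y+2),\sigma(y+4),\ldots,\sigma(2n-1)$, then the odd number $\sigma^{-1}(u)$ avoids the set $\{y+2,y+4,\ldots,2n-1\}$, hence $\sigma^{-1}(u) \le y$. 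For the inclusion $\subseteq$: if $u > y$ and $k := \sigma^{-1}(u) \le y$, then $\sigma(k) = u > k$, so $k$ must be odd because a D-permutation has no even excedances; thus $u \in G'$, and since $k \le y$ the value $u$ cannot be any of $\sigma(y+2),\ldots,\sigma(2n-1)$, so $u$ lies in the right-hand set.

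Second, I would count the left-hand side: the substitution $i = \sigma^{-1}(u)$ is a bijection between $\{u > y \colon \sigma^{-1}(u) \le y\}$ and $\{i \le y \colon \sigma(i) > y\}$, and the latter set has cardinality $f_y$ by the definition \eqref{eq.def.fk.dperm}. Combining this with the set identity above and the already-noted relation $\lceil h_{y-1}/2\rceil+1=\lceil(h_y+1)/2\rceil=f_y$ yields \eqref{eq.lem.cycle.closer.technical.dperm}. I do not expect a genuine obstacle here; the only point requiring care is the parity bookkeeping — using the D-permutation property to pin down the parity of $\sigma^{-1}(u)$, and using $y$ odd (so $y-1$ even, $h_{y-1}$ even, $h_y$ odd) to discharge the ceiling functions — and this is exactly the D-permutation shadow of the argument already carried out in the proof of Lemma~\ref{lem.cycle.closer.technical}.
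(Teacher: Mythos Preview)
Your proposal is correct and follows essentially the same route as the paper's own proof: establish the set identity $\{u>y\colon \sigma^{-1}(u)\le y\}=\{u\in G'\setminus\{\sigma(2n-1),\ldots,\sigma(y+2)\}\colon u>y\}$ via the two inclusions (using the D-permutation parity constraint for $\subseteq$), then identify the cardinality of the left-hand side with $f_y$ via the substitution $u\mapsto\sigma^{-1}(u)$. Your write-up is in fact slightly more explicit than the paper's in spelling out why $\sigma^{-1}(u)$ must be odd (you invoke ``no even excedances'' directly, whereas the paper just says ``as $\sigma$ is a D-permutation'').
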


\begin{proof} We first establish the following equality of sets:
\be
	\{u > y \colon \: \sinv(u) \leq  y\} \; = \;  \{u \in   G'\backslash \{\sigma(2n-1),\sigma(2n-3),\ldots, \sigma(y+2)\} \colon\: u > y \}.
\label{eq.technical.equation.a.dperm}
\ee

Whenever $u\in G'$, we have that $\sinv(u)\in G$
(by description of $G$, $G'$ in (\ref{eq.steps2excedanceclassification.dperm})).
Additionally, if $u\not\in \{\sigma(2n-1),\sigma(2n-3), \ldots, \sigma(y+2)\}$
then it must be that $\sinv(u)\leq y$.
This establishes the containment
\[\{u > y \colon \: \sinv(u) \leq  y\} \; \supseteq \;  \{u \in   G'\backslash \{\sigma(2n-1),\sigma(2n-3),\ldots, \sigma(y+2)\} \colon\: u > y \}.\]

On the other hand, if $u>y$ and $\sinv(u)\leq y$,
then $u> \sinv(u)$ and as $\sigma$ is a D-permutation, 
$\sinv(u)$ must be odd. Therefore, $u\in G'$.
As $\sinv(u)\leq y$,
$u$ cannot be one of $\sigma(y+2), \ldots,\sigma(2n-3), \sigma(2n-1)$.
Therefore, $u  \in   G'\backslash \{\sigma(2n-1),\sigma(2n-3),\ldots, \sigma(y+2)\}$.
This establishes \eqref{eq.technical.equation.a.dperm}.

To obtain Equation~\eqref{eq.lem.cycle.closer.technical.dperm},
it suffices to show that the cardinality of the set
$\{u> y \colon \: \sinv(u) \leq  y\}$
is $f_{y}$.
To do this, recall the description of $f_y$
in Equation~\eqref{eq.def.fk.dperm} and observe that
\begin{eqnarray}
    f_y  & = &  \#\{ u \le y \colon\: \sigma(u) > y \}  \nonumber \\
            & = &  \# \{u > y \colon\:  \sinv(u) \leq y \}
\end{eqnarray}
where the second equality is obtained by replacing $u$ with $\sinv(u)$.
\end{proof}

We are now ready to state the counting of cycle closers.

\begin{lem}[Counting of cycle closers for D-permutations]
Fix an almost-Dyck path $\omega$ of length $2n$ and
construct $F',G'$ (these are completely determined by $\omega$).
Also fix labels $\xi_u$ for vertices
$u\in\{2,4,\ldots, 2n\} \,\cup\, \{2n-1,\ldots, 2(n-j)+1\}$
satisfying~\eqref{eq.xi.ineqs.dperm}.
Also let $y = 2(n-j)-1\in G\cap F' = \Cval(\sigma)$.
Then
\begin{itemize}
      \item[(a)] The value of $\xi_{y}$ completely determines 
               if $y$ is a cycle closer or not.
      \item[(b)] There is exactly one value $\xi_{y}\in \{0,1,\ldots,\lceil h_{y-1}/2\rceil\}$
               that makes $y$ a cycle closer, and conversely.
\end{itemize}
\label{lem.cycle.closer.dperm}
\end{lem}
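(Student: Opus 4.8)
\textbf{Proof plan for Lemma~\ref{lem.cycle.closer.dperm}.}
The plan is to mirror the proof of Lemma~\ref{lem.cycle.closer} for ordinary permutations, substituting the D-permutation bookkeeping at each point. Since $y = 2(n-j)-1 \in G \cap F' = \Cval(\sigma)$, the vertex $y$ is a cycle valley of $\sigma$, so at the moment step $s_y$ is performed (recall that in the D-permutation FZ order we process $F$ first and then $G$ in decreasing order, so $y$ is reached during stage~(b) after all of $F$ and all odd vertices $> y$ have been handled), $y$ has no out-neighbour in $\laguerre{F \cup \{2n-1,\ldots,y+2\}}$ and hence is the final vertex of some path. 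Let $\rbf$ be the initial vertex of that path. During step $y$ we choose $\sigma(y)$ to be the $(p+1)$th \emph{largest} available element of $G' \setminus \{\sigma(2n-1),\ldots,\sigma(y+2)\}$, where $p = \xi_y \in \{0,1,\ldots,\lceil h_{y-1}/2\rceil\}$ by~(\ref{eq.xi.ineqs.dperm.a}). Since distinct values of $\xi_y$ select distinct vertices as $\sigma(y)$, and $y$ is a cycle closer precisely when $\sigma(y) = \rbf$, there is \emph{at most} one label making $y$ a cycle closer; this gives part~(a) and half of part~(b).

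For the other half I must show that $\rbf$ really is among the available ending vertices that some valid $\xi_y$ can reach, i.e.\ that $\rbf$ is among the $\lceil h_{y-1}/2\rceil + 1$ largest elements of $G' \setminus \{\sigma(2n-1),\ldots,\sigma(y+2)\}$. First, $\rbf \notin G'$: the initial vertex of this path has no in-neighbour, whereas at the end of stage~(a) every vertex of $F'$ and at this point every odd vertex $> y$ that lies in $G'$ as an image has already received an in-neighbour — more carefully, all vertices in $F' \cup \{\sigma(2n-1),\ldots,\sigma(y+2)\}$ have in-neighbours, and a starting vertex of a path has none, so $\rbf \in G' \setminus \{\sigma(2n-1),\ldots,\sigma(y+2)\}$ is impossible unless $\rbf \notin G'$; combined with $F' \cup G' \cup \{\text{evenfix}\}\cup\{\text{oddfix}\} \ldots$ — actually the cleaner statement is $\rbf \in F'$ (every vertex has in-degree at most $1$ and $\rbf$ currently has none, and the only vertices that can still lack an in-neighbour at this stage belong to $F'$), and since $y \in G'$ we get $\rbf \neq y$. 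By Lemma~\ref{lem.cycle.closer.technical.dperm}, it now suffices to prove $\rbf > y$.

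To get $\rbf > y$, write the path containing $y$ as $\rbf = \rbf_0, \rbf_1, \ldots, \rbf_\alpha = y$ with edges $\rbf_i \to \rbf_{i+1}$, and let $\beta$ be the smallest index with $\rbf_\beta \in G$ (such $\beta$ exists since $\rbf_\alpha = y \in G$). The vertices $\rbf_0,\ldots,\rbf_{\beta-1}$ lie in $F \setminus G = F$; since during stage~(a) all non-loop edges $u \to v$ built among even vertices satisfy $u > v$ (they are anti-excedances in the D-permutation, i.e.\ edges from $F$ to $F'$ with source a cycle peak or cycle double fall, hence source $>$ target), we get $\rbf = \rbf_0 > \rbf_1 > \cdots > \rbf_\beta$ when $\beta > 0$, so $\rbf \ge \rbf_\beta$ with equality iff $\beta = 0$. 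On the other hand, if $\beta < \alpha$ then $\rbf_\beta$ already has an out-neighbour, and since we process the odd vertices ($=G$) in \emph{decreasing} order and $\rbf_\beta \in G$ was assigned its out-edge before step $y$, we have $\rbf_\beta > y$; so $\rbf_\beta \ge \rbf_\alpha = y$ with equality iff $\beta = \alpha$. Combining, $\rbf \ge \rbf_\beta \ge y$, and equality throughout would force $\beta = 0 = \alpha$, i.e.\ the ``path'' is the single vertex $y$, contradicting that it has $\ge 2$ vertices (Lemma~\ref{lem.lastvertexG.dperm} context); hence $\rbf > y$.

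\textbf{Expected main obstacle.} The routine parts (part~(a), and uniqueness in~(b)) are immediate. The delicate point, exactly as in the permutation case, is the chain of inequalities in the last paragraph, specifically being careful that the two monotonicity facts — ``even-vertex non-loop edges decrease'' and ``odd vertices processed in decreasing order are assigned out-edges in decreasing order'' — apply to the correct segments $\rbf_0\cdots\rbf_\beta$ and $\rbf_\beta\cdots\rbf_\alpha$ of the path, and that the strict inequality $\rbf > y$ (not just $\rbf \ge y$) is salvaged from the two possibly-equality cases by using $\rbf \ne y$. I also need to double-check the edge case $j = n$ (i.e.\ $y$ could be small, or the available-set could be nearly empty) and the interaction with the possible height $-1$ / almost-Dyck down step, but these do not affect the argument since $y \in \Cval$ forces $s_y$ to be a genuine rise with $h_{y-1} \ge 0$. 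Once $\rbf > y$ is established, Lemma~\ref{lem.cycle.closer.technical.dperm} closes part~(b) exactly as in the proof of Lemma~\ref{lem.cycle.closer}.
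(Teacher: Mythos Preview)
Your approach is exactly what the paper intends: it omits the proof of this lemma, noting that the construction ``will almost mirror the construction in Section~\ref{subsec.laguerre}'', so adapting the proof of Lemma~\ref{lem.cycle.closer} is precisely the expected argument, and your monotonicity chain in the final paragraph is correct.

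However, you have swapped the roles of $F'$ and $G'$ in your second paragraph, and this is not cosmetic: it breaks the application of Lemma~\ref{lem.cycle.closer.technical.dperm}. In the D-permutation setting, stage~(a) processes $F$, so at the end of stage~(a) every vertex of $F'$ (not $G'$) has received its in-neighbour. Since $\rbf$ has no in-neighbour, the correct conclusion is $\rbf \notin F'$, hence $\rbf \in G' \setminus \{\sigma(2n-1),\ldots,\sigma(y+2)\}$ --- which is exactly the available set of ending vertices, as required. (And then $\rbf \neq y$ follows because $y \in F' \cap G$ and $F' \cap G' = \emptyset$.) Your stated conclusion ``$\rbf \in F'$'' would instead place $\rbf$ \emph{outside} the available set, making it unreachable by any label $\xi_y$ and collapsing the existence half of~(b). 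Once you correct this swap, the rest of your argument goes through verbatim.
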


\begin{rem}
Notice that one can also construct a variant of this
interpretation where stage (b) occurs before stage (a). The role of cycle closer
will then be played by cycle peak maximum. 
\myendremark
\end{rem}

\subsubsection{Running example 2}

Let us now look at the DS history of our running example for D-permutation
\begin{eqnarray}
	\sigma  &=&  7\, 1\, 9\, 2\, 5\, 4\, 8\, 6\, 10\, 3\, 11\, 12\, 14\, 13\,
	\nonumber\\
	&=&      (1,7,8,6,4,2)\,(3,9,10)\,(5)\,(11)\,(12)\,(13,14) \in \dperm_{14}.
\end{eqnarray}
It consists of the following stages:
\begin{itemize}
\item Stage (a): 2, 4, 6, 8, 10, 12, 14
\item Stage (b): 13, 11, 9, 7, 5, 3, 1
\end{itemize}
Stage~(a) of the  history of $\sigma$ has been drawn in
Figure~\ref{fig.running.example.2.DS.history.a}
and Stage~(b) has been drawn in Figure~\ref{fig.running.example.2.DS.history.b}.
Non-singleton cycles are formed in Stage~(b) when the edges $13\to 14$, $3\to 9$
and $1\to 7$ are inserted.

\begin{figure}[p]
\vspace*{-12mm}
\centering
\begin{tabular}{l}
%
\begin{tikzpicture}[scale=0.6]

\node[right] at (-7,0) {$\laguerre{\emptyset}$};
\emptysigmatwo{0}%

\end{tikzpicture}\\
%
\hline\\[-4mm]
\hline\\[-3mm]
Stage (a): $F = \{2, 4, 6, 8, 10, 12, 14\}$ in increasing order\\[2mm]
\hline\\[-3mm]
\begin{tikzpicture}[scale=0.6]
\node[right] at (-7,0) {$\laguerre{\{2\}}$};
\emptysigmatwo{0}
%
%
\graph [multi, edges = {thick,red}] {(b) -> (a); };
\end{tikzpicture}\\
\hdashline\\[-3mm]
\begin{tikzpicture}[scale=0.6]
\node[right] at (-7,0) {$\laguerre{\{2,4\}}$};
\emptysigmatwo{0}
%
%
	\graph [multi, edges = {thick}] {(b) -> (a); };
	\graph [multi, edges = {thick,red}] {(d) -> (b); };
\end{tikzpicture}\\
\hdashline\\[-3mm]
\begin{tikzpicture}[scale=0.6]
\node[right] at (-7,0) {$\laguerre{\{2,4,6\}}$};
\emptysigmatwo{0}
%
%
\graph [multi, edges = {thick}] {(b) -> (a); (d) -> (b); };
\graph [multi, edges = {thick,red}] {(f) -> (d); };
\end{tikzpicture}\\
\hdashline\\[-3mm]
\begin{tikzpicture}[scale=0.6]
\node[right] at (-7,0) {$\laguerre{\{2,4,6,8\}}$};
\emptysigmatwo{0}
%
%
\graph [multi, edges = {thick}] {(b) -> (a); (d) -> (b); (f) -> (d); };
\graph [multi, edges = {thick,red}] {(h) -> (f); };
\end{tikzpicture}\\
\hdashline\\[-3mm]
\begin{tikzpicture}[scale=0.6]
\node[right] at (-7,0) {$\laguerre{\{2,4,6,8,10\}}$};
\emptysigmatwo{0}
%
%
\graph [multi, edges = {thick}] {(b) -> (a); (d) -> (b); (f) -> (d), (h) -> (f); }; 
\graph [multi, edges = {thick,red}] {(j) -> (c); }; 
\end{tikzpicture}\\
\hdashline\\[-3mm]
\begin{tikzpicture}[scale=0.6]
\node[right] at (-7,0) {$\laguerre{\{2,4,6,8,10,12\}}$};
\emptysigmatwo{0}
%
\node[circle,fill=black,inner sep=1pt,minimum size=5pt] (l) at (11,\ly) {} edge [in=45,out=135, thick, loop above, color=red] node {} ();
\graph [multi, edges = {thick}] {(b) -> (a); (d) -> (b); (f) -> (d), (h) -> (f); 
	(j) -> (c); }; 


%
\end{tikzpicture}\\
\hdashline\\[-3mm]
\begin{tikzpicture}[scale=0.6]
\node[right] at (-7,0) {$\laguerre{\{2,4,6,8,10,12,14\}}$};
\emptysigmatwo{0}
%
\node[circle,fill=black,inner sep=1pt,minimum size=5pt] (l) at (11,\ly) {} edge [in=45,out=135, thick, loop above] node {} ();
\graph [multi, edges = {thick}] {(b) -> (a); (d) -> (b); (f) -> (d), (h) -> (f);
        (j) -> (c); };

\graph [multi, edges = {thick,red}] {(n) -> [bend right] (m)};

\end{tikzpicture}
\end{tabular}
\caption{Stage (a) of the DS history for the D-permutation\\
$\sigma = 7\, 1\, 9\, 2\, 5\, 4\, 8\, 6\, 10\, 3\, 11\, 12\, 14\, 13\,
	 =  (1,7,8,6,4,2)\,(3,9,10)\,(5)\,(11)\,(12)\,(13,14) \in \dperm_{14}.$}
\label{fig.running.example.2.DS.history.a}
\end{figure}

\begin{figure}[p]
\centering
\begin{tabular}{l}
\hline\\[-4mm]
\hline\\
Stage (b): $G = \{13,11, 9, 7, 5, 3,1\}$ in decreasing order\\[2mm]
\hline\\[2mm]
\begin{tikzpicture}[scale=0.7]
\node[right] at (-8,0) {$\laguerre{\{2,4,6,8,10,12,14,13\}}$};
\emptysigmatwo{0}
%
\node[circle,fill=black,inner sep=1pt,minimum size=5pt] (l) at (11,\ly) {} edge [in=45,out=135, thick, loop above] node {} ();
\graph [multi, edges = {thick}] {(b) -> (a); (d) -> (b); (f) -> (d), (h) -> (f);
        (j) -> (c); };

\graph [multi, edges = {thick}] {(n) -> [bend right] (m);};
\graph [multi, edges = {thick,red}] {(m) -> [bend right] (n);};

\end{tikzpicture}\\
\hdashline\\[-3mm]
\begin{tikzpicture}[scale=0.7]
\node[right] at (-8,0) {$\laguerre{\{2,4,6,8,10,12,14,13,11\}}$};
\emptysigmatwo{0}
%
\node[circle,fill=black,inner sep=1pt,minimum size=5pt] (k) at (10,\ky) {} edge [in=45,out=135, thick, loop above, color=red] node {} ();
\node[circle,fill=black,inner sep=1pt,minimum size=5pt] (l) at (11,\ly) {} edge [in=45,out=135, thick, loop above] node {} ();
\graph [multi, edges = {thick}] {(b) -> (a); (d) -> (b); (f) -> (d), (h) -> (f);
        (j) -> (c); };

\graph [multi, edges = {thick}] {(n) -> [bend right] (m);};

\graph [multi, edges = {thick}] {(m) -> [bend right] (n);};

%
\end{tikzpicture}\\
\hdashline\\[-3mm]
\begin{tikzpicture}[scale=0.7]
\node[right] at (-8,0) {$\laguerre{\{2,4,6,8,10,12,14,13,11,9\}}$};
\emptysigmatwo{0}
%
\node[circle,fill=black,inner sep=1pt,minimum size=5pt] (k) at (10,\ky) {} edge [in=45,out=135, thick, loop above] node {} ();
\node[circle,fill=black,inner sep=1pt,minimum size=5pt] (l) at (11,\ly) {} edge [in=45,out=135, thick, loop above] node {} ();
\graph [multi, edges = {thick}] {(b) -> (a); (d) -> (b); (f) -> (d), (h) -> (f);
        (j) -> (c); };

\graph [multi, edges = {thick}] {(n) -> [bend right] (m);};

\graph [multi, edges = {thick}] {(m) -> [bend right] (n);};

\graph [multi, edges = {thick,red}] {(i) -> (j); };
%
\end{tikzpicture}\\
\hdashline\\[-3mm]
\begin{tikzpicture}[scale=0.7]
\node[right] at (-8,0) {$\laguerre{\{2,4,6,8,10,12,14,13,11,9,7\}}$};
\emptysigmatwo{0}
%
\node[circle,fill=black,inner sep=1pt,minimum size=5pt] (k) at (10,\ky) {} edge [in=45,out=135, thick, loop above] node {} ();
\node[circle,fill=black,inner sep=1pt,minimum size=5pt] (l) at (11,\ly) {} edge [in=45,out=135, thick, loop above] node {} ();
\graph [multi, edges = {thick}] {(b) -> (a); (d) -> (b); (f) -> (d), (h) -> (f);
        (j) -> (c); };

\graph [multi, edges = {thick}] {(n) -> [bend right] (m);};

\graph [multi, edges = {thick}] {(m) -> [bend right] (n);};

\graph [multi, edges = {thick}] {(i) -> (j); };
\graph [multi, edges = {thick,red}] {(g) -> (h); };

%
\end{tikzpicture}\\
\hdashline\\[-3mm]
\begin{tikzpicture}[scale=0.7]
\node[right] at (-8,0) {$\laguerre{\{2,4,6,8,10,12,14,13,11,9,7,5\}}$};
\emptysigmatwo{0}
\node[circle,fill=black,inner sep=1pt,minimum size=5pt] (e) at (9,\ey) {} edge [in=45,out=135, thick, loop above,color=red] node {} ();
\node[circle,fill=black,inner sep=1pt,minimum size=5pt] (k) at (10,\ky) {} edge [in=45,out=135, thick, loop above] node {} ();
\node[circle,fill=black,inner sep=1pt,minimum size=5pt] (l) at (11,\ly) {} edge [in=45,out=135, thick, loop above] node {} ();
\graph [multi, edges = {thick}] {(b) -> (a); (d) -> (b); (f) -> (d), (h) -> (f);
        (j) -> (c); };

\graph [multi, edges = {thick}] {(n) -> [bend right] (m);};

\graph [multi, edges = {thick}] {(m) -> [bend right] (n);};

\graph [multi, edges = {thick}] {(i) -> (j); (g) -> (h);};
%
\end{tikzpicture}\\
\hdashline\\[-3mm]
\begin{tikzpicture}[scale=0.7]
\node[right] at (-8,0) {$\laguerre{\{2,4,6,8,10,12,14,13,11,9,7,5,3\}}$};
\emptysigmatwo{0}
\node[circle,fill=black,inner sep=1pt,minimum size=5pt] (e) at (9,\ey) {} edge [in=45,out=135, thick, loop above] node {} ();
\node[circle,fill=black,inner sep=1pt,minimum size=5pt] (k) at (10,\ky) {} edge [in=45,out=135, thick, loop above] node {} ();
\node[circle,fill=black,inner sep=1pt,minimum size=5pt] (l) at (11,\ly) {} edge [in=45,out=135, thick, loop above] node {} ();
\graph [multi, edges = {thick}] {(b) -> (a); (d) -> (b); (f) -> (d), (h) -> (f);
        (j) -> (c); };

\graph [multi, edges = {thick}] {(n) -> [bend right] (m);};

\graph [multi, edges = {thick}] {(m) -> [bend right] (n);};

\graph [multi, edges = {thick}] {(i) -> (j); (g) -> (h);};
\graph [multi, edges = {thick,red}] {(c) -> (i); };
%
\end{tikzpicture}\\
\hdashline\\[-3mm]
\begin{tikzpicture}[scale=0.7]
\node[right] at (-8,0) {$\laguerre{\{2,4,6,8,10,12,14,13,11,9,7,5,3,1\}}$};
\emptysigmatwo{0}
\node[circle,fill=black,inner sep=1pt,minimum size=5pt] (e) at (9,\ey) {} edge [in=45,out=135, thick, loop above] node {} ();
\node[circle,fill=black,inner sep=1pt,minimum size=5pt] (k) at (10,\ky) {} edge [in=45,out=135, thick, loop above] node {} ();
\node[circle,fill=black,inner sep=1pt,minimum size=5pt] (l) at (11,\ly) {} edge [in=45,out=135, thick, loop above] node {} ();
\graph [multi, edges = {thick}] {(b) -> (a); (d) -> (b); (f) -> (d), (h) -> (f);
        (j) -> (c); };

\graph [multi, edges = {thick}] {(n) -> [bend right] (m);};

\graph [multi, edges = {thick}] {(m) -> [bend right] (n);};

\graph [multi, edges = {thick}] {(i) -> (j); (g) -> (h); (c) -> (i);};
\graph [multi, edges = {thick,red}] {(a) -> (g)};
\end{tikzpicture}
\end{tabular}
\caption{Stage (b) of the DS history for the D-permutation\\
$\sigma = 7\, 1\, 9\, 2\, 5\, 4\, 8\, 6\, 10\, 3\, 11\, 12\, 14\, 13\,
	 =  (1,7,8,6,4,2)\,(3,9,10)\,(5)\,(11)\,(12)\,(13,14) \in \dperm_{14}.$}
\label{fig.running.example.2.DS.history.b}
\end{figure}

\subsubsection{Computation of weights}
\label{subsec.computation.dperm}

The steps in this subsection will be similar to that in
Section~\ref{subsec.computation}.
We first assign weights to each index
$i\in[2n]$ of a given D-permutation $\sigma\in \dperm_{2n}$,
and then the weight of a permutation is defined
to be the weights of its indices.
We then use the bijection to transfer the weights to 0-Schr\"oder paths
and then factorise the total weight for each of the steps
which then finally leads us to Theorem~\ref{thm.DS.master}.

The index weights are assigned as follows:
\begin{itemize}
        \item if $i$ is a cycle valley minimum, we set
                $\wt(i) = \lambda\,\sfa_{\ucross(i,\sigma) + \unest(i,\sigma)}$.
                For all other cycle valleys, we set
                $\wt(i) = \sfa_{\ucross(i,\sigma) + \unest(i,\sigma)}$,

        \item if $i$ is a cycle peak, we set
                $\wt(i) = \sfb_{\lcross(i,\sigma),\,\lnest(i,\sigma)}$,

        \item if $i$ is a cycle double fall, we set
                $\wt(i) = \sfc_{\lcross(i,\sigma),\,\lnest(i,\sigma)}$,

        \item if $i$ is a cycle double rise, we set
                $\wt(i) = \sfd_{\ucross(i,\sigma),\,\unest(i,\sigma)}$,

        \item and finally, if $i$ is a fixed point, we set
                \[\wt(i)
		\;=\;
		\begin{cases}
		\lambda\, \sfe_{\psnest(i,\sigma)} \;\;\;
		\text{if $i$ is even}\\
		\lambda\, \sff_{\psnest(i,\sigma)} \;\;\;
		\text{if $i$ is odd}
		\end{cases}\]
\end{itemize}
We set the weight of the D-permutation $\sigma$
to be $\wt(\sigma)  = \prod_{i=1}^{2n} \wt(i)$.
It is clear that our polynomial $\widehat{Q}_n$ defined in \eqref{def.dperm.master}
is simply
$\widehat{Q}_n(\bsfa, \bsfb, \bsfc, \bsfd, \bsfe, \bsff, \lambda)
= \sum_{\sigma\in\dperm_{2n}} \wt(\sigma)$.

We now use the bijection $\sigma\mapsto (\omegahat, \xi)$
to transfer the weights $\wt(i)$ onto the pair $(\omegahat, \xi)$.
As our polynomial $\widehat{Q}$
is almost the same as the polynomial introduced in
\cite[eq.~(3.30)]{Deb-Sokal}
except for the extra factor $\lambda^{\cyc(\sigma)}$
and the index of $\sfa$,
the dependence on cycle peaks, cycle double rises, cycle double falls,
are same as in \cite[eq.~(3.30)]{Deb-Sokal}
but the treatment of cycle valleys and also fixed points is different.
As we use the same bijection used to obtain
the continued fraction (\cite[eq.~(3.31)/(3.32)]{Deb-Sokal})
the computation of weights corresponding to
the variables $\sfb,\sfc,\sfd$ are going to be exactly the same.
The weights corresponding to the fixed points are almost same
except $\sfe_k$ and $\sff_k$ are now replaced with
$\lambda \sfe_k$ and $\lambda \sff_k$, respectively.
Hence, we skip the details for all cycle types and the corresponding steps
in the 0-Schr\"oder path $\omegahat$
except for cycle valleys.

Let us now compute the weights contributed by cycle valleys.
These correspond to steps $s_i$ in $\omegahat$ where $s_i$ is a rise
starting at height $h_{i-1} = 2k$.
Then from Equations~\eqref{def.xi.bis.dperm}/\eqref{eq.lemma.xii.crossing.dperm.a}
we get $\lceil (h_{i-1}-1)/2\rceil = \lceil h_i/2\rceil  = \ucross(i,\sigma)+\unest(i,\sigma)$ ($=k$).
Also among the possible choices of labels $\xi\in [0,k]$
there is exactly one which closes a cycle and the others don't
(Lemma~\ref{lem.cycle.closer.dperm}).
Therefore, we obtain
\be
a_{2k} \;\eqdef\; \sum_{\xi} a_{2k,\xi} \;=\;  (\lambda+k) \sfa_{2k}.
\ee

This completes the proof of Theorem~\ref{thm.DS.master}. \qed

%
%
%
%

\proofof{Theorem~\ref{thm.DS.pqgen}} Specialise Theorem~\ref{thm.DS.master} to
\begin{eqnarray}
   \sfa_{k-1}
   & = &
	p_{+1}^{k-1}  \,\times\, y_1
        \label{eq.proof.weights.sfa}  \\[2mm]
   \sfb_{k-1-\xi,\xi}
   & = &
   p_{-1}^{k-1-\xi} q_{-1}^\xi  \,\times\,
   \begin{cases}
      x_1  & \textrm{if $\xi = 0$}   \\
      u_1  & \textrm{if $1 \le \xi \le k-1$}
   \end{cases}
        \\[2mm]
   \sfc_{k-1-\xi,\xi}
   & = &
   p_{-2}^{k-1-\xi} q_{-2}^\xi  \,\times\,
   \begin{cases}
      x_2  & \textrm{if $\xi = 0$}   \\
      u_2  & \textrm{if $1 \le \xi \le k-1$}
   \end{cases}
        \\[2mm]
   \sfd_{k-1-\xi,\xi}
   & = &
   p_{+2}^{k-1-\xi} q_{+2}^\xi  \,\times\,
   \begin{cases}
      y_2  & \textrm{if $\xi = 0$}   \\
      v_2  & \textrm{if $1 \le \xi \le k-1$}
   \end{cases}
        \\[2mm]
   \sfe_k
   & = &
   \begin{cases}
      \ze  & \textrm{if $k = 0$}   \\[1mm]
      \se^k \we  & \textrm{if $k \ge 1$}
   \end{cases}
        \\[2mm]
   \sff_k
   & = &
   \begin{cases}
      \zo  & \textrm{if $k = 0$}   \\[1mm]
      \so^k \wo  & \textrm{if $k \ge 1$}
   \end{cases}
        \label{eq.proof.weights.sff}
\end{eqnarray}
\qed

\begin{rem}
Notice that the specialisation in the above proof is almost the same
as \cite[eq.~(6.40-45)]{Deb-Sokal}, except for the treatment of $\sfa$. \myendremark
\end{rem}

\proofof{Theorem~\ref{thm.conj.DS}} Specialise Theorem~\ref{thm.DS.pqgen} to 
\be
p_{+1}=p_{-1}=p_{+2}=p_{-2}=q_{-1}=q_{+2}=q_{-2}=\se = \so=1.\label{eq.thm.conj.DS.specialise}\ee\qed


\subsection[Continued fractions using variant record classification:\\ Proof of Theorems~\ref{thm.dperm.prime},~\ref{thm.DS.pqgen.prime},~\ref{thm.DS.master.prime}]{Continued fractions using variant record classification:\\ Proof of Theorems~\ref{thm.dperm.prime},~\ref{thm.DS.pqgen.prime},~\ref{thm.DS.master.prime}}
\label{subsec.dperm.proofs.prime}

We first recall the description of the alternate labels and the inverse bijection
from \cite[Section~6.5]{Deb-Sokal}
in Subsection~\ref{subsec.DSrecall.dperm.prime}.
We then reinterpret the inverse bijection
in Subsection~\ref{subsec.laguerre.dperm.prime}.

\subsubsection{Description of labels and inverse bijection}
\label{subsec.DSrecall.dperm.prime}

{\bf Step 2: Definition of the labels $\bm{\widehat{\xi}_i}$.}

We recall the alternate definition of labels 
introduced in \cite[eq.~(6.46)]{Deb-Sokal}
%
\begin{subeqnarray}
   \widehat{\xi}_i
   & = &
   \begin{cases}
       \#\{2l > 2k \colon\: \sigma(2l) < \sigma(2k) \}
           & \textrm{if $i = \sigma(2k)$}
              \\[2mm]
     \#\{2l-1< 2k-1 \colon\: \sigma(2l-1) > \sigma(2k-1) \}
           & \textrm{if $i= \sigma(2k-1) $}
   \end{cases}
   \qquad
 \slabel{def.xi.hat.a}
       \\[3mm]
   & = &
   \begin{cases}
       \#\{j \colon\:  j < i \le \sinv(i) < \sinv(j) \}
           & \textrm{if $\sinv(i)$ is even}
              \\[2mm]
       \#\{j \colon\:  \sinv(j) < \sinv(i) \le i < j \}
           & \textrm{if $\sinv(i)$ is odd}
   \end{cases}
 \slabel{def.xi.hat.b}
 \label{def.xi.hat}
\end{subeqnarray}
These labels have a simple interpretation,
but now we use the variant nesting statistics~(\ref{def.ucrossnestjk.prime}b,d).
\begin{lem}[{\cite[Lemma~6.7]{Deb-Sokal}}]
   \label{lemma.xii.nesting.alternative}
We have
\be
   \widehat{\xi}_i
   \;=\;
   \begin{cases}
        \lnest'(i,\sigma)
           & \hbox{\rm if $\sinv(i) $ is even (i.e., $s_i$ is a rise)
                       and $\neq i$}
              \\[1mm]
        \unest'(i,\sigma)
           & \hbox{\rm if $\sinv(i) $ is odd (i.e., $s_i$ is a fall)
                       and $\neq i$}
              \\[1mm]
        \psnest(i,\sigma)
           & \hbox{\rm if $\sinv(i) = i$ (i.e., $i$ is a fixed point)}
   \end{cases}
 \label{eq.xii.nestings.version1}
\ee
\end{lem}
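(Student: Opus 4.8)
The plan is to deduce the lemma from the already-established interpretation \eqref{def.xi.bis.dperm} of the \emph{original} labels $\xi_i$, by means of the elementary observation that $\widehat{\xi}_i = \xi_{\sinv(i)}$ for every $i \in [2n]$.

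To prove this identity I would simply compare the value-based formula \eqref{def.xi.hat.a} for $\widehat{\xi}$ with the position-based formula \eqref{def2.xi.bis.dperm} for $\xi$. For a given $i$ there is a unique $m = \sinv(i)$ with $\sigma(m) = i$, and $m$ is either even or odd. If $m = 2k$ is even --- equivalently $s_i$ is a rise --- then \eqref{def.xi.hat.a} reads $\widehat{\xi}_i = \#\{2l > 2k \colon \sigma(2l) < \sigma(2k)\}$, which is exactly $\xi_{2k} = \xi_{\sinv(i)}$ by \eqref{def2.xi.bis.dperm}; the odd case $m = 2k-1$ is handled identically. Since every index $i$ has $\sinv(i)$ either even or odd, this establishes $\widehat{\xi}_i = \xi_{\sinv(i)}$ for all $i$.

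Next I would feed $m = \sinv(i)$ into \eqref{def.xi.bis.dperm}. Since $\sigma(m) = \sigma(\sinv(i)) = i$, the index $m$ is a fixed point of $\sigma$ if and only if $i$ is, and $m$ is even (resp.\ odd) if and only if $s_i$ is a rise (resp.\ fall). Hence \eqref{def.xi.bis.dperm} gives $\widehat{\xi}_i = \xi_{\sinv(i)} = \lnest(\sinv(i),\sigma)$ when $\sinv(i)$ is even and $i \neq \sigma(i)$, $\widehat{\xi}_i = \unest(\sinv(i),\sigma)$ when $\sinv(i)$ is odd and $i \neq \sigma(i)$, and $\widehat{\xi}_i = \psnest(\sinv(i),\sigma) = \psnest(i,\sigma)$ when $i = \sigma(i)$ (using $\sinv(i) = i$ in that last case). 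Finally, the relations \eqref{eq.nestprime}, namely $\lnest'(i,\sigma) = \lnest(\sinv(i),\sigma)$ and $\unest'(i,\sigma) = \unest(\sinv(i),\sigma)$, convert the first two right-hand sides into $\lnest'(i,\sigma)$ and $\unest'(i,\sigma)$ respectively, which is precisely \eqref{eq.xii.nestings.version1}.

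There is no genuine obstacle in this argument: it is a short bookkeeping exercise once the identity $\widehat{\xi}_i = \xi_{\sinv(i)}$ is recorded, and the only points needing a moment's attention are the parity dictionary ($\sinv(i)$ even $\Leftrightarrow$ $s_i$ a rise) and the remark that $\sigma(\sinv(i)) = i$ forces $\sinv(i)$ and $i$ to be fixed points simultaneously. For a reader who prefers a self-contained verification one could instead work directly from the relational form \eqref{def.xi.hat.b} of $\widehat{\xi}_i$, matching the counted quadruples of indices against the defining counts \eqref{def.ucrossnestjk.prime} of $\lnest'$, $\unest'$ and the count \eqref{def.level} of $\psnest$; but routing through \eqref{def.xi.bis.dperm} and \eqref{eq.nestprime} reuses work already in place and is shorter.
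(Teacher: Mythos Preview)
Your argument is correct. The key identity $\widehat{\xi}_i = \xi_{\sinv(i)}$ follows immediately by comparing \eqref{def.xi.hat.a} with \eqref{def2.xi.bis.dperm}, and then \eqref{def.xi.bis.dperm} together with \eqref{eq.nestprime} delivers the lemma exactly as you describe. The parity/fixed-point bookkeeping you flag is the only thing to watch, and you handle it correctly (in particular $\sinv(i)\neq i \iff \sigma(i)\neq i$, so the case split in \eqref{def.xi.bis.dperm} applied at $m=\sinv(i)$ lines up with the case split in \eqref{eq.xii.nestings.version1}).

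Note that the present paper does not give its own proof of this lemma: it is quoted verbatim from \cite[Lemma~6.7]{Deb-Sokal} and used as input. Your route---reducing to the already-established interpretation \eqref{def.xi.bis.dperm} of the unhatted labels via $\widehat{\xi}_i=\xi_{\sinv(i)}$ and then invoking \eqref{eq.nestprime}---is clean and reuses earlier work; the alternative you mention (direct matching of \eqref{def.xi.hat.b} against the definitions \eqref{def.ucrossnestjk.prime} and \eqref{def.level}) is also straightforward and is closer in spirit to how such lemmas are typically verified from scratch. Either is fine.
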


The labels $\widehat{\xi}_i$ satisfy the following inequalities:
\begin{lem}[{\cite[Lemma~6.8]{Deb-Sokal}}]
   \label{lemma.ineqs.labels.xihat}
We have
\begin{subeqnarray}
   & 0 \;\le\; \widehat{\xi}_i  \;\le\; \left\lceil \dfrac{h_i-1}{2} \right\rceil \;=\;
                      \left\lceil\dfrac{h_{i-1}}{2} \right\rceil
       & \textrm{if $\sinv(i) $ is even (i.e., $s_i$ is a rise)}
    \qquad
 \slabel{eq.xihat.ineqs.a}
      \\[3mm]
   & 0 \;\le\; \widehat{\xi}_i  \;\le\; \left\lceil \dfrac{h_i}{2} \right\rceil  \;=\;
                      \left\lceil\dfrac{h_{i-1}-1}{2} \right\rceil
       & \textrm{if $\sinv(i) $ is odd (i.e., $s_i$ is a fall)}
 \slabel{eq.xihat.ineqs.b}
 \label{eq.xihat.ineqs}
\end{subeqnarray}
\end{lem}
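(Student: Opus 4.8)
The plan is to prove Lemma~\ref{lemma.ineqs.labels.xihat} in exact parallel with the unprimed case, where the bounds of Lemma~\ref{lemma.ineqs.labels.dperm} were read off from the crossing interpretation of Lemma~\ref{lemma.xii.crossing.dperm}.

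The easy half comes first. The lower bound $\widehat{\xi}_i \ge 0$ is immediate from~\eqref{def.xi.hat}, since $\widehat{\xi}_i$ is by definition the cardinality of a set. The two ceiling identities, $\lceil (h_i-1)/2\rceil = \lceil h_{i-1}/2\rceil$ in the rise case and $\lceil h_i/2\rceil = \lceil(h_{i-1}-1)/2\rceil$ in the fall case, are elementary: by Step~1 the path $\omega$ is built so that $s_i$ is a rise exactly when $\sinv(i)$ is even, and a rise (resp.\ fall) has $h_i = h_{i-1}+1$ (resp.\ $h_i = h_{i-1}-1$), so each identity is a one-line manipulation of $\lceil\,\cdot\,/2\rceil$. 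The content is therefore the upper bound on $\widehat{\xi}_i$.

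For the upper bound the plan is to establish the primed analogue of Lemma~\ref{lemma.xii.crossing.dperm}: in each of the four cases determined by the parities of $i$ and of $\sinv(i)$, the difference between the relevant ceiling and $\widehat{\xi}_i$ equals $\lcross'(i,\sigma)$ when $s_i$ is a rise and $\ucross'(i,\sigma)$ when $s_i$ is a fall — with the variant statistics as in~\eqref{def.ucrossnestjk.prime} — plus, in the two cases where $i$ and $\sinv(i)$ have the same parity (that is, $i$ a cycle double fall or cycle double rise), the extra term ${\rm I}[\sinv(i)\ne i]$, exactly mirroring parts~(b) and~(c) of Lemma~\ref{lemma.xii.crossing.dperm}. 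Granting this, the upper bounds are immediate, since $\ucross'$, $\lcross'$ and the indicator are manifestly nonnegative. To prove the primed crossing lemma I would combine the value-based reading of the height furnished by the second equality in~\eqref{eq.def.fk.dperm} of Lemma~\ref{lemma.heights.dperm}, namely $f_k = \#\{i\le k : \sinv(i)>k\}$, with the nesting description of the labels from Lemma~\ref{lemma.xii.nesting.alternative}/\eqref{eq.xii.nestings.version1} ($\widehat{\xi}_i = \lnest'(i,\sigma)$ for a rise, $\unest'(i,\sigma)$ for a fall, when $i\ne\sigma(i)$); writing out the height via $f_k$ and subtracting the quadruplets already counted by the nesting label leaves precisely the quadruplets $i<j<k<l$ forming a variant crossing at $i$, together with the single ``diagonal'' configuration $j=\sinv(i)$, $l=i$ that survives and contributes ${\rm I}[\sinv(i)\ne i]$ in the same-parity cases. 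This is the bookkeeping in the proof of Lemma~\ref{lemma.xii.crossing.dperm}, run with the roles of positions and values interchanged — the natural thing, since comparing~\eqref{def.xi.hat.a} with~\eqref{def2.xi.bis.dperm} shows that the primed construction reads the very same even/odd subword inversion tables but indexed by the value $\sigma(2k)$ rather than by the position $2k$, so that $\widehat{\xi}_i = \xi_{\sinv(i)}$; note, however, that one cannot simply quote Lemma~\ref{lemma.ineqs.labels.dperm} through this identity, because the bound for $\widehat{\xi}_i$ involves the height at position $i$, not at position $\sinv(i)$.

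The main obstacle I anticipate is precisely this four-way case analysis: in each parity pattern one must identify exactly which quadruplets $i<j<k<l$ are enumerated by (the relevant ceiling minus the nesting label) and match them against~\eqref{def.ucrossnestjk.prime}, keeping careful track of the off-diagonal/diagonal split that produces the indicator. The computation is routine — it introduces no idea beyond the unprimed proof — but it is the step where sign and parity errors can creep in; once it is done, nonnegativity of the variant crossing statistics finishes the proof, just as nonnegativity of~\eqref{eq.lemma.xii.crossing.dperm.a}--\eqref{eq.lemma.xii.crossing.dperm.d} finished the unprimed one.
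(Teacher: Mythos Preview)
Your proposal is correct and follows essentially the same approach as the paper: the paper derives the inequalities of Lemma~\ref{lemma.ineqs.labels.xihat} precisely by stating the variant crossing interpretation (Lemma~\ref{lemma.xihat.crossing}, i.e.\ \cite[Lemma~6.9]{Deb-Sokal}) --- which is exactly the ``primed analogue of Lemma~\ref{lemma.xii.crossing.dperm}'' you set out to prove --- and then invoking the manifest nonnegativity of $\lcross'$, $\ucross'$ and the indicator. Your four-case breakdown is just a slightly more explicit packaging of the paper's two cases (a) and (b), where the parity of $i$ is absorbed into the indicator; the observation that $\widehat{\xi}_i = \xi_{\sinv(i)}$ but that this identity alone does not transfer the height bound is also apt.
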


The inequalities \eqref{eq.xihat.ineqs}
are proved by providing an interpretation of the 
variant crossing statistics defined in (\ref{def.ucrossnestjk.prime}a,c)
in terms of these labels.
\begin{lem}[{\cite[Lemma~6.9]{Deb-Sokal}}]
   \label{lemma.xihat.crossing}
\nopagebreak
\quad\hfill
\vspace*{-1mm}
\begin{itemize}
   \item[(a)]  If $s_i$ a rise (i.e.~$\sinv(i)$ is even), then
\begin{subeqnarray}
   \left\lceil \dfrac{h_i - 1}{2} \right\rceil - \widehat{\xi}_i
   & = &
   \lcross'(i,\sigma)
       \,+\, {\rm I}[\hbox{\rm $i$ is even and } \sigma(i) \neq i]
     \\[-1mm]
   & = &
   \lcross'(i,\sigma)
       \,+\, {\rm I}[\hbox{\rm $i$ is a cycle double fall}]
   \;.
 \label{eq.lemma.xihat.crossing.1}
\end{subeqnarray}
   \item[(b)]  If $s_i$ a fall (i.e.~$\sinv(i)$ is odd), then
\begin{subeqnarray}
   \left\lceil \dfrac{h_i}{2} \right\rceil - \widehat{\xi}_i
   & = &
   \ucross'(i,\sigma)
       \,+\, {\rm I}[\hbox{\rm $i$ is odd and } \sigma(i) \neq i]
     \\[-1mm]
   & = &
   \ucross'(i,\sigma)
       \,+\, {\rm I}[\hbox{\rm $i$ is a cycle double rise}]
   \;.
 \label{eq.lemma.xihat.crossing.2}
\end{subeqnarray}
\end{itemize}
\end{lem}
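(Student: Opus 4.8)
The plan is to prove Lemma~\ref{lemma.xihat.crossing} directly, by unwinding both sides of the two claimed identities and then running a three-way case analysis on the cycle type of $i$; since parts~(a) and~(b) are mirror images of one another, I would carry out~(a) in full and obtain~(b) by symmetry. Two ingredients are already at hand. First, Lemma~\ref{lemma.xii.nesting.alternative} identifies $\widehat{\xi}_i$ with $\lnest'(i,\sigma)$ when $s_i$ is a rise and $i$ is not a fixed point, with $\unest'(i,\sigma)$ when $s_i$ is a fall and $i$ is not a fixed point, and with $\psnest(i,\sigma)$ when $i$ is a fixed point. Second, Lemma~\ref{lemma.heights.dperm}/\eqref{eq.lemma.heights.dperm} rewrites the height ceilings through $f_i$: for a rise, $\lceil(h_i-1)/2\rceil = f_i - {\rm I}[i\text{ odd}]$, and for a fall, $\lceil h_i/2\rceil = f_i$. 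What remains is a purely combinatorial identity relating $f_i$ to the primed crossing and nesting statistics.

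That identity is the combinatorial heart of the proof. \emph{If $s_i$ is a rise and $i$ is not a fixed point}, then
\[
  \lcross'(i,\sigma) \,+\, \lnest'(i,\sigma) \;=\; \#\{m < i \colon\: \sinv(m) > i\},
\]
and, by the mirror reasoning, \emph{if $s_i$ is a fall and $i$ is not a fixed point}, then $\ucross'(i,\sigma) + \unest'(i,\sigma) = \#\{m < i \colon\: \sigma(m) > i\}$. I would get the first by unwinding~\eqref{def.ucrossnestjk.prime}: realizing the distinguished index $i$ as the value-endpoint of the below-axis arc $\sinv(i)\to i$, one sees that $\lcross'(i,\sigma)$ counts the $m < i$ with $i < \sinv(m) < \sinv(i)$ and $\lnest'(i,\sigma)$ counts the $m < i$ with $\sinv(m) > \sinv(i)$; as $m\ne i$ forbids $\sinv(m) = \sinv(i)$, their sum counts all $m < i$ with $\sinv(m) > i$. (The hypothesis $\sinv(i) > i$, which holds for every rise-index that is not a fixed point, is exactly what guarantees there is a below-axis arc at $i$ for the straddling arcs to interact with --- hence the fixed-point carve-out.) Splitting off the term $m = i$ from the two equivalent expressions for $f_i$ in~\eqref{eq.def.fk.dperm} then turns the right-hand sides into $f_i$ minus an indicator. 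For a fixed point $i$ one has instead $\widehat{\xi}_i = \psnest(i,\sigma) = f_i$ while $\lcross'(i,\sigma) = \ucross'(i,\sigma) = 0$, so there the statement of the lemma degenerates to $0 = 0$.

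Assembling these, part~(a) is immediate. The rise-indices are the cycle valleys ($i$ odd, $\sinv(i)>i$), the cycle double falls ($i$ even, $\sinv(i)>i$), and the even fixed points. For a cycle valley, $\lceil(h_i-1)/2\rceil - \widehat{\xi}_i = (f_i-1) - \lnest'(i,\sigma) = \lcross'(i,\sigma)$ and ${\rm I}[i\in\Cdfall]=0$; for a cycle double fall, $\lceil(h_i-1)/2\rceil - \widehat{\xi}_i = f_i - \lnest'(i,\sigma) = 1 + \lcross'(i,\sigma)$ and ${\rm I}[i\in\Cdfall]=1$ --- so the extra $1$ arises precisely because the height ceiling jumps by $1$ between the odd and even sub-cases; for an even fixed point, both sides vanish. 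Part~(b) is obtained by interchanging throughout ``rise''$\leftrightarrow$``fall'', ``odd''$\leftrightarrow$``even'', $\lnest'\leftrightarrow\unest'$, $\lcross'\leftrightarrow\ucross'$, ``cycle double fall''$\leftrightarrow$``cycle double rise''; in that version the correction ${\rm I}[i\in\Cdrise]$ appears through the indicator ${\rm I}[\sigma(i)>i]$ produced when splitting off $m=i$ from $f_i$. Finally, since the left-hand sides of \eqref{eq.lemma.xihat.crossing.1}--\eqref{eq.lemma.xihat.crossing.2} are manifestly nonnegative, this also yields the label inequalities of Lemma~\ref{lemma.ineqs.labels.xihat}.

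The obstacle I anticipate is entirely one of care rather than of ideas: translating the four quadruple counts of~\eqref{def.ucrossnestjk.prime} into single-index ``straddling-arc'' counts without sign slips, and tracking the two degenerate positions $m = i$ and $m = \sinv(i)$ (resp.\ $m = \sigma(i)$), which is exactly where the indicator corrections and the ``$\le$''-versus-``$<$'' discrepancy in~\eqref{eq.def.fk.dperm} enter. One might hope to shortcut via the already-proven unprimed Lemma~\ref{lemma.xii.crossing.dperm} together with $\unest'(i,\sigma) = \unest(\sinv(i),\sigma)$ and $\lnest'(i,\sigma) = \lnest(\sinv(i),\sigma)$ from~\eqref{eq.nestprime} (which, matched against the definitions, also give $\widehat{\xi}_i = \xi_{\sinv(i)}$), but the unprimed \emph{crossing} statistics do not transfer to $\sinv(i)$ in the same clean way, so that route still forces one to reconcile the height at $i$ with the height at $\sinv(i)$ and is in the end no shorter than the direct argument.
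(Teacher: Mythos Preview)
Your argument is correct. The paper does not actually prove this lemma; it merely recalls it as \cite[Lemma~6.9]{Deb-Sokal} and uses the nonnegativity of the right-hand sides to conclude that the label bounds \eqref{eq.xihat.ineqs} hold. Your direct proof --- identifying $\widehat{\xi}_i$ via Lemma~\ref{lemma.xii.nesting.alternative}, rewriting the ceiling in terms of $f_i$ via Lemma~\ref{lemma.heights.dperm}, and then checking that $\lcross'(i,\sigma)+\lnest'(i,\sigma)=\#\{k>i:\sigma(k)<i\}$ (and its mirror) by pairing up the quadruple counts in~\eqref{def.ucrossnestjk.prime} --- is exactly the natural route and matches in spirit what the cited reference does. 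One minor wording point: in your key identity for the rise case you phrased the right-hand side as $\#\{m<i:\sinv(m)>i\}$; this is indeed the same set as $\#\{k>i:\sigma(k)<i\}$ under $m=\sigma(k)$, and the latter form makes the link to $f_i$ (via the second expression in~\eqref{eq.def.fk.dperm}) most transparent.
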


Since the quantities 
\eqref{eq.lemma.xihat.crossing.1}, \eqref{eq.lemma.xihat.crossing.2}
are manifestly nonnegative, 
the inequalities \eqref{eq.xihat.ineqs} are
satisfied.

{\bf Step 3: Proof of bijection.}
The proof is similar to that presented in
Step 3 Section~\ref{subsec.DSrecall.dperm},
but using a value-based rather than position-based notion
of inversion table.
Recall that if $S = \{s_1 < s_2 < \ldots < s_k \}$
is a totally ordered set of cardinality $k$,
and $\bm{x} = (x_1,\ldots,x_k)$ is a permutation of $S$,
then the (left-to-right) (position-based) inversion table
corresponding to $\bm{x}$
is the sequence $\bm{p} = (p_1,\ldots,p_k)$ of nonnegative integers
defined by $p_\alpha = \#\{\beta < \alpha \colon\: x_\beta > x_\alpha \}$.
We now define the (left-to-right) {\em value-based}\/ inversion table
$\bm{p}'$ by $p'_{x_i} = p_i$;
note that $\bm{p}'$ is a map from $S$ to $\{0,\ldots,k-1\}$,
such that $p'_{x_i}$ is the number of entries to the left of $x_i$
(in the word $\bm{x}$) that are larger than $x_i$.
In particular, $0 \le p'_{s_i} \le k-i$.
Given the value-based inversion table $\bm{p}'$,
we can reconstruct the sequence $\bm{x}$
by working from largest to smallest value,
as follows \cite[section~5.1.1]{Knuth_98}:
We start from an empty sequence, and insert $s_k$.
Then we insert $s_{k-1}$ so that the resulting word has $p'_{s_{k-1}}$
entries to its left.
Next we insert $s_{k-2}$ so that the resulting word has $p'_{s_{k-2}}$
entries to its left, and so on.
[The right-to-left value-based inversion table $\bm{q}'$
 is defined analogously, and the reconstruction proceeds from
 smallest to largest.]

We now recall the definitions
\begin{subeqnarray}
F & = & \{2,4,\ldots,2n\} \; = \;  \hbox{even positions} \\
F' & = & \{i \colon \sinv(i) \text{ is even} \} \; = \; \{\sigma(2),\sigma(4),\ldots, \sigma(2n)\}\\
G & = & \{1,3,\ldots, 2n-1\} \; = \; \hbox{odd positions}  \\
G' & = & \{i \colon \sinv(i) \text{ is odd} \} \; = \; \{\sigma(1),\sigma(3),\ldots, \sigma(2n-1)\}
\label{eq.steps2classification.dperm.prime}
\end{subeqnarray}
Note that $F'$ (resp. $G'$) are the positions of the rises (resp. falls)
in the almost-Dyck path $\omega$.

We can now describe the map
$(\omega,\widehat{\xi})\mapsto \sigma$.
Given the almost-Dyck path $\omega$,
we can immediately reconstruct the sets $F, F',G, G'$.
We now use the labels $\widehat{\xi}$ to reconstruct the maps
$\sigma \restrict F \colon\: F \to F'$ and
$\sigma \restrict G \colon\: G \to G'$ as follows:
The even subword $\sigma(2)\sigma(4)\cdots\sigma(2n)$ is a listing of $F'$
whose right-to-left value-based inversion table
is given by $q'_i = \widehat{\xi}_i$ for all $i \in F'$;
this is the content of \eqref{def.xi.hat.a}.
Similarly, the odd subword $\sigma(1)\sigma(3)\cdots\sigma(2n-1)$
is a listing of $G'$
whose left-to-right value-based inversion table
is given by $p'_i = \widehat{\xi}_i$ for all $i \in G'$;
this again is the content of \eqref{def.xi.hat.a}.

Additionally, we state a lemma which gives an alternate way of
constructing a value-based inversion table for D-permutation.
\begin{sloppy}
\begin{lem}
Let $\sigma\in \dperm_{2n}$ and let $\left(\omega,\widehat{\xi}\right)$
be its associated almost-Dyck path with an assignment of labels determined by
\eqref{def.xi.hat}.
Also let $G' = \{x_1<\ldots< x_n\}$ and let $F' =\{y_1<\ldots< y_n\}$.
Then
\begin{itemize}
	\item[(a)] For any index $j$ ($1\leq j\leq n$), $\sinv(x_j)$ is the
                $(\widehat{\xi}_{x_j}+1)$th smallest element
		of \mbox{$G \setminus \{\sinv(x_1), \ldots, \sinv(x_{j-1})\}$}.
        \item[(b)] For any index $j$ ($1\leq j\leq n$), 
		$\sinv(y_j)$ is the
                $(\widehat{\xi}_{y_j}+1)$th largest element
		of \mbox{$F \setminus \{\sinv(y_n),\ldots, \sinv(y_{j+1})\}$}.
\end{itemize}
\label{lem.valuebased.alternate}
\end{lem}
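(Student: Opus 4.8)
\textbf{Proof proposal for Lemma~\ref{lem.valuebased.alternate}.}

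The plan is to reduce the statement to the definition of the value-based inversion table together with the description of the inverse bijection given just above in Step~3 of Section~\ref{subsec.DSrecall.dperm.prime}. I will prove part~(a); part~(b) is entirely analogous with ``smallest'' and ``largest'' interchanged and ``left-to-right'' replaced by ``right-to-left'', so I would omit its proof (or leave it to the reader).

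First I would set up notation carefully. Write $G = \{1,3,\ldots,2n-1\} = \{g_1 < g_2 < \ldots < g_n\}$ (so $g_\alpha = 2\alpha-1$) and $G' = \{x_1 < \ldots < x_n\}$. The odd subword $w = \sigma(1)\,\sigma(3)\cdots\sigma(2n-1) = \sigma(g_1)\,\sigma(g_2)\cdots\sigma(g_n)$ is a permutation of $G'$, and by the bijection description its left-to-right value-based inversion table is $\bm{p}'$ with $p'_i = \widehat{\xi}_i$ for all $i \in G'$; that is, $\widehat{\xi}_{x_j}$ is the number of entries to the left of $x_j$ in the word $w$ that are larger than $x_j$. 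Now I would invoke the reconstruction procedure for value-based inversion tables recalled in the excerpt (following \cite[section~5.1.1]{Knuth_98}): one recovers $w$ by inserting the values of $G'$ from largest to smallest, placing $x_j$ so that exactly $\widehat{\xi}_{x_j}$ already-placed (hence larger) entries lie to its left.

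The key step is to translate ``position of $x_j$ in the word $w$'' back into ``which odd index maps to $x_j$''. If $x_j$ occupies position $m$ in $w$ (counting from the left, starting at $1$), then by definition of $w$ we have $\sigma(g_m) = x_j$, i.e.\ $\sinv(x_j) = g_m = 2m-1$, the $m$-th smallest element of $G$. So I must show: when $x_j$ is inserted in the reconstruction, the number of entries strictly to its left equals $\#\{\beta < j : \sinv(x_\beta) < \sinv(x_j)\}$ minus... — more precisely, I would argue directly that $m = \#\{g \in G : g \le \sinv(x_j)\}$ on one hand, and on the other hand that among the larger values $x_{j+1},\ldots,x_n$, the ones placed to the left of $x_j$ are exactly those $x_\beta$ (with $\beta > j$) having $\sinv(x_\beta) < \sinv(x_j)$, while the ones to the right are those with $\sinv(x_\beta) > \sinv(x_j)$. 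Combining these, $\widehat{\xi}_{x_j} = \#\{\beta > j : \sinv(x_\beta) < \sinv(x_j)\} = m - 1 - \#\{\beta < j : \sinv(x_\beta) < \sinv(x_j)\}$, and hence $\sinv(x_j)$ is the $(\widehat{\xi}_{x_j}+1)$-th element of $G$ once we delete the $j-1$ elements $\sinv(x_1),\ldots,\sinv(x_{j-1})$ that are smaller than it — which is exactly the claim. Alternatively, and perhaps more cleanly, I would bypass the geometric reconstruction picture entirely and use formula~\eqref{def.xi.hat.b}: for $\sinv(i)$ odd (the case $i \in G'$), $\widehat{\xi}_i = \#\{j : \sinv(j) < \sinv(i) \le i < j\}$; reindexing $j = x_\beta$ and using that $\sinv(x_\beta) < \sinv(x_j) \le x_j$ forces (for a D-permutation, by the parity constraints) $x_j < x_\beta$, one gets $\widehat{\xi}_{x_j} = \#\{\beta : \sinv(x_\beta) < \sinv(x_j),\ x_j < x_\beta\} = \#\{\beta > j : \sinv(x_\beta) < \sinv(x_j)\}$, and the conclusion follows immediately by counting.

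I expect the main obstacle to be purely bookkeeping: keeping straight the three different orderings in play — the natural order on $G$, the natural order on $G'$, and the positional order in the word $w$ — and making sure the parity/D-permutation constraints (which guarantee, e.g., that $\sinv$ of an element of $G'$ lies in $G$, and the sign of $x_j - x_\beta$ is determined by that of $\sinv(x_j) - \sinv(x_\beta)$ in the relevant regime) are correctly applied so that the ``value-based'' and ``position-based'' counts line up. Once the translation dictionary is fixed, the verification is a short counting argument, and I would write it out for part~(a) only, noting that part~(b) follows by the obvious symmetry (even positions, right-to-left inversion tables, largest-to-smallest reconstruction).
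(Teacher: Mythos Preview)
Your proposal is correct and matches the paper's approach: the paper simply says ``This lemma can be proved using \eqref{def.xi.hat.a} and we omit the details,'' and your argument is precisely an unpacking of that pointer. Your second, cleaner route (directly from the formula for $\widehat{\xi}_i$, followed by the one-line count that the elements of $G$ smaller than $\sinv(x_j)$ and not among $\sinv(x_1),\ldots,\sinv(x_{j-1})$ are exactly those counted by $\widehat{\xi}_{x_j}$) is exactly what the paper has in mind; the reconstruction-picture detour you sketch first is unnecessary but not wrong.
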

This lemma can be proved using \eqref{def.xi.hat.a}
and we omit the details.
\end{sloppy}

\begin{rem}
\label{rem.DS.FZ.prime}
For this bijection, a comment similar to Remark~\ref{rem.DS.FZ} can be made.
As mentioned earlier in Remark~\ref{rem.original.FZ}, 
Foata and Zeilberger used value-based inversion tables for their presentation of the bijection.
We think that it is possible to write a variant of the Foata--Zeilberger bijection
in which fixed points are treated separately and for which the definition of the Motzkin path is 
identical to the one in Step~1 of Section~\ref{subsec.FZrecall.perm} 
but the description of the labels will be different.
We expect that the inversion tables used in the reverse bijection to be value-based.
We refrain from writing out the details.
With this, the bijection in this section can be seen as almost a special case of this 
variant Foata--Zeilberger bijection except for the treatment of fixed points.
\myendremark
\end{rem}

\subsubsection{Running example 2}
\label{subsubsec.DSbij.eg.2.prime}

Now let us take our second running example
\begin{eqnarray}
	\sigma &=& 7\, 1\, 9\, 2\, 5\, 4\, 8\, 6\, 10\, 3\, 11\, 12\, 14\, 13\, \nonumber\\
	  &=& (1,7,8,6,4,2)\,(3,9,10)\,(5)\,(11)\,(12)\,(13,14) \in \dperm_{14}
\end{eqnarray}
which was depicted in Figure~\ref{fig.pictorial.2}.
The almost-Dyck path is the same as the one in Figure~\ref{fig.almostdyck.running.example.2}.
The sets $F',G'$ were also already recorded in~\eqref{eq.DSbij.eg.2.FGprime}.

The labels $\xi_i$ in \eqref{eq.labels.DS.eg.2.prime} are now assigned to $\sigma(i)$: 
\begin{subeqnarray}
\begin{array}{r}
2i\in F  \\[3mm]
	F' = \{\sigma(2i)\; | \: i\in [7]\}   \\[3mm]
	\text{Right-to-left inversion table:} \; \widehat{\xi}_{\sigma(2i)}   \\
\end{array}
&&\hspace*{-6mm}
\begin{pmatrix}
	\; 2 &  4 & 6 & 8 & 10  & 12 & 14\; \\[3mm]
	\; 1 & 2 & 4 & 6 & 3 & 12 & 13 \; \\[3mm]
	\; 0 & 0 & 1 & 1 & 0 & 0 & 0  \; 
\end{pmatrix} 
	\\[5mm]
\begin{array}{r}
2i-1\in G  \\[3mm]
	G' = \{\sigma(2i-1)\; | \: i\in [7]\}   \\[3mm]
	\text{Left-to-right inversion table:} \; \widehat{\xi}_{\sigma(2i-1)}   \\
\end{array}
	&&\hspace*{-6mm}
\begin{pmatrix}
	\; 1 & 3 & 5 & 7 &  9 & 11 & 13 \; \\[3mm]
	\; 7 & 9 & 5 & 8 & 10 & 11 & 14 \; \\[3mm]
	\; 0 & 0 & 2 & 1 & 0 & 0 & 0
\end{pmatrix}
\label{eq.labels.DS.eg.2.prime}
\end{subeqnarray}
%
%
%
%
%
%
\bigskip

\subsubsection{Combinatorial interpretation using Laguerre digraphs}
\label{subsec.laguerre.dperm.prime}

The construction here will again be similar to those 
in Sections~\ref{subsec.laguerre} and~\ref{subsec.laguerre.dperm}.
We will only include the necessary details 
and state the necessary lemmas,
and will omit most of the proofs.

We begin with an almost-Dyck path $\omega$
and an assignment of labels $\widehat{\xi}$ satisfying
\eqref{eq.xihat.ineqs}.
The inverse bijection in Section~\ref{subsec.DSrecall.dperm.prime}
Step~3, gives us a D-permutation $\sigma$.
We will again break this process into several intermediate steps
and provide a reinterpretation using Laguerre digraphs.

This time however, we will use a different convention 
for denoting Laguerre digraphs
than the one used in 
Sections~\ref{subsec.laguerre} and~\ref{subsec.laguerre.dperm}.
Let $\sigma\in\Sym_n$ be a permutation on $[n]$.
For $S\subseteq [n]$,
we let $\laguerrep{S}$ denote the subgraph of $L^{\sigma}$,
containing the same set of vertices $[n]$, 
but only containing the edges $\sinv(i)\to i$, whenever $i\in S$
(we are allowed to have $\sinv(i)\not\in S$). 
Thus, $\laguerrep{[n]} = L^{\sigma}$, and
$\laguerrep{\emptyset} = \left. L^{\sigma}\right|_{\emptyset} $
is the digraph containing $n$ vertices and no edges. 
Whenever the permutation $\sigma$ is understood,
we shall drop the superscript and denote it as $\laguerrep{S}$.

Now, let $\sigma\in\dperm_{2n}$ be a D-permutation.
Similar to the construction in Section~\ref{subsec.DSrecall.dperm},
recall that the inverse bijection 
begins by obtaining the sets $F,G, F', G'$ 
(the sets $F,G$ are fixed for any given $n$
and the sets $F', G'$ are obtained from the almost-Dyck path $\omega$).
We then construct $\sigma\restrict F\colon \: F \to F'$
and $\sigma\restrict G\colon \: G\to G'$ separately
but by using the labels $\widehat{\xi}\restrict F'$ 
and $\widehat{\xi}\restrict G'$ respectively.

In this interpretation, 
we start with the digraph $\laguerrep{\emptyset}$ and
then go through the set $[2n]$.
This time, we first go through the elements of $G'$ in increasing order (stage~(a))  
and then through the elements of $F'$ in decreasing order (stage~(b)).
We call this the {\em variant DS order} on the set $[2n]$.

Here however, the history that we consider is different
from the one in Section~\ref{subsec.laguerre.dperm}.
Let $u_1,\ldots, u_{2n}$ be a rewriting of $[2n]$ as per the variant DS order.
We now consider the ``variant DS history''
$\laguerrep{\emptyset}\subset \laguerrep{\{u_1\}} \subset \laguerrep{\{u_1, u_2\}}
\subset \ldots \subset \laguerrep{\{u_1,\ldots, u_{2n}\}} = L$.
Thus, at step $u$ as per the variant DS order,
we use the (value-based) inversion tables and Lemma~\ref{lem.valuebased.alternate}
to construct the edge $\sinv(u)\to u$.
Similar to the previous histories, 
at each step we insert a new edge into the digraph,
and at the end of this process,
the resulting digraph obtained is the permutation $\sigma$ in cycle notation. 

Let us now look at the intermediate Laguerre digraphs obtained
during stages~(a) and~(b).

\medskip

{\bf Stage (a): Going through $\boldsymbol{G'}$:}\\
From \eqref{eq.steps2classification.dperm.prime},
we know that 
$G = \Cdrise(\sigma) \, \cup \, \Oddfix(\sigma) \, \cup \, \Cval(\sigma)$
and
$G' = \Cdrise(\sigma) \, \cup \, \Oddfix(\sigma) \, \cup \, \Cpeak(\sigma)$
where $\sigma$ is the resulting D-permutation obtained at the end
of the inverse bijection.

The connected components
at the end of this stage can be described as follows:
\begin{lem} The Laguerre digraph $\laguerrep{G'}$
consists of the following connected components:
\begin{itemize} 
        \item loops on vertices $u\in \Oddfix$,


        \item directed paths with at least two vertices,
                in which the initial vertex of the path
                is a cycle valley in $\sigma$
                (i.e. contained in the set $F'\,\cap\, G$)
                and the final vertex is a cycle peak in $\sigma$
                (i.e. contained in the set $F\,\cap\, G'$),
		and the intermediate vertices (if any)
		are cycle double rises 
		(which belong to the set $G\,\cap\, G'$).


        \item  isolated vertices at $u\in F\,\cap\, F' = \Cdfall(\sigma)\, \cup \, \Evenfix(\sigma)$.
\end{itemize}
Furthermore, it contains no directed cycles.
\label{lem.afterGp.dperm.prime}
\end{lem}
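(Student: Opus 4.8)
The plan is to prove this exactly as Lemma~\ref{lem.afterHG} and Lemma~\ref{lem.afterF.dperm} were proved, treating $\laguerrep{G'}$ as a static object. By definition its edge set is precisely $\{\sinv(i)\to i \colon i\in G'\}$, and as a subgraph of the Laguerre digraph $L^\sigma$ it has in- and out-degree at most $1$ at every vertex; hence each connected component is a directed path or a directed cycle (a loop being a one-vertex cycle). Everything then reduces to two observations: a monotonicity fact about the inserted edges, and a bookkeeping of which vertices carry an in-edge and which carry an out-edge.

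First I would record the monotonicity. For $u\in G'$ the index $\sinv(u)$ is odd; writing $\sinv(u)=2k-1$, the defining property of a D-permutation gives $u=\sigma(2k-1)\ge 2k-1=\sinv(u)$, with equality exactly when $u$ is a fixed point (necessarily an odd one). Thus every non-loop edge of $\laguerrep{G'}$ is strictly increasing. This at once forces that the only directed cycles are loops, that these loops sit exactly on the odd fixed points (since $\Oddfix\subseteq G'$), and that every remaining component is an increasing directed path; in particular it establishes the final clause of the lemma.

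Next I would carry out the degree bookkeeping, using \eqref{eq.steps2classification.dperm.prime} and the cycle classification \eqref{eq.steps2cycleclassification.dperm}. A vertex $v$ is the head of an edge of $\laguerrep{G'}$ iff $v\in G'$, and it is the tail of an edge iff $\sigma(v)\in G'$, i.e.\ iff $v\in G$ (because $\sigma$ maps $G$ bijectively onto $G'$). Since $[2n]=F\sqcup G=F'\sqcup G'$, this yields: the isolated vertices are $[2n]\setminus(G\cup G')=F\cap F'=\Cdfall\cup\Evenfix$; the vertices carrying both an in- and an out-edge are $G\cap G'=\Cdrise\cup\Oddfix$, of which the odd fixed points give the loops and the cycle double rises are precisely the interior vertices of the non-trivial paths; a path source (out-edge, no in-edge) lies in $G\setminus G'=F'\cap G=\Cval$; and a path sink (in-edge, no out-edge) lies in $G'\setminus G=F\cap G'=\Cpeak$. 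Collecting these facts gives the asserted list of components.

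I do not anticipate a real obstacle, since this is the same argument as in the earlier sections. The one point requiring care is the direction convention: unlike in Sections~\ref{subsec.laguerre} and~\ref{subsec.laguerre.dperm}, the edge inserted when processing $u$ points \emph{into} $u$ (it is $\sinv(u)\to u$, not $u\to\sigma(u)$), so ``has an in-edge'' and ``has an out-edge'' swap roles relative to those arguments, and accordingly the source of a non-trivial path here is a cycle valley rather than a cycle peak. Once that bookkeeping is kept straight, the remaining steps are the routine set identities among $F,F',G,G'$ and the cycle types.
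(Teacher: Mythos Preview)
Your proposal is correct and aligns with the paper's intended approach: the paper omits the proof of this lemma, stating only that ``the construction here will again be similar to those in Sections~\ref{subsec.laguerre} and~\ref{subsec.laguerre.dperm}'' and that it will ``omit most of the proofs.'' Your argument is precisely the analogue of the proof of Lemma~\ref{lem.afterHG} (and the implicit proof of Lemma~\ref{lem.afterF.dperm}) adapted to the variant convention $\laguerrep{\,\cdot\,}$, with the monotonicity of non-loop edges coming from the D-permutation inequality $\sigma(2k-1)\ge 2k-1$ and the endpoint identification from the set identities \eqref{eq.steps2cycleclassification.dperm}; the one subtlety you correctly flag---that in-/out-edge roles are swapped relative to the earlier sections, so path sources are now cycle valleys---is exactly the point of difference.
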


{\bf Stage (b): Going through $\boldsymbol{F'}$:}\\
We now go through the elements of $F'$ in decreasing order.
From \eqref{eq.steps2classification.dperm.prime},
we know that
$F = \Cdfall(\sigma) \, \cup \, \Evenfix(\sigma) \, \cup \, \Cpeak(\sigma)$
and
$F' = \Cdfall(\sigma) \, \cup \, \Evenfix(\sigma) \, \cup \, \Cval(\sigma)$
where $\sigma$ is the resulting D-permutation obtained at the end
of the inverse bijection.
We let $F' = \{y_1<\ldots<y_n\}$.

This time, we describe the
the {\em initial vertices} of paths with at least two vertices 
(and not the final vertices):

\begin{sloppy}
\begin{lem} Let $u$ be the initial vertex of a path
with at least two vertices in $\laguerrep{G' \,\cup\, \{y_n,\ldots,y_{j+1}\}}$
for some index $j$ ($1\leq j \leq n$). Then $u\in \Cval$.
\label{lem.initialvertexFp.dperm.prime}
\end{lem}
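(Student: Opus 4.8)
\textbf{Proof plan for Lemma~\ref{lem.initialvertexFp.dperm.prime}.}
The plan is to mirror, with the obvious dualizations, the argument used for Lemma~\ref{lem.lastvertexF} in the permutation case and Lemma~\ref{lem.lastvertexG.dperm} in the D-permutation case, but now tracking \emph{initial} vertices of paths rather than final ones, since in the variant DS history we insert edges of the form $\sinv(u)\to u$ (so at each step we supply a vertex with its \emph{in}-edge, not its out-edge). First I would note that $u$ being the initial vertex of a path in $\laguerrep{G'\,\cup\,\{y_n,\ldots,y_{j+1}\}}$ with at least two vertices means $u$ has an out-neighbour in this digraph but no in-neighbour. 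Since all vertices in $G'$ and in $\{y_n,\ldots,y_{j+1}\}\subseteq F'$ have already been assigned in-edges (these are exactly the vertices $v$ for which the edge $\sinv(v)\to v$ has been inserted), the absence of an in-edge at $u$ forces $u\notin G'\cup\{y_n,\ldots,y_{j+1}\}$. Combined with the fact that (from~(\ref{eq.steps2classification.dperm.prime})) $F = \Cdfall(\sigma)\cup\Evenfix(\sigma)\cup\Cpeak(\sigma)$ and $F' = \Cdfall(\sigma)\cup\Evenfix(\sigma)\cup\Cval(\sigma)$, and that $u$ has an out-edge hence $\sinv(u)$ has already been processed (so $u\in G'\cup\{y_n,\ldots,y_{j+1}\}$ is ruled out but $u\in F'$ is still possible via $u = y_i$ for some $i\le j$), I would deduce $u\in F'$, and then argue $u\notin\Cdfall\cup\Evenfix$.

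The key step ruling out the double-fall and even-fixed-point cases is where a contradiction argument analogous to the proof of Lemma~\ref{lem.lastvertexG.dperm} (or Lemma~\ref{lem.lastvertexF}) is needed. Suppose $u$ is a cycle double fall, i.e.\ $\sinv(u) > u > \sigma(u)$ (in a D-permutation this also means $u$ is even and $\sinv(u)$ is even). Since $u$ is the initial vertex of a path with at least two vertices, $u$ has an out-neighbour, namely $\sigma(u)$, so the edge $\sinv(\sigma(u))\to\sigma(u)$, i.e.\ $u\to\sigma(u)$, has been inserted; equivalently, $\sigma(u)\in F'$ was processed before $u$ in stage~(b). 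Since $\sigma(u) < u$ and $\sigma(u)\in F'$, we would have the smaller vertex $\sigma(u)$ processed before the larger vertex $u$ in the decreasing enumeration of $F'$ — a contradiction. The even-fixed-point case $u\in\Evenfix$ is impossible because then $u$ would be a loop, hence not the initial vertex of a path with at least two vertices (a loop is its own connected component with one vertex). Therefore $u\in F'\setminus(\Cdfall\cup\Evenfix) = \Cval$, which is $F'\cap G$.

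A minor subtlety I would be careful about: I must check that the descending-order argument genuinely applies, i.e.\ that $\sigma(u)$ (the out-neighbour of $u$) is indeed an element of $F'$ and not merely of $F'\cup G'$. This follows because $u\in F'$ means $\sinv(u)$ is even, and for a double fall $u\to\sigma(u)$ with $u$ even we have $\sigma(u) < u$; in a D-permutation $\sigma(u) < u$ forces $u$ even (consistent) and, crucially, $\sinv(\sigma(u)) = u$ is even, so $\sigma(u)\in F'$. Thus the out-neighbour of $u$ really was processed during stage~(b) among $F'$-vertices, and the ``smaller before larger'' contradiction is legitimate. I expect this bookkeeping about parities and about which stage assigned which edge to be the only real place where care is needed; the structural skeleton of the proof is identical to the earlier lemmas and should be stated briefly, with the details of Lemma~\ref{lem.afterGp.dperm.prime} (already established) and the variant DS order doing most of the work.
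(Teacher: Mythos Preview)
Your proposal is correct and follows exactly the approach the paper intends: the paper explicitly omits the proof of this lemma, stating that the construction ``will again be similar to those in Sections~\ref{subsec.laguerre} and~\ref{subsec.laguerre.dperm}'' and that it ``will only include the necessary details and state the necessary lemmas, and will omit most of the proofs.'' Your dualization of the proof of Lemma~\ref{lem.lastvertexF} (tracking initial vertices and in-edges rather than final vertices and out-edges, since the variant DS history inserts $\sinv(u)\to u$) is precisely the expected argument, and your careful check that $\sigma(u)\in F'$ (via the parity of $u=\sinv(\sigma(u))$) correctly handles the one subtlety.
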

\end{sloppy}

%

Our definition of cycle closers here is different.
Let $u_1,\ldots, u_{2n}$ be the vertices $[2n]$ arranged according to the 
variant DS order.
We say that $u_j\in [2n]$ is a {\em cycle closer}
if the edge $\sinv(u_j)\to u_j$
is introduced in $\laguerrep{\{u_1,\ldots, u_{j-1}\}}$
as an edge between the two ends of a path
turning the path to a cycle.
The following lemma classifies all cycle closers.

\begin{lem}(Classifying cycle closers) Given a D-permutation $\sigma$,
        an element $u\in [2n]$ is a cycle closer
        if and only if it is a cycle valley minimum,
	i.e.,
	it is the smallest element in its cycle.
\label{lem.classifyingCycleG.dperm.variant}
\end{lem}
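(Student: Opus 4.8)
The plan is to mirror the proof of Lemma~\ref{lem.classifyingCycleF} (and its D-permutation analogue Lemma~\ref{lem.classifyingCycleG.dperm}), but adapted to the variant DS history, which processes $G'$ in increasing order and then $F'$ in decreasing order, inserting edges $\sinv(u)\to u$ rather than $u\to\sigma(u)$. First I would observe that since $\Cdfall$ and $\Evenfix$ vertices are isolated after stage~(a) (by Lemma~\ref{lem.afterGp.dperm.prime}), and since a cycle closer must be the vertex $u_j$ whose incoming edge $\sinv(u_j)\to u_j$ closes a path into a cycle, the vertex $u_j$ must already be the \emph{initial} vertex of a path with at least two vertices at that moment of the variant DS history; hence by Lemma~\ref{lem.initialvertexFp.dperm.prime} we get $u_j\in\Cval = F'\cap G$. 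This rules out every index type except cycle valley.

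Next I would argue that among the cycle valleys in the resulting cycle, $u_j$ is the smallest. Here the key is that the elements of $F'$ are processed in \emph{decreasing} order during stage~(b). Any other cycle valley $v\neq u_j$ lying in the same cycle must already have had its incoming edge $\sinv(v)\to v$ inserted before step $u_j$ (because $u_j$'s incoming edge is the one that completes the cycle, so all the other edges of the cycle — in particular the edge into $v$ — are present beforehand). Since $\Cval\subseteq F'$ and we insert incoming edges for $F'$ in descending order, having been processed earlier means $v > u_j$. Therefore $u_j$ is the smallest cycle valley, hence the smallest element of its cycle (the minimum of any non-singleton cycle is necessarily a cycle valley).

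For the converse, I would show that every cycle valley minimum is in fact a cycle closer in the variant DS history. Let $u$ be the smallest element of a non-singleton cycle $C$; then $u\in\Cval\subseteq F'$, so $u$ is processed in stage~(b). By the argument above, every other cycle valley of $C$ is larger than $u$, and every cycle double rise, cycle peak or (there are none here, but in the general setting) fixed point of $C$ either lies in $G'$ and was processed in stage~(a), or lies in $F'$ and is larger than $u$ (being not the minimum). Thus at the moment step $u$ is reached, all edges of $C$ other than $\sinv(u)\to u$ have been inserted, so $C$ currently appears as a path from $u$ around to $\sinv(u)$; inserting $\sinv(u)\to u$ closes it into a cycle, making $u$ a cycle closer. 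Combining the two directions gives the claimed classification.

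I expect the main obstacle to be the bookkeeping in the converse direction: verifying cleanly that \emph{all} of the other vertices of the cycle $C$ have already received their distinguishing incoming edge before step $u$. This requires carefully tracking, for each cycle type occurring in a D-permutation cycle (cycle peaks, cycle double rises, cycle double falls, and — in the general non-derangement setting — fixed points), whether the vertex sits in $F'$ or $G'$, and then using the processing order (increasing on $G'$, decreasing on $F'$) together with the minimality of $u$. The facts in \eqref{eq.steps2classification.dperm.prime}--\eqref{eq.steps2cycleclassification.dperm} and Lemmas~\ref{lem.afterGp.dperm.prime}--\ref{lem.initialvertexFp.dperm.prime} should make this routine, so the lemma follows by essentially the same reasoning as Lemma~\ref{lem.classifyingCycleG.dperm}, with ``final vertex'' replaced by ``initial vertex'' and the roles of $F'$ and $G'$ adjusted to the variant DS order.
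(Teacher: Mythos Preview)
Your proposal is correct and follows precisely the approach the paper intends: the paper actually omits the proof of this lemma (it says in Section~\ref{subsec.laguerre.dperm.prime} that it will ``state the necessary lemmas and will omit most of the proofs''), relying on the analogy with Lemmas~\ref{lem.classifyingCycleF} and~\ref{lem.classifyingCycleG.dperm}, and your adaptation --- replacing ``final vertex'' by ``initial vertex'', invoking Lemma~\ref{lem.initialvertexFp.dperm.prime} in place of Lemma~\ref{lem.lastvertexF}, and using that $\Cval\subseteq F'$ is processed in decreasing order --- is exactly the right one. Your explicit treatment of the converse direction is slightly more detailed than the paper's proof of Lemma~\ref{lem.classifyingCycleF} (which leaves it implicit via the counting observation that each non-singleton cycle has exactly one cycle closer and exactly one cycle valley minimum), but this is a harmless elaboration.
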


%

Next, we will count the number of cycle closers.
But before doing that, we require a technical lemma similar to 
Lemmas~\ref{lem.cycle.closer.technical},~\ref{lem.cycle.closer.technical.dperm}.
(As before, recall that if
$i \in G\,\cap\,F' = \Cval(\sigma)$,
step $s_{i}$ must be a rise from height $h_{i-1}$ to height $h_{i}$
and hence, $h_{i-1}+1 = h_{i}$.
Also, from the interpretation of the heights in Lemma~\ref{lemma.heights.dperm},
we must have $\lceil h_{i-1}/2\rceil +1 = \lceil (h_{i}+1)/2\rceil = f_i$.)

\begin{lem}
Given a D-permutation $\sigma$ and associated sets $F,G,F',G'$
where $F' = \{y_1<\ldots< y_n\}$, 
and an index $j$ ($1\leq j\leq n$) 
such that $y_j\in G\,\cap\, F' = \Cval$.
Then the following is true:
\be
\#\{u \in  F\backslash \{\sinv(y_n),\ldots, \sinv(y_{j+1})\} \colon \: u>y_j\}
        = \lceil h_{y_j-1}/2\rceil +1 = \lceil (h_{y_j}+1)/2\rceil
	= f_{y_j}
\label{eq.lem.cycle.closer.technical.dperm.prime}
\ee
where $h_i$ denotes the height at position $i$ of the almost-Dyck path $\omega$
associated to $\sigma$
in Step~1
and $f_i$ is defined in \eqref{eq.def.fk.dperm}.
\label{lem.cycle.closer.technical.dperm.prime}
\end{lem}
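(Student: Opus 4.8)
The plan is to prove Lemma~\ref{lem.cycle.closer.technical.dperm.prime} by essentially transcribing the proof of Lemma~\ref{lem.cycle.closer.technical.dperm} with the roles of positions and values interchanged, since we are now working with the variant (value-based) inversion tables. The key observation is that the set $F$ of even positions plays, in this variant, the role that $F'$ (values of rises) played in the original setup, and $\sinv$ plays the role that $\sigma$ played there. First I would establish the set equality
\be
\{u > y_j \colon\: \sigma(u) \le y_j\} \;=\; \{u \in F\setminus\{\sinv(y_n),\ldots,\sinv(y_{j+1})\} \colon\: u > y_j\}.
\ee
For the containment $\supseteq$: if $u \in F$, i.e.\ $u$ is an even position, then $\sigma(u) < u$ or $\sigma(u) = u$ by the D-permutation condition, but in any case $u = \sinv(\sigma(u))$ with $\sigma(u) \in F'$; more to the point, $u \in F$ means $\sinv$ of nothing forces it — I will instead argue directly: if $u \in F$ and $u \notin \{\sinv(y_n),\ldots,\sinv(y_{j+1})\}$, then since $F' = \{y_1 < \ldots < y_n\}$ and $\sinv$ restricted to $F'$ is a listing of $F$, the preimage $\sigma(u) \in F'$ must be one of $y_1,\ldots,y_j$, hence $\sigma(u) \le y_j$. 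For the reverse containment $\subseteq$: if $u > y_j$ and $\sigma(u) \le y_j$, then $\sigma(u) < u$ (as $y_j < u$), so $u$ is an even position, i.e.\ $u \in F$; and since $\sigma(u) \le y_j < y_{j+1} < \ldots < y_n$, the element $u = \sinv(\sigma(u))$ cannot equal any of $\sinv(y_{j+1}),\ldots,\sinv(y_n)$. This gives the equality.

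Next I would compute the cardinality of $\{u > y_j \colon\: \sigma(u) \le y_j\}$ and show it equals $f_{y_j}$. Using the definition \eqref{eq.def.fk.dperm}, namely $f_{y_j} = \#\{i \le y_j \colon\: \sigma(i) > y_j\}$, I substitute $i \mapsto \sigma(i)$ (equivalently, reindex by $u = \sigma(i)$, so $i = \sinv(u)$), obtaining $f_{y_j} = \#\{u > y_j \colon\: \sinv(u) \le y_j\}$. Wait — I should check the direction: $\{i \le y_j : \sigma(i) > y_j\}$ is in bijection with its image under $\sigma$, which is $\{u > y_j : \sinv(u) \le y_j\}$. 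But the set I want is $\{u > y_j : \sigma(u) \le y_j\}$, not $\{u > y_j : \sinv(u) \le y_j\}$. These differ; however, by the symmetric formula in \eqref{eq.def.fk.dperm} we also have $f_{y_j} = \#\{i \le y_j : \sinv(i) > y_j\}$, and applying $\sinv \mapsto \sigma$ (i.e.\ substituting $u = \sinv(i)$, $i = \sigma(u)$) to \emph{this} expression yields exactly $\#\{u > y_j : \sigma(u) \le y_j\}$. So the chain of equalities is: $f_{y_j} = \#\{i \le y_j : \sinv(i) > y_j\} = \#\{u > y_j : \sigma(u) \le y_j\}$, and combined with the set equality above this gives $\#\{u \in F\setminus\{\sinv(y_n),\ldots,\sinv(y_{j+1})\} : u > y_j\} = f_{y_j}$. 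Finally, the equalities $f_{y_j} = \lceil h_{y_j-1}/2\rceil + 1 = \lceil(h_{y_j}+1)/2\rceil$ are just the height interpretation from Lemma~\ref{lemma.heights.dperm} together with the remark (stated just before the lemma) that $y_j \in \Cval$ forces $s_{y_j}$ to be a rise with $h_{y_j-1}+1 = h_{y_j}$; I would spell out that $y_j$ is odd (being a cycle valley in a D-permutation), so $h_{y_j} = 2f_{y_j} - 1$ and $h_{y_j-1} = h_{y_j} - 1 = 2f_{y_j} - 2 = 2(f_{y_j}-1)$, whence $\lceil h_{y_j-1}/2\rceil + 1 = (f_{y_j}-1)+1 = f_{y_j}$ and $\lceil(h_{y_j}+1)/2\rceil = \lceil 2f_{y_j}/2\rceil = f_{y_j}$.

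I do not expect any genuine obstacle here: the lemma is the ``variant'' (prime) counterpart of Lemma~\ref{lem.cycle.closer.technical.dperm}, which was already proved in the excerpt, and the only real content is bookkeeping the swap of positions for values (equivalently, replacing $\sigma$ by $\sinv$ and $F'$ by $F$). The one place to be careful — and the step I would double-check — is getting the direction of the substitution right in the cardinality computation, since naively reindexing $\{i \le y_j : \sigma(i) > y_j\}$ gives a statement about $\sinv$ whereas we want a statement about $\sigma$; the fix is to start instead from the $\sinv$-version of $f_k$ in \eqref{eq.def.fk.dperm}. Everything else is a direct transcription, so after verifying the set equality and this reindexing I would simply assemble the chain of equalities and conclude.
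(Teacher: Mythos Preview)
Your proposal is correct and follows essentially the same approach as the paper: the paper's proof establishes the identical set equality
\[
\{u \in F\setminus\{\sinv(y_n),\ldots,\sinv(y_{j+1})\} : u > y_j\} \;=\; \{u > y_j : \sigma(u) \le y_j\}
\]
and then appeals to \eqref{eq.def.fk.dperm} to conclude the cardinality is $f_{y_j}$. Your write-up simply fills in the details the paper leaves implicit, including the (correct) observation that one should start from the $\sinv$-form of $f_k$ in \eqref{eq.def.fk.dperm} before reindexing.
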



\begin{proof}
Notice the equality of the following sets:
\be
\{u \in  F\backslash \{\sinv(y_n),\ldots, \sinv(y_{j+1})\} \colon \: u>y_j\}
\;=\;
	\{u>y_j\:\colon \sigma(u)\leq y_j\}.
\label{eq.cycle.closer.technical.dperm.prime}
\ee
Next use \eqref{eq.def.fk.dperm}
to notice that the set on 
the right hand side of \eqref{eq.cycle.closer.technical.dperm.prime}
has cardinality $f_{y_j}$.
\end{proof}

%
%
%
%

We are now ready to count the number of cycle closers.

\begin{lem}[Counting of cycle closers for D-permutations using variant labels]
Fix an almost-Dyck path $\omega$ of length $2n$ and
construct $F',G'$ (these are completely determined by $\omega$).
Let $y_j \in G\cap F'$.
Also fix labels $\widehat{\xi}_u$ for vertices
$u\in G' \,\cup\, \{y_n,\ldots,y_{j+1}\}$
satisfying~\eqref{eq.xihat.ineqs}.
Then
\begin{itemize}
      \item[(a)] The value of $\widehat{\xi}_{y_j}$ completely determines
               if $y_j$ is a cycle closer or not.
      \item[(b)] There is exactly one value $\widehat{\xi}_{y_j}\in \{0,1,\ldots,\lceil h_{y_j-1}/2\rceil\}$
               that makes $y_j$ a cycle closer, and conversely.
\end{itemize}
\label{lem.cycle.closer.dperm.prime}
\end{lem}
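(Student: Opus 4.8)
The plan is to mirror the proof of Lemma~\ref{lem.cycle.closer} (and its D-permutation analogue Lemma~\ref{lem.cycle.closer.dperm}) almost verbatim, substituting the value-based inversion table and Lemma~\ref{lem.valuebased.alternate} for the position-based construction. First I would note that since $y_j \in G \cap F' = \Cval(\sigma)$, it is a cycle valley, so in the variant DS history it has no out-neighbour after stage~(a) has completed and after the edges $\sinv(y_n) \to y_n, \ldots, \sinv(y_{j+1}) \to y_{j+1}$ have been inserted in stage~(b); hence $y_j$ is the initial vertex of some path, say $\rbf_0 = y_j, \rbf_1, \ldots, \rbf_\alpha$ with edges $\rbf_i \to \rbf_{i+1}$. (Here I use Lemma~\ref{lem.afterGp.dperm.prime} to see the components are loops, paths, isolated vertices and no cycles, and Lemma~\ref{lem.initialvertexFp.dperm.prime} to see $y_j$ is indeed an initial vertex.) The step $y_j$ of the history introduces the edge $\sinv(y_j) \to y_j$, and by Lemma~\ref{lem.valuebased.alternate}(b) the value of $\widehat{\xi}_{y_j} \in \{0,1,\ldots,\lceil h_{y_j-1}/2\rceil\}$ determines $\sinv(y_j)$ as the $(\widehat{\xi}_{y_j}+1)$th largest element of $F \setminus \{\sinv(y_n),\ldots,\sinv(y_{j+1})\}$; distinct labels give distinct choices, so at most one label makes $y_j$ a cycle closer, proving (a) and half of (b).

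For the remaining half of (b) I must show there is \emph{some} admissible label that closes the cycle. The cycle gets closed precisely when $\sinv(y_j) = \rbf_\alpha$, the final vertex of $y_j$'s path. So I need (i) $\rbf_\alpha \in F \setminus \{\sinv(y_n),\ldots,\sinv(y_{j+1})\}$, and (ii) $\rbf_\alpha$ sits among the largest $\lceil h_{y_j-1}/2\rceil + 1 = f_{y_j}$ elements of that set; then Lemma~\ref{lem.cycle.closer.technical.dperm.prime} converts (ii) into the single inequality $\rbf_\alpha > y_j$. For (i): $\rbf_\alpha$ has no out-neighbour (it is a path end not yet connected to $y_j$), and all elements of $F'$ had out-neighbours assigned during stages~(a) and~(b), so $\rbf_\alpha \notin F'$; combined with $\rbf_\alpha$ having an in-neighbour (the path has $\ge 2$ vertices), $\rbf_\alpha$ is an anti-excedance-value-complement, forcing $\rbf_\alpha \in F$, and since $\sinv(y_n),\ldots,\sinv(y_{j+1})$ are already used as in-neighbours while $\rbf_\alpha$ is not, $\rbf_\alpha \notin \{\sinv(y_n),\ldots,\sinv(y_{j+1})\}$. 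The one subtle point is the exact dual of the chain argument in Lemma~\ref{lem.cycle.closer}: let $\beta$ be the largest index with $\rbf_\beta \in F$; using the description of $G$ in (\ref{eq.steps2classification.dperm.prime}) and that $G$-vertices have decreasing out-edges, one gets $\rbf_\beta < \rbf_{\beta+1} < \cdots < \rbf_\alpha$ when $\beta < \alpha$, so $\rbf_\alpha \ge \rbf_\beta$; and if $\beta > 0$ then $\rbf_\beta$ already has an in-neighbour, forcing $\rbf_\beta > y_j$ because we process $F'$ in decreasing order — so in all cases $\rbf_\alpha > y_j$, as $\rbf_\alpha \ne y_j$.

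The main obstacle — and really the only place where care is needed — is getting the direction of all the inequalities right in this last chain argument: the variant construction processes $F'$ (values of rises) in \emph{decreasing} order and traces paths from initial to final vertex along edges $\sinv(u) \to u$, so the monotonicity statements about $G$-vertices having decreasing out-edges and about ``already assigned'' vertices being larger must be transcribed with the correct orientation relative to Lemma~\ref{lem.afterGp.dperm.prime} and Lemma~\ref{lem.initialvertexFp.dperm.prime}. Once that bookkeeping is pinned down, the proof is structurally identical to Lemma~\ref{lem.cycle.closer.dperm}, with Lemma~\ref{lem.cycle.closer.technical.dperm.prime} playing the role of Lemma~\ref{lem.cycle.closer.technical.dperm}. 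I would write it as a short proof that says ``the argument is entirely analogous to that of Lemma~\ref{lem.cycle.closer.dperm}, using Lemma~\ref{lem.valuebased.alternate}(b), Lemma~\ref{lem.initialvertexFp.dperm.prime}, and Lemma~\ref{lem.cycle.closer.technical.dperm.prime} in place of their unprimed counterparts,'' then spell out only the chain-of-inequalities step explicitly to reassure the reader that the orientations work out.
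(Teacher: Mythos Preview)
The paper omits the proof of this lemma entirely (it falls under the blanket disclaimer at the start of Section~\ref{subsec.laguerre.dperm.prime} that ``we will omit most of the proofs''). Your overall plan --- mirror Lemma~\ref{lem.cycle.closer} using Lemmas~\ref{lem.valuebased.alternate}(b), \ref{lem.initialvertexFp.dperm.prime}, and~\ref{lem.cycle.closer.technical.dperm.prime} --- is exactly the intended argument, and the structure of your write-up is fine.

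However, you have committed precisely the orientation errors you warned yourself about, and they are not just cosmetic. Three specific slips:
\begin{itemize}
\item[(1)] After stage~(a) and the partial stage~(b), the vertex $y_j \in G \cap F'$ has no \emph{in}-neighbour, not no out-neighbour: in $\laguerrep{S}$ the out-edges lie at $\sinv(S) = G \cup \{\sinv(y_n),\ldots,\sinv(y_{j+1})\}$, so $y_j \in G$ already has its out-edge from stage~(a).
\item[(2)] To show $\rbf_\alpha \in F \setminus \{\sinv(y_n),\ldots,\sinv(y_{j+1})\}$, the correct one-line argument is: $\rbf_\alpha$ has no out-edge, and the out-edges are exactly at $G \cup \{\sinv(y_n),\ldots,\sinv(y_{j+1})\}$. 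Your claim that ``all elements of $F'$ had out-neighbours assigned'' is false (e.g.\ a cycle double fall $u \in F \cap F'$ with $\sigma(u) = y_m$, $m \le j$, has no out-edge yet), and the subsequent deduction of $\rbf_\alpha \in F$ from ``$\rbf_\alpha \notin F'$ and has an in-neighbour'' does not work either.
\item[(3)] In the chain argument, take $\beta$ to be the largest index with $\rbf_\beta \in F'$, not $F$: since $\rbf_\alpha \in F$, your choice forces $\beta = \alpha$ and the argument collapses. With $F'$ instead: for $i > \beta$ one has $\rbf_i \in G'$, hence $\rbf_{i-1} = \sinv(\rbf_i) \in G$ is odd, and in a D-permutation odd indices satisfy $\sigma(i) \ge i$, so the edges are \emph{increasing} (not decreasing as you wrote), giving $\rbf_\beta < \rbf_{\beta+1} < \cdots < \rbf_\alpha$. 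If $\beta > 0$, then $\rbf_\beta \in F'$ has an in-edge, so $\rbf_\beta \in G' \cup \{y_n,\ldots,y_{j+1}\}$; since $F' \cap G' = \emptyset$ this forces $\rbf_\beta \in \{y_n,\ldots,y_{j+1}\}$, whence $\rbf_\beta > y_j$. Either way $\rbf_\alpha > y_j$.
\end{itemize}
Once these three points are fixed, your proof is complete and matches the paper's (implicit) argument.
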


\subsubsection{Running example 2}

Let us now look at the variant DS history of our running example for D-permutation
$\sigma  =  7\, 1\, 9\, 2\, 5\, 4\, 8\, 6\, 10\, 3\, 11\, 12\, 14\, 13\,
=      (1,7,8,6,4,2)\,(3,9,10)\,(5)\,(11)\,(12)\,(13,14) \in \dperm_{14}.$
The sets $G'$ and $F'$ are $G' = \{5, 7, 8, 9, 10, 11, 14 \}$
and $F' =  \{13, 12, 6, 4, 3, 2, 1\}$.
The variant DS order consists of the following stages:
\begin{itemize}
\item Stage (a): $G' = \{5, 7, 8, 9, 10, 11, 14 \}$ in increasing order
\item Stage (b): $F' = \{13, 12, 6, 4, 3, 2, 1\}$ in decreasing order
\end{itemize}
Stage~(a) of the  history of $\sigma$ has been drawn in
Figure~\ref{fig.running.example.2.DS.variant.history.a}
and Stage~(b) has been drawn in Figure~\ref{fig.running.example.2.DS.variant.history.b}.
Non-singleton cycles are formed in Stage~(b) when the edges $14\to 13$, $10\to 3$
and $2\to 1$ are inserted.

\begin{figure}[p]
\vspace*{-12mm}
\centering
\begin{tabular}{l}
%
\begin{tikzpicture}[scale=0.6]

\node[right] at (-7,0) {$\laguerre{\emptyset}$};
\emptysigmatwo{0}%

\end{tikzpicture}\\
%
\hline\\[-4mm]
\hline\\[-3mm]
Stage (a): $G' = \{5, 7, 8, 9, 10, 11, 14 \}$ in increasing order\\[2mm]
\hline\\[-3mm]
\begin{tikzpicture}[scale=0.6]
\node[right] at (-7,0) {$\laguerre{\{5\}}$};
\emptysigmatwo{0}
\node[circle,fill=black,inner sep=1pt,minimum size=5pt] (e) at (9,\ey) {} edge [in=45,out=135, thick, loop above, color=red] node {} ();
%
%
\end{tikzpicture}\\
\hdashline\\[-3mm]
\begin{tikzpicture}[scale=0.6]
\node[right] at (-7,0) {$\laguerre{\{5, 7\}}$};
\emptysigmatwo{0}
\node[circle,fill=black,inner sep=1pt,minimum size=5pt] (e) at (9,\ey) {} edge [in=45,out=135, thick, loop above] node {} ();
%
	\graph [multi, edges = {thick,red}] {(a) -> (g); };
\end{tikzpicture}\\
\hdashline\\[-3mm]
\begin{tikzpicture}[scale=0.6]
\node[right] at (-7,0) {$\laguerre{\{5, 7, 8\}}$};
\emptysigmatwo{0}
\node[circle,fill=black,inner sep=1pt,minimum size=5pt] (e) at (9,\ey) {} edge [in=45,out=135, thick, loop above] node {} ();
%
	\graph [multi, edges = {thick}] {(a) -> (g); };
	\graph [multi, edges = {thick,red}] {(g) -> (h); };
\end{tikzpicture}\\
\hdashline\\[-3mm]
\begin{tikzpicture}[scale=0.6]
\node[right] at (-7,0) {$\laguerre{\{5, 7, 8, 9\}}$};
\emptysigmatwo{0}
\node[circle,fill=black,inner sep=1pt,minimum size=5pt] (e) at (9,\ey) {} edge [in=45,out=135, thick, loop above] node {} ();
%
        \graph [multi, edges = {thick}] {(a) -> (g); };
        \graph [multi, edges = {thick}] {(g) -> (h); };
	\graph [multi, edges = {thick,red}] {(c) -> (i); };
\end{tikzpicture}\\
\hdashline\\[-3mm]
\begin{tikzpicture}[scale=0.6]
\node[right] at (-7,0) {$\laguerre{\{5, 7, 8, 9, 10\}}$};
\emptysigmatwo{0}
\node[circle,fill=black,inner sep=1pt,minimum size=5pt] (e) at (9,\ey) {} edge [in=45,out=135, thick, loop above] node {} ();
%
        \graph [multi, edges = {thick}] {(a) -> (g); };
        \graph [multi, edges = {thick}] {(g) -> (h); };
        \graph [multi, edges = {thick}] {(c) -> (i); };
	\graph [multi, edges = {thick,red}] {(i) -> (j); };	
\end{tikzpicture}\\
\hdashline\\[-3mm]
\begin{tikzpicture}[scale=0.6]
\node[right] at (-7,0) {$\laguerre{\{5, 7, 8, 9, 10, 11\}}$};
\emptysigmatwo{0}
\node[circle,fill=black,inner sep=1pt,minimum size=5pt] (e) at (9,\ey) {} edge [in=45,out=135, thick, loop above] node {} ();
\node[circle,fill=black,inner sep=1pt,minimum size=5pt] (k) at (10,\ky) {} edge [in=45,out=135, thick, loop above,color=red] node {} ();
%
%
        \graph [multi, edges = {thick}] {(a) -> (g); };
        \graph [multi, edges = {thick}] {(g) -> (h); };
        \graph [multi, edges = {thick}] {(c) -> (i); };
        \graph [multi, edges = {thick}] {(i) -> (j); };
\end{tikzpicture}\\
\hdashline\\[-3mm]
\begin{tikzpicture}[scale=0.6]
\node[right] at (-7,0) {$\laguerre{\{5, 7, 8, 9, 10, 11, 14\}}$};
\emptysigmatwo{0}
\node[circle,fill=black,inner sep=1pt,minimum size=5pt] (e) at (9,\ey) {} edge [in=45,out=135, thick, loop above] node {} ();
\node[circle,fill=black,inner sep=1pt,minimum size=5pt] (k) at (10,\ky) {} edge [in=45,out=135, thick, loop above] node {} ();
%
%
	\graph [multi, edges = {thick}] {(a) -> (g); };
        \graph [multi, edges = {thick}] {(g) -> (h); };
        \graph [multi, edges = {thick}] {(c) -> (i); };
        \graph [multi, edges = {thick}] {(i) -> (j); };
	\graph [multi, edges = {thick,red}] {(m) -> [bend right] (n)};
\end{tikzpicture}
\end{tabular}
\caption{Stage (a) of the variant DS history for the D-permutation\\
$\sigma = 7\, 1\, 9\, 2\, 5\, 4\, 8\, 6\, 10\, 3\, 11\, 12\, 14\, 13\,
	 =  (1,7,8,6,4,2)\,(3,9,10)\,(5)\,(11)\,(12)\,(13,14) \in \dperm_{14}.$}
\label{fig.running.example.2.DS.variant.history.a}
\end{figure}

\begin{figure}[p]
\centering
\begin{tabular}{l}
\hline\\[-4mm]
\hline\\
Stage (b): $F' = \{13, 12, 6, 4, 3, 2, 1\}$ in decreasing order\\[2mm]
\hline\\[2mm]
\begin{tikzpicture}[scale=0.7]
\node[right] at (-8,0) {$\laguerre{\{5, 7, 8, 9, 10, 11, 14, 13\}}$};
\emptysigmatwo{0}
\node[circle,fill=black,inner sep=1pt,minimum size=5pt] (e) at (9,\ey) {} edge [in=45,out=135, thick, loop above] node {} ();
\node[circle,fill=black,inner sep=1pt,minimum size=5pt] (k) at (10,\ky) {} edge [in=45,out=135, thick, loop above] node {} ();
        \graph [multi, edges = {thick}] {(a) -> (g); };
        \graph [multi, edges = {thick}] {(g) -> (h); };
        \graph [multi, edges = {thick}] {(c) -> (i); };
        \graph [multi, edges = {thick}] {(i) -> (j); };
        \graph [multi, edges = {thick}] {(m) -> [bend right] (n)};
	\graph [multi, edges = {thick,red}] {(n) -> [bend right] (m)};
\end{tikzpicture}\\
\hdashline\\[-3mm]
\begin{tikzpicture}[scale=0.7]
\node[right] at (-8,0) {$\laguerre{\{5, 7, 8, 9, 10, 11, 14, 13, 12\}}$};
\emptysigmatwo{0}
\node[circle,fill=black,inner sep=1pt,minimum size=5pt] (e) at (9,\ey) {} edge [in=45,out=135, thick, loop above] node {} ();
\node[circle,fill=black,inner sep=1pt,minimum size=5pt] (k) at (10,\ky) {} edge [in=45,out=135, thick, loop above] node {} ();
\node[circle,fill=black,inner sep=1pt,minimum size=5pt] (l) at (11,\ly) {} edge [in=45,out=135, thick, loop above, color=red] node {} ();
        \graph [multi, edges = {thick}] {(a) -> (g); };
        \graph [multi, edges = {thick}] {(g) -> (h); };
        \graph [multi, edges = {thick}] {(c) -> (i); };
        \graph [multi, edges = {thick}] {(i) -> (j); };
        \graph [multi, edges = {thick}] {(m) -> [bend right] (n)};
        \graph [multi, edges = {thick}] {(n) -> [bend right] (m)};

\end{tikzpicture}\\
\hdashline\\[-3mm]
\begin{tikzpicture}[scale=0.7]
\node[right] at (-8,0) {$\laguerre{\{5, 7, 8, 9, 10, 11, 14, 13, 12, 6\}}$};
\emptysigmatwo{0}
\node[circle,fill=black,inner sep=1pt,minimum size=5pt] (e) at (9,\ey) {} edge [in=45,out=135, thick, loop above] node {} ();
\node[circle,fill=black,inner sep=1pt,minimum size=5pt] (k) at (10,\ky) {} edge [in=45,out=135, thick, loop above] node {} ();
\node[circle,fill=black,inner sep=1pt,minimum size=5pt] (l) at (11,\ly) {} edge [in=45,out=135, thick, loop above] node {} ();
        \graph [multi, edges = {thick}] {(a) -> (g); };
        \graph [multi, edges = {thick}] {(g) -> (h); };
        \graph [multi, edges = {thick}] {(c) -> (i); };
        \graph [multi, edges = {thick}] {(i) -> (j); };
        \graph [multi, edges = {thick}] {(m) -> [bend right] (n)};
        \graph [multi, edges = {thick}] {(n) -> [bend right] (m)};
        \graph [multi, edges = {thick, red}] {(h) -> (f); };
\end{tikzpicture}\\
\hdashline\\[-3mm]
\begin{tikzpicture}[scale=0.7]
\node[right] at (-8,0) {$\laguerre{\{5, 7, 8, 9, 10, 11, 14, 13, 12, 6, 4\}}$};
\emptysigmatwo{0}
\node[circle,fill=black,inner sep=1pt,minimum size=5pt] (e) at (9,\ey) {} edge [in=45,out=135, thick, loop above] node {} ();
\node[circle,fill=black,inner sep=1pt,minimum size=5pt] (k) at (10,\ky) {} edge [in=45,out=135, thick, loop above] node {} ();
\node[circle,fill=black,inner sep=1pt,minimum size=5pt] (l) at (11,\ly) {} edge [in=45,out=135, thick, loop above] node {} ();
        \graph [multi, edges = {thick}] {(a) -> (g); };
        \graph [multi, edges = {thick}] {(g) -> (h); };
        \graph [multi, edges = {thick}] {(c) -> (i); };
        \graph [multi, edges = {thick}] {(i) -> (j); };
        \graph [multi, edges = {thick}] {(m) -> [bend right] (n)};
        \graph [multi, edges = {thick}] {(n) -> [bend right] (m)};
        \graph [multi, edges = {thick}] {(h) -> (f); };
	\graph [multi, edges = {thick,red}] {(f) -> (d); };
\end{tikzpicture}\\
\hdashline\\[-3mm]
\begin{tikzpicture}[scale=0.7]
\node[right] at (-8,0) {$\laguerre{\{5, 7, 8, 9, 10, 11, 14, 13, 12, 6, 4, 3\}}$};
\emptysigmatwo{0}
\node[circle,fill=black,inner sep=1pt,minimum size=5pt] (e) at (9,\ey) {} edge [in=45,out=135, thick, loop above] node {} ();
\node[circle,fill=black,inner sep=1pt,minimum size=5pt] (k) at (10,\ky) {} edge [in=45,out=135, thick, loop above] node {} ();
\node[circle,fill=black,inner sep=1pt,minimum size=5pt] (l) at (11,\ly) {} edge [in=45,out=135, thick, loop above] node {} ();
        \graph [multi, edges = {thick}] {(a) -> (g); };
        \graph [multi, edges = {thick}] {(g) -> (h); };
        \graph [multi, edges = {thick}] {(c) -> (i); };
        \graph [multi, edges = {thick}] {(i) -> (j); };
        \graph [multi, edges = {thick}] {(m) -> [bend right] (n)};
        \graph [multi, edges = {thick}] {(n) -> [bend right] (m)};
        \graph [multi, edges = {thick}] {(h) -> (f); };
        \graph [multi, edges = {thick}] {(f) -> (d); };
        \graph [multi, edges = {thick, red}] {(j) -> (c); };
\end{tikzpicture}\\
\hdashline\\[-3mm]
\begin{tikzpicture}[scale=0.7]
\node[right] at (-8,0) {$\laguerre{\{5, 7, 8, 9, 10, 11, 14, 13, 12, 6, 4, 3, 2\}}$};
\emptysigmatwo{0}
\node[circle,fill=black,inner sep=1pt,minimum size=5pt] (e) at (9,\ey) {} edge [in=45,out=135, thick, loop above] node {} ();
\node[circle,fill=black,inner sep=1pt,minimum size=5pt] (k) at (10,\ky) {} edge [in=45,out=135, thick, loop above] node {} ();
\node[circle,fill=black,inner sep=1pt,minimum size=5pt] (l) at (11,\ly) {} edge [in=45,out=135, thick, loop above] node {} ();
        \graph [multi, edges = {thick}] {(a) -> (g); };
        \graph [multi, edges = {thick}] {(g) -> (h); };
        \graph [multi, edges = {thick}] {(c) -> (i); };
        \graph [multi, edges = {thick}] {(i) -> (j); };
        \graph [multi, edges = {thick}] {(m) -> [bend right] (n)};
        \graph [multi, edges = {thick}] {(n) -> [bend right] (m)};
        \graph [multi, edges = {thick}] {(h) -> (f); };
        \graph [multi, edges = {thick}] {(f) -> (d); };
        \graph [multi, edges = {thick}] {(j) -> (c); };
        \graph [multi, edges = {thick, red}] {(d) -> (b); };
\end{tikzpicture}\\
\hdashline\\[-3mm]
\begin{tikzpicture}[scale=0.7]
\node[right] at (-8,0) {$\laguerre{\{5, 7, 8, 9, 10, 11, 14, 13, 12, 6, 4, 3, 2, 1\}}$};
\emptysigmatwo{0}
\node[circle,fill=black,inner sep=1pt,minimum size=5pt] (e) at (9,\ey) {} edge [in=45,out=135, thick, loop above] node {} ();
\node[circle,fill=black,inner sep=1pt,minimum size=5pt] (k) at (10,\ky) {} edge [in=45,out=135, thick, loop above] node {} ();
\node[circle,fill=black,inner sep=1pt,minimum size=5pt] (l) at (11,\ly) {} edge [in=45,out=135, thick, loop above] node {} ();
        \graph [multi, edges = {thick}] {(a) -> (g); };
        \graph [multi, edges = {thick}] {(g) -> (h); };
        \graph [multi, edges = {thick}] {(c) -> (i); };
        \graph [multi, edges = {thick}] {(i) -> (j); };
        \graph [multi, edges = {thick}] {(m) -> [bend right] (n)};
        \graph [multi, edges = {thick}] {(n) -> [bend right] (m)};
        \graph [multi, edges = {thick}] {(h) -> (f); };
        \graph [multi, edges = {thick}] {(f) -> (d); };
        \graph [multi, edges = {thick}] {(j) -> (c); };
        \graph [multi, edges = {thick}] {(d) -> (b); };
        \graph [multi, edges = {thick, red}] {(b) -> (a); };
\end{tikzpicture}
\end{tabular}
\caption{Stage (b) of the variant DS history for the D-permutation\\
$\sigma = 7\, 1\, 9\, 2\, 5\, 4\, 8\, 6\, 10\, 3\, 11\, 12\, 14\, 13\,
	 =  (1,7,8,6,4,2)\,(3,9,10)\,(5)\,(11)\,(12)\,(13,14) \in \dperm_{14}.$}
\label{fig.running.example.2.DS.variant.history.b}
\end{figure}

\subsubsection{Computation of weights}
\label{subsec.computation.dperm.prime}

\proofof{Theorem~\ref{thm.DS.master.prime}}
The computation of weights is completely analogous to what was done in
Section~\ref{subsec.computation.dperm}, 
but using Lemma~\ref{lem.cycle.closer.dperm.prime}
in place of Lemma~\ref{lem.cycle.closer.dperm}.
We leave the details to the reader: the upshot 
(similar to the proof of \cite[Theorem~3.12]{Deb-Sokal})
is that for cycle valleys and cycle peaks, ``u'' 
and ``l'' are interchanged compared to 
Section~\ref{subsec.computation.dperm},
and all the statistics are primed.
It therefore completes the proof of
Theorem~\ref{thm.DS.master.prime}.
\qed

\proofof{Theorems~\ref{thm.DS.pqgen.prime} and ~\ref{thm.dperm.prime}}
Comparing 
\eqref{def.dperm.poly.pqgen.prime} 
with \eqref{def.dperm.master.prime}
we see that the needed specialisation
in \eqref{def.dperm.master.prime} are
the same as given in \eqref{eq.proof.weights.sfa}--\eqref{eq.proof.weights.sff}.
Inserting these into Theorem~\ref{thm.DS.master.prime}
gives Theorem~\ref{thm.DS.pqgen.prime}.

Similarly, the proof of Theorem~\ref{thm.dperm.prime} follows by 
specialising the weights in Theorem~\ref{thm.DS.pqgen.prime} 
to \eqref{eq.thm.conj.DS.specialise}.
\qed

\section{Final remarks}
\label{sec.final}

We began this work only hoping to prove \cite[Conjecture~2.3]{Sokal-Zeng_masterpoly}.
Our initial guess was that this would involve constructing a new bijection
from permutations to labelled Motzkin paths,
possibly by tweaking the Biane bijection \cite{Biane_93}.
However, on discovering our proof, 
we were surprised to see that not only did we not construct any new bijection,
but we used the same variant of the Foata--Zeilberger bijection, 
which Sokal and Zeng use to prove their ``first'' continued fractions for permutations.
As the proofs of the ``first'' theorems in \cite{Deb-Sokal} for D-permutations
are {\em parallel} to the proofs of the ``first'' theorems in \cite{Sokal-Zeng_masterpoly} for permutations
(as mentioned in \cite[Section~8]{Deb-Sokal}),
we managed to then prove \cite[Conjecture~12]{Randrianarivony_96b}
and \cite[Conjecture~4.1]{Deb-Sokal}.

We had to introduce a new total order on $[n]$ ($[2n]$ for D-permutations)
to describe histories for these bijections.
The crucial reason why our proof works is that the
total order is the same for any given Motzkin path (almost-Dyck path for D-permutations),
and also because the associated weights are commutative.
Thus, the order in which we multiply them has no effect on
the product as long as we stick to the same order for any given path.

On the other hand, 
Flajolet \cite{Flajolet_80} provides a more general combinatorial
interpretation with non-commutative weights,
as long as these weights are multiplied using the natural order of the path.
With this in mind,
we think that it will probably not be too difficult to generalize
the ``second'' theorems
for permutations in
\cite{Sokal-Zeng_masterpoly},
and for D-permutations
in \cite{Deb-Sokal}
to obtain continued fractions with non-commutative weights.
We predict that this will also be possible for continued fractions obtained using
the Fran\c{c}on--Viennot bijection \cite{Francon_79}.
However, why such continued fractions 
might be of interest is not immediate to us,
and at the present moment, we refrain from working out the details.

\section*{Acknowledgements}
\addcontentsline{toc}{section}{Acknowledgements}

We wish to thank Alan~Sokal and Jiang~Zeng for helpful conversations
and for their advice on drafts of this work.
We also thank Ashvni Narayanan for her comments on the title and abstract
of this paper.
We thank the anonymous referees whose comments have vastly improved
the exposition of this paper.

This work was supported by a teaching assistantship from
the Department of Mathematics, University College London.
The author is currently supported by the
DIMERS project ANR-18-CE40-0033 funded by
Agence Nationale de la Recherche (ANR, France).


\addcontentsline{toc}{section}{Bibliography}


\begin{thebibliography}{199}
%
%
\bibitem{Barsky_81}  D. Barsky, 
   Congruences pour les nombres de Genocchi de 2e esp\`ece
   [extrait d'un travail en commun avec Dominique Dumont],
   Groupe d'\'etude d'Analyse ultram\'etrique, 8e ann\'ee (1980/81),
   Expos\'e no.~34, 13~pp.
%

\bibitem{Blitvic_21} N.~Blitvi\'c and E.~Steingrimsson, 
	Permutations, moments, measures,
	Trans. Amer. Math. Soc. {\bf 374}, 5473-5508 (2021).


%
\bibitem{Biane_93}  P. Biane, Permutations suivant le type d'exc\'edance
   et le nombre d'inversions et interpr\'etation combinatoire d'une fraction
   continue de Heine,
   European J. Combin. {\bf 14}, 277--284 (1993).
%
%
%
%
%
%

   \bibitem{Corteel_07}  S. Corteel, Crossings and alignments of permutations,
   Adv. Appl. Math. {\bf 38}, 149--163 (2007).

%
\bibitem{Deb-Dyachenko_laguerre} B.~Deb, 
A.~Dyachenko, M.~P\'etr\'eolle and A.D.~Sokal,
Lattice paths and branched continued fractions III: 
Generalizations of the Laguerre, rook and Lah polynomials, 
preprint (December~2023), arXiv:2312.11081 [math.CO] at arXiv.org.

\bibitem{Deb-Sokal} B. Deb and A.D. Sokal,
    Classical continued fractions for some multivariate polynomials
    generalizing the Genocchi and median Genocchi numbers,
	Adv. Appl. Math. {\bf 161}, 102756 (2024).

%
%
%
%
%
\bibitem{Dumont_74}  D. Dumont,
   Interpr\'etations combinatoires des nombres de Genocchi,
   Duke Math. J. {\bf 41}, 305--318 (1974).
%
%
%
%
%
%
\bibitem{Dumont_94}  D. Dumont and A. Randrianarivony,
   D\'erangements et nombres de Genocchi,
   Discrete Math. {\bf 132}, 37--49 (1994).
%
\bibitem{Dumont_94b}  D. Dumont and J. Zeng,
   Further results on the Euler and Genocchi numbers,
   Aequationes Math. {\bf 47}, 31--42 (1994).

%
%
%
%
%
%

\bibitem{Elvey-Price-Sokal_wardpoly}  A. Elvey Price and A.D. Sokal,
   Phylogenetic trees, augmented perfect matchings,
   and a Thron-type continued fraction (T-fraction) for the Ward polynomials,
   Electron. J. Combin. {\bf 27}(4), article P4.6 (2020).

%
%
%
\bibitem{Euler_1760}  L. Euler, De seriebus divergentibus,
   Novi Commentarii Academiae Scientiarum Petropolitanae {\bf 5}, 205--237
   (1760);
   reprinted in {\em Opera Omnia}\/, ser.~1, vol.~14, pp.~585--617.
   [Latin original and English and German translations available at
    \url{http://eulerarchive.maa.org/pages/E247.html}]
%
%

\bibitem{Flajolet_80}  P. Flajolet, Combinatorial aspects of continued
   fractions,  Discrete Math. {\bf 32}, 125--161 (1980).


 \bibitem{Flajolet_09}  P. Flajolet and R. Sedgewick,
   {\em Analytic Combinatorics}\/ (Cambridge University Press, Cambridge, 2009).

%

\bibitem{Foata_84} D.~Foata and V.~Strehl, Combinatorics of Laguerre polynomials, 
in {\em Enumeration and Design}, edited by D.M.~Jackson and S.A.~Vanstone
(Academic Press, Toronto, 1984), pp. 123–140.



\bibitem{Foata_90}  D. Foata and D. Zeilberger,
   Denert's permutation statistic is indeed Euler-Mahonian,
   Stud. Appl. Math. {\bf 83}, 31--59 (1990).



\bibitem{Foata_08}  D. Foata and G.-N. Han,
   {\em Principes de combinatoire classique: Cours et exercices corrig\'es}\/
   (Universit\'e Louis Pasteur, Strasbourg, D\'epartement de math\'ematique,
    2008).
   Available on-line at
   \url{https://irma.math.unistra.fr/~foata/AlgComb.pdf}
%

   \bibitem{Francon_79} J.~Fran\c{c}on and G.~Viennot, 
	   Permutations selon leurs pics, creux, doubles mont\'ees et
	   double descentes, nombres d’Euler et nombres de Genocchi.
	   Discrete Mathematics, 28(1) 21--35 (1979).

%
\bibitem{Fusy_15}  E. Fusy and E. Guitter,
   Comparing two statistical ensembles of quadrangulations:
   A continued fraction approach,
   Ann. Inst. Henri Poincar\'e D {\bf 4}, 125--176 (2017).

%
%
%
%
\bibitem{Han_18}  G.-N. Han and J.-Y. Liu,
   Combinatorial proofs of some properties of tangent and Genocchi numbers,
   European J. Combin. {\bf 71}, 99--110 (2018).
%
%
\bibitem{Han_99b}  G.-N. Han and J. Zeng,
   On a $q$-sequence that generalizes the median Genocchi numbers,
   Ann. Sci. Math. Qu\'ebec {\bf 23}, 63--72 (1999).

%
%
\bibitem{Josuat-Verges_18}  M. Josuat-Verg\`es,
   A $q$-analog of Schl\"afli and Gould identities on Stirling numbers,
   Ramanujan J. {\bf 46}, 483--507 (2018).

%
%
 \bibitem{Knuth_98}  D.E. Knuth, {\em The Art of Computer Programming}\/,
    vol.~3, 2nd ed.\ (Addison-Wesley, Reading MA, 1998).
%
%
%
\bibitem{Lazar_20}  A.L. Lazar,
   The homogenized Linial arrangement and its consequences in enumerative
   combinatorics,
   Ph.D.~thesis, University of Miami (August~2020),

\bibitem{Lazar_22}  A. Lazar and M.L. Wachs,
   The homogenized Linial arrangement and Genocchi numbers,
   arXiv:1910.07651 [math.CO],
   Combin. Theory {\bf 2}, issue~1, paper no.~2 (2022), 34 pp.

%
%
\bibitem{Lucas_1877}  \'E. Lucas, Sur les th\'eor\`emes de Binet
   et de Staudt concernant les nombres de Bernoulli,
   Nouvelles Annales de Math\'ematiques (2${}^e$ s\'erie)
   {\bf 16}, 157--160 (1877).

%
%
\bibitem{OEIS}  The On-Line Encyclopedia of Integer Sequences,
   published electronically at \url{http://oeis.org}

\bibitem{Oste_15}  R. Oste and J. Van der Jeugt,
   Motzkin paths, Motzkin polynomials and recurrence relations,
   Electron. J. Combin. {\bf 22}, no.~2, \#P2.8 (2015).

\bibitem{Pan_23}  Q. Pan and J. Zeng,
   Cycles of even-odd drop permutations and continued fractions of
   Genocchi numbers,
   J. Combin. Theory A {\bf 199}, no. 105778 (2023).
%
%
%
%
%
%
\bibitem{Randrianarivony_97}  A. Randrianarivony,
   Fractions continues, $q$-nombres de Catalan et $q$-polyn\^omes
   de Genocchi,
   Europ. J. Combin. {\bf 18}, 75--92 (1997).

%
%
\bibitem{Randrianarivony_96b}  A. Randrianarivony and J. Zeng,
   Some equidistributed statistics on Genocchi permutations,
   Electron. J. Combin. {\bf 3}, no. 2, Research Paper \#22 (1996), 11 pp.


\bibitem{Randrianarivony_98b}  A. Randrianarivony,
   Moments des polyn\^omes orthogonaux unitaires de Sheffer g\'en\'eralis\'es
   et sp\'ecialisations,
   European J. Combin. {\bf 19}, 507--518 (1998).


%
%
%
%
%
%
%
%
\bibitem{Sokal_totalpos}  A.D. Sokal, Coefficientwise total positivity
   (via continued fractions) for some Hankel matrices of combinatorial
   polynomials, in preparation.
%
%
\bibitem{Sokal_2022} A.D. Sokal,
	Multiple Laguerre polynomials: Combinatorial model and Stieltjes moment representation,
	Amer. Math. Soc. {\bf 150}, 1997--2005 (2022).


\bibitem{Sokal-Zeng_masterpoly}  A.D. Sokal and J. Zeng,
    Some multivariate master polynomials
    for permutations, set partitions, and perfect matchings,
    and their continued fractions,
    Adv. Appl. Math. {\bf 138}, 102341 (2022).
%
%
%
%
%
%
%

\bibitem{Stanley_12}  R.P. Stanley, {\em Enumerative Combinatorics}\/,
       vol. 1, Second Edition, Cambridge University Press, 2012.

%
%
%
\bibitem{Viennot_81}  G. Viennot, Interpr\'etations combinatoires des nombres
   d'Euler et de Genocchi, S\'eminaire de Th\'eorie des Nombres (Bordeaux),
   Ann\'ee 1980--81, expos\'e no.~11 (1981).
    https://www.jstor.org/stable/44165433

\bibitem{Viennot_83}  G. Viennot, Une th\'eorie combinatoire des polyn\^omes
   orthogonaux g\'en\'eraux, Notes de conf\'erences donn\'ees
   \`a l'Universit\'e du Qu\'ebec \`a Montr\'eal,
   septembre-octobre 1983.
   Available on-line at
   \url{http://www.xavierviennot.org/xavier/polynomes_orthogonaux.html}
%
%
%
%
%



\end{thebibliography}
\end{document}